\pdfoutput=1
\documentclass[11pt,reqno]{amsart}

\usepackage[margin=1.25in]{geometry}
\usepackage{amsmath,amssymb,amsthm}
\usepackage[authoryear,round]{natbib}
\usepackage{mathtools}
\usepackage{enumitem}
\usepackage{aliascnt}
\usepackage{graphicx}
\usepackage{booktabs}
\usepackage{hyperref}
\usepackage{url}
\usepackage[capitalize,noabbrev]{cleveref}
\usepackage{tikz}
\usetikzlibrary{arrows.meta,calc,positioning,fit,decorations.markings,backgrounds}
\definecolor{wellblue}{RGB}{45,91,150}
\definecolor{saddlered}{RGB}{177,48,48}
\definecolor{graphgray}{RGB}{104,112,122}

\allowdisplaybreaks

\makeatletter
\renewcommand{\paragraph}{\@startsection{paragraph}{4}%
  {\z@}{1.25ex \@plus .5ex \@minus .2ex}{-1em}%
  {\normalfont\normalsize\bfseries}}
\makeatother

\newtheorem{theorem}{Theorem}
\newaliascnt{proposition}{theorem}
\newtheorem{proposition}[proposition]{Proposition}
\aliascntresetthe{proposition}
\newaliascnt{lemma}{theorem}
\newtheorem{lemma}[lemma]{Lemma}
\aliascntresetthe{lemma}
\newaliascnt{corollary}{theorem}
\newtheorem{corollary}[corollary]{Corollary}
\aliascntresetthe{corollary}
\newaliascnt{definition}{theorem}
\newtheorem{definition}[definition]{Definition}
\aliascntresetthe{definition}
\theoremstyle{remark}
\newaliascnt{remark}{theorem}
\newtheorem{remark}[remark]{Remark}
\aliascntresetthe{remark}

\newcommand{\R}{\mathbb{R}}
\newcommand{\E}{\mathbb{E}}
\newcommand{\Pp}{\mathbb{P}}
\newcommand{\SphereN}{S^{N-1}(\sqrt N)}
\newcommand{\grad}{\nabla}
\newcommand{\pLiminf}{\operatorname*{\Pp\text{-}\liminf}}
\newcommand{\pLimsup}{\operatorname*{\Pp\text{-}\limsup}}

\newcommand{\vol}{\operatorname{vol}}
\newcommand{\Hess}{\operatorname{Hess}}
\newcommand{\ind}{\operatorname{ind}}
\newcommand{\Var}{\operatorname{Var}}
\newcommand{\spec}{\operatorname{spec}}
\newcommand{\op}{\mathrm{op}}
\newcommand{\Crt}{\operatorname{Crt}}
\newcommand{\dd}{\,\mathrm d}
\newcommand{\eps}{\varepsilon}
\newcommand{\defeq}{\stackrel{\mathrm{def}}{=}}
\newcommand{\sclaw}{\mathrm{sc}}
\newcommand{\cA}{\mathcal A}
\newcommand{\cB}{\mathcal B}
\newcommand{\cC}{\mathcal C}
\newcommand{\cE}{\mathcal E}

\newcommand{\cI}{\mathcal I}
\newcommand{\cK}{\mathcal K}
\newcommand{\cL}{\mathcal L}
\newcommand{\cM}{\mathcal M}

\newcommand{\cS}{\mathcal S}

\newcommand{\cW}{\mathcal W}
\newcommand{\Cov}{\operatorname{Cov}}

\title[Spectral gap bounds for mixed spherical spin glass dynamics]{Spectral
Gap Bounds for Langevin Dynamics in Mixed Spherical Spin Glasses}

\author{Masoud Badiei Khuzani}
\address{}
\email{}

\subjclass[2020]{Primary 82D30, 60K35; Secondary 60G15, 82C44, 60B20}
\keywords{Mixed spherical spin glass, spherical $p$-spin glass,
Langevin dynamics, spectral gap, Eyring--Kramers formula, Kac--Rice
formula, metastability}

\begin{document}

\begin{abstract}
We prove rigorous \emph{lower} bounds on the relaxation time
$1/\gamma_{N,\beta}$ of Langevin dynamics for mixed spherical spin
glasses with even mixture $\xi(x)=\sum_{p\ge4}\gamma_p^2x^p$, under
a one-step-replica-symmetry-breaking-type standing assumption of
strict threshold separation $E_0(\xi)>E_1(\xi)>E_2(\xi)$; the pure
spherical $p$-spin glass with even $p\ge4$, for which the assumption
is a theorem, is recovered as a corollary.  The main result is an
aggregate Eyring--Kramers bound whose exponent sums conductances
over the exponentially many index-one saddles above the lowest
saddle level $-NE_1(\xi)$, combining the saddle complexity
$\Theta_{1,\xi}$ with a half-determinant Hessian statistic---an
entropic contribution that the classical single-saddle picture
misses.  The single new random-matrix ingredient of the mixed model
is that the conditional Hessian at a critical point is a
\emph{randomly} shifted GOE matrix: the radial derivative is no
longer determined by the energy (Euler's identity degenerates
exactly in the pure case), and every landscape rate becomes a
one-dimensional supremum over the scalar shift with a Gaussian
penalty.  At low temperature the aggregate exponent exceeds the
unconditional free-energy bound by
$\tfrac12\log(\beta e)-\Xi^{\mathrm{EK}}_{1,\xi}(-E_1(\xi))
-C_{\xi,b}\beta^{-1/2}$ with
$-\Xi^{\mathrm{EK}}_{1,\xi}(-E_1(\xi))\ge\tfrac14\log\xi''(1)-\tfrac14>0$,
so the refinement is a strict improvement for all sufficiently large
fixed $\beta$; the onset temperature and the constants $b_*(\xi)$ and
$C_{\xi,b}$ produced by the proof are not numerically explicit.  Two
companion results---a sequential Arrhenius bound with the explicit
constant $E_0(\xi)-E_1(\xi)$ and a fixed-temperature free-energy
bound---come with complete, self-contained proofs.  All bounds are one-sided; identifying the mechanism the
dynamics actually realizes would require a matching upper bound,
which remains open.
\end{abstract}

\maketitle

\section{Introduction}

Langevin dynamics on a high-dimensional random landscape is the basic
model of noisy local search: it is the sampling scheme for Gibbs
posteriors, the continuum limit of noisy gradient methods, and the
physical dynamics of mean-field glasses.  For spherical spin-glass
Hamiltonians---the canonical Gaussian landscapes whose covariance
$N\xi(R)$ depends only on the overlap $R$ of configurations---the
statics are understood in great detail: the landscape carries
exponentially many critical points organized by index-dependent
energy thresholds, in the pure case $\xi(x)=x^p$ by
\citet{auffinger2013random} and for general mixtures by
\citet{auffinger2013complexity}, and at low temperature the Gibbs
measure condenses on pairs of antipodal wells near the ground-state
level \citep{subag2017geometry}.  The
\emph{dynamics} is understood asymptotically in time-rescaled limits
\citep{cugliandolo1993analytical,benarous2020bounding} and, at the
level of the spectral gap $\gamma_{N,\beta}$ of the Langevin
generator, is known to be exponentially slow at low temperature
\citep{gheissari2019spectral}.  The spectral gap is the natural
quantitative object: its inverse is the slowest $L^2$ relaxation time,
and an exponentially small gap is the mathematical formulation of
trapping in metastable wells.

What has been missing is the \emph{geometry inside the exponent}: how
the relaxation-time exponent is built from the landscape data---the
depth of the wells, the level of the lowest saddles, and, crucially,
the exponential \emph{number} of saddle channels connecting distant
wells.  Classical Eyring--Kramers theory
\citep{bovier2004metastability} resolves the prefactor of a single
gate saddle, but on a landscape with $e^{cN}$ candidate gates,
selecting one saddle and assuming its Hessian is typical is exactly
the kind of unjustified step a rigorous treatment must avoid.

This paper provides three lower bounds on the relaxation time whose
exponents are explicit functions of the landscape thresholds, and
whose main novelty is an \emph{aggregate} formulation: the exponent of
our refined bound sums Eyring--Kramers conductances over \emph{all}
index-one saddles in an energy window, weighted by complexity and a
half-determinant Hessian statistic, before any saddle is selected.
We work throughout with \emph{mixed} even spherical models
$\xi(x)=\sum_{p\ge4\ \mathrm{even}}\gamma_p^2x^p$ under a standing
assumption of strict threshold separation
$E_0(\xi)>E_1(\xi)>E_2(\xi)$ (\Cref{ass:mixture})---the landscape
signature of a one-step-replica-symmetry-breaking (1RSB) phase,
which holds throughout the pure-like mixture class of
\citet{auffinger2013complexity} and is a theorem in the pure case.
The pure spherical $p$-spin results are stated as a corollary
(\Cref{cor:pure}), in which every rate function collapses to a
closed form.  The price of mixing is a single structural change in
the random-matrix input: Euler's identity ties the radial derivative
to the energy only in the pure case, so the conditional Hessian at a
critical point of a genuine mixture is a GOE matrix with a
\emph{random} scalar shift, and all Kac--Rice rates become suprema
over the shift with a quadratic penalty.  

\paragraph{Contributions.}
\begin{itemize}
  \item Our main theoretical result is an \emph{aggregate}
  Eyring--Kramers lower bound (\Cref{thm:ek}): the exponent involves
  the conductance pressure $\Lambda^{\mathrm{EK}}_{\xi,\beta}$, a sum
  over \emph{all} index-one saddles in an energy window weighted by
  the complexity $\Theta_{1,\xi}$ and a half-determinant Hessian
  statistic, rather than a single selected saddle with an
  assumed-typical Hessian.  The bound is proved for every even
  mixture satisfying the standing assumption; the mixed-model
  Kac--Rice analysis rests on the randomly shifted GOE Hessian law
  (\Cref{prop:shifted-GOE}) and a scalar-tilt Laplace argument that
  reduces every rate to its pure-model counterpart optimized over
  the shift.  At low temperature the aggregate bound
  strictly improves the unconditional fixed-temperature bound by
  $\tfrac12\log(\beta e)-\Xi^{\mathrm{EK}}_{1,\xi}(-E_1(\xi))
  -C_{\xi,b}\beta^{-1/2}$ with
  $-\Xi^{\mathrm{EK}}_{1,\xi}(-E_1(\xi))>0$,
  hence for all sufficiently large fixed $\beta$
  (\Cref{cor:lowtemp,rem:improvement}); the constants $b_*(\xi)$,
  $C_{\xi,b}$, and the onset $\beta_0(\xi,b)$ are not numerically
  explicit, so the improvement is certified asymptotically in
  $\beta$, not at a given $(\xi,\beta)$.  Two companion results---a
  sequential Arrhenius bound with the explicit constant
  $E_0(\xi)-E_1(\xi)$, which sharpens the exponential spectral-gap bounds
  of \citet{gheissari2019spectral} into a variational constant built
  from the thresholds of
  \citet{auffinger2013random,auffinger2013complexity}, and an
  elementary fixed-temperature bound $F_\xi(\beta)-\beta E_1(\xi)$ in the
  free-energy-barrier spirit of \citet{benarous2018spectral}---come
  with complete, self-contained proofs
  (\Cref{sec:gate,sec:deep-cutoff}).  The pure even $p$-spin model is
  \Cref{cor:pure}.
  \item We probe the theory numerically at small sizes
  (\Cref{sec:experiments}).  The central experiment
  enumerates critical points of $p=4$ instances at $N=8,10,12$ by
  projected Newton iteration, classifies them by Hessian index, and
  measures the determinant-weighted index-one saddle sum
  $\cW_{N,\beta}$ of \Cref{thm:ek} directly---its energy-resolved
  profile against the predicted complexity
  $\Xi^{\mathrm{EK}}_{1,\xi}=\Theta_{1,\xi}-\tfrac12D_\xi$ (its pure-case
  closed form), and the share
  of the largest single channel in the aggregate as a function of
  $\beta$ and $N$.  At $N\le12$ the asymptotic regime is out of
  reach, so these are consistency checks of shape and trend, not
  quantitative validations.  Langevin simulations at even $p=4$
  reproduce threshold relaxation, aging, and activated escape with an
  effective barrier that grows with $N$ (bootstrap confidence
  intervals, explicit censoring).
\end{itemize}

All code and simulation scripts are publicly available at
\url{https://github.com/mbadieik/spectral_gap_for_mixed_spherical_spin_glasses}.

\subsection{Related work}\label{sec:related}

\paragraph{Landscape complexity of spherical spin glasses.}
The critical-point structure of the pure spherical $p$-spin model was
computed by \citet{auffinger2013random}
through Kac--Rice formulas and GOE large deviations: no local minima
below $-NE_0$, no index-$k$ critical points below $-NE_k$, and
exponential proliferation of critical points of all fixed indices near
the threshold $-NE_\infty$.  \citet{auffinger2013complexity} extended
the complexity theory to general mixtures and classified them into
\emph{pure-like}, \emph{full}, and transitory classes; our standing
assumption places the mixture in the pure-like (1RSB-type) regime.
The annealed complexity \emph{upper} bounds our proofs consume are
rederived here in a self-contained way for mixed models
(\Cref{prop:complexity-upper}), because the mixed conditional Hessian
carries a random scalar shift that couples into every determinant
rate.  \citet{subag2017complexity}
upgraded the annealed complexity of minima to a quenched statement by
a second-moment argument in the pure case; whether a quenched
analogue holds for the
\emph{determinant-weighted} saddle sums introduced here is an open
problem we state in \Cref{sec:discussion}.  The statics are governed
by the Crisanti--Sommers/Parisi free energy
\citep{crisanti1992sphericalpspin,talagrand2006free,chen2013mixed},
with ground-state formulas in
\citet{chen2017parisi,jagannath2017low}, and at low
temperature the Gibbs measure concentrates on antipodal pairs of deep
wells \citep{subag2017geometry}.  In the statistical-physics
literature, which saddles dominate activated escape has been studied
via barrier complexity and dynamical instantons
\citep{ros2019complexity,ros2021dynamical}; our aggregate bound is a
rigorous, one-sided counterpart of that question.

\paragraph{Mixed models in the physics literature.}
Spherical $s+p$ mixtures have been studied in detail by replica
methods.  \citet{crisanti2004spherical,crisanti2006spherical} solved
the $2+p$ model, including a mixed one-step/full (1-FRSB) phase, and
\citet{crisanti2007amorphous} treated $s+p$ mixtures with large
$p-s$, deriving a criterion on $(s,p)$ for the appearance of a stable
two-step (2RSB) phase and of mixed continuous--discontinuous phases;
the criterion is independent of parity and therefore applies to even
mixtures such as $4+16$.  These static phase diagrams bear on when
our landscape assumption \textup{(A2)} coincides with a 1RSB
low-temperature phase (\Cref{sec:rsb-frsb}).  On the dynamical side,
\citet{crisanti2007equilibrium} formulated the equilibrium dynamics
for any number of replica-symmetry-breaking steps and showed that
the gap between the threshold and the equilibrium free energy shrinks
as the number of steps grows and vanishes in the full-RSB phase;
\citet{crisanti2011statistical} report two-time-scale bifurcations
in a $3+p$ model.  For the out-of-equilibrium dynamics of mixed
models, \citet{folena2020rethinking,folena2021gradient} showed that
gradient descent and Langevin dynamics from random starts need not
relax to the threshold energy and that the asymptotic energy depends
on the initial condition, and
\citet{lang2026dynamite,lang2026aging} integrated the dynamical
mean-field equations to very long times, obtaining the aging phase
diagram and exact asymptotic energies, with transitions between
aging states with one, two, and continuously many effective
temperatures.  We draw on these works in \Cref{sec:rsb}.

\paragraph{Langevin dynamics and spectral gaps.}
\citet{cugliandolo1993analytical} solved the out-of-equilibrium
Langevin dynamics in the thermodynamic limit, finding relaxation
toward the threshold energy $-E_\infty$ rather than the ground state,
and aging of two-time correlations; \citet{benarous2020bounding}
proved bounding flows for the limiting dynamics.  For the spectral
gap, \citet{gheissari2019spectral} established exponentially slow
mixing at low temperature, and \citet{benarous2018spectral} obtained
spectral-gap exponents from free-energy barriers in mean-field
glasses.  We are explicit about attribution: that the gap is
exponentially small is due to \citet{gheissari2019spectral}, the
free-energy-barrier route to gap exponents is in the spirit of
\citet{benarous2018spectral}, and the energy difference $E_0-E_1$ as
a landscape barrier is already visible in the complexity computations
of \citet{auffinger2013random}.  Relative to this line,
\Cref{thm:arrhenius,thm:fixed} contribute the explicit variational
constants $E_0(\xi)-E_1(\xi)$ and $F_\xi(\beta)-\beta E_1(\xi)$, tied to the
index-one threshold rather than to a generic free-energy barrier;
their proofs are elementary and self-contained, and we regard them as
companions.  The main new result is the aggregate Eyring--Kramers
bound of \Cref{thm:ek}, whose exponent---a conductance pressure over
all index-one channels---no prior work provides.  Classical
Eyring--Kramers theory for fixed (non-random) landscapes is due to
\citet{bovier2004metastability}.  A complementary recent line
analyzes when sampling mean-field Gibbs measures is possible or hard
for broad algorithm classes via algorithmic stochastic localization
\citep{elalaoui2022sampling}; our results concern the specific
canonical dynamics (fixed-temperature Langevin) and measure its
worst-case $L^2$ relaxation time.

\section{Notation and landscape preliminaries}\label{app:proofs}

This section collects the model, the standing assumption, the free
energy, the critical-point layers, and the derivative estimates used
throughout the proofs of the three theorems of \Cref{sec:theory}.
\Cref{sec:gate} proves the pathwise gate theorem and
Theorem~\ref{thm:arrhenius}; \Cref{sec:deep-cutoff} proves
Theorem~\ref{thm:fixed}; \Cref{sec:weighted-KR} derives the
complexity-weighted saddle pressure; \Cref{sec:network} states the
co-core localization theorem and derives Theorem~\ref{thm:ek};
\Cref{app:weighted-KR,app:geometry,app:near-threshold-cocore} supply
the random-matrix machinery, the spherical-geometry estimates, and the
proof of the localization theorem.

\subsection{The mixed spherical model and the standing assumption}

Let
\begin{equation}\label{eq:sphere}
 \SphereN=\{\sigma\in\R^N:\|\sigma\|^2=N\},
\end{equation}
and let $\nu_N$ denote normalized Riemannian surface measure on
$\SphereN$.  Fix a \emph{mixture polynomial}
\begin{equation}\label{eq:mixture}
 \xi(x)=\sum_{p\in P}\gamma_p^2\,x^p,
 \qquad \gamma_p\ne0\ (p\in P),
\end{equation}
where $P$ is a finite set of integers.  The Hamiltonian is the
centered Gaussian field
\begin{equation}\label{eq:hamiltonian}
 H_N(\sigma)
 =\sum_{p\in P}\gamma_p\,H_{N,p}(\sigma),
 \qquad
 H_{N,p}(\sigma)
 =\frac{1}{N^{(p-1)/2}}
 \sum_{i_1,\ldots,i_p=1}^N
 J^{(p)}_{i_1\cdots i_p}\sigma_{i_1}\cdots\sigma_{i_p},
\end{equation}
where all coefficients $J^{(p)}_{i_1\cdots i_p}$ are independent
standard Gaussian random variables, so that the pure components
$H_{N,p}$ are independent.  Equivalently,
\begin{equation}\label{eq:covariance}
 \E\bigl[H_N(\sigma)H_N(\tau)\bigr]
 =N\,\xi\bigl(R(\sigma,\tau)\bigr),
 \qquad
 R(\sigma,\tau)=\frac1N\langle\sigma,\tau\rangle.
\end{equation}

\begin{definition}[Standing assumption]\label{ass:mixture}
Throughout the paper the mixture \eqref{eq:mixture} satisfies:
\begin{enumerate}[label=\textup{(A\arabic*)},leftmargin=*]
 \item \emph{(Even $1$RSB-type mixture.)}  Every $p\in P$ is even and
 $p\ge4$, and the mixture is normalized by
 $\xi(1)=\sum_{p\in P}\gamma_p^2=1$.
 \item \emph{(Strict threshold separation.)}  The landscape constants
 defined below satisfy
 \begin{equation}\label{eq:threshold-order}
  E_0(\xi)>E_1(\xi)>E_2(\xi),
 \end{equation}
 where $E_0(\xi)$ is the ground-state constant of
 \eqref{eq:ground-limit} and $E_1(\xi),E_2(\xi)$ are the annealed
 complexity zeros of \eqref{eq:Ek}.
\end{enumerate}
\end{definition}

The normalization $\xi(1)=1$ costs no generality: replacing $H_N$ by
$cH_N$ only rescales $\beta$.  Assumption (A2) is the
one-step-replica-symmetry-breaking-type hypothesis under which the
deep landscape is layered by index; in the pure case
$\xi(x)=x^p$ it is a theorem of
\citet{auffinger2013random}, and within the \emph{pure-like} mixture
class of \citet{auffinger2013complexity} the strict layering of the
annealed thresholds persists, while for \emph{full} mixtures the
separation degenerates and our results become vacuous rather than
false (see \Cref{sec:rsb-frsb}).  Finiteness of $P$ is assumed for
convenience in the derivative estimates; mixtures with sufficiently
fast decay of $\gamma_p$ pose no additional difficulty.

Since every $p\in P$ is even,
\begin{equation}\label{eq:even-symmetry}
 H_N(-\sigma)=H_N(\sigma).
\end{equation}
The derivatives of the mixture at $x=1$ appear throughout:
\begin{equation}\label{eq:xi-derivatives}
 \xi'(1)=\sum_{p\in P}p\,\gamma_p^2,
 \qquad
 \xi''(1)=\sum_{p\in P}p(p-1)\,\gamma_p^2,
\end{equation}
together with the \emph{radial fluctuation scale}
\begin{equation}\label{eq:v-xi}
 v_\xi^2
 \defeq\xi'(1)+\xi''(1)-\xi'(1)^2
 =\sum_{p\in P}p^2\gamma_p^2
 -\Bigl(\sum_{p\in P}p\,\gamma_p^2\Bigr)^2
 \ \ge\ 0,
\end{equation}
where nonnegativity is the Cauchy--Schwarz inequality for the weights
$(\gamma_p^2)$, with equality if and only if $P$ is a singleton.
Thus $v_\xi=0$ characterizes the \emph{pure} model $\xi(x)=x^p$, in
which case Euler's identity
$\langle\sigma,\nabla H_N(\sigma)\rangle=pH_N(\sigma)$ makes the
radial derivative a deterministic function of the energy; for genuine
mixtures the radial derivative retains a Gaussian fluctuation of
variance $Nv_\xi^2$ around $N\xi'(1)u$ at energy $Nu$
(\Cref{app:geometry}), and this scalar fluctuation is the single new
random-matrix ingredient of the mixed model.  Note also that
$\xi''(1)\ge12$ by (A1), a crude bound used in
\Cref{rem:improvement}.

The partition function and free energy are
\begin{align}
 Z_{N,\beta}
 &\defeq\int_{\SphereN}e^{-\beta H_N(\sigma)}\,\nu_N(\dd\sigma),
 &F_{N,\xi}(\beta)&\defeq\frac1N\log Z_{N,\beta}.
 \label{eq:partition-free}
\end{align}
The Gibbs measure and Langevin generator are
\begin{align}
 \pi_{N,\beta}(\dd\sigma)
 &\defeq\frac{e^{-\beta H_N(\sigma)}}{Z_{N,\beta}}\,\nu_N(\dd\sigma),
 \label{eq:gibbs-measure}\\
 L_{N,\beta}
 &\defeq\Delta_{\SphereN}
 -\beta\langle\grad H_N,\grad\,\cdot\,\rangle.
 \label{eq:gibbs-generator}
\end{align}
The nonnegative spectral gap of $-L_{N,\beta}$ is
\begin{equation}\label{eq:spectral-gap}
 \gamma_{N,\beta}
 \defeq\inf_{\Var_{\pi_{N,\beta}}(f)>0}
 \frac{\int_{\SphereN}|\grad f|^2\dd\pi_{N,\beta}}
 {\Var_{\pi_{N,\beta}}(f)}.
\end{equation}

\subsection{Free energy and zero-temperature scale}

The spherical free-energy theorem gives a deterministic limit
\begin{equation}\label{eq:free-energy-limit}
 F_{N,\xi}(\beta)\longrightarrow F_\xi(\beta)
 \qquad\text{in probability for every }\beta>0;
\end{equation}
for even mixtures this is \citet{talagrand2006free}, and
\citet{chen2013mixed} removes the evenness restriction.  The
ground-state energy also has a deterministic limit,
\begin{equation}\label{eq:ground-limit}
 \frac1N\min_{\sigma\in\SphereN}H_N(\sigma)
 \longrightarrow-E_0(\xi)
 \qquad\text{in probability},
\end{equation}
with $E_0(\xi)\in(0,\infty)$ given by the zero-temperature
Crisanti--Sommers/Parisi variational formula; see
\citet{chen2017parisi} and \citet{jagannath2017low}.

\begin{proposition}[Free-energy/ground-state matching]\label{prop:free-energy-ground}
For every mixture satisfying \Cref{ass:mixture},
\begin{equation}\label{eq:F-over-beta}
 \lim_{\beta\to\infty}\frac{F_\xi(\beta)}{\beta}=E_0(\xi).
\end{equation}
In particular, since $E_0(\xi)>E_1(\xi)$ by \textup{(A2)}, there is
$\beta_\xi<\infty$ such that
\begin{equation}\label{eq:positive-fixed-condition}
 F_\xi(\beta)>\beta E_1(\xi)
 \qquad\text{for every }\beta\ge\beta_\xi.
\end{equation}
\end{proposition}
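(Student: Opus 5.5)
The plan is a two-sided sandwich at finite $N$, then passing to the limits $N\to\infty$ and $\beta\to\infty$ in the right order. Write $m_N\defeq -\frac1N\min_{\SphereN}H_N$, so that $m_N\to E_0(\xi)$ in probability by \eqref{eq:ground-limit}.

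\emph{Upper bound.} Since $\nu_N$ is a probability measure, $Z_{N,\beta}\le e^{\beta N m_N}$. Hence $F_{N,\xi}(\beta)\le\beta m_N$. Letting $N\to\infty$ along a subsequence on which both \eqref{eq:free-energy-limit} and \eqref{eq:ground-limit} hold almost surely gives $F_\xi(\beta)\le\beta E_0(\xi)$ for every $\beta$. This yields $\limsup_{\beta\to\infty} F_\xi(\beta)/\beta\le E_0(\xi)$.

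\emph{Lower bound.} Fix $\eps>0$ and let $\sigma_*$ be a minimizer of $H_N$. I would restrict the integral defining $Z_{N,\beta}$ to a spherical cap $B_\delta=\{\sigma:R(\sigma,\sigma_*)\ge1-\delta\}$. On this cap I need a uniform bound $H_N(\sigma)\le -Nm_N+N\eps$. The natural tool is the derivative estimates the paper announces for this section: with high probability $\sup_{\sigma}\|\grad H_N(\sigma)\|\le C\sqrt N$ (equivalently, $H_N/N$ is $C$-Lipschitz in the normalized distance $\|\sigma-\tau\|/\sqrt N$). This follows from a standard Gaussian operator-norm bound on the finitely many tensors $J^{(p)}$, using finiteness of $P$. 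Since $\|\sigma-\sigma_*\|^2/N=2(1-R)\le2\delta$, choosing $\delta$ with $C\sqrt{2\delta}\le\eps$ gives the required bound on the cap. The cap has $\nu_N$-mass $e^{N(\frac12\log(1-(1-\delta)^2)+o(1))}\ge e^{-Nc(\delta)}$ with $c(\delta)<\infty$ depending only on $\delta$. Therefore
\[
F_{N,\xi}(\beta)\ge\beta(m_N-\eps)-c(\delta)-o(1).
\]
Sending $N\to\infty$ in probability gives $F_\xi(\beta)\ge\beta(E_0(\xi)-\eps)-c(\delta)$. Dividing by $\beta$ and letting $\beta\to\infty$, then $\eps\to0$, gives the matching $\liminf$. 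This proves \eqref{eq:F-over-beta}.

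\emph{Consequence.} Put $\eta=E_0(\xi)-E_1(\xi)>0$ by (A2). Then \eqref{eq:F-over-beta} gives $\beta_\xi$ with $F_\xi(\beta)/\beta>E_1(\xi)+\eta/2$ for $\beta\ge\beta_\xi$, which is \eqref{eq:positive-fixed-condition}. The only real work is the uniform Lipschitz control, i.e. $\E\sup\|\grad H_N\|\le C\sqrt N$ with Gaussian concentration. I would take it from the derivative estimates of this section, or prove it directly from the injective-norm bound $\|J^{(p)}\|_{\mathrm{inj}}\le C_p\sqrt N$ via an $\eps$-net and Borell--TIS. One point needs care: the limits in $N$ are in probability, so all three events must be intersected on a common high-probability set before comparing deterministic limits. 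This is routine.
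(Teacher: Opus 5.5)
Your proposal is correct and follows essentially the same route as the paper. The upper bound comes from $Z_{N,\beta}\le e^{-\beta\min H_N}$. The lower bound restricts $Z_{N,\beta}$ to a small cap around a ground state, on which the Lipschitz bound from the derivative event of \Cref{prop:derivative-scale} keeps $H_N$ within $\eps N$ of its minimum; the cap's $e^{-c(\delta)N}$ volume cost then disappears after dividing by $\beta$ and letting $\beta\to\infty$, and \eqref{eq:positive-fixed-condition} follows from (A2) exactly as you wrote.
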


\begin{proof}
Since $\nu_N$ is a probability measure,
\[
 Z_{N,\beta}\le
 \exp\{-\beta\min_{\SphereN}H_N\},
\]
so \eqref{eq:free-energy-limit} and \eqref{eq:ground-limit} imply
$F_\xi(\beta)/\beta\le E_0(\xi)$.

For the reverse zero-temperature bound, fix $\delta>0$.  On the derivative event in \Cref{prop:derivative-scale} below, $H_N$ is $C_\xi\sqrt N$-Lipschitz in geodesic distance.  A geodesic ball of fixed radius $r\sqrt N$, centered at a ground state and with $r=r(\delta,\xi)>0$ sufficiently small, therefore lies in
\[
 \{H_N\le \min H_N+\delta N\}.
\]
Its normalized surface measure is at least $e^{-c_{\xi,\delta}N}$ for all large $N$.  Hence
\[
 F_{N,\xi}(\beta)
 \ge -\frac\beta N\min H_N-\beta\delta-c_{\xi,\delta}.
\]
Let $N\to\infty$, divide by $\beta$, then let $\beta\to\infty$ and finally $\delta\downarrow0$.  This gives the matching lower bound in \eqref{eq:F-over-beta}.  The strict inequality $E_0(\xi)>E_1(\xi)$ and \eqref{eq:F-over-beta} imply \eqref{eq:positive-fixed-condition}.
\end{proof}

\subsection{Complexity functionals and thresholds}\label{subsec:complexity-thresholds}

For a Borel set $B\subset\R$ and $k\in\{0,\ldots,N-1\}$, define
\begin{align}
 \Crt_{N,k}(B)
 \defeq\#\Bigl\{\sigma\in\SphereN:\;&
 \grad H_N(\sigma)=0,\notag\\[-0.1em]
 &\ind\Hess H_N(\sigma)=k,\notag\\[-0.1em]
 &H_N(\sigma)/N\in B\Bigr\},
 \label{eq:Crt}
\end{align}
and write $\Crt_{N,\ge k}(B)$ for the same count with
$\ind\Hess H_N(\sigma)\ge k$.

The conditional law of the Hessian at a critical point of energy
$Nu$ is a GOE matrix of scale $\sqrt{\xi''(1)}$ shifted by the
\emph{random} scalar $-(\xi'(1)u+\omega)$, where the shift
fluctuation $\omega$ is centered Gaussian with variance $v_\xi^2/N$
(\Cref{prop:shifted-GOE}).  At exponential scale the fluctuation
therefore participates in every Kac--Rice rate through the penalty
$\omega^2/(2v_\xi^2)$, and all landscape rate functions of the mixed
model are one-dimensional suprema over $\omega$.  We now fix this
family of functionals.  Define, for $u,\omega\in\R$,
\begin{equation}\label{eq:t-shift}
 t_\xi(u,\omega)\defeq\frac{\xi'(1)u+\omega}{\sqrt{\xi''(1)}},
\end{equation}
the position of the Hessian shift relative to the semicircle support
$[-2,2]$; the \emph{one-sided GOE outlier rate}
\begin{equation}\label{eq:Jgoe}
 J(t)\defeq\frac12\int_t^{-2}\sqrt{x^2-4}\,\dd x
 \quad(t\le-2),
 \qquad
 J(t)\defeq0\quad(t>-2);
\end{equation}
the semicircle logarithmic potential $\Omega$ of
\eqref{eq:Omega}; the \emph{determinant rate}
\begin{equation}\label{eq:Dxi}
 D_\xi(u,\omega)
 \defeq\frac12\log\xi''(1)
 +\Omega\bigl(t_\xi(u,\omega)\bigr),
 \qquad
 D_\xi(u)\defeq D_\xi(u,0);
\end{equation}
and the \emph{shift penalty}
\begin{equation}\label{eq:penalty}
 \mathfrak p_\xi(\omega)\defeq\frac{\omega^2}{2v_\xi^2}
 \quad(v_\xi>0);
 \qquad
 \mathfrak p_\xi(0)\defeq0,\quad
 \mathfrak p_\xi(\omega)\defeq+\infty\ (\omega\ne0)
 \quad(v_\xi=0),
\end{equation}
so that in the pure case every supremum over $\omega$ below is
evaluation at $\omega=0$.  For $r\ge0$ and $k\in\{1,2\}$, define the
\emph{tilted matrix rate}
\begin{equation}\label{eq:Lambda-rk}
 \Lambda^{(r)}_{k,\xi}(u)
 \defeq\sup_{\omega\in\R}
 \Bigl\{-\mathfrak p_\xi(\omega)
 +r\,D_\xi(u,\omega)
 -k\,J\bigl(t_\xi(u,\omega)\bigr)\Bigr\},
\end{equation}
the \emph{Kac--Rice prefactor rate}
\begin{equation}\label{eq:varrho-xi}
 \varrho_\xi(u)
 \defeq\frac12-\frac12\log\xi'(1)-\frac{u^2}2,
\end{equation}
and the \emph{annealed complexity and outlier functionals}
\begin{align}
 \Theta_{k,\xi}(u)
 &\defeq\varrho_\xi(u)+\Lambda^{(1)}_{k,\xi}(u),
 \qquad k\in\{1,2\},
 \label{eq:Theta-mixed}\\
 I_{1,\xi}(u)
 &\defeq\inf_{\omega\in\R}
 \Bigl\{\mathfrak p_\xi(\omega)
 +J\bigl(t_\xi(u,\omega)\bigr)\Bigr\}.
 \label{eq:I1}
\end{align}
Because $\mathfrak p_\xi$ is coercive while $D_\xi$ grows only
logarithmically and $J\ge0$, the supremum in \eqref{eq:Lambda-rk} is
attained on a compact $\omega$-interval, and
$\Lambda^{(r)}_{k,\xi}$ is finite, continuous, and locally Lipschitz
in $u$ (\Cref{lem:B-Lambda-regularity}).  Finally set
\begin{equation}\label{eq:E-infty}
 E_\infty(\xi)\defeq\frac{2\sqrt{\xi''(1)}}{\xi'(1)},
\end{equation}
the energy at which the typical shift $t_\xi(u,0)$ reaches the edge
of the semicircle, and define the index thresholds
\begin{equation}\label{eq:Ek}
 E_k(\xi)\defeq\sup\{a>0:\Theta_{k,\xi}(-a)\ge0\},
 \qquad k\in\{1,2\},
\end{equation}
which are finite because $\Theta_{k,\xi}(u)\to-\infty$ as
$u\to-\infty$ (the Gaussian term $-u^2/2$ dominates the
logarithmically growing matrix rate) and which satisfy
$\Theta_{k,\xi}(-E_k(\xi))=0$ and $\Theta_{k,\xi}(-a)<0$ for every
$a>E_k(\xi)$, by continuity.

\begin{remark}[The pure case]\label{rem:pure-collapse}
For $\xi(x)=x^p$ one has $\xi'(1)=p$, $\xi''(1)=p(p-1)$, $v_\xi=0$,
$t_\xi(u,0)=u\sqrt{p/(p-1)}$, and
$E_\infty(\xi)=2\sqrt{(p-1)/p}$.  The elementary identity
$\Omega(t)=t^2/4-\tfrac12-J(t)$ for $t\le-2$ (compare
\eqref{eq:Omega-explicit} and \eqref{eq:B-antiderivative}) gives, for
$u\le-E_\infty(\xi)$,
\begin{equation}\label{eq:Theta-explicit}
 \Theta_{k,\xi}(u)
 =\frac12\log(p-1)
 -\frac{p-2}{4(p-1)}u^2
 -(k+1)\,J\bigl(t_\xi(u,0)\bigr),
\end{equation}
which is the complexity function
$\Theta_{k,p}$ of \citet[Theorem~2.5]{auffinger2013random}, with
$J(t_\xi(u,0))=I_{1,\xi}(u)$ the GOE outlier rate at the shifted
threshold.  Thus \eqref{eq:Ek} reproduces the thresholds $E_k(p)$ of
the pure model, and \textup{(A2)} holds by
\citet[Theorem~2.12]{auffinger2013random}.  For genuine mixtures,
\eqref{eq:Theta-mixed} is the annealed rate produced by the same
Kac--Rice computation carried out with the randomly shifted Hessian;
the corresponding complexity theory for general mixtures is
developed by \citet{auffinger2013complexity}.  Our proofs consume
only the \emph{upper} bound of the next proposition, which is proved
in \Cref{app:weighted-KR} with the rest of the random-matrix
machinery, so no external complexity formula is invoked.
\end{remark}

\begin{proposition}[Annealed complexity upper bound]\label{prop:complexity-upper}
Let $k\in\{1,2\}$ and $a>0$.  Then
\begin{equation}\label{eq:complexity-cumulative}
 \limsup_{N\to\infty}\frac1N
 \log\E\Crt_{N,\ge k}((-\infty,-a])
 \le\sup_{u\le-a}\Theta_{k,\xi}(u),
\end{equation}
and the supremum is finite and negative whenever $a>E_k(\xi)$.
\end{proposition}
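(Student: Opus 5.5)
We prove \eqref{eq:complexity-cumulative} with the Kac--Rice formula on $\SphereN$, applied to the field $H_N$ with the index constraint $\ind\Hess H_N\ge k$ and the energy constraint $H_N/N\le -a$.

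\textbf{Kac--Rice representation.} By rotational invariance of the law of $H_N$, the integrand is constant on the sphere. Hence
\[
 \E\Crt_{N,\ge k}((-\infty,-a])
 =\vol(\SphereN)\,\varphi_{\grad H_N(\sigma)}(0)\,
 \E\Bigl[|\det\Hess H_N(\sigma)|\,\mathbf 1\{\ind\ge k,\ H_N(\sigma)/N\le -a\}\Bigm|\grad H_N(\sigma)=0\Bigr]
\]
at a fixed point $\sigma$ (say the north pole). The tangential gradient is centered Gaussian with covariance $\xi'(1)I_{N-1}$ and is independent of $(H_N(\sigma),\text{radial derivative},\text{Euclidean Hessian})$. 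We then condition on the energy $H_N(\sigma)=Nu$, whose density is $\approx e^{-Nu^2/2}$ since the variance is $N\xi(1)=N$. By \Cref{prop:shifted-GOE}, the Riemannian Hessian is then $\sqrt{(N-1)/N}\,\sqrt{\xi''(1)}\,M-(\xi'(1)u+\omega)I$, where $M$ is GOE and $\omega$ is an independent $N(0,v_\xi^2/N)$ variable. The volume factor $\vol(S^{N-1}(\sqrt N))\approx (2\pi e)^{N/2}$, the gradient density $(2\pi\xi'(1))^{-(N-1)/2}$, and the Gaussian energy density combine into $e^{N\varrho_\xi(u)+o(N)}$. This leaves the problem of bounding
\[
 \int_{-\infty}^{-a} e^{N\varrho_\xi(u)}\,
 \E_\omega\Bigl[\xi''(1)^{(N-1)/2}\,\E\bigl|\det(M-t_\xi(u,\omega))\bigr|\,\mathbf 1\{\ind\ge k\}\Bigr]\dd u .
\]

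\textbf{Matrix estimate with the index constraint.} The index of $M-t$ is the number of eigenvalues of $M$ below $t$. For $t\le-2$, the event $\ind\ge k$ requires $k$ eigenvalues at or below $t$, which costs $e^{-NkJ(t)+o(N)}$ by GOE large deviations of the extreme eigenvalues. The determinant is controlled by $e^{N\Omega(t)+o(N)}$, uniformly for $t$ in compacts. For $t>-2$ both constraints are free at exponential scale ($J=0$). To decouple determinant and index we use H\"older: with $1/q+1/q'=1$ and $q'$ close to $1$,
\[
 \E[|\det|\mathbf 1_{\ind\ge k}]\le \E[|\det|^{q}]^{1/q}\,\Pp(\ind\ge k)^{1/q'} .
\]
The upper bound $\E|\det(M-t)|^q\le e^{Nq\Omega(t)+o(N)}$ follows from exponential tightness of the top eigenvalue together with the LDP for the empirical measure, as the Laplace asymptotics of $\int\log|x-t|$. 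Letting $q\downarrow1$ at the end gives the bound $e^{N(D_\xi(u,\omega)-kJ(t_\xi(u,\omega)))+o(N)}$. I expect this step to be the main obstacle. Two points need care: uniformity in $t$ over growing ranges, and the $k$-th eigenvalue LDP $\Pp(\lambda_k\le t)\le e^{-NkJ(t)+o(N)}$, which I would take from the joint density of ordered eigenvalues, as in Auffinger--Ben Arous--\v{C}ern\'y.

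\textbf{Laplace over $(u,\omega)$ and tails.} We integrate $\omega$ against its density $e^{-N\mathfrak p_\xi(\omega)}$; in the pure case $\omega\equiv0$. By the Laplace principle on compacts, the exponent is $\sup_\omega\{-\mathfrak p_\xi+D_\xi-kJ\}=\Lambda^{(1)}_{k,\xi}(u)$, and the $u$-integral gives $\sup_{u\le -a}\Theta_{k,\xi}(u)$. The noncompact tails are negligible. For $|\omega|$ or $|u|$ large, use the crude bound $\E|\det(M-t)|\le \E\|M-t\|^{N-1}\le (C+|t|)^{N}$: its growth is only logarithmic in the exponent, whereas the Gaussian factors $e^{-Nu^2/2}$ and $e^{-N\omega^2/(2v_\xi^2)}$ give quadratic decay. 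So we can restrict to a compact set at the cost of an arbitrarily negative exponent, and continuity of $\Lambda^{(1)}_{k,\xi}$ (\Cref{lem:B-Lambda-regularity}) closes the Laplace argument.

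\textbf{Finiteness and negativity.} The supremum is finite because $\Theta_{k,\xi}(u)\to-\infty$ as $u\to-\infty$. For $a>E_k(\xi)$, the definition \eqref{eq:Ek} gives $\Theta_{k,\xi}(u)<0$ for every $u\le -a$. Continuity and the coercive decay at $-\infty$ make the supremum over $(-\infty,-a]$ attained, hence strictly negative.
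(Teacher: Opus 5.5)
Your proposal is correct and has the same architecture as the paper's proof (\Cref{lem:B-index-count}). Both arguments use the marked Kac--Rice identity, condition on the energy, and apply the randomly shifted GOE law. Both then prove a per-shift matrix bound $e^{N(D_\xi(u,\omega)-kJ(t_\xi(u,\omega))+o(1))}$, integrate the shift with the scalar-tilt Laplace bound under logarithmic global growth, and handle the energy with a crude Gaussian tail.

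The one genuine difference is how you decouple the determinant from the index constraint. You use H\"older, which requires $\frac1N\log\E|\det(M-t)|^q\le q\,\Omega(t)+o(1)$ for each fixed $q$. The paper avoids H\"older. It works on the bounded-norm, typical-bulk event $\mathcal N_R\cap\mathcal B_\eps$ from the proof of \Cref{lem:B-matrix-rates}(b). On that event $|\det Q_N(u,\omega)|\le C\,e^{N(D_\xi(u,\omega)+\eps)}$ holds deterministically, since the at most $k$ outliers contribute only a bounded factor. The indicator therefore just produces $\Pp(\lambda_k(G_n)\le t_N(u,\omega))$ with no loss, and the complement is handled by the speed-$N^2$ bulk LDP and \Cref{lem:B-det-moments}.

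Proving your moment bound needs exactly this decomposition anyway: a Varadhan argument at speed $N^2$ says nothing at speed $N$. So your route buys modularity, since any off-the-shelf asymptotics for GOE determinant moments can be plugged in. The cost is that you need moments of every fixed order and an extra limit.

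Fix the direction of that limit. As written, ``letting $q\downarrow1$'' sends $q'\to\infty$, so $\Pp(\ind\ge k)^{1/q'}\to1$. That discards the index cost $kJ$ and would only bound the total complexity. You need $q'\downarrow1$, i.e.\ $q\to\infty$, taken after $\limsup_N$ at each fixed $q$. This is legitimate because, for fixed $q$, a single eigenvalue contributes only the factor $|\lambda-t|^q$, which is not exponential in $N$.

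A minor point: the tangential gradient is independent of the radial derivative and of the tangential block of the Euclidean Hessian, but not of the mixed radial--tangential second derivatives. Only the former enter the Riemannian Hessian, so nothing in your argument changes.
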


\begin{proof}
This is \Cref{lem:B-index-count} in \Cref{app:weighted-KR}: the
marked Kac--Rice identity \eqref{eq:B-KR-identity}, the per-shift GOE
estimates of \Cref{lem:B-matrix-rates}, the two-outlier estimate of
\Cref{lem:B-two-outlier}, the scalar-tilt Laplace bound of
\Cref{lem:B-tilt-laplace}, and a Gaussian energy tail.  Negativity
for $a>E_k(\xi)$ holds because $\Theta_{k,\xi}(-a')<0$ for every
$a'>E_k(\xi)$ by \eqref{eq:Ek}, $\Theta_{k,\xi}$ is continuous, and
$\Theta_{k,\xi}(u)\to-\infty$ as $u\to-\infty$.
\end{proof}

\begin{proposition}[Low-index layering of the mixed spherical landscape]\label{prop:layering}
Fix $k\in\{1,2\}$ and $a>E_k(\xi)$.  Then
\begin{equation}\label{eq:no-high-index}
 \Pp\left(
 \begin{array}{c}
 \exists\sigma\in\SphereN:\quad
 H_N(\sigma)\le-Na,\quad
 \grad H_N(\sigma)=0,\\[-0.1em]
 \ind\Hess H_N(\sigma)\ge k
 \end{array}
 \right)\longrightarrow0.
\end{equation}
Consequently:
\begin{enumerate}[label=\textup{(\roman*)},leftmargin=*]
 \item below every level $-Na$ with $a>E_1(\xi)$, all critical points are minima;
 \item below every level $-Nb$ with $b>E_2(\xi)$, all critical points have index zero or one.
\end{enumerate}
\end{proposition}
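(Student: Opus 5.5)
The statement follows from \Cref{prop:complexity-upper} by a first-moment (Markov) argument; the plan has three short steps.

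\emph{Step 1 (reduce to a count).} The event in \eqref{eq:no-high-index} says exactly that $\Crt_{N,\ge k}((-\infty,-a])\ge1$. Markov's inequality therefore gives
\[
 \Pp\bigl(\Crt_{N,\ge k}((-\infty,-a])\ge1\bigr)\le\E\Crt_{N,\ge k}((-\infty,-a]).
\]

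\emph{Step 2 (exponential smallness).} Since $a>E_k(\xi)$, \Cref{prop:complexity-upper} supplies $-c\defeq\sup_{u\le-a}\Theta_{k,\xi}(u)<0$. The $\limsup$ bound \eqref{eq:complexity-cumulative} then yields, for all large $N$,
\[
 \E\Crt_{N,\ge k}((-\infty,-a])\le e^{-cN/2}.
\]
Hence the probability in Step 1 tends to zero, which is \eqref{eq:no-high-index}.

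\emph{Step 3 (the consequences).} For (i), apply the main statement with $k=1$: below level $-Na$, with probability tending to one, no critical point has index $\ge1$, so all critical points there are minima. For (ii), apply it with $k=2$ and $b>E_2(\xi)$: no critical point below $-Nb$ has index $\ge2$, so all have index zero or one. Both are statements holding with probability tending to one, for each fixed $a$ or $b$.

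The main technical subtlety is that the count must be finite and measurable, which needs the Morse property of $H_N$ almost surely. That property is standard for smooth nondegenerate Gaussian fields and is implicit in the Kac--Rice identity behind \Cref{prop:complexity-upper}. Critical points exactly at energy $-Na$ are also included in the count, so the closed set $(-\infty,-a]$ matches the event as stated. There is no real obstacle beyond invoking the cited upper bound.
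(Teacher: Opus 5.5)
Your proposal is correct and follows the paper's proof: the paper also bounds the probability by $\E\Crt_{N,\ge k}((-\infty,-a])$ via Markov's inequality, then invokes \Cref{prop:complexity-upper} to get $e^{-cN}$ decay, since the annealed exponent $\sup_{u\le-a}\Theta_{k,\xi}(u)$ is strictly negative for $a>E_k(\xi)$. Consequences (i) and (ii) are then the cases $k=1$ and $k=2$, read as statements holding with probability tending to one, exactly as you describe.
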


\begin{proof}
Markov's inequality and \Cref{prop:complexity-upper}: the probability
in \eqref{eq:no-high-index} is at most
$\E\Crt_{N,\ge k}((-\infty,-a])\le e^{-cN}$ for some $c>0$, since the
annealed exponent is negative for $a>E_k(\xi)$.
\end{proof}

\subsection{Morse regularity and derivative scale}

\begin{proposition}[Morse regularity of the spherical Hamiltonian]\label{prop:morse-spherical}
For every fixed $N$ and every mixture with $\max P\ge3$, the random function $H_N$ is almost surely Morse on $\SphereN$.  In particular, it has finitely many critical points, every critical value can be crossed by a finite Morse-handle attachment, and every first merger of two sublevel components occurs through index-one critical points.
\end{proposition}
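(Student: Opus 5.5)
We will prove that $H_N$ is almost surely Morse on $\SphereN$ by a Bulinskaya-type transversality argument in local charts, and then read off the remaining assertions from compactness and standard Morse theory.

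\emph{Step 1: reduction to a nondegeneracy statement.} Cover $\SphereN$ by finitely many charts; stereographic projections from $\pm\sqrt N e_i$ will do. In each chart $\phi:U\to\SphereN$, with $U\subset\R^{N-1}$ open, consider the random field $g=H_N\circ\phi$. We must show that almost surely there is no $x\in U$ with $\nabla g(x)=0$ and $\det\nabla^2 g(x)=0$. Since $H_N$ is a polynomial in $\sigma$ with Gaussian coefficients, $g$ is $C^\infty$ almost surely. By the standard Bulinskaya lemma (e.g.\ Adler--Taylor, Lemma 11.2.10), it suffices to show the following: for every $x$, the Gaussian vector $(\nabla g(x),\nabla^2 g(x))$, restricted to the gradient and one suitable Hessian direction, has a density bounded uniformly on compacts. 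Equivalently, the map $x\mapsto(\nabla g(x),\det\nabla^2 g(x))\in\R^{N}$ from an $(N-1)$-dimensional domain almost surely does not hit $0$. The cleanest way to verify this is to show that the joint law of $(\nabla_{\mathrm{sph}}H_N(\sigma),\Hess H_N(\sigma))$ at a fixed point is nondegenerate, or at least that the Hessian given the gradient has a nondegenerate GOE component.

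\emph{Step 2: nondegeneracy of the covariance.} This is the step I expect to be the main obstacle, and it is where $\max P\ge3$ enters. By rotational invariance it suffices to work at the north pole $\sigma=\sqrt N e_N$. From the covariance $N\xi(R)$, the tangential gradient has covariance $\xi'(1)I_{N-1}$. The Euclidean second derivatives in the tangent directions have a GOE part with variance proportional to $\xi''(1)$, independent of the gradient. The spherical Hessian equals this part minus the radial term $\tfrac{1}{\sqrt N}\partial_r H_N\, I$. The off-diagonal Hessian entries are therefore Gaussian with positive variance, as soon as $\xi''(1)>0$, and independent of the gradient; this is exactly the content of \Cref{prop:shifted-GOE}. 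Condition on the gradient and on all Hessian entries except one off-diagonal entry $h_{12}$. Then $\det$ is an affine-quadratic polynomial in $h_{12}$ that is not identically zero, so the conditional probability that it vanishes is $0$. This supplies the density bound in the form Bulinskaya requires. The detailed version: apply the lemma to $F(x)=(\nabla g(x),\ \det\nabla^2g(x))$, whose first $N-1$ coordinates have a bounded density. Then use the standard dimension count: the zero set of $\nabla g$ is almost surely a discrete set, since $\nabla g$ is a nondegenerate Gaussian field in $\R^{N-1}$ on an $(N-1)$-dimensional domain. By the Kac--Rice formula, the expected number of zeros at which $|\det\nabla^2 g|<\eps$ is bounded by $\int \E[|\det|\mathbf 1_{|\det|<\eps}\mid\nabla g=0]\,p_{\nabla g(x)}(0)\dd x$, which tends to $0$ as $\eps\to0$ by dominated convergence. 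If $\max P=2$ this argument breaks down, since the GOE part is still present but the field is quadratic and degenerate eigenvalues can occur robustly; this explains the hypothesis.

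\emph{Step 3: consequences.} A Morse function on a compact manifold has isolated, hence finitely many, critical points. Almost surely the critical values are also distinct: at two distinct critical points $\sigma\ne\pm\tau$, the energies have a nondegenerate joint law given both gradients vanish. The antipodal pairs forced by \eqref{eq:even-symmetry} share a value, and we treat them as a single handle pair, which does not affect the merger statement. The handle-attachment description and the fact that a first merger of two sublevel components occurs at an index-one point then follow from the classical Morse lemma. Passing a critical value of index $k$ attaches a $k$-handle; only $1$-handles change $\pi_0$ of a sublevel set by joining components; $0$-handles create components and higher handles preserve connectivity.
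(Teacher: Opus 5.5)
Your Step~3 matches the second half of the paper's proof (Milnor's deformation and handle-attachment theorems applied to a Morse realization). In Step~2 you also isolate the right input: at a fixed point the tangent gradient is $N(0,\xi'(1)I_{N-1})$ and independent of $\Hess H_N=c_NG_{N-1}-WI$, whose law on symmetric matrices is nondegenerate because $\xi''(1)>0$. The paper stops there. It cites Auffinger--Ben Arous(--\v{C}ern\'y) for this nondegeneracy, which is the hypothesis of the Adler--Taylor criterion: a $C^2$ Gaussian field with nondegenerate joint law of gradient and Hessian is a.s.\ Morse.

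\textbf{The gap.} Neither of your two mechanisms turns pointwise nondegeneracy into the almost-sure absence of degenerate critical points.

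\emph{The conditioning argument.} Conditioning on everything but $h_{12}$ only shows $\det\nabla^2g(x)\ne0$ a.s.\ at each \emph{fixed} $x$. The bad event is a union over uncountably many $x$. Bulinskaya's lemma for $F=(\nabla g,\det\nabla^2g)\colon U\subset\R^{N-1}\to\R^N$ requires the density of the \emph{whole} vector $F(x)$ to be bounded near $(0,0)$, uniformly in $x$. A bounded density for the first $N-1$ coordinates, plus a.s.\ nonvanishing of the last, does not give that. The conditional density of a quadratic in $h_{12}$ can be arbitrarily large near $0$ when its two roots nearly coincide. You would have to integrate out the remaining entries and control this, which you do not do.

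\emph{The Kac--Rice fallback.} This is circular. The one-point Kac--Rice formula at level $0$, including the inequality you use, is proved under the hypothesis that $\nabla g$ a.s.\ has no degenerate zeros. The weight $|\det\nabla^2g|$ comes from a local change of variables that exists only at nondegenerate zeros, so the right-hand side is blind to exactly the points you want to exclude. That hypothesis is what Bulinskaya's lemma is used to verify in the first place. The same objection applies to your assertion that the zero set of $\nabla g$ is discrete.

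\textbf{A repair that needs no determinant density.}
\begin{itemize}
\item $H_N$ is a polynomial, so $Z=\nabla g$ has $C^2$ paths.
\item $Z(x)$ is centered Gaussian with covariance $\xi'(1)D\phi(x)^{\top}D\phi(x)$, which is bounded below on compacts, so its density is uniformly bounded.
\item On $\{\|DZ\|,\|D^2Z\|\le M\}$, a degenerate zero $x_0$ with kernel vector $e$ forces $\{|Z|<\delta\}$ to contain a box of side $\asymp\sqrt{\delta/M}$ along $e$ and $\asymp\delta/M$ in the $N-2$ transverse directions. Its volume is $\gtrsim(\delta/M)^{N-3/2}$.
\item On the other hand, $\E\vol\{|Z|<\delta\}\le C\delta^{N-1}$.
\item Markov's inequality then bounds the probability of such a zero by $O_M(\sqrt\delta)$. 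Letting $\delta\downarrow0$, then $M\to\infty$, gives the Morse property.
\end{itemize}
Alternatively, quote the Adler--Taylor criterion directly; your covariance computation verifies its hypothesis.

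\textbf{Two smaller points.} First, your explanation of the hypothesis $\max P\ge3$ is wrong. The pure $p=2$ model is a.s.\ Morse: its critical points are $\pm$ eigenvectors of a GOE-type matrix with a.s.\ simple spectrum, and its Hessian eigenvalues are the nonzero spectral gaps. Your argument uses nothing beyond $\xi''(1)>0$. Second, the distinctness of critical values claimed in Step~3 is neither proved nor needed for the statement.
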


\begin{proof}
The derivative nondegeneracy and regularity hypotheses hold for
general smooth isotropic Gaussian fields on the sphere; they are
verified for mixtures in
\citet[Section~3]{auffinger2013complexity} (see
\citet[Lemma~3.1]{auffinger2013random} for the pure case).  The
remaining assertions are the Morse deformation and attachment
theorems applied to this almost-sure realization; see
\citet{milnor1963morse}.
\end{proof}

\begin{proposition}[Uniform derivative scale]\label{prop:derivative-scale}
Define
\begin{align}
 \mathcal D_{N,\xi}=\sup_{\sigma\in\SphereN}\Biggl\{
 &\frac{|H_N(\sigma)|}{N}
 +\frac{\|\grad H_N(\sigma)\|}{\sqrt N}
 +\|\Hess H_N(\sigma)\|_{\op}\notag\\[-0.2em]
 &+\sqrt N\,\|\nabla^3H_N(\sigma)\|_{\op}
 \Biggr\}.
 \label{eq:derivative-functional}
\end{align}
There are constants $C_\xi,c_\xi>0$ such that
\begin{equation}\label{eq:derivative-event}
 \Pp(\mathcal D_{N,\xi}\le C_\xi)\ge1-e^{-c_\xi N}.
\end{equation}
\end{proposition}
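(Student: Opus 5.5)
The proof rests on Gaussian concentration for the supremum of a Gaussian process, combined with a chaining or tensor-norm bound on the expectation. The key observation is that each of the four terms in $\mathcal D_{N,\xi}$ is the supremum of a centered Gaussian process indexed by a compact set. Concretely, for the $j$-th derivative ($j=0,1,2,3$) we take the index set $\SphereN\times(\text{unit tangent vectors})^j$ and the process $(\sigma,e_1,\dots,e_j)\mapsto N^{j/2-1}\,\nabla^jH_N(\sigma)[e_1,\dots,e_j]$. The scaling is chosen so that each term in \eqref{eq:derivative-functional} is the supremum of such a normalized process. We will work with the Euclidean derivatives of the polynomial extension of $H_N$ to $\R^N$. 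The Riemannian gradient, Hessian and third covariant derivative on $\SphereN$ differ from the Euclidean ones by lower-order terms involving radial derivatives and the curvature $1/\sqrt N$ scale. These corrections are controlled by the same Euclidean quantities, so it suffices to bound Euclidean operator norms uniformly over the ball of radius $\sqrt N$.

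\textbf{Step 1 (variance).} From \eqref{eq:covariance}, the variance of $N^{j/2-1}\nabla^jH_N(\sigma)[e_1,\dots,e_j]$ is $N^{j-2}\,\partial^j_{e}\partial^j_{e'}\,N\xi(\langle\sigma,\tau\rangle/N)$ evaluated at $\sigma=\tau$. Each derivative contributes a factor $1/N$ through the argument of $\xi$, which gives a variance $\le C_\xi/N$ uniformly, using that $P$ is finite so that $\xi$ and all its derivatives are bounded on $[-1,1]$.

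\textbf{Step 2 (expectation).} We will show that the expected supremum is $O(1)$. For each pure component, the Euclidean tensor $\nabla^jH_{N,p}$ at $\sigma$ is a contraction of the Gaussian $p$-tensor $J^{(p)}$ with $p-j$ copies of $\sigma$. Its operator norm over unit vectors is at most $N^{-(p-1)/2}N^{(p-j)/2}\,\|J^{(p)}\|_{\op}$, where $\|J^{(p)}\|_{\op}$ is the injective norm of $J^{(p)}$. The injective norm of a standard Gaussian $p$-tensor satisfies $\E\|J^{(p)}\|_{\op}\le C_p\sqrt N$. This follows by Sudakov–Fernique or an $\eps$-net over $(S^{N-1})^p$ with a union bound, since each contraction is $N(0,1)$ and the net has size $e^{C pN}$. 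After multiplying by the normalization $N^{j/2-1}$, the powers of $N$ cancel exactly to give $O(1)$. Summing over the finitely many $p\in P$ gives $\E\mathcal D_{N,\xi}\le C'_\xi$.

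\textbf{Step 3 (concentration).} We apply the Borell–TIS inequality (equivalently, Gaussian concentration for the Lipschitz function $J\mapsto\mathcal D_{N,\xi}$ of the i.i.d. coefficients). The Lipschitz constant is bounded by the maximal standard deviation $\sqrt{C_\xi/N}$, so
\[
\Pp\bigl(\mathcal D_{N,\xi}\ge C'_\xi+t\bigr)\le 4\,e^{-Nt^2/(2C_\xi)}.
\]
Taking $t=1$ and $C_\xi:=C'_\xi+1$ yields \eqref{eq:derivative-event}.

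The main obstacle is Step 2, specifically the injective-norm bound for the third-order term. The factor $\sqrt N$ in front of $\|\nabla^3H_N\|_{\op}$ requires that the tensor contraction scaling come out exactly, and it also requires checking that the Riemannian corrections are of the same order. The Riemannian third derivative picks up terms like $N^{-1/2}\,\partial_r\nabla H$. These are bounded by the radial gradient, whose size is $O(\sqrt N)$, times $N^{-1}$, which after the $\sqrt N$ normalization is $O(1)$ as well.
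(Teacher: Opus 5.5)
Your proof is correct and follows essentially the paper's route. Both arguments reduce every term, via the contraction of the coefficient tensor $J^{(p)}$ with $p-j$ copies of $\sigma$, to the bound $\|J^{(p)}\|_{\op}=O(\sqrt N)$ on the injective norm, sum over the finitely many $p\in P$, and note that covariant corrections on the radius-$\sqrt N$ sphere cost only factors $N^{-1/2}$. The only difference is how the exponential tail is obtained: you bound the mean and then apply Borell--TIS with maximal standard deviation $O(N^{-1/2})$, whereas the paper reads the tail off directly from a union bound over a product net followed by multilinear extension. Your constants need only harmless adjustments: a factor $4$ in the Lipschitz constant from summing four suprema, and a combinatorial factor $p!/(p-j)!$ in the contraction bound.
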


\begin{proof}
For each pure component, a fixed-resolution product net for the Gaussian coefficient tensor, followed by Gaussian concentration and multilinear net extension, gives the ambient derivative scales
\begin{align*}
 |H_{N,p}|&=O(N),
 &\|\nabla H_{N,p}\|&=O(\sqrt N),\\
 \|\nabla^2H_{N,p}\|_{\op}&=O(1),
 &\|\nabla^3H_{N,p}\|_{\op}&=O(N^{-1/2}),
\end{align*}
uniformly on the sphere, with failure probability $e^{-c_pN}$.  Since
$P$ is finite, the triangle inequality extends the four scales to
$H_N=\sum_p\gamma_pH_{N,p}$ at the cost of the constant
$C_\xi=\sum_p|\gamma_p|C_p$ and the exponent
$c_\xi=\min_pc_p$.  Passing to covariant derivatives introduces only curvature factors of order $N^{-1/2}$ on the radius-$\sqrt N$ sphere, and therefore preserves the displayed scales.  A detailed tensor-net derivation is included in \Cref{app:geometry}.
\end{proof}

\section{Main results}
\label{sec:theory}

We now state the main theoretical results, for every mixture
satisfying the standing assumption of \Cref{ass:mixture}: an even
mixture $\xi(x)=\sum_{p\in P}\gamma_p^2x^p$ with all degrees even
and at least $4$ (evenness gives the exact antipodal symmetry used
in the proofs), normalized by $\xi(1)=1$, and with the strict
threshold separation $E_0(\xi)>E_1(\xi)>E_2(\xi)$ that characterizes
the 1RSB-type (pure-like) regime.  Complete, self-contained proofs occupy
\Cref{sec:gate,sec:deep-cutoff,sec:weighted-KR,sec:network}, with
the random-matrix and geometric estimates deferred to
\Cref{app:weighted-KR,app:geometry,app:near-threshold-cocore}.
Recall the notation of \Cref{app:proofs}, and write
$\pLiminf$ for the limit in probability over the coupling
disorder.\footnote{For random variables $X_N$, we use the limits in
probability
$\pLiminf_{N\to\infty}X_N\defeq\sup\{x:\Pp(X_N<x-\eps)\to0
\text{ for every }\eps>0\}$ and
$\pLimsup_{N\to\infty}X_N\defeq\inf\{x:\Pp(X_N>x+\eps)\to0
\text{ for every }\eps>0\}$.}

\begin{theorem}[Sequential Arrhenius bound]\label{thm:arrhenius}
Let $\xi$ satisfy \Cref{ass:mixture}.  Taking first the low-temperature limit at fixed
system size and then the large-$N$ limit in probability over the
disorder, the Arrhenius exponent of the spectral gap satisfies
\begin{equation}
  \pLiminf_{N\to\infty}\ \liminf_{\beta\to\infty}
  \Bigl(-\frac1{\beta N}\log\gamma_{N,\beta}\Bigr)
  \ \ge\ E_0(\xi)-E_1(\xi).
\end{equation}
\end{theorem}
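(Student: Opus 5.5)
The plan is to use a test function built from the antipodal symmetry together with a Morse-theoretic gate argument at fixed $N$, and then let $\beta\to\infty$.

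\textbf{Setup.} Work on the high-probability event $\mathcal G_N$, on which three things hold:
\begin{itemize}
\item $H_N$ is Morse (\Cref{prop:morse-spherical});
\item the derivative event of \Cref{prop:derivative-scale} holds;
\item the conclusion (i) of \Cref{prop:layering} holds at level $-Na$ for a fixed $a\in(E_1(\xi),E_0(\xi))$, and $\min H_N\le -N(E_0(\xi)-\delta)$ by \eqref{eq:ground-limit}.
\end{itemize}
The probability of $\mathcal G_N$ tends to one. On $\mathcal G_N$, with $N$ fixed, we prove the deterministic Arrhenius statement
\[
 \liminf_{\beta\to\infty}\Bigl(-\tfrac1\beta\log\gamma_{N,\beta}\Bigr)\ \ge\ \mathfrak h_N,
\]
where $\mathfrak h_N$ is the communication height: the minimal level $c$ such that a global minimizer $\sigma_*$ and its antipode $-\sigma_*$ lie in the same connected component of $\{H_N\le c\}$, minus $\min H_N$. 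Since $H_N(-\sigma)=H_N(\sigma)$, both $\pm\sigma_*$ are global minimizers.

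\textbf{Lower bound on the communication height.} We show $\mathfrak h_N\ge N(E_0-\delta-a)$. Let $c_*$ be the merger level of the components containing $\sigma_*$ and $-\sigma_*$. By \Cref{prop:morse-spherical}, the first merger of these two distinct components happens at an index-one critical point of energy $c_*$.

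The two components are genuinely distinct at low levels: a component of $\{H_N\le -N(E_0-\delta')\}$ lies in a small geodesic neighborhood of its minimizer, by the Lipschitz bound plus the fact that the set is tiny. Hence $\sigma_*$ and $-\sigma_*$, which are at distance $\pi\sqrt N$, lie in different components at levels just above $\min H_N$.

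By (i), no index-one point exists below $-Na$, so $c_*\ge -Na$. Combining with $\min H_N\le -N(E_0-\delta)$ gives $\mathfrak h_N\ge N(E_0-\delta-a)$.

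\textbf{Fixed-$N$ gap bound.} To get $\gamma_{N,\beta}\le e^{-\beta(\mathfrak h_N-\eta)}$ for $\beta$ large, take a test function $f$ that equals $1$ on the component $A$ of $\{H_N< c_*-\eta'\}$ containing $\sigma_*$, equals $-1$ on its antipodal image $-A$, and equals $0$ elsewhere, smoothed in a thin collar where $H_N\ge c_*-\eta'$ (away from other low regions). Then:
\begin{itemize}
\item \emph{Variance.} By symmetry $\int f\,\dd\pi=0$. By Laplace asymptotics at fixed $N$, $\Var_\pi(f)\ge 2\pi(A)\gtrsim$ a constant times a positive fraction, since $\pm\sigma_*$ carry the Gibbs mass as $\beta\to\infty$. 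If there are several global minimizers, a generic Morse function has a unique minimizer pair almost surely; alternatively, use a Dirichlet-form version that does not need this.
\item \emph{Energy.} $\int|\nabla f|^2\dd\pi\le C_N e^{-\beta(c_*-\eta')}/Z_{N,\beta}$, while $Z_{N,\beta}\ge e^{-\beta(\min H_N+\eta'')}$ for $\beta$ large.
\end{itemize}
Together these give $-\frac1\beta\log\gamma_{N,\beta}\ge c_*-\min H_N-2\eta'-\eta''+o(1)$. Let $\beta\to\infty$, then $\eta\downarrow0$, divide by $N$, and finally let $N\to\infty$ in probability and $\delta\downarrow0$, $a\downarrow E_1$. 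This yields $E_0(\xi)-E_1(\xi)$.

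\textbf{Main obstacle.} The delicate step is the topological one. We must verify that the sublevel components containing $\sigma_*$ and $-\sigma_*$ cannot merge except through an index-one saddle below $c_*$, which is guaranteed by the Morse handle attachment. We must also verify that the collar where $\nabla f\neq0$ can be placed entirely in $\{H_N\ge c_*-\eta'\}$. The collar issue I would handle by taking $A$ to be a connected component of the open sublevel set: its boundary lies at level exactly $c_*-\eta'$, and since $c_*-\eta'$ is not a critical value, the boundary is a smooth hypersurface with a tubular neighborhood on which $H_N$ stays near that level.
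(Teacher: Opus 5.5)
Your proposal is correct and is essentially the paper's proof: the Morse first-merger at an index-one saddle, the index-one exclusion below $-Na$, and the ground-state limit together bound the communication height between $\pm\sigma_*$; a fixed-$N$ sublevel-component test function with Laplace asymptotics in $\beta$ then converts this height into the gap bound before $N\to\infty$. Your odd test function ($\pm1$ on $\pm A$, transition in a collar just outside $A$) is only a cosmetic variant of the paper's function, which is $1$ near $m_N$ and $0$ near $-m_N$ with gradient in $\{c_0\le H_N\le c_1\}$. Two of your supporting steps need a simpler justification than the one you gave, but neither is a gap: the variance step needs no uniqueness of the minimizer pair, since $\pi_{N,\beta}(A)\ge e^{-o(\beta)}$ suffices, and the separation of $\pm\sigma_*$ just above $\min H_N$ follows from the Morse lemma at these two distinct nondegenerate minima, not from a smallness-of-measure argument.
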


\begin{theorem}[Fixed-temperature bound]\label{thm:fixed}
For every $\beta$ with $F_\xi(\beta)>\beta E_1(\xi)$,
\begin{equation}
  \pLiminf_{N\to\infty}
  \Bigl(-\frac1N\log\gamma_{N,\beta}\Bigr)
  \ \ge\ F_\xi(\beta)-\beta E_1(\xi).
\end{equation}
\end{theorem}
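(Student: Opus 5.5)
The statement to prove is Theorem~\ref{thm:fixed}. The plan is to bound the gap by a bottleneck (Cheeger-type) test function that exploits the antipodal symmetry \eqref{eq:even-symmetry}. Fix $b$ with $E_1(\xi)<b<F_\xi(\beta)/\beta$; this interval is nonempty by hypothesis. Consider the sublevel set $U_b=\{H_N\le -Nb\}$. By \Cref{prop:layering}(i), with high probability every critical point in $U_b$ is a local minimum. Morse theory (\Cref{prop:morse-spherical}) then shows that no two components of $U_b$ merge below level $-Nb$, since a merger requires an index-one point. So $U_b$ is a disjoint union of connected components, and each component contains exactly one minimum.

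We next need a component that does not contain its own antipode. If a component $K$ contained both $\sigma$ and $-\sigma$, the even function $H_N$ restricted to $K$ would have at least two minima, namely the minimizer on $K$ and its antipode. These coincide only if $\sigma=-\sigma$, which is impossible on the sphere. Hence $K\cap(-K)=\emptyset$ for every component $K$. We can therefore split the components into two families $A$ and $-A=\{-\sigma:\sigma\in A\}$, choosing one component from each antipodal pair. By symmetry $\pi_{N,\beta}(A)=\pi_{N,\beta}(-A)$, since $\pi_{N,\beta}$ is invariant under $\sigma\mapsto-\sigma$.

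The test function is a smooth $f$ equal to $1$ on a neighborhood of $A$, $-1$ on $-A$, and $0$ elsewhere, with the transition supported in a thin layer $\{-Nb\le H_N\le -N(b-\eta)\}$ around the boundary. Since the components are separated, we can take $f=\phi(H_N/N)\cdot\mathbf 1_{\text{basin}}$ on the $\eta$-thickening of each component. The $\eta$-thickenings stay disjoint if we choose $b-\eta>E_1(\xi)$ as well, because then \Cref{prop:layering} still applies at level $b-\eta$. This gives $\Var(f)\ge 2\pi(A)$ and
\[
\int|\nabla f|^2\,d\pi\le \|\phi'\|_\infty^2\,\frac{\sup\|\nabla H_N\|^2}{N^2}\,\pi\bigl(H_N\ge -Nb\bigr)\le C\eta^{-2}N^{-1}\,\pi\bigl(H_N\ge -Nb\bigr),
\]
where the last step uses \Cref{prop:derivative-scale}. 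Because $\pi(A)+\pi(-A)\ge \pi(U_b)$, we get
\[
\gamma_{N,\beta}\le C\eta^{-2}\,\frac{\pi(H_N\ge -Nb)}{\pi(U_b)}.
\]

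It remains to estimate the two masses. For the numerator, $\pi(H_N\ge -Nb)\le e^{\beta Nb}/Z_{N,\beta}=\exp\{N(\beta b-F_{N,\xi}(\beta))\}$. For the denominator, we need $\pi(U_b)$ to be bounded below by something subexponential, ideally $\pi(U_b)\to1$. This is the step I expect to be the main obstacle. Since $b<F_\xi(\beta)/\beta$, we have $\int_{H>-Nb}e^{-\beta H}d\nu\le e^{\beta Nb}=e^{-N\delta}Z_{N,\beta}$ with $\delta=F_\xi(\beta)-\beta b>0$, using \eqref{eq:free-energy-limit}. Hence $\pi(U_b)\ge 1-e^{-N\delta+o(N)}$. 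Combining the two estimates,
\[
-\tfrac1N\log\gamma_{N,\beta}\ge F_{N,\xi}(\beta)-\beta b-o(1)
\]
with high probability. We then let $b\downarrow E_1(\xi)$, first shrinking $\eta$ and then $b$; the prefactor $\eta^{-2}$ is subexponential, so it does not affect the rate.

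The topological step needs care, in particular the claim that components of $U_b$ stay disjoint up to level $-N(b-\eta)$. If the Morse-theoretic argument is delicate, the fallback is to take $f$ to be $\pm1$ on the components of $U_b$, linear in geodesic distance on an $N^{1/2}\eta$-neighbourhood, and $0$ elsewhere. The gradient bound $\|\nabla f\|\lesssim 1/(\eta\sqrt N)$ is then automatic. Separation of the neighbourhoods follows from the Lipschitz bound in \Cref{prop:derivative-scale} applied on $\{H_N\le -N(b-C\eta)\}$, again using layering at a level above $E_1(\xi)$.
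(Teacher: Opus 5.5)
Your proposal is correct and is essentially the paper's proof. Your $f$ is the paper's odd function $g_N=s_{N,a_0}\,\chi(H_N/N)$ with $a_1=b$ and $a_0=b-\eta$; the paper uses the affine rescaling $f_N=(1+g_N)/2$. As in the paper, the Dirichlet energy is bounded by the derivative scale times $e^{\beta Nb}/Z_{N,\beta}$ on the shell, and the variance is bounded below by the deep-sublevel mass of \Cref{prop:deep-mass}. Two remarks: the fallback construction is unnecessary, and no ordering of limits in $\eta$ is needed, since the exponent $F_\xi(\beta)-\beta b$ does not depend on $\eta$ once $b-\eta>E_1(\xi)$.
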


\begin{theorem}[Aggregate Eyring--Kramers bound]\label{thm:ek}
There is $b_*(\xi)\in(E_2(\xi),E_1(\xi))$ such that, for every
$b\in(b_*(\xi),E_1(\xi))$, there are constants $C_{\xi,b}<\infty$ and
$\beta_0(\xi,b)<\infty$ with the following property: for every fixed
$\beta\ge\beta_0(\xi,b)$,
\begin{equation}\label{eq:ek-fixed-b}
  \pLiminf_{N\to\infty}
  \Bigl(-\frac1N\log\gamma_{N,\beta}\Bigr)
  \ \ge\ F_\xi(\beta)
  -\max\Bigl\{\beta b,\
  \Lambda^{\mathrm{EK}}_{\xi,\beta}(b)+\tfrac{C_{\xi,b}}{\sqrt\beta}\Bigr\},
\end{equation}
where
$\Lambda^{\mathrm{EK}}_{\xi,\beta}(b)
=-\tfrac12\log(\beta e)
+\sup_{u\in[-E_1(\xi),-b]}\{-\beta u+\Xi^{\mathrm{EK}}_{1,\xi}(u)\}$
is the aggregate conductance pressure: a sum of Eyring--Kramers
conductances \citep{bovier2004metastability} over \emph{all} index-one
saddles in the energy window, with
$\Xi^{\mathrm{EK}}_{1,\xi}(u)
=\varrho_\xi(u)+\Lambda^{(1/2)}_{1,\xi}(u)$ the conductance
complexity of \eqref{eq:Xi-EK}, combining the saddle complexity and
the half-determinant statistic of the randomly shifted GOE Hessian
through a joint supremum over the shift; in the pure case it
collapses to $\Theta_{1,\xi}(u)-\tfrac12D_\xi(u)$
(equation~\eqref{eq:Xi-EK-explicit} of
\Cref{sec:weighted-KR}).  Consequently, at any fixed $\beta$, writing
$B(\beta)\defeq\{b\in(b_*(\xi),E_1(\xi)):\beta_0(\xi,b)\le\beta\}$ for the
set of cut depths admissible at that temperature,
\begin{equation}\label{eq:ek-variational}
  \pLiminf_{N\to\infty}
  \Bigl(-\frac1N\log\gamma_{N,\beta}\Bigr)
  \ \ge\ F_\xi(\beta)
  -\inf_{b\in B(\beta)}
  \max\Bigl\{\beta b,\
  \Lambda^{\mathrm{EK}}_{\xi,\beta}(b)+\tfrac{C_{\xi,b}}{\sqrt\beta}\Bigr\}
  \qquad\text{whenever }B(\beta)\ne\varnothing.
\end{equation}
The free parameter $b$ is the depth of the co-core cut used in the
proof; \eqref{eq:ek-variational} optimizes only over the $b$
admissible at the given $\beta$---the temperature is \emph{not} an
optimization variable.  The proof forces
$b_*(\xi)\ge E_1(\xi)-\eta_0(\xi)/4$ for a nonexplicit margin
$\eta_0(\xi)>0$ (equation~\eqref{eq:D-bstar} of
\Cref{app:near-threshold-cocore}), so the
admissible window is a possibly very thin left neighborhood of
$E_1(\xi)$; neither $b_*(\xi)$ nor $C_{\xi,b}$ is numerically certified.
The theorem is proved as \Cref{thm:refined-EK} in \Cref{sec:network}.
\end{theorem}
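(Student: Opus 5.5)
The plan is to bound the spectral gap through the variational characterization \eqref{eq:spectral-gap} with a test function built from a bottleneck set, so that $\gamma_{N,\beta}\le \mathrm{cap}(A)/(\pi(A)\pi(A^c))$ up to subexponential factors. I would take $A$ to be the union of the sublevel-set components of $\{H_N\le -Nb\}$ that contain one member of each antipodal pair of deep wells; evenness \eqref{eq:even-symmetry} then gives $\pi(A)\approx\pi(A^c)\approx\tfrac12$ up to the mass outside the sublevel set. The test function is the smoothed indicator of this co-core cut, or the equilibrium potential between $A$ and its antipode. Choosing $b\in(b_*(\xi),E_1(\xi))$ ensures three things: by \Cref{prop:layering}(ii) and $b>E_2(\xi)$, all critical points below $-Nb$ have index $0$ or $1$; by (i), nothing below $-NE_1$ is a saddle; and the sublevel set $\{H_N\le-Nb\}$ has at least two components separated by index-one saddles with energies in $[-NE_1,-Nb]$. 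The last claim is the co-core localization statement from \Cref{sec:network}, which I would invoke as proved in \Cref{app:near-threshold-cocore}.

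Next I would bound the Dirichlet energy by splitting the boundary flux into two parts. The first part is the region above level $-Nb$. Here a crude estimate gives $|\grad f|^2\le \mathrm{poly}(N)$ on a layer of Gibbs weight at most $e^{-\beta Nb}/Z_{N,\beta}$ times volume, which contributes $e^{N(\beta b - F_\xi(\beta))}$ and produces the $\beta b$ branch of the max. The second part is the neighborhoods of the index-one saddles $\sigma_s$ at levels in $[-NE_1,-Nb]$. Near each saddle I would use the standard Eyring--Kramers local capacity computation \citep{bovier2004metastability}. I would Taylor expand $H_N$ to second order, controlling the cubic remainder by \Cref{prop:derivative-scale}, and choose the transition profile along the negative Hessian direction. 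This gives a conductance of order
\[
\frac{e^{-\beta H_N(\sigma_s)}}{Z_{N,\beta}}\,(2\pi/\beta)^{(N-1)/2}\,|\lambda_-(\sigma_s)|^{1/2}\prod_{\lambda_i>0}\lambda_i^{-1/2}\cdot N^{\text{normalization}}.
\]
Taking $\frac1N\log$ of the factor $(2\pi/\beta)^{N/2}$ against the normalized sphere volume produces $-\tfrac12\log(\beta e)$. The Taylor error over a ball of radius $\beta^{-1/2}$ costs $C_{\xi,b}/\sqrt\beta$ per unit $N$, and the requirement $\beta\ge\beta_0(\xi,b)$ makes the Gaussian ball fit inside the region where the quadratic approximation and index control hold.

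The key step is then to sum these conductances over all saddles, weighting each by $|\det\Hess H_N(\sigma_s)|^{-1/2}$. I would not control this weighted sum quenched. Instead I would use Markov's inequality on its expectation, which suffices for a $\pLiminf$ lower bound on $-\frac1N\log\gamma$. The expectation is a marked Kac--Rice integral. Conditioning on energy $Nu$ and on the shift $\omega$ via \Cref{prop:shifted-GOE}, the integrand becomes $|\det|^{1}\cdot|\det|^{-1/2}=|\det|^{1/2}$ of a shifted GOE matrix with exactly one negative eigenvalue. The rate is therefore $\tfrac12 D_\xi(u,\omega)-J(t_\xi(u,\omega))-\mathfrak p_\xi(\omega)$, plus the prefactor $\varrho_\xi(u)$. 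Applying Laplace's method in $\omega$ (\Cref{lem:B-tilt-laplace}) and in $u\in[-E_1,-b]$ gives $\sup_u\{-\beta u+\Xi^{\mathrm{EK}}_{1,\xi}(u)\}$. Dividing by $\pi(A)\pi(A^c)$, which is bounded below, and using \eqref{eq:free-energy-limit} for $Z_{N,\beta}$ yields \eqref{eq:ek-fixed-b}. Statement \eqref{eq:ek-variational} follows by taking the best admissible $b$ at a fixed $\beta$, since each $b\in B(\beta)$ gives a valid bound.

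I expect the main obstacle to be the localization step. It must show that the capacity of the cut is genuinely carried by quadratic neighborhoods of index-one saddles in the window, with the remaining flux controlled by the $\beta b$ term. Doing so requires excluding near-degenerate saddles, whose small positive eigenvalues would break the quadratic approximation at fixed $\beta$, and saddles sitting too close to level $-Nb$. I would first attempt this with an outlier-gap estimate: with high probability, saddles in the window have Hessian spectrum bounded away from zero by some $\eta_0(\xi)$, obtained from the Kac--Rice large deviations near $t_\xi\le-2$. I would then choose $b_*\ge E_1-\eta_0/4$ so that the saddle neighborhoods stay disjoint and well inside the window.
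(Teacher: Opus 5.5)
\paragraph{The soft unstable direction.} Your architecture matches the paper's: a high-level charge gives the $\beta b$ branch, saddle profiles are compared with an annealed marked Kac--Rice sum, and Markov's inequality and the shift Laplace bound finish. The step you flag as the main obstacle is comparing the cut's Dirichlet energy with the Eyring--Kramers sum, which is \Cref{thm:cocore-localization}. Your plan for it fails in two places, and this is the first.

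\begin{itemize}
\item \emph{What your estimate controls.} The outlier estimate you have in mind (two GOE eigenvalues below the shift cost $2J$, and $\Theta_{2,\xi}<0$ below $-E_2(\xi)$) controls only the stable block, giving $\lambda_2(Q_z)\ge\kappa$. It says nothing about $\alpha_z=|\lambda_-(Q_z)|$.
\item \emph{Why $\alpha_z$ cannot be bounded below.} No first-moment argument can give a uniform lower bound: confining the lone outlier to within a fixed $\eta$ of the shift costs nothing at speed $N$. Since the window carries exponentially many index-one saddles, exponentially small $\alpha_z$ cannot be excluded.
\item \emph{Consequence.} For such a saddle, a transverse profile of width $r_N=K\sqrt{N/\beta}$ costs $r_N^{-1}/\sqrt{\det B_z}$ (up to the common Gaussian factor), not $\sqrt{\beta\alpha_z}/\sqrt{\det B_z}$. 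The ratio is of order $(N\alpha_z)^{-1/2}$, which no factor $e^{C_{\xi,b}N/\sqrt\beta}$ absorbs (\Cref{rem:soft-unstable-mode}). So the total profile energy is not bounded by $\cK_{N,\beta}$, and summing Eyring--Kramers weights sums the wrong quantity.
\item \emph{The paper's fix.} It splits the saddles at $\alpha_*=e^{-AN/\sqrt\beta}$. Good saddles lose at most $e^{AN/(2\sqrt\beta)}$. The flat-profile energies of soft saddles are summed in expectation (\Cref{prop:soft-marked-KR}), where the Jacobian $|\det Q_z|=\alpha_z\det B_z$ supplies $\alpha_z\le\alpha_*$ and yields pressure $\Lambda^{\mathrm{EK}}_{\xi,\beta}(b)-A/\sqrt\beta$. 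This is the source of the additive term $e^{N\Lambda^{\mathrm{EK}}_{\xi,\beta}(b)}$ and of $C_{\xi,b}=C_1+A/2$.
\item \emph{An alternative.} Feeding the true mark $\max\{\sqrt{\beta\alpha_z},r_N^{-1}\}/\sqrt{\det B_z}$ into your Kac--Rice step should also work. The Eyring--Kramers weight alone does not.
\end{itemize}

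\paragraph{The geometry of the cut.} This is the second failure.

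\begin{itemize}
\item \emph{Your set $A$ is not well defined.} A union of components of $\{H_N\le-Nb\}$ fails in the regime that matters. The saddles in $[-NE_1(\xi),-Nb]$ lie \emph{inside} $\{H_N\le-Nb\}$ and join its components, so one component may contain both $m$ and $-m$. Conversely, if the deep wells already sat in different components, only the $\beta b$ branch would be paid and no saddle sum would arise.
\item \emph{What must be separated.} The sets are the deep-well unions $\cA_N(a),\cB_N(a)$ at some $a>E_1(\xi)$. The proof needs $a<F_\xi(\beta)/\beta$ and $a-E_1(\xi)\ge(E_0(\xi)-E_1(\xi))/4$, which is part of what $\beta_0(\xi,b)$ secures. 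The separating hypersurfaces must then be cut \emph{through} the sublevel set.
\item \emph{Gaussian neighbourhoods do not suffice.} Balls of radius $\sqrt{N/\beta}$ around the saddles separate nothing. The co-core of $z$ must reach the level $-Nb$, which for saddles near $-NE_1(\xi)$ means a stable radius of order $\sqrt N$.
\item \emph{What the paper does.} It takes all index-one points below $-Nb$, with co-cores the truncated stable manifolds $D_z=W^s_{\mathrm{loc}}(z)\cap\{H_N\le-Nb\}$. Separation follows from the basin decomposition: off $\bigcup_zD_z$, the sublevel set is a disjoint union of open basins of minima. It then uses a quantitative stable-manifold theorem on a ball of radius $c_0\sqrt N$, with constants uniform in $\alpha_z$ and strong convexity of $H_N$ along the disk (\Cref{lem:D-stable-manifold}). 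This makes the part of each slab outside the Gaussian core exponentially negligible.
\item \emph{Where $b_*$ comes from.} This is the real origin of $b_*(\xi)\ge E_1(\xi)-\eta_0/4$. Here $\eta_0=\kappa c_0^2/16$ is an \emph{energy} margin keeping $D_z$ inside the controlled chart. It is not a spectral gap, and it does not make neighbourhoods disjoint.
\item \emph{Gluing.} Disjointness is neither available nor needed. The profiles are glued through the square-sum density $[(\rho_N^{\mathrm{hi}})^2+\sum_z\rho_{N,z}^2]^{1/2}$ and the weighted-modulus lemma, which is insensitive to overlaps.
\end{itemize}
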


\begin{corollary}[Low-temperature form]\label{cor:lowtemp}
Fix one $b\in(b_*(\xi),E_1(\xi))$.  For all sufficiently large fixed
$\beta$,
\begin{equation}\label{eq:lowtemp-main}
  \pLiminf_{N\to\infty}
  \Bigl(-\frac1N\log\gamma_{N,\beta}\Bigr)
  \ \ge\ F_\xi(\beta)-\beta E_1(\xi)
  -\Xi^{\mathrm{EK}}_{1,\xi}(-E_1(\xi))+\frac12\log(\beta e)
  -\frac{C_{\xi,b}}{\sqrt\beta},
\end{equation}
where
$-\Xi^{\mathrm{EK}}_{1,\xi}(-E_1(\xi))
\ge\tfrac14\log\xi''(1)-\tfrac14>0$, with equality to
$\tfrac12D_\xi(-E_1(\xi))$ in the pure case (the sharper constant
$\tfrac14\log\xi''(1)+\tfrac14$ fails for some mixtures;
\Cref{rem:Xi-constant-example}).
There is no residual $o_\beta(1)$ error: since
$\Xi^{\mathrm{EK}}_{1,\xi}$ is Lipschitz on
$[-E_1(\xi),-b]$, once $\beta$ exceeds its Lipschitz constant the
supremum
defining $\Lambda^{\mathrm{EK}}_{\xi,\beta}(b)$ is attained exactly at
$u=-E_1(\xi)$.  This is \Cref{cor:refined-low-temp} in
\Cref{sec:network}.
\end{corollary}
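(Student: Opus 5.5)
The plan is to derive \eqref{eq:lowtemp-main} from \eqref{eq:ek-fixed-b} of \Cref{thm:ek} with the fixed cut depth $b$. The work splits into three steps: evaluate the supremum defining $\Lambda^{\mathrm{EK}}_{\xi,\beta}(b)$ exactly at the endpoint, show the maximum in \eqref{eq:ek-fixed-b} is attained by the conductance term, and bound the constant $-\Xi^{\mathrm{EK}}_{1,\xi}(-E_1(\xi))$.

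\emph{Step 1 (endpoint attainment).} By \Cref{lem:B-Lambda-regularity}, $\Lambda^{(1/2)}_{1,\xi}$ is locally Lipschitz, and $\varrho_\xi$ is smooth. Hence $\Xi^{\mathrm{EK}}_{1,\xi}$ has some Lipschitz constant $L_b$ on the compact window $[-E_1(\xi),-b]$. For $u\in[-E_1(\xi),-b]$,
\[
 -\beta u+\Xi^{\mathrm{EK}}_{1,\xi}(u)
 \le\beta E_1(\xi)+\Xi^{\mathrm{EK}}_{1,\xi}(-E_1(\xi))
 -(\beta-L_b)\bigl(u+E_1(\xi)\bigr).
\]
So once $\beta\ge L_b$ the supremum is attained at $u=-E_1(\xi)$, giving
\[
 \Lambda^{\mathrm{EK}}_{\xi,\beta}(b)
 =-\tfrac12\log(\beta e)+\beta E_1(\xi)+\Xi^{\mathrm{EK}}_{1,\xi}(-E_1(\xi)).
\]

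\emph{Step 2 (which term wins the max).} Since $b<E_1(\xi)$, we have
\[
 \Lambda^{\mathrm{EK}}_{\xi,\beta}(b)+C_{\xi,b}\beta^{-1/2}-\beta b
 =\beta\bigl(E_1(\xi)-b\bigr)-\tfrac12\log(\beta e)+O(1).
\]
This tends to $+\infty$, so for $\beta$ large the maximum in \eqref{eq:ek-fixed-b} equals the second term. Substituting Step 1 and taking $\beta\ge\max\{\beta_0(\xi,b),L_b,\beta_1\}$ yields \eqref{eq:lowtemp-main} with no $o_\beta(1)$ remainder.

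\emph{Step 3 (the constant).} This is the main obstacle. We need a lower bound on $-\Xi^{\mathrm{EK}}_{1,\xi}(-E_1)$ using only $\Theta_{1,\xi}(-E_1)=0$. Write $u_1=-E_1$. Let $\omega^\ast$ be a maximizer for $\Lambda^{(1/2)}_{1,\xi}(u_1)$; it exists because the supremum is attained on a compact set. Then
\[
 \Xi^{\mathrm{EK}}_{1,\xi}(u_1)
 =\varrho_\xi(u_1)-\mathfrak p_\xi(\omega^\ast)+\tfrac12D_\xi(u_1,\omega^\ast)-J(t^\ast),
 \qquad t^\ast=t_\xi(u_1,\omega^\ast).
\]
Evaluating the supremum in $\Theta_{1,\xi}$ at the same $\omega^\ast$ gives
\[
 0=\Theta_{1,\xi}(u_1)\ge\varrho_\xi(u_1)-\mathfrak p_\xi(\omega^\ast)+D_\xi(u_1,\omega^\ast)-J(t^\ast).
\]
Subtracting the second relation from the first gives $-\Xi^{\mathrm{EK}}_{1,\xi}(u_1)\ge\tfrac12D_\xi(u_1,\omega^\ast)$.

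In the pure case $\omega^\ast=0$, so both relations are equalities and we get exactly $\tfrac12D_\xi(-E_1)$. It then remains to bound
\[
 \tfrac12D_\xi(u_1,\omega^\ast)=\tfrac14\log\xi''(1)+\tfrac12\Omega(t^\ast)
\]
from below. This needs $\Omega(t^\ast)\ge-\tfrac12$. I would use the explicit form \eqref{eq:Omega-explicit} of the semicircle log-potential, whose minimum value over $\R$ is $-\tfrac12$, attained at $t=0$. That gives exactly the claimed $\tfrac14\log\xi''(1)-\tfrac14$.

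Positivity follows from $\xi''(1)\ge12$, since $\log 12>1$. The sharper constant $+\tfrac14$ would require $t^\ast\le-2$. I expect this fails because $\omega^\ast$ can push the shift inside the bulk, which is consistent with \Cref{rem:Xi-constant-example}.
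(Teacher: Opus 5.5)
Your proposal is correct and follows the paper's proof of \Cref{cor:refined-low-temp} step for step. The paper also uses Lipschitz continuity of $\Xi^{\mathrm{EK}}_{1,\xi}$ to place the supremum exactly at $u=-E_1(\xi)$, a large-$\beta$ threshold $\beta_b$ past which the conductance branch dominates $\beta b$, and the cross-evaluation bound \eqref{eq:Xi-vs-Theta} (which you re-derive by evaluating the $\Theta_{1,\xi}$ objective at the half-determinant maximizer) combined with $\Theta_{1,\xi}(-E_1(\xi))=0$ and $\Omega\ge-\tfrac12$.
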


\begin{remark}[Improvement over \Cref{thm:fixed}, asymptotic in $\beta$]\label{rem:improvement}
The right-hand side of \eqref{eq:lowtemp-main} exceeds the exponent
$F_\xi(\beta)-\beta E_1(\xi)$ of \Cref{thm:fixed} by
\[
  \tfrac12\log(\beta e)-\Xi^{\mathrm{EK}}_{1,\xi}(-E_1(\xi))
  -C_{\xi,b}\beta^{-1/2},
\]
which is positive---and diverges like $\tfrac12\log\beta$---for all
sufficiently large $\beta$: indeed
$-\Xi^{\mathrm{EK}}_{1,\xi}(-E_1(\xi))
\ge\tfrac14\log\xi''(1)-\tfrac14\ge\tfrac14\log12-\tfrac14>0.37$
for every admissible mixture, since $\xi''(1)\ge12$ under
\Cref{ass:mixture}; numerically the pure value is
$\tfrac12D_\xi(-E_1(4))\approx0.896$ at $p=4$, and the mixture
$\xi(q)=0.9q^4+0.1q^{12}$ gives $\approx1.041$
(\Cref{rem:Xi-constant-example}).  Thus the aggregate
mechanism of \Cref{thm:ek} is not vacuous: at low temperature it
yields a strictly larger relaxation-time exponent than the
unconditional bound.  We state the limitation plainly: because
$C_{\xi,b}$ and the onset $\beta_0(\xi,b)$ are not numerically explicit,
the improvement is certified only asymptotically in $\beta$---at no
concrete $(\xi,\beta)$ does the present proof certify more than
\Cref{thm:fixed}.  Obtaining computable bounds on $b_*(\xi)$ and
$C_{\xi,b}$ is an open problem (\Cref{sec:discussion}), and for this
reason we present the finite-$\beta$ comparison of the two branches
of \eqref{eq:ek-fixed-b} only as a formal leading-term comparison
(\Cref{sec:transitions}).
\end{remark}

\begin{corollary}[Pure spherical $p$-spin model]\label{cor:pure}
Let $\xi(x)=x^p$ with $p\ge4$ even.  Then \Cref{ass:mixture} holds:
evenness and normalization are immediate, and the strict separation
$E_0(\xi)>E_1(\xi)>E_2(\xi)$ is a theorem of
\citet{auffinger2013random}, the thresholds coinciding with their
$E_0(p)>E_1(p)>E_2(p)$.  Moreover $v_\xi=0$, so every supremum over
the Hessian shift collapses at $\omega=0$ and the rate functions
take the closed forms
\[
 \Theta_{k,\xi}(u)
 =\frac12\log(p-1)-\frac{p-2}{4(p-1)}u^2-(k+1)I_{1,\xi}(u),
 \qquad
 \Xi^{\mathrm{EK}}_{1,\xi}(u)
 =\Theta_{1,\xi}(u)-\frac12D_\xi(u),
\]
for $u\le-E_\infty(\xi)=-2\sqrt{(p-1)/p}$, with
$D_\xi(u)=\tfrac12\log[p(p-1)]+\Omega(u\sqrt{p/(p-1)})$ and
$I_{1,\xi}$ the GOE outlier rate at the shifted threshold
(\Cref{rem:pure-collapse}).  \Cref{thm:arrhenius,thm:fixed,thm:ek}
and \Cref{cor:lowtemp} therefore hold unconditionally for the pure
even model, with
$-\Xi^{\mathrm{EK}}_{1,\xi}(-E_1(\xi))=\tfrac12D_\xi(-E_1(\xi))$ in
\eqref{eq:lowtemp-main}.
\end{corollary}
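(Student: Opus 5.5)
Corollary~\ref{cor:pure} is essentially a specialization, so the plan is to check that each ingredient of the general theory reduces to the stated closed forms when $\xi(x)=x^p$, and then invoke the three theorems and \Cref{cor:lowtemp}.

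\textbf{Step 1: the standing assumption.} Evenness with $p\ge4$ and $\xi(1)=1$ are immediate, so (A1) holds. For (A2), \Cref{rem:pure-collapse} shows that $\Theta_{k,\xi}$ coincides with the Auffinger--Ben Arous--\v{C}ern\'y complexity $\Theta_{k,p}$ for $u\le-E_\infty$. I must also check that $E_k(\xi)\ge E_\infty(\xi)$, so that the supremum in \eqref{eq:Ek} is attained in the region where the closed form applies. This holds because $\Theta_{k,\xi}(-E_\infty)=\tfrac12\log(p-1)-\tfrac{p-2}{4(p-1)}E_\infty^2$, and with $E_\infty^2=4(p-1)/p$ this becomes $\tfrac12\log(p-1)-\tfrac{p-2}{p}$, which is positive for $p\ge4$. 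Hence the zeros $E_k(\xi)$ defined by \eqref{eq:Ek} equal $E_k(p)$. Since the ground-state constant $E_0(\xi)$ is the same object as $E_0(p)$, the strict ordering follows from \citet[Theorem~2.12]{auffinger2013random}.

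\textbf{Step 2: collapse of the shift supremum.} When $P$ is a singleton, $v_\xi=0$ by the equality case of \eqref{eq:v-xi}. By \eqref{eq:penalty} the penalty is then $+\infty$ off $\omega=0$. Therefore every supremum or infimum over $\omega$ is evaluation at $\omega=0$:
\[
\Lambda^{(r)}_{k,\xi}(u)=rD_\xi(u)-kJ(t_\xi(u,0)),\qquad I_{1,\xi}(u)=J(t_\xi(u,0)).
\]
Substituting $\xi'(1)=p$ and $\xi''(1)=p(p-1)$ gives $D_\xi(u)=\tfrac12\log[p(p-1)]+\Omega(u\sqrt{p/(p-1)})$.

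\textbf{Step 3: the closed forms.} The formula for $\Theta_{k,\xi}$ is exactly \eqref{eq:Theta-explicit}, obtained from $\Omega(t)=t^2/4-\tfrac12-J(t)$. For the conductance complexity, $\Xi^{\mathrm{EK}}_{1,\xi}=\varrho_\xi+\tfrac12D_\xi-J$, while $\Theta_{1,\xi}=\varrho_\xi+D_\xi-J$. Subtracting gives $\Xi^{\mathrm{EK}}_{1,\xi}=\Theta_{1,\xi}-\tfrac12D_\xi$, and evaluating at $u=-E_1$, where $\Theta_{1,\xi}(-E_1)=0$, gives $-\Xi^{\mathrm{EK}}_{1,\xi}(-E_1)=\tfrac12D_\xi(-E_1)$.

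\textbf{Step 4: conclusion.} With \Cref{ass:mixture} verified, \Cref{thm:arrhenius,thm:fixed,thm:ek} and \Cref{cor:lowtemp} apply verbatim to the pure even model.

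The only point needing care is the threshold-matching in Step 1: confirming that $E_k(\xi)$ falls in the region $u\le-E_\infty$ and that the normalizations of the Kac--Rice prefactor $\varrho_\xi$ agree with those of \citet{auffinger2013random}. I would handle this by direct comparison with their Theorem~2.5, as \Cref{rem:pure-collapse} already indicates.
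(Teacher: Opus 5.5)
Your proposal is correct and follows the paper's route. (A1) is immediate, and (A2) is imported from \citet[Theorem~2.12]{auffinger2013random}. Then $v_\xi=0$ collapses every $\omega$-supremum to evaluation at $\omega=0$, the identity $\Omega(t)=t^2/4-\tfrac12-J(t)$ gives the closed form of $\Theta_{k,\xi}$, and $\Theta_{1,\xi}(-E_1(\xi))=0$ yields $-\Xi^{\mathrm{EK}}_{1,\xi}(-E_1(\xi))=\tfrac12D_\xi(-E_1(\xi))$.

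You add a check the paper leaves implicit: $\Theta_{k,\xi}(-E_\infty(\xi))=\tfrac12\log(p-1)-\tfrac{p-2}{p}>0$. This places the thresholds of \eqref{eq:Ek} in the region $u\le-E_\infty(\xi)$, where the closed form is strictly decreasing in $-u$ and so identifies them with $E_k(p)$.

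One small correction in wording: $E_0(\xi)$, the Parisi ground-state constant, is not ``the same object'' as the zero of $\Theta_{0,p}$ by definition. Their equality is the substantive content of the cited theorem.
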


\begin{remark}[What the bounds do and do not show]\label{rem:scope}
\Cref{thm:arrhenius,thm:fixed,thm:ek} are proved by
exhibiting trial functions with small Dirichlet energy (or by direct
comparison to such a bound); they are \emph{lower}
bounds on the relaxation time and nothing more.  They do \emph{not}
establish (i) a matching upper bound; (ii) that the communication
height between the deep wells is attained near $-NE_1(\xi)$; (iii) that
the saddles dominating $\Lambda^{\mathrm{EK}}_{\xi,\beta}$ are traversed
by typical transitions; or (iv) that either branch of the maximum in
\Cref{thm:ek} describes the realized mechanism---the maximum
only records which term controls the \emph{bound}.  What the
landscape input supplies unconditionally is an exclusion: below
$-NE_1(\xi)$ there are no index-one critical points at all
(\Cref{prop:layering}; \citealp{auffinger2013random} in the pure
case), so no one-dimensional gate can lie below
that level.  The bounds concern the fixed-temperature reversible
dynamics; they do not constrain annealed, tempered, or nonreversible
variants, or favorable initializations.
\end{remark}

\subsection{Reading the theorems}\label{sec:reading}

\paragraph{Where gates can and cannot lie.}  The Arrhenius constant in
\Cref{thm:arrhenius} is $E_0-E_1$, the separation between the
deepest minima and the \emph{lowest possible} index-one energy: below
$-NE_1(\xi)$ the landscape has no index-one critical points at all, so
any one-dimensional gate between wells lies at or above that level.
Our lower bound is consistent with gates near $-NE_1(\xi)$, and a
crossing there would make the bound tight, but proving that the
communication height is \emph{attained} near $-NE_1(\xi)$ is open
(\Cref{rem:scope}).  The smallness of $E_0(\xi)-E_1(\xi)$ (about
$1.1\times10^{-2}$ in the pure case $p=4$) is a genuine feature of
these models:
the deepest wells and the lowest saddles live in a narrow band, and
it is the extensivity in $N$, not the size of the constant, that
produces exponential trapping.

\paragraph{One channel or exponentially many?}
\Cref{thm:ek} goes beyond the classical single-saddle
Eyring--Kramers picture: the term $\Theta_{1,\xi}(u)$ inside
$\Lambda^{\mathrm{EK}}_{\xi,\beta}$ counts the exponential number of
index-one channels at energy density $u$, and the bound aggregates the
conductances of all of them---whether typical transitions use many
channels or few is exactly the question a matching upper bound would
answer.  The saddle-enumeration experiment of
\Cref{sec:saddle-exp} measures this channel structure directly at
small $N$: it computes the weighted sum $\cW_{N,\beta}$, its
energy-resolved profile, and the share of the largest single channel.

\paragraph{The role of temperature.}
The maximum in \Cref{thm:ek} exchanges dominance at a
$\beta$-dependent level: at very low temperature the entropic term
$-\tfrac12\log(\beta e)$ and the complexity are negligible and the
bound reduces to a pure Arrhenius form; at moderate temperature the
complexity term $\Theta_{1,\xi}$ lowers the exponent of the bound.  If
the true relaxation rate has the same structure as the bound---which
is unproven---this would correspond to a regime in which raising the
noise temperature accelerates mixing \emph{faster than Arrhenius}
because the dynamics recruits an exponential number of higher saddles;
what the theorem itself provides is only that the \emph{obstruction} to
mixing weakens super-Arrheniusly there.

\section{Replica symmetry breaking, ultrametricity, and aging}
\label{sec:rsb}

The constants in \Cref{thm:arrhenius,thm:fixed,thm:ek} are landscape
quantities.  This section places them in the replica-theoretic
classification of the statics: the bounds are a quantitative
expression of the overlap gap of a one-step
replica-symmetry-breaking (1RSB) phase; the finiteness of the co-core
cut mirrors the two-level ultrametric hierarchy; and, set against
the known out-of-equilibrium theory, the bounds constrain worst-case
relaxation, and we state precisely what they do and do not imply for
aging of the two-time overlap.  Except where a theorem is
cited explicitly, the statements of this section are interpretive and
one-sided in the same sense as \Cref{rem:scope}.

\subsection{The 1RSB overlap gap as an energy barrier}
\label{sec:rsb-statics}

At low temperature the statics of the pure spherical model, and of
the 1RSB mixtures our standing assumption is modeled on, are
one-step RSB: the Parisi measure of the
Crisanti--Sommers/Parisi functional
\citep{crisanti1992sphericalpspin,talagrand2006free,chen2013mixed}
consists of two
atoms (see \citealp{auffinger2015properties} for criteria on $\xi$),
so that for two independent
samples $\sigma,\sigma'$ from the same Gibbs measure the overlap
$R(\sigma,\sigma')$ concentrates, for even mixtures, on the three values
$\{-q_\beta,0,q_\beta\}$ with $q_\beta<1$ and $q_\beta\uparrow1$ as
$\beta\to\infty$: two samples either fall in the same well (overlap
$\pm q_\beta$, the sign by antipodal symmetry) or in nearly
orthogonal wells (overlap close to $0$).  Geometrically, the Gibbs
measure concentrates on antipodal pairs of nearly orthogonal
spherical bands around the deepest minima \citep{subag2017geometry};
\Cref{prop:deep-components,prop:deep-mass} record the part of this
picture that our proofs use, namely that below $-Na$ the landscape
consists of finitely many well components, antipodally paired and
carrying half of the deep Gibbs mass on each side.

The forbidden overlap interval $(0,q_\beta)$---the defining feature
of a 1RSB phase---corresponds on the sphere to the region between
distinct deep wells, and the theorems quantify what the dynamics must
pay to traverse it: every low path between the two well unions
crosses an index-one gate no lower than $-NE_1(\xi)$
(\Cref{thm:arrhenius}); at fixed temperature the crossing costs free
energy $F_\xi(\beta)-\beta E_1(\xi)$ (\Cref{thm:fixed}); and the
aggregate saddle sum resolves the subexponential structure of that
cost (\Cref{thm:ek}).  In this sense the variational constant
$E_0(\xi)-E_1(\xi)$ is an energy-scale expression of the 1RSB overlap
gap: it is strictly positive precisely because the deep wells sit at
isolated points of the overlap support rather than at the endpoint of
a continuum of states.

\subsection{Ultrametricity and the finite cut}
\label{sec:rsb-ultra}

In the thermodynamic limit, mean-field Gibbs measures are
ultrametric \citep{panchenko2013parisi}; for a 1RSB measure the
ultrametric tree has exactly two levels---clusters of internal
overlap $q_\beta$, mutually at overlap $0$.  The proof of
\Cref{thm:ek} uses this two-level structure in geometric form: the
low sublevel set decomposes into finitely many low-index components
(\Cref{prop:layering,prop:deep-components}), a \emph{single, finite}
co-core cut at one level pair $(a,b)$ separates them
(\Cref{lem:topological-cocore-cut}), and the Eyring--Kramers sum runs
over the one layer of index-one gates joining nearly orthogonal
clusters.  A full-RSB phase would break exactly this step: for a
continuous Parisi measure the tree has infinitely many levels,
barriers occur on all scales, and no single cut level with a uniform
gate energy separates the state space into finitely many pieces.  The
method of this paper is therefore intrinsically a 1RSB method, and we
regard this as informative rather than incidental: the point where
the proof would fail is the point where the phenomenon changes.

\subsection{An overlap dichotomy for aging}
\label{sec:rsb-aging}

Write
\begin{equation}\label{eq:two-time-overlap}
  C_N(t_w,t_w+t)\defeq R(\sigma_{t_w},\sigma_{t_w+t})
\end{equation}
for the two-time overlap along the Langevin trajectory, the quantity
measured in \Cref{fig:pspin} (right).

\paragraph{Within-well starts: persistence without aging.}
The trial-function computations behind \Cref{thm:fixed,thm:ek} bound
the equilibrium flow out of the deep-well union $\cA_N(a)$; by the
standard relation between capacities and exit times in
potential-theoretic metastability---the mean-hitting-time identity
$\E_{\nu_{A,B}}[\tau_B]=\mu(h_{A,B})/\operatorname{cap}(A,B)$, which
is proved for general reversible Markov processes and applies
verbatim to the Langevin diffusion on the compact sphere
\citep[Part~II]{bovier2016metastability} (the sharp Euclidean
asymptotics of \citealp{bovier2004metastability} are not
needed)---the mean time to reach the antipodal union, for a
trajectory started from the last-exit (equilibrium) distribution
$\nu_{A,B}$ on the boundary of one well union, is at least of order
$e^{N(F_\xi(\beta)-\beta E_1(\xi)-o(1))}$.  The expected reading is
that on subexponential time windows such a trajectory keeps two-time
overlap near $q_\beta$, time-translation invariance holds, and no
aging occurs: this is metastable equilibrium, the regime of the
largest-$\beta$ curves of \Cref{fig:pspin} (right), where $C$ barely
decays over the accessible window.  Only the mean-exit-time bound is
a theorem; the statement about the overlap trajectory is the standard
metastability picture.

\paragraph{Random starts: what the bounds do and do not certify.}
A uniform start has overlap $O(N^{-1/2})$ with every deep well.  In
the pure model, for $\beta>\beta_d(p)$ the limiting dynamics relaxes
toward the marginal threshold states at energy density
$-E_\infty(\xi)$ rather than into the wells.  This is the
Cugliandolo--Kurchan picture \citep{cugliandolo1993analytical},
rigorous at the level of the limiting CHSCK equations
\citep{benarous2006cugliandolo} and of bounding flows
\citep{benarous2020bounding}, and it entails weak ergodicity
breaking of the two-time overlap: $C_N(t_w,t_w+t)$ decays in $t$ for
every fixed $t_w$, on a scale that grows with the age $t_w$.  For
genuine mixtures the picture is richer: the asymptotic energy reached
from a random start is in general not the threshold energy and
depends on the initial condition
\citep{folena2020rethinking,folena2021gradient}, and long-time
integration of the dynamical mean-field equations produces an aging
phase diagram with one, two, or continuously many effective
temperatures, together with exact asymptotic energies
\citep{lang2026dynamite,lang2026aging}.  Which of these aging states
the mixtures satisfying \Cref{ass:mixture} realize is a question for
that theory, not for the present bounds.

What \Cref{thm:fixed} adds concerns the \emph{worst-case} relaxation
time only.  Whenever $F_\xi(\beta)>\beta E_1(\xi)$, the spectral gap
is at most $e^{-N(F_\xi(\beta)-\beta E_1(\xi)-o(1))}$, so there exist
initial data and observables whose $L^2$ relaxation to the Gibbs
measure takes exponential time.  This does \emph{not} by itself
certify that the trajectory started from the uniform measure ages
until exponential times, nor does it fix the overlap trajectory it
follows.  The obstruction is a symmetry.  The bottleneck test
function of \Cref{thm:fixed} is odd under the antipodal map
$\sigma\mapsto-\sigma$, whereas the uniform initial law, the Gibbs
measure, the generator, and the two-time overlap
\eqref{eq:two-time-overlap} are all even.  The law of the trajectory
from a uniform start therefore stays in the even sector for all
times, and the slow odd mode is never excited by it; an
exponentially small spectral gap is consistent with fast relaxation
of every even observable from a symmetric start.  Certifying aging to
exponential times from a random start would require a separate
argument---for instance a lower bound on the time for the energy, an
even observable, to descend from the threshold region into the deep
wells---which this paper does not provide.  A previous version of
this paper claimed that extension; the claim is withdrawn.

\paragraph{The resulting picture.}
For even $p\ge4$ the overlap-based picture is a trichotomy in
$\beta$: (i) for $\beta<\beta_d(p)$, $C_N$ decays on an
$N$-independent scale and is time-translation invariant (equilibrium
relaxation); (ii) for $\beta>\beta_d(p)$ from a random start, $C_N$
ages at threshold energies on the time scales described by the
limiting dynamics, and whether this regime extends to exponential
times is open; (iii) from a start inside a deep well at large
$\beta$, the mean exit time is exponential and $C_N$ is expected to
stay pinned near $q_\beta$ without aging (metastable persistence).
Of these, only the worst-case relaxation bound and the mean exit-time
bound in (iii) are theorems of this paper; the remainder is the
Cugliandolo--Kurchan picture and its mixed-model refinements.
Regimes (ii) and (iii) are the two branches visible in
\Cref{fig:pspin}; the arrest experiment of \Cref{sec:arrest-exp}
locates the boundary, exactly $T_d=1/E_\infty$ at $p=3$.

\subsection{The full-RSB boundary of the method}
\label{sec:rsb-frsb}

For mixed even models $\xi(q)=\sum_p\gamma_p^2q^p$ the low-temperature
Parisi measure need not be one-step, and the complexity landscape
correspondingly splits into the \emph{pure-like} and \emph{full}
mixture classes of \citet{auffinger2013complexity}.  Our standing
assumption (A2) is the landscape expression of the pure-like
regime; it is a condition on the annealed complexity functions, and
we do not claim that it is equivalent to a one-step Parisi measure at
low temperature.  Whether a given mixture is in a 1RSB phase is a
question of the statics, for which the $s+p$ phase diagrams of
\citet{crisanti2004spherical,crisanti2006spherical,crisanti2007amorphous}
give explicit criteria: for $p-s$ large enough a stable 2RSB phase
and mixed continuous--discontinuous phases appear, by a criterion
independent of parity, so that even mixtures such as $4+16$ can leave
the one-step class while remaining within the scope of our theorems
whenever \textup{(A2)} holds.  This paper carries the entire program
out within \textup{(A2)}: the
randomly shifted GOE Hessian law (\Cref{prop:shifted-GOE}) replaces
the deterministic shift of the pure model, and every Kac--Rice rate
acquires a one-dimensional supremum over the shift.  In full-RSB
phases, by contrast, the overlap support is an interval: there is no
forbidden overlap gap, near-ground states are connected through
marginal directions, and the constants driving our bounds degenerate
together with the phase---the strict separation
$E_0(\xi)>E_1(\xi)>E_2(\xi)$ that anchors
\Cref{thm:arrhenius,thm:fixed,thm:ek} fails, and the theorems become
vacuous rather than false.  The dynamical counterpart of this
degeneration is known from the equilibrium-dynamics formulation of
\citet{crisanti2007equilibrium}: the gap between the threshold free
energy, at which the relaxing system gets stuck, and the equilibrium
free energy shrinks as the number of replica-symmetry-breaking steps
grows and vanishes in the full-RSB phase.  The energy gap
$E_0(\xi)-E_1(\xi)$ driving \Cref{thm:arrhenius} is the landscape
version of that statement.  This
degeneration is consistent with, rather than contradicted by, the
algorithmic side: in full-RSB spherical models near-optimal
configurations can be found in polynomial time by descending along
marginal directions \citep{subag2021following}, and in
replica-symmetric high-temperature phases sampling itself is provably
fast \citep{elalaoui2022sampling}.  We therefore read the boundary of
our method---now located precisely at the failure of (A2)---as a
phase boundary rather than a technical artifact.  A testable
consequence remains: a 1RSB phase predicts a \emph{single} aging
sector---one
barrier scale, one plateau in $C_N$---whereas 2RSB phases produce
two time scales \citep{crisanti2011statistical} and full-RSB
dynamics is expected to age hierarchically on a continuum of
scales; the aging phase diagram of \citet{lang2026aging}, with its
transitions between one, two, and continuously many effective
temperatures, is the mean-field prediction against which this should
be compared.  The single-plateau form of the measured two-time curves
in \Cref{fig:pspin}, for the pure $p=4$ model, is the 1RSB
signature.

\section{Antipodal gates and the sequential Arrhenius bound: proof of Theorem~\ref{thm:arrhenius}}\label{sec:gate}

\begin{definition}[Communication height, optimal paths, and gates]
\label{def:communication-height}
For nonempty compact sets $A,B\subset\SphereN$, the \emph{communication
height} between $A$ and $B$ is
\begin{equation}\label{eq:communication-height}
 \widehat H_N(A,B)
 \defeq\inf_{\substack{\omega\in C([0,1],\SphereN)\\
                   \omega(0)\in A,\ \omega(1)\in B}}
 \max_{t\in[0,1]}H_N(\omega(t)).
\end{equation}
An \emph{optimal path} is a path attaining the infimum in
\eqref{eq:communication-height}.  A \emph{pathwise gate} is a closed
subset of the critical level $\{H_N=\widehat H_N(A,B)\}$ met by
every optimal path; a pathwise gate is \emph{minimal} when no proper
closed subset of it is itself a pathwise gate.
\end{definition}

\begin{figure}[t]
\centering
\resizebox{\textwidth}{!}{%
\begin{tikzpicture}[x=1.00cm,y=1.00cm,>=Latex,font=\footnotesize,
  graphnode/.style={circle,draw=graphgray!70,fill=white,minimum size=4.5mm,inner sep=0pt,line width=.5pt},
  wellnode/.style={circle,draw=wellblue!85!black,fill=wellblue!18,minimum size=5.1mm,inner sep=0pt,line width=.8pt},
  saddlemark/.style={circle,draw=saddlered!85!black,fill=saddlered,minimum size=3.4mm,inner sep=0pt,line width=.5pt}]

\coordinate (O) at (3.87,4.02);
\def\Rgate{2.45}
\shade[ball color=blue!9,opacity=.96] (O) circle (\Rgate);
\begin{scope}
  \clip (O) circle (\Rgate);
  \fill[saddlered!12,opacity=.72] (1.20,3.55) rectangle (6.55,4.49);
  \draw[black!14,line width=.35pt] (3.87,4.02) ellipse (2.42 and .52);
  \draw[black!10,line width=.35pt] (3.87,4.85) ellipse (2.06 and .36);
  \draw[black!10,line width=.35pt] (3.87,3.19) ellipse (2.06 and .36);
  \draw[black!10,line width=.35pt] (3.87,4.02) ellipse (.70 and 2.42);
  \draw[black!10,line width=.35pt,rotate around={28:(3.87,4.02)}] (3.87,4.02) ellipse (.70 and 2.42);
  \draw[black!10,line width=.35pt,rotate around={-28:(3.87,4.02)}] (3.87,4.02) ellipse (.70 and 2.42);
  \fill[wellblue!27,opacity=.72] (3.87,5.93) ellipse (1.00 and .38);
  \fill[wellblue!27,opacity=.72] (3.87,2.11) ellipse (1.00 and .38);
\end{scope}
\draw[black!55,line width=.65pt] (O) circle (\Rgate);

\filldraw[wellblue!90!black,fill=wellblue] (3.87,5.97) circle (2.2pt);
\filldraw[wellblue!90!black,fill=wellblue] (3.87,2.07) circle (2.2pt);
\node[anchor=west,text=wellblue!75!black] at (4.08,6.02) {$m_N$};
\node[anchor=west,text=wellblue!75!black] at (4.08,2.03) {$-m_N$};
\node[fill=white,fill opacity=.86,text opacity=1,rounded corners=1.5pt,inner sep=2pt,align=center,text=wellblue!75!black] at (2.08,5.90) {well near $m_N$\\$H\approx-NE_0(\xi)$};
\node[fill=white,fill opacity=.86,text opacity=1,rounded corners=1.5pt,inner sep=2pt,align=center,text=wellblue!75!black] at (2.08,2.15) {well near $-m_N$\\$H\approx-NE_0(\xi)$};

\coordinate (s1) at (1.68,4.02);
\coordinate (s2) at (2.63,4.33);
\coordinate (s3) at (3.87,4.48);
\coordinate (s4) at (5.11,4.33);
\coordinate (s5) at (6.06,4.02);
\foreach \s in {s1,s2,s3,s4,s5}{\node[saddlemark] at (\s) {};}

\draw[black!62,dashed,line width=.75pt]
  (3.87,5.95) .. controls (2.30,5.52) and (1.55,4.82) .. (s1)
  .. controls (1.62,3.22) and (2.38,2.49) .. (3.87,2.09);
\draw[black!62,dashed,line width=.75pt]
  (3.87,5.95) .. controls (4.82,5.59) and (5.20,4.95) .. (s4)
  .. controls (5.18,3.30) and (4.84,2.50) .. (3.87,2.09);
\draw[black!45,densely dashed,line width=.55pt]
  (3.87,5.95) .. controls (3.72,5.37) and (3.74,4.91) .. (s3)
  .. controls (3.99,3.52) and (3.93,2.65) .. (3.87,2.09);

\node[fill=white,fill opacity=.90,text opacity=1,rounded corners=1.5pt,inner sep=2.3pt,align=center,text=saddlered!85!black] (gatelabel) at (3.85,.62)
  {minimal gate: index-one saddles at $H_N^\dagger$};
\draw[saddlered!80!black,->,line width=.65pt] (gatelabel.north) .. controls (3.3,1.35) and (2.9,2.80) .. (s2);
\node[fill=white,fill opacity=.90,text opacity=1,rounded corners=1.5pt,inner sep=1.7pt,align=center,text=saddlered!85!black] at (5.25,5.20)
  {$H_N^\dagger/N\ge -E_1(\xi)-o(1)$};

\node[wellnode] (a1) at (8.82,5.35) {};
\node[graphnode] (a2) at (9.35,4.25) {};
\node[graphnode] (a3) at (8.85,3.10) {};
\node[graphnode] (l1) at (10.45,4.92) {};
\node[graphnode] (l2) at (10.55,3.58) {};

\node[graphnode] (r1) at (13.70,4.92) {};
\node[graphnode] (r2) at (13.60,3.58) {};
\node[wellnode] (b1) at (15.32,5.35) {};
\node[graphnode] (b2) at (14.80,4.25) {};
\node[graphnode] (b3) at (15.30,3.10) {};

\foreach \u/\v in {a1/a2,a2/a3,a1/l1,a2/l1,a2/l2,a3/l2,l1/l2,
                     r1/r2,r1/b1,r1/b2,r2/b2,r2/b3,b1/b2,b2/b3}{
  \draw[graphgray!68,line width=.7pt] (\u) -- (\v);
}
\draw[saddlered!88!black,line width=1.15pt] (l1) -- coordinate (z1) (r1);
\draw[saddlered!88!black,line width=1.15pt] (l1) -- coordinate (z2) (r2);
\draw[saddlered!88!black,line width=1.15pt] (l2) -- coordinate (z3) (r2);
\node[saddlemark] at (z1) {};
\node[saddlemark] at (z2) {};
\node[saddlemark] at (z3) {};

\draw[saddlered!70,dashed,line width=.7pt] (12.08,2.72) -- (12.08,5.78);
\node[text=saddlered!85!black,fill=white,inner sep=1.3pt] at (12.08,6.08) {cut $\cC_N$};
\node[align=center,text=black!75] at (9.62,6.36) {$V_A$ (contains $\cA_N(a)$)};
\node[align=center,text=black!75] at (14.58,6.36) {$V_B$ (contains $\cB_N(a)$)};

\node[draw=wellblue!65,dashed,rounded corners=5pt,fit=(a1)(a2)(a3)(l1)(l2),inner sep=5pt] {};
\node[draw=wellblue!65,dashed,rounded corners=5pt,fit=(r1)(r2)(b1)(b2)(b3),inner sep=5pt] {};

\node[align=center,text=black!72] at (12.08,1.72)
  {Every path from $\cA_N(a)$ to $\cB_N(a)$ that stays below $-Nb$ crosses a cut handle.};

\draw[graphgray!70,line width=.7pt] (8.48,.86) -- (9.03,.86);
\node[anchor=west,text=black!72] at (9.10,.86) {one-handle};
\draw[saddlered!88!black,line width=1.1pt] (10.93,.86) -- (11.48,.86);
\node[saddlemark,minimum size=2.7mm] at (11.205,.86) {};
\node[anchor=west,text=black!72] at (11.58,.86) {cut handle};
\node[wellnode,minimum size=4mm] at (13.65,.86) {};
\node[anchor=west,text=black!72] at (13.90,.86) {deep-well component};

\end{tikzpicture}%
}
\caption{Geometry behind
Theorems~\ref{thm:arrhenius}--\ref{thm:ek}.  \textbf{Left:} on the
sphere, the Gibbs measure concentrates on the two antipodal wells at
energy $\approx-NE_0(\xi)$, and every optimal path between them meets an
index-one saddle at the communication height $H_N^\dagger$, which
cannot lie below $-N(E_1(\xi)+o(1))$
(\Cref{def:communication-height,thm:ground-state-barrier}).
\textbf{Right:} the low-index Morse handle graph used in the proofs:
vertices are connected components of a deep sublevel set (created by
zero-handles), edges are index-one one-handles, and the red edges form
a minimal cut $\cC_N$ separating the deep families $\cA_N(a)$ and
$\cB_N(a)$; every low path between the families crosses a cut handle
(\Cref{sec:network,app:near-threshold-cocore}).}
\label{fig:gate-morse-geometry}
\end{figure}

\begin{proposition}[Pathwise gates of the mixed spherical Hamiltonian]\label{prop:spin-gates}
Almost surely, let $m_1,m_2$ be two distinct local minima of $H_N$ and put
\[
 H=\widehat H_N(\{m_1\},\{m_2\}).
\]
Every pathwise minimal gate between $m_1$ and $m_2$ is a nonempty finite set of index-one critical points of $H_N$ at level $H$.  More precisely, choose $\eps>0$ so that $H$ is the only critical value in $[H-2\eps,H+2\eps]$.  The components of
$\{H_N\le H-\eps\}$ are the vertices of a finite graph, and the index-one saddles at level $H$ are its edges.  Minimal gates are exactly the inclusion-minimal edge cuts separating the vertices containing $m_1$ and $m_2$.
\end{proposition}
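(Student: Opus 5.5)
We work on the almost-sure event of \Cref{prop:morse-spherical}, on which $H_N$ is Morse with finitely many critical points. Since there are only finitely many critical values, we can pick $\eps>0$ so that $H$ is the only critical value in $[H-2\eps,H+2\eps]$. After an arbitrarily small generic perturbation we may also assume that distinct critical points have distinct values. (This is a further almost-sure property of the Gaussian field, since the joint law of the critical values has a density.) We do not strictly need it: the handle-attachment argument tolerates several saddles at the same level, provided their local handles are disjoint.

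\emph{Step 1: the graph.} The sublevel set $\{H_N\le H-\eps\}$ is a compact manifold with boundary, with finitely many components; these are the vertices. Morse deformation (\citealp{milnor1963morse}) shows that $\{H_N\le H+\eps\}$ deformation-retracts onto $\{H_N\le H-\eps\}$ with one handle attached at each critical point of level $H$. Handles of index $\ge2$ do not change $\pi_0$. Each index-one handle is a $1$-cell whose two endpoints lie in components of $\{H_N\le H-\eps\}$, so it is an edge (possibly a loop). Index-zero critical points at level $H$ are new isolated vertices. They cannot lie on an optimal path joining $m_1$ to $m_2$ unless they are $m_1$ or $m_2$, and those have value $<H$. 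Denote by $V_1,V_2$ the vertices containing $m_1,m_2$. They differ, because otherwise a path inside $\{H_N\le H-\eps\}$ would give $\widehat H_N<H$. They are joined in the graph, because the infimum in \eqref{eq:communication-height} is attained by a compactness argument at level $H$: $\{H_N\le H+\eps'\}$ connects them for every $\eps'>0$, and $\pi_0$ is constant for $\eps'<2\eps$.

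\emph{Step 2: optimal paths versus edge paths.} Here we show that a path $\omega$ with $\max H_N\circ\omega=H$ can only reach level $H$ at critical points of index one. The key local fact is this: near a non-critical point $x$ with $H_N(x)=H$, the gradient flow pushes $\omega$ strictly below $H$ without changing its endpoints' components, so such crossings are not essential. Near a critical point of index $\ge2$, the Morse lemma gives coordinates $H=-|y|^2+|z|^2$, and since $\{|y|^2>|z|^2\}$ is locally connected for index $\ge2$ we can detour strictly below $H$. The formal statement we aim for is the following. Every optimal path meets the set $S$ of index-one saddles at level $H$. Conversely, the set of index-one saddles it meets, read in order, traces an edge path from $V_1$ to $V_2$, because between consecutive saddle visits the path lies in $\{H_N<H\}$. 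That set lies in $\{H_N\le H-\eps\}$ after deformation along the flow, which keeps it in a single vertex. Conversely, every edge path from $V_1$ to $V_2$ is realized by an optimal path that meets exactly those saddles: inside each vertex we connect points by paths in $\{H_N\le H-\eps\}$, and we cross each saddle $s$ along its unstable manifold, descending on both sides.

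\emph{Step 3: minimal gates equal minimal cuts.} Let $G\subset\{H_N=H\}$ be closed and met by every optimal path. Step 2 implies that $G\cap S$ is itself a gate, since we can perturb any optimal path off non-saddle points of $G$. Hence minimality forces $G\subset S$, and $G$ is finite and nonempty. A subset $C\subset S$ is a gate if and only if every edge path from $V_1$ to $V_2$ uses an edge in $C$, that is, if and only if $C$ is an edge cut. Inclusion-minimal gates therefore correspond exactly to inclusion-minimal cuts.

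The main obstacle is the detour claim in Step 2, namely that an optimal path touching level $H$ away from index-one saddles can be modified to avoid those points while keeping its maximum $\le H$. I would handle it by a local gradient-flow surgery in Morse charts, with the number of touched points reduced by compactness, applied to a small neighbourhood of $\{t:H_N(\omega(t))=H\}$.
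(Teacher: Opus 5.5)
Your proposal is correct and follows essentially the paper's route: the handle graph on the components of $\{H_N\le H-\eps\}$, canonical optimal paths that cross each index-one saddle along its unstable manifold, negative-gradient deformation of an arbitrary optimal path to a graph walk, and from these the identification of minimal gates with inclusion-minimal edge cuts. Your observation that $G\cap S$ is already a gate makes the minimality step more explicit than the paper does.

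One correction: delete the remark that distinct critical points almost surely have distinct values. It is false here, because the even symmetry $H_N(-\sigma)=H_N(\sigma)$ gives every critical point an antipodal twin at the same level, so several saddles at level $H$ is the typical situation. Perturbing $H_N$ would also change the statement. As you say, the argument never uses it.
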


\begin{proof}
By \Cref{prop:morse-spherical}, the Hamiltonian is Morse.  The communication height is the first level at which the two relevant sublevel components merge, and therefore is a critical value.  By the Morse attachment theorem, an index-zero handle creates a new component, while a handle of index at least two has connected attaching sphere and cannot merge two previously distinct components.  Hence every edge responsible for the first merger is an index-one handle.

For completeness, the pathwise minimality statement requires more than existence of one finite saddle set.  Given any path in the handle graph, connect points inside the corresponding lower sublevel components and traverse the unstable axis in each index-one Morse chart.  The resulting canonical path has maximal energy $H$ and meets the level $H$ exactly at the saddles representing the graph edges.  Conversely, negative-gradient deformation away from the critical charts maps any optimal path to a graph walk; moving between two lower sublevel components forces the path through the center of the corresponding index-one handle.  Thus a closed set hits every optimal path exactly when its saddle points form an edge cut.  Inclusion-minimality on the two sides is the same, and the critical set is finite.
\end{proof}

\begin{theorem}[Antipodal ground-state gate barrier]\label{thm:ground-state-barrier}
Let $m_N$ be any ground state of $H_N$ and set
\begin{equation}\label{eq:Hdagger}
 H_N^\dagger
 \defeq\widehat H_N(\{m_N\},\{-m_N\}).
\end{equation}
Then, for every $\eps>0$,
\begin{equation}\label{eq:barrier-probability}
 \Pp\left(
 H_N^\dagger-H_N(m_N)
 \ge N\bigl(E_0(\xi)-E_1(\xi)-\eps\bigr)
 \right)\longrightarrow1.
\end{equation}
Every pathwise minimal gate realizing $H_N^\dagger$ is almost surely an inclusion-minimal finite cut of index-one saddles.
\end{theorem}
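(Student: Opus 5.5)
The plan is to combine the ground-state limit \eqref{eq:ground-limit}, the layering result \Cref{prop:layering}, and the pathwise gate description of \Cref{prop:spin-gates}. The structural claim comes first. By evenness \eqref{eq:even-symmetry}, $-m_N$ is also a ground state, and $-m_N\ne m_N$ because $m_N\neq0$ on $\SphereN$. Both points are local minima. Moreover, almost surely $m_N$ and $-m_N$ are distinct critical points, and the Morse property of \Cref{prop:morse-spherical} holds. \Cref{prop:spin-gates} applied with $m_1=m_N$, $m_2=-m_N$ then shows that every minimal gate is an inclusion-minimal finite edge cut of index-one saddles at level $H_N^\dagger$. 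This settles the final sentence of the theorem.

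For the quantitative bound I fix $\eps>0$ and choose $a\in(E_1(\xi),E_1(\xi)+\eps/2)$. I then intersect three events, each of probability tending to one. The first is $H_N(m_N)\le -N(E_0(\xi)-\eps/2)$, which follows from \eqref{eq:ground-limit}. The second is the event of \Cref{prop:layering} at level $a$ with $k=1$: there is no critical point of index $\ge1$ with $H_N\le -Na$. The third is the Morse event, which holds almost surely. On the intersection I claim $H_N^\dagger> -Na$. Suppose instead that $H_N^\dagger\le -Na$. The communication height is a critical value, and by \Cref{prop:spin-gates} it is attained at an index-one saddle $s$ with $H_N(s)=H_N^\dagger\le -Na$, since the two antipodal points are distinct minima. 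The second event forbids such a saddle, so the claim holds.

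It follows that $H_N^\dagger-H_N(m_N)\ge -Na+N(E_0(\xi)-\eps/2)\ge N(E_0(\xi)-E_1(\xi)-\eps)$. One subtlety is the degenerate case in which $m_N$ and $-m_N$ already lie in one component without any saddle. This cannot happen for distinct minima: the sublevel set at level $H_N(m_N)$ consists of isolated points, so a first merger must occur at some index-one critical value. That is exactly the content of \Cref{prop:spin-gates}.

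The main obstacle is the strictness issue at the threshold. \Cref{prop:layering} requires $a>E_1(\xi)$ strictly, which is why the $\eps$ slack is split between the ground-state fluctuation and the choice $a-E_1(\xi)<\eps/2$. A second point is that the layering event controls all critical points uniformly and not a single randomly chosen one. This uniformity is what lets us apply it to the data-dependent saddle $s$ without a union bound over the random gate. Apart from checking that $H_N^\dagger$ is realized by an actual index-one critical point, which \Cref{prop:spin-gates} supplies, the argument is bookkeeping.
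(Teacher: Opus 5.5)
Your proof is correct and follows essentially the same route as the paper: evenness gives the distinct antipodal ground state, \Cref{prop:spin-gates} supplies both the structural claim and an index-one saddle at the first-merger level, and \Cref{prop:layering} combined with \eqref{eq:ground-limit} gives the bound. Your level $a\in(E_1(\xi),E_1(\xi)+\eps/2)$ plays exactly the role of the paper's $E_1(\xi)+\delta$ with $2\delta<\eps$.
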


\begin{proof}
Evenness \eqref{eq:even-symmetry} makes $-m_N$ a distinct ground state.  By \Cref{prop:spin-gates}, the first-merging level contains an index-one critical point.  For every $\delta>0$, \Cref{prop:layering} implies that, with probability tending to one, no index-one critical point lies below $-N(E_1(\xi)+\delta)$.  Therefore
\[
 H_N^\dagger\ge-N(E_1(\xi)+\delta)
\]
on that event.  Combine this with the ground-state convergence \eqref{eq:ground-limit} and choose $2\delta<\eps$.
\end{proof}

\begin{theorem}[Sequential spherical-spin spectral-gap bound]\label{thm:sequential-gap}
For every $\eps>0$,
\begin{equation}\label{eq:sequential-prob}
 \Pp\left(
 \liminf_{\beta\to\infty}
 -\frac1{\beta N}\log\gamma_{N,\beta}
 \ge E_0(\xi)-E_1(\xi)-\eps
 \right)\longrightarrow1.
\end{equation}
Equivalently,
\begin{equation}\label{eq:sequential-pliminf}
 \pLiminf_{N\to\infty}\;
 \liminf_{\beta\to\infty}
 \left(-\frac1{\beta N}\log\gamma_{N,\beta}\right)
 \ge E_0(\xi)-E_1(\xi).
\end{equation}
This theorem uses neither an Eyring--Kramers prefactor nor a Hessian determinant.
\end{theorem}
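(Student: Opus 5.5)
The plan is to work at fixed $N$ on the high-probability event of \Cref{thm:ground-state-barrier}, and to use a classical low-temperature test-function argument for a Morse function on a compact manifold. That argument needs no prefactor information.

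\textbf{Step 1: the trial function.} Fix a realization on which $H_N$ is Morse (\Cref{prop:morse-spherical}) and on which
\[
 H_N^\dagger-H_N(m_N)\ge N\bigl(E_0(\xi)-E_1(\xi)-\eps/2\bigr).
\]
Let $\delta>0$ be small, with $\delta$ less than the gap between $H_N^\dagger$ and the critical values below it. Let $U$ be the connected component of $\{H_N<H_N^\dagger-\delta\}$ containing $m_N$. By definition of the communication height, $U$ does not contain $-m_N$. By evenness, $-U$ is the component containing $-m_N$, and it is disjoint from $U$.

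Take $f$ to be a smooth cutoff that equals $1$ on $U$ and $0$ away from a $\delta'$-neighbourhood of $\overline U$. Choose $\delta'$ small enough, depending on $N$ and $\delta$ but not on $\beta$, that the neighbourhood misses $-U$ and lies in $\{H_N\ge H_N^\dagger-2\delta\}$ outside $U$. This is possible because $\partial U$ sits on the level $H_N^\dagger-\delta$ and $H_N$ is continuous. As an alternative one can take $f$ odd, $f=\mathbf 1_U-\mathbf 1_{-U}$ smoothed, so that $\E_\pi f=0$ by antipodal symmetry of $\pi_{N,\beta}$.

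\textbf{Step 2: the Rayleigh quotient.} The Dirichlet form satisfies
\[
 \int|\nabla f|^2\,\dd\pi\le C_{N,\delta}\,
 \frac{e^{-\beta(H_N^\dagger-2\delta)}}{Z_{N,\beta}},
\]
since $\nabla f$ is supported in $\{H_N\ge H_N^\dagger-2\delta\}$ and $C_{N,\delta}$ does not depend on $\beta$.

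For the variance, $\pi(U)$ and $\pi(-U)$ are equal by symmetry. Each is at least $Z_{N,\beta}^{-1}\nu_N(B_r(m_N))\,e^{-\beta(H_N(m_N)+\eta)}$ for any $\eta>0$ and a small ball $B_r(m_N)$. The variance is therefore at least $\pi(U)\pi(-U)$ times a constant.

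We also need $Z_{N,\beta}\le e^{-\beta H_N(m_N)}$, which gives $\pi(U)\to1/2$ up to $\beta$-subexponential factors. Then
\[
 \gamma_{N,\beta}\le C'_{N}\,e^{-\beta(H_N^\dagger-H_N(m_N)-2\delta-2\eta)}.
\]
Taking $-\frac1{\beta N}\log$ and letting $\beta\to\infty$ gives
\[
 \liminf_{\beta\to\infty}\Bigl(-\frac{1}{\beta N}\log\gamma_{N,\beta}\Bigr)\ge \frac{H_N^\dagger-H_N(m_N)}{N}-\frac{2\delta+2\eta}{N}.
\]
Now let $\delta,\eta\downarrow0$. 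On the event of \Cref{thm:ground-state-barrier} the right side is at least $E_0(\xi)-E_1(\xi)-\eps/2$, which proves \eqref{eq:sequential-prob}. The equivalent form \eqref{eq:sequential-pliminf} then follows from the definition of $\pLiminf$.

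\textbf{Main obstacle.} The delicate point is the order of limits. All constants ($C_{N,\delta}$, $\delta'$, $r$) may depend on $N$ and on the realization, but they must not depend on $\beta$. This is guaranteed because the construction uses only the fixed Morse realization. The one genuinely geometric input is that $U$ and $-U$ are distinct components strictly below $H_N^\dagger$, so that the cutoff's gradient lives only above $H_N^\dagger-2\delta$. This follows from the definition of communication height as an infimum over paths together with the finiteness of critical values (\Cref{prop:spin-gates}). No Hessian determinant enters, as the statement claims.
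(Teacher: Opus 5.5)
Your proposal is correct and is essentially the paper's proof: a cutoff equal to one on the sublevel component of $m_N$ just below $H_N^\dagger$ (so that $-m_N$ lies outside it), Dirichlet energy of order $e^{-\beta(H_N^\dagger-2\delta)}$, a lower bound on the Gibbs mass near $\pm m_N$, then $\beta\to\infty$, $\delta,\eta\downarrow0$, and \Cref{thm:ground-state-barrier}. The differences are cosmetic. The paper cuts off inside an energy shell $\{c_0\le H_N\le c_1\}$ between two regular values and uses a quadratic bound at $m_N$, which gives polynomial rather than $e^{-\beta\eta}$ mass losses. Note that your estimate gives $\pi_{N,\beta}(U)\ge\nu_N(B_r(m_N))e^{-\beta\eta}$, not $\pi_{N,\beta}(U)\to1/2$, but that lower bound is all your argument actually uses.
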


\begin{proof}
Fix $N$ in an almost-sure Morse realization and abbreviate
$U_*=H_N(m_N)$ and $H=H_N^\dagger$.  Choose regular values
$U_*<c_0<c_1<H$.  Let $C_1$ be the component of
$\{H_N<c_1\}$ containing $m_N$.  Since $c_1<H$, the point $-m_N$ is outside $C_1$.  There is a smooth function equal to one near $m_N$, equal to zero outside $C_1$ and near $-m_N$, and whose gradient is supported in
$C_1\cap\{c_0\le H_N\le c_1\}$.

Its unnormalized Dirichlet energy is at most $C e^{-\beta c_0}$.  A quadratic upper bound near either isolated ground state and the elementary volume bound give the two-sided estimate
\[
 c\beta^{-(N-1)/2}e^{-\beta U_*}
 \ \le\ Z^{\vol}_{N,\beta}
 \ \le\ \vol(\SphereN)e^{-\beta U_*}
\]
for the partition function with surface volume, which shows that fixed $\beta^{-1/2}$-balls around the two minima have only polynomially small normalized mass.  The Rayleigh quotient is therefore at most a polynomial in $\beta$ times
$e^{-\beta(c_0-U_*)}$.  Sending $\beta\to\infty$ and then $c_0\uparrow H$ gives
\[
 \liminf_{\beta\to\infty}-\frac1\beta\log\gamma_{N,\beta}
 \ge H_N^\dagger-H_N(m_N).
\]
Now apply \Cref{thm:ground-state-barrier}.
\end{proof}

Theorem~\ref{thm:arrhenius} of the main text is exactly the statement
\eqref{eq:sequential-pliminf}, so its proof is complete.

\section{An unconditional fixed-temperature bound from deep sublevels: proof of Theorem~\ref{thm:fixed}}\label{sec:deep-cutoff}

For $a>0$, write
\begin{equation}\label{eq:deep-sublevel}
 \cL_N(a)\defeq\{\sigma\in\SphereN:H_N(\sigma)\le-Na\}.
\end{equation}
The next construction uses three facts that are special to the present model: the index-one threshold $E_1(\xi)$, antipodal symmetry, and the limiting spherical free energy.

\begin{proposition}[Antipodal pairing of deep spherical-spin wells]\label{prop:deep-components}
Fix $a>E_1(\xi)$.  With probability tending to one, every connected component of $\cL_N(a)$ contains exactly one critical point, which is a local minimum, and the components occur in distinct antipodal pairs
\[
 C,-C,\qquad C\ne-C.
\]
In particular, one can choose a sign map
\begin{equation}\label{eq:component-sign}
 s_{N,a}:\cL_N(a)\longrightarrow\{-1,1\}
\end{equation}
that is constant on each component and satisfies
$s_{N,a}(-\sigma)=-s_{N,a}(\sigma)$.
\end{proposition}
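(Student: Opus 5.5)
The plan is to work on the intersection of the almost-sure Morse event of \Cref{prop:morse-spherical} with the event of \Cref{prop:layering}(i) for the fixed $a>E_1(\xi)$. The first event has probability one and the second has probability tending to one. On this intersection every critical point $\sigma$ with $H_N(\sigma)\le-Na$ is a nondegenerate local minimum, and there are finitely many critical points in total. All remaining assertions are then deterministic consequences of Morse theory and the evenness \eqref{eq:even-symmetry}.

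\emph{Each component contains a critical point.} $\cL_N(a)$ is a closed subset of the compact sphere, so each connected component $C$ is compact. The minimum of $H_N$ over $C$ is attained at some $m\in C$. Since $H_N(m)\le-Na$, the point $m$ lies in the interior of the sublevel set relative to its component unless $H_N(m)=-Na$. In either case I will argue that $m$ is a local minimum of $H_N$ on the whole sphere. If $H_N(m)<-Na$, a neighborhood of $m$ lies in $\cL_N(a)$ and hence in $C$, so $m$ is a local minimum of $H_N$. If $H_N(m)=-Na$, then $C=\{m\}$ must be a strict local minimum; otherwise points of level at most $-Na$ near $m$ would either be connected to $m$ inside the sublevel set or contradict the minimality of $m$. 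I will handle this boundary case using the Morse chart at $m$ when $m$ is critical, and the local submersion structure when it is not.

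\emph{Uniqueness.} Suppose a component $C$ contained two distinct local minima $m_1,m_2$. Since $C$ is connected and contained in $\{H_N\le-Na\}$, there is a path from $m_1$ to $m_2$ inside $C$; if needed I will use that components of sublevel sets of a Morse function are path-connected, which follows from the local Morse charts. Hence $\widehat H_N(\{m_1\},\{m_2\})\le-Na$. By \Cref{prop:spin-gates}, this communication height is attained by an index-one critical point at that level, that is, at energy $\le-Na$. This contradicts \Cref{prop:layering}(i), whose event includes the closed level $H_N\le-Na$, so no regular-value assumption on $-Na$ is needed. Consequently each component contains exactly one critical point, namely its minimum.

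\emph{Antipodal pairing.} By \eqref{eq:even-symmetry}, the map $\sigma\mapsto-\sigma$ is a homeomorphism preserving $\cL_N(a)$, so it permutes the components; thus $-C$ is again a component. If $C=-C$, then both $m$ and $-m$ lie in $C$ and are critical points with the same energy. Uniqueness forces $m=-m$, which is impossible on $\SphereN$ since $m\ne0$. Therefore the components split into finitely many unordered pairs $\{C,-C\}$ with $C\ne-C$. The sign map $s_{N,a}$ is defined by choosing one member of each pair, assigning it the value $+1$ and its partner the value $-1$. This map is constant on components and odd by construction.

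\emph{Main obstacle.} The only delicate point is the use of \Cref{prop:spin-gates} inside a closed sublevel set, namely that the first merger of $m_1$ and $m_2$ happens at an index-one point of level $\le-Na$ rather than at a degenerate configuration on the boundary level. The inclusive inequality in \Cref{prop:layering} is what makes this step go through.
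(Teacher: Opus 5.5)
Your argument is correct and is essentially the paper's: \Cref{prop:layering} removes every critical point of positive index from $\cL_N(a)$, Morse theory then forbids two minima in one component because any merger requires an index-one handle, and the self-antipodal case is excluded exactly as in the paper. The difference is only in packaging. The paper uses that the deterministic level $-Na$ is almost surely regular and reads existence and uniqueness together off the handle decomposition (only index-zero handles attach, so no merger occurs), whereas you obtain existence from compactness of each component and uniqueness by applying \Cref{prop:spin-gates} to the communication height; this avoids the regular-value step at the modest cost of checking that components of $\cL_N(a)$ are path-connected.
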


\begin{proof}
Choose $\delta>0$ with $E_1(\xi)+\delta<a$.  By \Cref{prop:layering}, with probability tending to one there is no critical point of index at least one below $-N(E_1(\xi)+\delta)$, hence none in $\cL_N(a)$.  The deterministic level $-Na$ is almost surely regular.  Starting below the minimum and increasing the sublevel to $-Na$, the only possible handle attachments are therefore index-zero handles.  Each creates one component and one local minimum, and no merger can occur.  Thus every component contains exactly one minimum.

The map $\sigma\mapsto-\sigma$ preserves $\cL_N(a)$ by \eqref{eq:even-symmetry} and permutes its components.  If a component $C$ were self-antipodal, the unique minimum $m\in C$ would imply that $-m\in C$ is a second, distinct minimum, a contradiction.  Choose one component from each antipodal pair and assign it sign $1$, assigning sign $-1$ to its antipode.  A measurable deterministic choice can be made by orienting the unique minimum according to the sign of its first nonzero coordinate.
\end{proof}

\begin{proposition}[Gibbs mass of a deep sublevel]\label{prop:deep-mass}
For every fixed $\beta>0$ and $a>0$,
\begin{equation}\label{eq:deep-mass-bound}
 \pi_{N,\beta}(\cL_N(a)^c)
 \le \exp\{\beta Na-NF_{N,\xi}(\beta)\}.
\end{equation}
Consequently, if $F_\xi(\beta)>\beta a$, then
\begin{equation}\label{eq:deep-mass-one}
 \pi_{N,\beta}(\cL_N(a))\longrightarrow1
 \qquad\text{in probability},
\end{equation}
and the convergence is exponentially fast on the $N$ scale.
\end{proposition}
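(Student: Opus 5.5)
The bound \eqref{eq:deep-mass-bound} is a one-line Chebyshev/Markov estimate on the Gibbs weight.

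On the complement $\cL_N(a)^c=\{H_N>-Na\}$ we have $-\beta H_N(\sigma)<\beta Na$. Therefore
\[
 \pi_{N,\beta}(\cL_N(a)^c)
 =\frac{1}{Z_{N,\beta}}\int_{\{H_N>-Na\}}e^{-\beta H_N}\dd\nu_N
 \le\frac{e^{\beta Na}\,\nu_N(\{H_N>-Na\})}{Z_{N,\beta}}
 \le\frac{e^{\beta Na}}{Z_{N,\beta}}.
\]
The last step uses only that $\nu_N$ is a probability measure. By \eqref{eq:partition-free}, $Z_{N,\beta}=e^{NF_{N,\xi}(\beta)}$, which gives \eqref{eq:deep-mass-bound} deterministically, for every realization of the disorder.

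For \eqref{eq:deep-mass-one}, set $\eta\defeq F_\xi(\beta)-\beta a>0$. By the free-energy limit \eqref{eq:free-energy-limit}, the event $\{F_{N,\xi}(\beta)\ge F_\xi(\beta)-\eta/2\}$ has probability tending to one. On that event \eqref{eq:deep-mass-bound} gives
\[
 \pi_{N,\beta}(\cL_N(a)^c)\le e^{-N\eta/2}.
\]
This yields both the convergence in probability and the exponential rate on the $N$ scale.

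There is no real obstacle here. The only subtlety is that "exponentially fast" refers to the size of the mass defect on a high-probability event; the probability of that event itself is not claimed to converge exponentially. If a quantitative event probability were wanted, Gaussian concentration of $F_{N,\xi}$ (it is $\beta/\sqrt N$-Lipschitz in the couplings) would give $1-e^{-cN}$, but the statement does not require it.
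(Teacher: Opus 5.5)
Your proof is correct and is the same argument as the paper's. You bound $e^{-\beta H_N}\le e^{\beta Na}$ on $\{H_N>-Na\}$, use that $\nu_N$ is a probability measure, divide by $Z_{N,\beta}=e^{NF_{N,\xi}(\beta)}$, and invoke the free-energy limit \eqref{eq:free-energy-limit}; your explicit choice $\eta=F_\xi(\beta)-\beta a$ and your reading of ``exponentially fast'' only spell out what the paper leaves implicit.
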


\begin{proof}
On $\cL_N(a)^c$ one has $H_N>-Na$, and therefore
\[
 \int_{\cL_N(a)^c}e^{-\beta H_N}\dd\nu_N
 \le e^{\beta Na}.
\]
Division by $Z_{N,\beta}$ gives \eqref{eq:deep-mass-bound}; now use \eqref{eq:free-energy-limit}.
\end{proof}

\begin{theorem}[Unconditional fixed-temperature spherical-spin gap bound]\label{thm:fixed-temperature}
Fix $\beta>0$ such that
\begin{equation}\label{eq:fixed-beta-condition}
 F_\xi(\beta)>\beta E_1(\xi).
\end{equation}
Then
\begin{equation}\label{eq:fixed-temperature-bound}
 \pLiminf_{N\to\infty}
 \left(-\frac1N\log\gamma_{N,\beta}\right)
 \ge F_\xi(\beta)-\beta E_1(\xi).
\end{equation}
The proof uses no selected minimum, no selected gate, no well-Hessian determinant, and no Eyring--Kramers approximation.
\end{theorem}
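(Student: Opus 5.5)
The plan is to use the odd test function built from the sign map of \Cref{prop:deep-components}, with its gradient confined to a thin energy shell just above the deep sublevel. Fix $\eps>0$ small and choose levels $E_1(\xi)<a'<a$ with $a-E_1(\xi)<\eps$ and $F_\xi(\beta)>\beta a$; this is possible by \eqref{eq:fixed-beta-condition}. By \Cref{prop:deep-components} applied at level $a'$, with probability tending to one $\cL_N(a')$ splits into antipodal pairs of components carrying an odd sign map $s=s_{N,a'}$. Each component of $\cL_N(a)\subset\cL_N(a')$ lies in a unique component of $\cL_N(a')$, so $s$ restricts to $\cL_N(a)$.

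Define $f$ by setting $f=s$ on $\cL_N(a)$ and $f=0$ outside $\cL_N(a')$. On a component $C$ of $\cL_N(a')$, interpolate by
\[
 f=s(C)\,\chi\bigl(-H_N/N\bigr),
\]
where $\chi$ is a smooth cutoff equal to $1$ on $[a,\infty)$ and to $0$ on $(-\infty,a']$, with $|\chi'|\le 2/(a-a')$. Then $f$ is Lipschitz and odd, since $H_N$ is even and $s$ is odd. Hence $\int f\dd\pi_{N,\beta}=0$ by antipodal invariance of $\pi_{N,\beta}$, and
\[
 \Var_{\pi_{N,\beta}}(f)\ge\pi_{N,\beta}(\cL_N(a))\to1
\]
by \Cref{prop:deep-mass}, since $F_\xi(\beta)>\beta a$.

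For the Dirichlet energy, the gradient of $f$ is supported in the shell $\{-Na\le H_N\le -Na'\}$, where
\[
 |\grad f|\le 2\|\grad H_N\|/(N(a-a'))\le C_\xi/(\sqrt N(a-a'))
\]
on the derivative event of \Cref{prop:derivative-scale}. Therefore
\[
 \int|\grad f|^2\dd\pi_{N,\beta}\le \frac{C}{N(a-a')^2}\,e^{\beta Na}\,Z_{N,\beta}^{-1}
 =\frac{C}{N(a-a')^2}\exp\{\beta Na-NF_{N,\xi}(\beta)\}.
\]
Combining this with the variance bound and \eqref{eq:free-energy-limit} shows that, with probability tending to one,
\[
 -\tfrac1N\log\gamma_{N,\beta}\ge F_\xi(\beta)-\beta a-o(1)\ge F_\xi(\beta)-\beta E_1(\xi)-\beta\eps-o(1).
\]
Letting $\eps\downarrow0$ gives \eqref{eq:fixed-temperature-bound}.

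The main obstacle is the well-definedness and smoothness of the interpolation across the shell. This needs $s$ to be constant on each component of $\cL_N(a')$, so that $f$ does not jump between differently signed deep components inside the shell. That is exactly why the sign map is taken at the larger level $a'$ rather than at $a$. It still requires $a'>E_1(\xi)$ so that \Cref{prop:layering} forbids mergers, which forces the two-level choice above. Everything else is routine: the almost-sure regularity of the levels, and the fact that the cutoff composed with $H_N$ is only Lipschitz, which is enough for the variational formula \eqref{eq:spectral-gap} after a standard smoothing.
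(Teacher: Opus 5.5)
Your proposal is correct and follows the paper's proof essentially line for line: sign map taken at the shallower level, energy cutoff between the two levels, variance from the deep-mass bound, and Dirichlet energy charged at $e^{\beta Na}$ on the shell. The only differences are cosmetic. You use the odd function $g$ itself rather than the paper's $f_N=(1+g_N)/2$. And no smoothing step is needed: because $\chi$ vanishes identically on $(-\infty,a']$ and the components of $\cL_N(a')$ are at positive distance from one another, the interpolated function is already $C^\infty$, not merely Lipschitz.
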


\begin{proof}
Choose levels
\begin{equation}\label{eq:a0-a1-choice}
 E_1(\xi)<a_0<a_1<\frac{F_\xi(\beta)}\beta.
\end{equation}
On the event in \Cref{prop:deep-components}, let $s_{N,a_0}$ be the component sign.  Choose a smooth function
$\chi:\R\to[0,1]$ satisfying
\begin{equation}\label{eq:chi}
 \chi(u)=1\quad(u\le-a_1),
 \qquad
 \chi(u)=0\quad(u\ge-a_0),
 \qquad
 \|\chi'\|_\infty\le\frac{C}{a_1-a_0},
\end{equation}
and flat at the endpoints.  Define
\begin{equation}\label{eq:g-cutoff}
 g_N(\sigma)\defeq
 \begin{cases}
  s_{N,a_0}(\sigma)\,\chi(H_N(\sigma)/N),
   &\sigma\in\cL_N(a_0),\\
  0,&\sigma\notin\cL_N(a_0),
 \end{cases}
 \qquad
 f_N\defeq\frac{1+g_N}{2}.
\end{equation}
Because $\chi$ is flat where it vanishes, $g_N$ is smooth across the boundaries of the finitely many components.  Equations \eqref{eq:even-symmetry} and \eqref{eq:component-sign} give
$g_N(-\sigma)=-g_N(\sigma)$.  The Gibbs measure is antipodally invariant, hence
\begin{equation}\label{eq:mean-variance-cutoff}
 \int f_N\dd\pi_{N,\beta}=\frac12,
 \qquad
 \Var_{\pi_{N,\beta}}(f_N)
 =\frac14\int g_N^2\dd\pi_{N,\beta}
 \ge\frac14\pi_{N,\beta}(\cL_N(a_1)).
\end{equation}
By \Cref{prop:deep-mass} and \eqref{eq:a0-a1-choice}, the last probability converges to one.

The gradient is supported in the shell
\begin{equation}\label{eq:energy-shell}
 \cS_N(a_0,a_1)
 =\{-Na_1<H_N<-Na_0\},
\end{equation}
and
\begin{equation}\label{eq:gradient-cutoff}
 |\grad f_N|
 \le\frac{C}{N(a_1-a_0)}|\grad H_N|.
\end{equation}
On the derivative event \eqref{eq:derivative-event},
$|\grad f_N|^2\le C_{\xi,a_0,a_1}/N$.  Moreover,
$e^{-\beta H_N}\le e^{\beta Na_1}$ throughout the shell.  Therefore
\begin{align}
 \int_{\SphereN}|\grad f_N|^2\dd\pi_{N,\beta}
 &\le
 \frac{C_{\xi,a_0,a_1}}{N Z_{N,\beta}}
 \int_{\cS_N(a_0,a_1)}e^{-\beta H_N}\dd\nu_N
 \notag\\
 &\le
 \frac{C_{\xi,a_0,a_1}}N
 \exp\{\beta Na_1-NF_{N,\xi}(\beta)\}.
 \label{eq:cutoff-dirichlet}
\end{align}
Insert \eqref{eq:mean-variance-cutoff} and \eqref{eq:cutoff-dirichlet} into the Rayleigh quotient.  Using \eqref{eq:free-energy-limit} gives
\begin{equation}\label{eq:fixed-a1}
 \pLiminf_{N\to\infty}
 \left(-\frac1N\log\gamma_{N,\beta}\right)
 \ge F_\xi(\beta)-\beta a_1.
\end{equation}
Since $a_1$ can be chosen arbitrarily close to $E_1(\xi)$ from above, \eqref{eq:fixed-temperature-bound} follows.
\end{proof}

The inequality \eqref{eq:fixed-temperature-bound} is exactly the
statement of Theorem~\ref{thm:fixed} in the main text, whose proof is
therefore complete.

\begin{corollary}[The stronger order of limits]\label{cor:stronger-order}
The fixed-temperature result implies
\begin{equation}\label{eq:stronger-order}
 \liminf_{\beta\to\infty}
 \frac1\beta
 \pLiminf_{N\to\infty}
 \left(-\frac1N\log\gamma_{N,\beta}\right)
 \ge E_0(\xi)-E_1(\xi).
\end{equation}
Thus the same Arrhenius constant is obtained with $N\to\infty$ before $\beta\to\infty$.
\end{corollary}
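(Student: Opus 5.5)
The plan is to divide the fixed-temperature bound of \Cref{thm:fixed-temperature} by $\beta$ and let $\beta\to\infty$, using \Cref{prop:free-energy-ground} for the free-energy asymptotics.

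First, by \Cref{prop:free-energy-ground} there is $\beta_\xi<\infty$ such that $F_\xi(\beta)>\beta E_1(\xi)$ for all $\beta\ge\beta_\xi$. For each such $\beta$, hypothesis \eqref{eq:fixed-beta-condition} holds. \Cref{thm:fixed-temperature} then gives
\[
 \pLiminf_{N\to\infty}\Bigl(-\frac1N\log\gamma_{N,\beta}\Bigr)\ge F_\xi(\beta)-\beta E_1(\xi).
\]
Dividing by $\beta>0$ preserves the inequality, since the $\pLiminf$ is positively homogeneous: scaling $X_N$ by a positive constant scales the threshold $x$ in the footnote's definition by the same constant. This yields
\[
 \frac1\beta\,\pLiminf_{N\to\infty}\Bigl(-\frac1N\log\gamma_{N,\beta}\Bigr)\ge\frac{F_\xi(\beta)}\beta-E_1(\xi)
 \qquad\text{for every }\beta\ge\beta_\xi.
\]

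Next, take $\liminf_{\beta\to\infty}$ of both sides. By \eqref{eq:F-over-beta}, $F_\xi(\beta)/\beta\to E_0(\xi)$, so the right-hand side converges to $E_0(\xi)-E_1(\xi)$, which is \eqref{eq:stronger-order}. The restriction $\beta\ge\beta_\xi$ does not affect a $\liminf$ as $\beta\to\infty$.

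No step here is a real obstacle. The only bookkeeping point is that the left side is a deterministic function of $\beta$, because $\pLiminf$ is deterministic, so the outer $\liminf_{\beta\to\infty}$ is an ordinary liminf of real numbers. Finally, I would note the comparison with \Cref{thm:sequential-gap}: there the order of limits is $\beta\to\infty$ first and then $N\to\infty$, whereas here $N\to\infty$ comes first. The two statements reach the same constant $E_0(\xi)-E_1(\xi)$ along different routes.
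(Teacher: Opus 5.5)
Your proposal is correct and is essentially the paper's proof: for $\beta\ge\beta_\xi$ \Cref{prop:free-energy-ground} verifies the hypothesis of \Cref{thm:fixed-temperature}, then you divide by $\beta$ and let $\beta\to\infty$ using \eqref{eq:F-over-beta}. Positive homogeneity of $\pLiminf$ is not actually needed, since the factor $1/\beta$ already sits outside the deterministic $\pLiminf$ and you are only dividing a real-number inequality by $\beta>0$.
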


\begin{proof}
For all sufficiently large $\beta$, \Cref{prop:free-energy-ground} verifies the condition of \Cref{thm:fixed-temperature}.  Divide \eqref{eq:fixed-temperature-bound} by $\beta$ and use \eqref{eq:F-over-beta}.
\end{proof}

\begin{remark}[What the unconditional fixed-temperature theorem measures]\label{rem:coarse-vs-EK}
The exponent in \eqref{eq:fixed-temperature-bound} is a free-energy cost for leaving the full collection of deep antipodal wells.  It already uses more spin-glass structure than a selected-gate formula and requires no well-mass or target-mass assumption.  It is intentionally coarse at the saddle scale: it bounds the entire transition shell by its deepest energy and therefore does not yet use the transverse widths of the index-one channels.  Those widths enter the aggregate Eyring--Kramers pressure in the next section.
\end{remark}

\section{Complexity-weighted Eyring--Kramers conductance}\label{sec:weighted-KR}

This section uses the Hessian and critical-point structure of the mixed spherical model to replace a determinant assumption at one selected saddle by an aggregate estimate over all index-one saddles.  The single structural difference from the pure case is that the conditional Hessian is a \emph{randomly} shifted GOE matrix: the scalar shift fluctuation, of variance $v_\xi^2/N$, participates in every exponential rate through the penalty $\mathfrak p_\xi$ of \eqref{eq:penalty}, and the rate functions of \Cref{subsec:complexity-thresholds} are the resulting one-dimensional suprema over the shift.

\subsection{The index-one saddle weight}

Let $z$ be an index-one critical point and write
\begin{align}
 Q_z&=\Hess H_N(z),\notag\\
 \spec(Q_z)
 &=\{-\alpha_z,\lambda_{z,2},\ldots,\lambda_{z,N-1}\},
 \qquad \alpha_z,\lambda_{z,j}>0.
 \label{eq:saddle-spectrum}
\end{align}
The local Eyring--Kramers weight is
\begin{equation}\label{eq:EK-weight}
 w_{N,\beta}(z)
 \defeq e^{-\beta H_N(z)}
 \frac{\alpha_z}{\sqrt{|\det Q_z|}}.
\end{equation}
The name records the functional form of the classical
Eyring--Kramers prefactor \citep{bovier2004metastability};
\eqref{eq:EK-weight} is a \emph{definition}, and no theorem of that
theory---which is developed for diffusions in Euclidean space---is
invoked anywhere in our proofs.  All spectral-gap bounds below come
from the variational principle \eqref{eq:spectral-gap} with explicit
test functions, which is intrinsic to the sphere.

For an energy interval $I\subset\R$, define the total index-one saddle weight
\begin{equation}\label{eq:W-def}
 \cW_{N,\beta}(I)
 \defeq\sum_{\substack{z:\ \grad H_N(z)=0,\ \ind Q_z=1\\
                   H_N(z)/N\in I}}
 e^{-\beta H_N(z)}
 \frac{\alpha_z}{\sqrt{|\det Q_z|}},
\end{equation}
and its geometrically normalized conductance
\begin{equation}\label{eq:K-def}
 \cK_{N,\beta}(I)
 \defeq\frac1{\vol(\SphereN)}
 \left(\frac{2\pi}{\beta}\right)^{(N-3)/2}
 \cW_{N,\beta}(I).
\end{equation}
The point of \eqref{eq:W-def} is that it sums every possible low-index channel; it does not require choosing a dominant saddle or knowing its basin adjacency.

\subsection{Shifted GOE and the determinant rate}

Let $G_{N-1}$ be a normalized GOE matrix with
\begin{equation}\label{eq:GOE-normalization}
 \E G_{ij}^2=\frac1{N-1}\quad(i\ne j),
 \qquad
 \E G_{ii}^2=\frac2{N-1}.
\end{equation}
Its empirical law converges to the semicircle density
\begin{equation}\label{eq:semicircle}
 \rho_{\sclaw}(x)=\frac1{2\pi}\sqrt{4-x^2}\,\mathbf1_{[-2,2]}(x).
\end{equation}

\begin{proposition}[Randomly shifted GOE Hessian of the mixed model]\label{prop:shifted-GOE}
Fix $\sigma\in\SphereN$ and condition on
\begin{equation}\label{eq:condition-H-grad}
 H_N(\sigma)=Nu,
 \qquad
 \grad H_N(\sigma)=0.
\end{equation}
Then, as an operator on $T_\sigma\SphereN$,
\begin{equation}\label{eq:shifted-GOE}
 \Hess H_N(\sigma)
 \stackrel{\mathrm d}=c_NG_{N-1}
 -\bigl(\xi'(1)u+\omega_N\bigr)I_{N-1},
 \qquad
 c_N=\sqrt{\frac{N-1}{N}\,\xi''(1)},
\end{equation}
where $\omega_N=v_\xi Z/\sqrt N$ with $Z$ a standard Gaussian
variable independent of the GOE matrix $G_{N-1}$, and both are
independent of the conditioned tangent gradient.  In the pure case
$v_\xi=0$ this is the deterministically shifted GOE law of
\citet[Lemma~3.2(b)]{auffinger2013random}, with
$c_N=\sqrt{\tfrac{N-1}N\,p(p-1)}$ and shift $-pu$.
\end{proposition}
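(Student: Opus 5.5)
We prove the statement by an explicit Gaussian conditioning computation at a single point. Since the law of $H_N$ is invariant under orthogonal transformations of $\R^N$ by \eqref{eq:covariance}, we may take $\sigma=\sqrt N\,e_N$ and identify $T_\sigma\SphereN$ with $\mathrm{span}(e_1,\ldots,e_{N-1})$. Extend $H_N$ to $\R^N$ by the polynomial formula \eqref{eq:hamiltonian}. On the sphere of radius $\sqrt N$, with outward unit normal $\sigma/\sqrt N$ and second fundamental form $N^{-1/2}I$, the Riemannian gradient and Hessian are
\[
 \grad H_N(\sigma)=(\partial_iH_N)_{i<N},
 \qquad
 \Hess H_N(\sigma)=\bigl(\partial_i\partial_jH_N\bigr)_{i,j<N}-\frac{\langle\sigma,\nabla_E H_N(\sigma)\rangle}{N}\,I_{N-1},
\]
where $\nabla_E$ is the Euclidean gradient. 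The plan is therefore to compute the joint covariance of the Gaussian vector
\[
 \bigl(H_N(\sigma),\ X\defeq\langle\sigma,\nabla_EH_N(\sigma)\rangle,\ (\partial_iH_N)_{i<N},\ (\partial_{ij}H_N)_{i,j<N}\bigr)
\]
by differentiating $K(x,y)=N\xi(\langle x,y\rangle/N)$ and evaluating at $x=y=\sigma$. Along the way we use that the tangential coordinates of $\sigma$ vanish.

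\emph{Covariance table.} Differentiating $K$ gives the following entries.
\begin{itemize}
 \item $\Var H_N=N$.
 \item $\Cov(H_N,X)=N\xi'(1)$ and $\Var X=N(\xi'(1)+\xi''(1))$, the second obtained by applying $\langle x,\nabla_x\rangle\langle y,\nabla_y\rangle$ to $K$.
 \item $\Cov(\partial_iH_N,H_N)=\xi'(1)\sigma_i=0$ and $\Cov(\partial_iH_N,X)=0$ for $i<N$.
 \item $\Cov(\partial_iH_N,\partial_kH_N)=\xi'(1)\delta_{ik}$.
 \item $\Cov(\partial_{ij}H_N,H_N)=\xi''(1)\sigma_i\sigma_j/N=0$ for tangential $i,j$.
 \item $\Cov(\partial_{ij}H_N,X)$: every term contains a factor $\sigma_i$, $\sigma_j$, or $\delta_{iN}$, $\delta_{jN}$, hence vanishes.
 \item $\Cov(\partial_{ij}H_N,\partial_kH_N)=0$, since every term carries a tangential coordinate of $\sigma$.
 \item $\Cov(\partial_{ij}H_N,\partial_{kl}H_N)=\frac{\xi''(1)}N(\delta_{ik}\delta_{jl}+\delta_{il}\delta_{jk})$; the remaining terms again carry tangential coordinates of $\sigma$.
\end{itemize}
Thus the tangential Euclidean Hessian is $\sqrt{\xi''(1)/N}$ times a GOE matrix with off-diagonal variance $1$ and diagonal variance $2$. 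This equals $c_NG_{N-1}$ with $c_N=\sqrt{\tfrac{N-1}N\xi''(1)}$ under the normalization \eqref{eq:GOE-normalization}. It is independent of $(H_N,X,\grad H_N)$, and $\grad H_N$ is independent of $(H_N,X)$.

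\emph{Conditioning.} By the independence established above, conditioning on $\grad H_N(\sigma)=0$ does not affect the laws of the remaining variables. Conditioning on $H_N(\sigma)=Nu$ then only changes $X$. Gaussian regression gives
\[
 X\mid\{H_N=Nu\}\sim\mathcal N\bigl(N\xi'(1)u,\;N(\xi'(1)+\xi''(1)-\xi'(1)^2)\bigr)=\mathcal N\bigl(N\xi'(1)u,\;Nv_\xi^2\bigr).
\]
Hence $X/N=\xi'(1)u+\omega_N$ with $\omega_N=v_\xi Z/\sqrt N$. Here $Z$ is standard Gaussian and is independent of the Hessian block and of the gradient. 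Substituting this into the Riemannian Hessian formula gives \eqref{eq:shifted-GOE}. When $v_\xi=0$, the variable $X$ is the deterministic quantity $pH_N$ by Euler's identity, which recovers the pure-case statement.

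\emph{Main obstacle.} The only delicate step is bookkeeping rather than conceptual. We must fix the correct curvature correction and its sign for the radius-$\sqrt N$ sphere, namely the term $-\langle\sigma,\nabla_EH\rangle/N$. We must also verify that every mixed covariance involving tangential second derivatives vanishes, including the third-derivative ($\xi'''$) contributions in $\Cov(\partial_{ij}H_N,X)$. Each such term carries a factor $y_iy_j$ or $x_iy_j$ at tangential indices, which is zero at $\sigma=\sqrt N e_N$, so the vanishing holds. Because the conditioned variables are jointly Gaussian, these zero covariances upgrade to genuine independence, and the conditional law is exactly the one stated.
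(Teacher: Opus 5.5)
Your proof is correct, and it takes a genuinely different route from the paper's.

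\textbf{The paper's route.} The paper never differentiates the mixed kernel. It writes $H_N=\sum_p\gamma_pH_{N,p}$ with independent pure components and quotes the pure-case decomposition $\Hess H_{N,p}=c_N^{(p)}G^{(p)}_{N-1}-p\,(H_{N,p}(\sigma)/N)\,I$ (Lemma~3.2(b) of Auffinger, Ben Arous and \v{C}ern\'y) for each component. It adds the independent GOE parts in variance to get $c_N^2=\tfrac{N-1}{N}\xi''(1)$. It then identifies the scalar part as $W=\sum_p p\gamma_pH_{N,p}(\sigma)/N$, the radial derivative via Euler's identity applied component by component. The only remaining computation is the Gaussian regression of $W$ on $H_N$, which is exactly your $X/N$ step: the paper's $\Var W=(\xi'(1)+\xi''(1))/N$ and $\Cov(W,H_N/N)=\xi'(1)/N$ are your entries rescaled by $1/N^2$.

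\textbf{Your route.} You derive the full block-independence structure of $(H_N,X)$, the tangent gradient and the tangential Euclidean Hessian from first principles. You do this by differentiating $K(x,y)=N\xi(\langle x,y\rangle/N)$, and you also check that the $\xi'''$ and $\xi''''$ contributions vanish at tangential indices. This requires the kernel identity on all of $\R^N\times\R^N$ for the polynomial extension. That identity does hold, since $\E[H_{N,p}(x)H_{N,p}(y)]=N^{1-p}\langle x,y\rangle^p$.

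\textbf{What each route buys.} Your argument is self-contained: it verifies the independence claims directly instead of inheriting them from the cited lemma. It also uses neither the decomposition into finitely many independent homogeneous components nor per-component Euler identities, so it extends directly to any sufficiently smooth isotropic mixture. The paper's argument is shorter given the pure case. It also makes visible why the shift becomes random: $W$ is a different linear combination of the $H_{N,p}(\sigma)$ than $H_N(\sigma)$ is, unless $P$ is a singleton.
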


\begin{proof}
Write $H_N=\sum_{p\in P}\gamma_pH_{N,p}$ with independent pure
components as in \eqref{eq:hamiltonian}.  For each component,
\citet[Lemma~3.2(b)]{auffinger2013random}, translated to the
radius-$\sqrt N$ sphere and the GOE convention
\eqref{eq:GOE-normalization}, gives
$\Hess H_{N,p}(\sigma)=c_N^{(p)}G^{(p)}_{N-1}-p\,(H_{N,p}(\sigma)/N)\,I$
with $c_N^{(p)}=\sqrt{\tfrac{N-1}N\,p(p-1)}$ and $G^{(p)}_{N-1}$ a
GOE matrix independent of $H_{N,p}(\sigma)$ and of the tangent
gradient; since the conditional law does not depend on the
conditioning value, this is an exact decomposition in law.  Summing
the independent components, the GOE parts add in variance to
$c_N^2=\tfrac{N-1}N\sum_p\gamma_p^2\,p(p-1)=\tfrac{N-1}N\,\xi''(1)$,
while the scalar part is
$-W I$ with $W=\sum_p p\,\gamma_p H_{N,p}(\sigma)/N$.  Conditionally
on $H_N(\sigma)=Nu$, the Gaussian vector
$(H_{N,p}(\sigma))_{p\in P}$ gives
$W\sim N(\xi'(1)u,\;v_\xi^2/N)$ with
$v_\xi^2=\sum_pp^2\gamma_p^2-(\sum_pp\gamma_p^2)^2$ as in
\eqref{eq:v-xi}; the conditional-variance computation, together with
the identification of $W$ as the radial derivative
$\langle\sigma,\nabla H_N(\sigma)\rangle/N$, is recorded in
\Cref{app:geometry}.  The residual fluctuation of $W$ is a function
of the energies $(H_{N,p}(\sigma))$ and is therefore independent of
the GOE parts and of the gradient.
\end{proof}

Define the semicircle logarithmic potential
\begin{equation}\label{eq:Omega}
 \Omega(t)\defeq\int_{-2}^{2}\log|x-t|\rho_{\sclaw}(x)\dd x,
\end{equation}
so that the determinant rate of \eqref{eq:Dxi} is
\begin{equation}\label{eq:Dp}
 D_\xi(u,\omega)
 =\frac12\log\xi''(1)
 +\Omega\bigl(t_\xi(u,\omega)\bigr),
 \qquad
 D_\xi(u)=D_\xi(u,0),
\end{equation}
with $t_\xi(u,\omega)=(\xi'(1)u+\omega)/\sqrt{\xi''(1)}$ the shift
location of \eqref{eq:t-shift}.  An explicit expression is
\begin{equation}\label{eq:Omega-explicit}
 \Omega(t)=
 \begin{cases}
  \dfrac{t^2}{4}-\dfrac12,&|t|\le2,\\[0.7em]
  \dfrac{t^2}{4}-\dfrac12
  -\dfrac{|t|}{4}\sqrt{t^2-4}
  +\log\!\left(\dfrac{|t|+\sqrt{t^2-4}}2\right),&|t|>2.
 \end{cases}
\end{equation}

\begin{proposition}[Fixed-location spherical determinant]\label{prop:fixed-location-det}
Let $u_N\to u$ and condition at a deterministic $\sigma_N\in\SphereN$ on
$H_N(\sigma_N)=Nu_N$ and $\grad H_N(\sigma_N)=0$.  Then
\begin{equation}\label{eq:fixed-location-det}
 \frac1N\log\left|\det\Hess H_N(\sigma_N)\right|
 \longrightarrow D_\xi(u)
 \qquad\text{in probability}.
\end{equation}
\end{proposition}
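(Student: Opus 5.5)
By \Cref{prop:shifted-GOE}, conditionally on $H_N(\sigma_N)=Nu_N$ and $\grad H_N(\sigma_N)=0$,
\[
 \frac1N\log|\det\Hess H_N(\sigma_N)|
 \stackrel{\mathrm d}=\frac{N-1}{N}\log c_N
 +\frac1N\sum_{i=1}^{N-1}\log\bigl|\lambda_i(G_{N-1})-s_N\bigr|,
\]
where
\[
 s_N=\frac{\xi'(1)u_N+\omega_N}{c_N},
 \qquad \omega_N=\frac{v_\xi Z}{\sqrt N}.
\]
Since $c_N\to\sqrt{\xi''(1)}$ and $\omega_N\to0$ in probability, we have $s_N\to t_\xi(u,0)$ in probability, and the first term tends to $\tfrac12\log\xi''(1)$. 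It therefore suffices to show that
\[
 \frac1{N-1}\sum_i\log|\lambda_i-s_N|\to\Omega(t)
 \qquad\text{in probability, with } t=t_\xi(u,0).
\]
Because $Z$ is independent of $G_{N-1}$, I would first condition on $Z$. I would then prove the convergence uniformly over shifts $s$ in a compact neighbourhood of $t$. Continuity of $\Omega$ (explicit from \eqref{eq:Omega-explicit}) then finishes the argument.

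\emph{Upper bound.} The function $\log|x-s|$ is bounded above on $[-R,R]$. The operator norm of $G_{N-1}$ is at most $3$ with probability $1-e^{-cN}$. On that event, $\log|x-s|\wedge M$ is bounded and continuous for every $M$. Weak convergence of the empirical spectral measure to $\rho_{\sclaw}$, together with uniformity in $s$ (the test functions are equicontinuous), gives
\[
 \frac1{N-1}\sum_i\log|\lambda_i-s|\le\Omega(s)+o(1)
\]
uniformly in $s$. This is the easy direction.

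\emph{Lower bound.} This is the main obstacle. The logarithm is unbounded below, and eigenvalues close to $s$ could drag the average down when $s$ lies inside the bulk, i.e.\ when $|t|\le2$, which is possible for $u>-E_\infty(\xi)$.

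For $|t|>2$ there is no difficulty. With exponentially high probability all eigenvalues stay at distance at least $(|t|-2)/2$ from $s$, so $\log|x-s|$ is bounded and continuous on the relevant region and weak convergence gives the limit directly.

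For $|t|\le 2$, I would split the sum at the cut level $\log|\lambda_i-s|\ge-M$. The truncated part converges by weak convergence; since $\int\log|x-s|\mathbf 1\{|x-s|<e^{-M}\}\rho_{\sclaw}\to0$ uniformly in $s$, the truncation error on the limit side is small. The remaining task is to control
\[
 \frac1N\sum_{|\lambda_i-s|<e^{-M}}\log\frac1{|\lambda_i-s|},
\]
and I would do this with two standard GOE inputs. First, a Wegner-type estimate bounds the expected number of eigenvalues in any interval of length $\eta\ge N^{-1+\epsilon}$ by $C N\eta$; alternatively, one can use the rigidity and local semicircle law of Erd\H{o}s--Yau type. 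Second, a smallest-gap bound shows that $\min_i|\lambda_i-s|\ge e^{-N^{\epsilon}}$ with probability $1-o(1)$; for GOE with an independent continuous shift $s$, the density of eigenvalues is bounded, so this is immediate. Combining the two, dyadic shells $|\lambda_i-s|\in[2^{-k-1},2^{-k}]$ contribute at most $C2^{-k}k$ each after division by $N$, down to scale $N^{-1+\epsilon}$. The at most $N^{\epsilon}$ eigenvalues closer than that contribute at most
\[
 N^{\epsilon}\cdot N^{\epsilon}/N\to0.
\]
Hence the tail is $O(Me^{-M})+o(1)$ in probability. Letting $M\to\infty$ yields the lower bound, and with it the proposition.

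An alternative that avoids local laws is to use the exact identity $\E|\det(G-s)|$ computed via the GOE/Kac--Rice literature (\citet{auffinger2013random}, Lemma~3.3) together with concentration of $\frac1N\log|\det|$ obtained by regularizing at level $N^{-C}$. I would only fall back on this route if the Wegner route proved cumbersome.
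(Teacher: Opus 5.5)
Your proposal is correct and is essentially the paper's argument (\Cref{app:weighted-KR}): truncate the logarithm near the shift, use the semicircle law for the truncated part, and remove the truncation by counting eigenvalues near the shift. The paper does the last step in one line from the uniform boundedness on compacts of the mean GOE eigenvalue density, which gives $\E\int_{|x-s|<\eta}\log(\eta/|x-s|)\,L_n(\dd x)\le C\eta$ for every deterministic $s$; this makes your $N^{-1+\epsilon}$ cutoff and smallest-gap step unnecessary, and it needs no continuity of the shift's law, which fails in the pure case $v_\xi=0$ where the shift is deterministic. One correction: in your upper bound the truncation should read $\log|x-s|\vee(-M)$ rather than $\wedge M$.
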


\begin{proof}
Under \eqref{eq:shifted-GOE}, the normalized log determinant is the empirical integral of
$\log|c_Nx-\xi'(1)u_N-\omega_N|$.  The random shift $\omega_N=v_\xi Z/\sqrt N$ tends to zero in probability, so it does not affect a limit in probability: truncate the logarithm at distance $\eta$ from the shift, use the semicircle law for the bounded truncation, and remove $\eta$ by GOE eigenvalue-counting and level-repulsion estimates.  The details are recorded in \Cref{app:weighted-KR}.  (The fluctuation $\omega_N$ matters only under exponential-scale expectations, where it is tilted to order-one values at the speed-$N$ cost $\mathfrak p_\xi(\omega)$; this is the content of the rate functionals \eqref{eq:Lambda-rk}.)  This proposition is not used by selecting one gate saddle; instead, its bulk rate reappears inside the aggregate Kac--Rice calculation below.
\end{proof}

\subsection{The Eyring--Kramers conductance complexity}

Define the Eyring--Kramers conductance upper complexity by the
half-determinant tilted rate of \eqref{eq:Lambda-rk}:
\begin{equation}\label{eq:Xi-EK}
 \Xi^{\mathrm{EK}}_{1,\xi}(u)
 \defeq\varrho_\xi(u)+\Lambda^{(1/2)}_{1,\xi}(u)
 =\varrho_\xi(u)
 +\sup_{\omega\in\R}
 \Bigl\{-\mathfrak p_\xi(\omega)
 +\tfrac12D_\xi(u,\omega)
 -J\bigl(t_\xi(u,\omega)\bigr)\Bigr\}.
\end{equation}
In the pure case $\xi(x)=x^p$, where $v_\xi=0$, the supremum collapses at $\omega=0$ and
\begin{equation}\label{eq:Xi-EK-explicit}
 \Xi^{\mathrm{EK}}_{1,\xi}(u)
 =\Theta_{1,\xi}(u)-\frac12D_\xi(u)
 =\frac12\log(p-1)
 -\frac{p-2}{4(p-1)}u^2
 -2I_{1,\xi}(u)-\frac12D_\xi(u),
 \quad u<-E_\infty(\xi);
\end{equation}
for genuine mixtures the shift optimization couples the complexity
and determinant terms, and only inequalities survive.  Let
$\omega^{\mathrm{crt}}_u$ and $\omega^{\mathrm{EK}}_u$ be maximizers
of the suprema defining $\Lambda^{(1)}_{1,\xi}(u)$ and
$\Lambda^{(1/2)}_{1,\xi}(u)$ respectively; they exist by
\Cref{lem:B-Lambda-regularity}.  Evaluating each objective at the
other's maximizer gives
\begin{equation}\label{eq:Xi-vs-Theta}
 \Theta_{1,\xi}(u)-\tfrac12D_\xi\bigl(u,\omega^{\mathrm{crt}}_u\bigr)
 \ \le\
 \Xi^{\mathrm{EK}}_{1,\xi}(u)
 \ \le\
 \Theta_{1,\xi}(u)-\tfrac12D_\xi\bigl(u,\omega^{\mathrm{EK}}_u\bigr)
 \ \le\
 \Theta_{1,\xi}(u)-\tfrac12\Bigl(\tfrac12\log\xi''(1)-\tfrac12\Bigr),
\end{equation}
where the last step uses the global bound
$\Omega(t)\ge\Omega(0)=-\tfrac12$, read off from
\eqref{eq:Omega-explicit}, so that
$D_\xi(u,\omega)\ge\tfrac12\log\xi''(1)-\tfrac12$ for \emph{every}
shift $\omega$.  The sharper value $\Omega(-2)=\tfrac12$, which would
give the constant $\tfrac12(\tfrac12\log\xi''(1)+\tfrac12)$, is
available only when the half-determinant maximizer places the
Hessian shift outside the semicircle support,
$t_\xi(u,\omega^{\mathrm{EK}}_u)\le-2$.  Although the index-one
event forces one eigenvalue below the shift, the \emph{optimizing}
shift of the half-determinant problem can lie inside the bulk even
at $u=-E_1(\xi)$, and then the sharper constant fails
(\Cref{rem:Xi-constant-example}).  Under \Cref{ass:mixture},
$\xi''(1)\ge12$, so the certified constant in
\eqref{eq:Xi-vs-Theta} is
$\tfrac14\log\xi''(1)-\tfrac14\ge\tfrac14\log12-\tfrac14>0.37$.
The subtraction of one half of the
determinant rate has a direct Kac--Rice origin.  The critical-point
Jacobian contributes $|\det Q_z|$, whereas the local conductance
contributes $\alpha_z/\sqrt{|\det Q_z|}$.  Their product is
\begin{equation}\label{eq:KR-cancellation}
 |\det Q_z|\,
 \frac{\alpha_z}{\sqrt{|\det Q_z|}}
 =\alpha_z\sqrt{|\det Q_z|}.
\end{equation}
Thus no inverse determinant remains in the one-point expectation.  Soft stable eigenvalues suppress rather than enlarge the weighted Kac--Rice integrand.

\begin{theorem}[Weighted Kac--Rice pressure for index-one saddles]\label{thm:weighted-KR}
Fix $\beta>0$ and a compact interval
$I\subset(-\infty,0)$.  Then
\begin{equation}\label{eq:weighted-KR-expectation}
 \limsup_{N\to\infty}\frac1N
 \log\E\cW_{N,\beta}(I)
 \le
 \sup_{u\in I}
 \left\{-\beta u+\Xi^{\mathrm{EK}}_{1,\xi}(u)\right\}.
\end{equation}
Consequently,
\begin{equation}\label{eq:weighted-KR-prob}
 \pLimsup_{N\to\infty}\frac1N
 \log\cW_{N,\beta}(I)
 \le
 \sup_{u\in I}
 \left\{-\beta u+\Xi^{\mathrm{EK}}_{1,\xi}(u)\right\}.
\end{equation}
Here and below $\log0=-\infty$.
\end{theorem}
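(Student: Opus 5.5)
The plan is to apply the Kac--Rice formula to $\cW_{N,\beta}(I)$ as a sum of a function of the critical point. For the marked count
\[
 \E\cW_{N,\beta}(I)=\int_{\SphereN}\E\Bigl[\,|\det\Hess H_N(\sigma)|\,\mathbf 1\{\ind=1\}\,e^{-\beta H_N(\sigma)}\frac{\alpha_\sigma}{\sqrt{|\det\Hess H_N(\sigma)|}}\,\mathbf 1\{H_N(\sigma)/N\in I\}\,\Big|\,\grad H_N(\sigma)=0\Bigr]\,\varphi_{\grad H_N(\sigma)}(0)\,\dd\sigma .
\]
By isotropy the integrand is constant in $\sigma$, so the integral is $\vol(\SphereN)$ times a single one-point expectation. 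The cancellation \eqref{eq:KR-cancellation} turns the weight into $\alpha\sqrt{|\det Q|}$, so no inverse determinant has to be controlled.

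Next I condition on the energy $H_N(\sigma)=Nu$, which has density $\propto e^{-Nu^2/2}$ up to polynomial factors, and use \Cref{prop:shifted-GOE}. Conditionally on $u$, the Hessian is $c_NG_{N-1}-(\xi'(1)u+\omega_N)I$ with $\omega_N=v_\xi Z/\sqrt N$. The gradient density at zero, the Gaussian energy density and the sphere volume combine to give exactly $e^{N\varrho_\xi(u)+o(N)}$, as in the unweighted count of \Cref{lem:B-index-count}. The problem therefore reduces to bounding, uniformly for $u\in I$,
\[
 \E\Bigl[e^{-\beta Nu}\,\alpha\,|\det(c_NG-(\xi'(1)u+\omega_N))|^{1/2}\,\mathbf 1\{\ind=1\}\Bigr].
\]
The factor $e^{-\beta Nu}$ is deterministic given $u$. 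The factor $\alpha\le\|\Hess\|_{\op}$ is subexponential once I truncate on the event $\|G\|_{\op}\le C$, whose complement has super-exponentially small probability. On that complement I use Cauchy--Schwarz together with crude moment bounds on $|\det|$; the bound $\|G\|\le C$ also handles the tails of $Z$.

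For the matrix part I condition on $Z$, that is, on the shift $\omega=v_\xi Z/\sqrt N$, and apply the per-shift GOE estimate \Cref{lem:B-matrix-rates} with determinant power $r=1/2$ and one outlier ($k=1$). This gives the exponent $\tfrac12D_\xi(u,\omega)-J(t_\xi(u,\omega))$ uniformly over compact $(u,\omega)$. Then I integrate against the law of $\omega$, which has density $\propto e^{-N\mathfrak p_\xi(\omega)}$, and apply the scalar-tilt Laplace bound \Cref{lem:B-tilt-laplace}. The result is $\Lambda^{(1/2)}_{1,\xi}(u)$. Large $|\omega|$ is harmless because $\mathfrak p_\xi$ is quadratic while $D_\xi$ grows only logarithmically. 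In the pure case the $\omega$-integral is trivial. A finite covering of $I$ in $u$, together with the local Lipschitz continuity from \Cref{lem:B-Lambda-regularity}, then turns the pointwise bounds into $\sup_{u\in I}\{-\beta u+\Xi^{\mathrm{EK}}_{1,\xi}(u)\}$, which gives \eqref{eq:weighted-KR-expectation}.

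The step I expect to be the main obstacle is the uniformity of the per-shift estimate for the half-power determinant restricted to the index-one event. Near the edge $t\approx-2$, the small eigenvalues close to the shift make $\log|\det|$ singular, so the expectation of $|\det|^{1/2}\mathbf 1\{\ind=1\}$ is not directly given by the semicircle law. My approach there is to split off the single outlier eigenvalue using its large-deviation rate $J$, and to bound the remaining bulk determinant from above via $\log|x|\le\log(|x|\vee\eta)$. This regularization only increases the integrand, so an upper bound suffices, and I then let $\eta\downarrow0$ using continuity of $\Omega$.

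Finally, \eqref{eq:weighted-KR-prob} follows from Markov's inequality: $\Pp(\cW\ge e^{N(s+\eps)})\le e^{-N(s+\eps)}\E\cW\le e^{-N\eps/2}$ for large $N$, where $s$ denotes the supremum in \eqref{eq:weighted-KR-expectation}.
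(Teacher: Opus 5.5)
Your proposal is correct and follows the paper's proof essentially step for step. The shared steps are: the marked one-point Kac--Rice formula with the cancellation $|\det Q|\,\alpha/\sqrt{|\det Q|}=\alpha\sqrt{|\det Q|}$; the randomly shifted GOE law; the per-shift bound of \Cref{lem:B-matrix-rates}(b) at $r=1/2$, obtained through the $\eta$-truncated logarithm, the outlier split, and speed-$N^2$ bulk large deviations; the Laplace bound of \Cref{lem:B-tilt-laplace} over the shift; the prefactor limit $\varrho_\xi$; binning in $u$; and Markov's inequality.

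Two points need slight correction. First, $\Pp(\|G\|_{\op}\ge C)$ is only exponentially small, with rate of order $C^2$, not super-exponentially small; your Cauchy--Schwarz plus crude moment argument still handles the complement once $C$ is large, which is what \Cref{lem:B-det-moments} records. Second, the large-$|\omega|$ control does not come from $\|G\|\le C$ as such. It comes from the polynomial-in-$|\omega|$ growth of the mark on the norm event, which is what verifies the global growth hypothesis of \Cref{lem:B-tilt-laplace}.
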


\begin{proof}
Apply the marked one-point Kac--Rice formula to \eqref{eq:W-def}.
Two points of rigor are recorded at the outset.  First, the mark
$\alpha(Q)\sqrt{|\det Q|}\,\mathbf1_{\{\ind Q=1\}}$ is a nonnegative
Borel function of the Hessian that is unbounded; the Kac--Rice
identity for it is obtained by applying the bounded-mark identity to
the truncations $\min\{\cdot,m\}$ and letting $m\to\infty$ by
monotone convergence on both sides (the same argument is used, and
spelled out, in the proof of \Cref{prop:soft-marked-KR}).  Second,
the prose estimates in this proof and in \Cref{lem:half-det} are
conveniences for readability; the quantitative statement this
theorem ultimately consumes is \Cref{lem:B-matrix-rates}(b), which
bounds the conditional matrix expectation uniformly on compact
energy sets.  Conditional on energy $Nu$ and zero tangent gradient, the Hessian has the randomly shifted GOE law \eqref{eq:shifted-GOE}.  By \eqref{eq:KR-cancellation}, the matrix expectation is
\[
 \E
 \left[
  \alpha(Q)\sqrt{|\det Q|}\,
  \mathbf1_{\{\ind Q=1\}}
 \right],
 \qquad Q=c_NG_{N-1}-\bigl(\xi'(1)u+\omega_N\bigr)I,
\]
the expectation running over the GOE matrix and the shift
fluctuation $\omega_N$.  Conditionally on
$\omega_N=\omega$, the empirical spectral measure has speed-$N^2$
concentration at the semicircle law, the single outlier producing
index one has the speed-$N$ cost $J(t_\xi(u,\omega))$, the bulk half
determinant contributes at most $D_\xi(u,\omega)/2$, and the
unstable eigenvalue $\alpha(Q)$ contributes no positive $N$-scale
logarithm; integrating over $\omega$ at the Gaussian cost
$\mathfrak p_\xi(\omega)$ (\Cref{lem:B-tilt-laplace}) yields the
rate $\Lambda^{(1/2)}_{1,\xi}(u)$.  Compared with the index-one
count, whose mark carries the full determinant power and produces
$\Lambda^{(1)}_{1,\xi}(u)$, the half-determinant mark lowers the
matrix exponent exactly as in \eqref{eq:Xi-vs-Theta}.  Adding the
Kac--Rice prefactor rate $\varrho_\xi(u)$ gives the local upper rate
\eqref{eq:Xi-EK}.

Partition $I$ into finitely many bins, bound $e^{-\beta H}$ by its maximum in each bin, apply the marked Kac--Rice identity \eqref{eq:B-KR-identity} together with the uniform rate \Cref{lem:B-matrix-rates}(b) on each bin, and let the mesh tend to zero, using the continuity of $\Xi^{\mathrm{EK}}_{1,\xi}$ (\Cref{lem:B-Lambda-regularity}).  This proves \eqref{eq:weighted-KR-expectation}.  The GOE truncation and the shift-Laplace step are given in detail in \Cref{app:weighted-KR}.  Finally, Markov's inequality upgrades the annealed upper bound to \eqref{eq:weighted-KR-prob}.
\end{proof}

\begin{corollary}[Geometrically normalized saddle pressure]\label{cor:K-pressure}
Under the assumptions of \Cref{thm:weighted-KR},
\begin{equation}\label{eq:K-pressure}
 \pLimsup_{N\to\infty}\frac1N
 \log\cK_{N,\beta}(I)
 \le
 -\frac12\log(\beta e)
 +\sup_{u\in I}
 \left\{-\beta u+\Xi^{\mathrm{EK}}_{1,\xi}(u)\right\}.
\end{equation}
\end{corollary}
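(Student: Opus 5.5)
The plan is to take logarithms in the definition \eqref{eq:K-def}, divide by $N$, and reduce everything to \Cref{thm:weighted-KR}; the only new input is the asymptotics of the deterministic normalization. Writing
\[
 \frac1N\log\cK_{N,\beta}(I)
 =-\frac1N\log\vol(\SphereN)
 +\frac{N-3}{2N}\log\frac{2\pi}{\beta}
 +\frac1N\log\cW_{N,\beta}(I),
\]
the last term is controlled in $\pLimsup$ by \eqref{eq:weighted-KR-prob}. The first two terms are deterministic, so they pass through the limit in probability unchanged. This uses the elementary fact that $\pLimsup(X_N+a_N)=\pLimsup X_N+\lim a_N$ when $a_N$ is a convergent deterministic sequence, which follows directly from the definition in the footnote.

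Next I will compute the deterministic limit. The surface area of the radius-$\sqrt N$ sphere is $\vol(\SphereN)=2\pi^{N/2}N^{(N-1)/2}/\Gamma(N/2)$. Stirling's formula $\log\Gamma(N/2)=\tfrac N2\log\tfrac N2-\tfrac N2+O(\log N)$ then gives
\[
 \frac1N\log\vol(\SphereN)\longrightarrow\frac12\log\pi+\frac12\log2+\frac12=\frac12\log(2\pi e).
\]
The $N\log N$ terms cancel between $N^{(N-1)/2}$ and $\Gamma(N/2)$. Separately, $\tfrac{N-3}{2N}\log(2\pi/\beta)\to\tfrac12\log(2\pi/\beta)$. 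Combining the two limits,
\[
 -\tfrac12\log(2\pi e)+\tfrac12\log(2\pi/\beta)=-\tfrac12\log(\beta e),
\]
which is exactly the additive constant in \eqref{eq:K-pressure}.

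Finally, I will handle the degenerate case. If the supremum in \eqref{eq:weighted-KR-prob} is $-\infty$, or $\cW_{N,\beta}(I)=0$ so that $\log0=-\infty$, then adding a finite deterministic constant preserves the bound trivially. The supremum is finite in any case, since $\Xi^{\mathrm{EK}}_{1,\xi}$ is continuous on the compact interval $I$. There is no real obstacle here. The only points needing care are the Stirling bookkeeping that produces $\tfrac12\log(2\pi e)$ (and not, say, a stray $\log N$), and checking that the $-3$ in the exponent $(N-3)/2$ contributes only $O(1/N)$.
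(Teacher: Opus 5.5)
Your proposal is correct and follows the paper's proof: both take logarithms in \eqref{eq:K-def}, use Stirling to obtain $\frac1N\log\vol(\SphereN)\to\frac12\log(2\pi e)$, take the trivial limit of $\frac1N\log(2\pi/\beta)^{(N-3)/2}$, and combine these with \eqref{eq:weighted-KR-prob}. You also check explicitly that convergent deterministic shifts pass through $\pLimsup$, a step the paper leaves implicit.
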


\begin{proof}
Stirling's formula gives
\begin{equation}\label{eq:sphere-volume-asymptotic}
 \lim_{N\to\infty}\frac1N\log\vol(\SphereN)
 =\frac12\log(2\pi e).
\end{equation}
Also,
\[
 \lim_{N\to\infty}\frac1N
 \log\left(\frac{2\pi}{\beta}\right)^{(N-3)/2}
 =\frac12\log\frac{2\pi}{\beta}.
\]
Combine these limits with \eqref{eq:weighted-KR-prob} and \eqref{eq:K-def}.
\end{proof}

\begin{remark}[Why this is stronger than a selected determinant assumption]\label{rem:aggregate-vs-selected}
The weighted pressure does not assert that a saddle selected by connectivity has the fixed-location law \eqref{eq:fixed-location-det}.  It calculates the whole marked sum before any saddle is selected.  The Kac--Rice Jacobian removes the singular inverse determinant, and the complexity term $\Theta_{1,\xi}$ explicitly pays for the exponential number of index-one channels.  This is the principal advantage of the aggregate formulation over a selected-gate determinant premise.
\end{remark}

\section{The aggregate Eyring--Kramers refinement: proof of Theorem~\ref{thm:ek}}\label{sec:network}

The unconditional theorem in \Cref{sec:deep-cutoff} requires no well-mass or target-mass assumption, while the weighted Kac--Rice theorem removes the need to transfer a determinant law to a connectivity-selected saddle.  Passing from the aggregate saddle pressure to a continuum trial potential has four logically distinct parts.  The low-index handlebody must admit a finite separating co-core cut; curves that leave the low sublevel must be charged at energy cost $\exp\{\beta Nb+o(N)\}$; the local profiles must be glued without paying for overlap; and the aggregate energy of the co-core profiles must be comparable to the Eyring--Kramers sum.  The first three statements are deterministic consequences of Morse theory, the derivative bound, and weighted $2$-modulus.  For $b$ sufficiently close to $E_1(\xi)$, the last comparison is formulated as \Cref{thm:cocore-localization}.  Appendix~\ref{app:near-threshold-cocore} proves its deterministic and good-saddle parts and supplies the marked Kac--Rice estimate that controls the soft-saddle contribution.

For $a>E_1(\xi)$, on the event in \Cref{prop:deep-components}, define the antipodal deep-well unions
\begin{equation}\label{eq:A-B-deep}
 \cA_N(a)\defeq\{\sigma\in\cL_N(a):s_{N,a}(\sigma)=1\},
 \qquad
 \cB_N(a)\defeq-\cA_N(a).
\end{equation}
They satisfy
\begin{equation}\label{eq:A-B-masses}
 \pi_{N,\beta}(\cA_N(a))
 =\pi_{N,\beta}(\cB_N(a))
 =\frac12\pi_{N,\beta}(\cL_N(a)).
\end{equation}
For a smooth $f:\SphereN\to\R$, write
\begin{equation}\label{eq:unnormalized-energy}
 \cE_{N,\beta}(f)
 =\int_{\SphereN}|\grad f|^2e^{-\beta H_N}\dd\nu_N.
\end{equation}
Throughout this section,
\begin{equation}\label{eq:b-range}
 E_2(\xi)<b<E_1(\xi),
\end{equation}
and, for $\delta>0$, we abbreviate
\begin{equation}\label{eq:saddle-interval}
 I_{\xi,b,\delta}=[-E_1(\xi)-\delta,-b].
\end{equation}

\begin{proposition}[Uniform stable directions in the low-index layer]\label{prop:stable-gap-layer}
Fix $b>E_2(\xi)$.  There is $\kappa_{\xi,b}>0$ such that
\begin{equation}\label{eq:stable-gap-layer}
 \Pp\left(
 \begin{array}{c}
  \exists z:\quad \grad H_N(z)=0,\quad H_N(z)\le-Nb,\\[-0.1em]
  \ind Q_z=1,\quad \lambda_2(Q_z)\le\kappa_{\xi,b}
 \end{array}
 \right)\longrightarrow0,
\end{equation}
where $\lambda_2(Q_z)$ is the smallest positive eigenvalue of the index-one Hessian.
\end{proposition}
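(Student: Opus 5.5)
The plan is a first-moment (Markov) argument on the marked Kac--Rice count of index-one saddles whose second eigenvalue is soft, mirroring the proofs of \Cref{prop:complexity-upper} and \Cref{prop:layering}. Define, for $\kappa>0$,
\[
 \Crt^{\mathrm{soft}}_{N}(\kappa)\defeq\#\bigl\{z:\grad H_N(z)=0,\ H_N(z)\le-Nb,\ \ind Q_z=1,\ \lambda_2(Q_z)\le\kappa\bigr\}.
\]
By Markov's inequality the probability in \eqref{eq:stable-gap-layer} is at most $\E\Crt^{\mathrm{soft}}_N(\kappa)$, so it suffices to show that this expectation is $e^{-cN}$ for some $\kappa=\kappa_{\xi,b}>0$. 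The marked Kac--Rice identity \eqref{eq:B-KR-identity}, with the bounded mark $\mathbf 1\{\ind Q=1,\ \lambda_2(Q)\le\kappa\}$ and the law \eqref{eq:shifted-GOE}, writes the expectation as $\int_{u\le-b}e^{N\varrho_\xi(u)+o(N)}\,\E\bigl[|\det Q|\mathbf 1\{\ind Q=1,\lambda_2(Q)\le\kappa\}\bigr]\,\dd u$, with $Q=c_NG_{N-1}-(\xi'(1)u+\omega_N)I$. The Gaussian energy tail disposes of $u\le-A$ for large $A$, leaving the compact window $u\in[-A,-b]$.

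The key step is the matrix estimate. Conditionally on $\omega_N=\omega$, the event forces the two smallest eigenvalues of $c_NG_{N-1}$ to lie in $(-\infty,\,\xi'(1)u+\omega+\kappa]$. Equivalently, at least two eigenvalues lie at or below a level whose rescaled position is $t_\xi(u,\omega)+\kappa/\sqrt{\xi''(1)}$. Two cases arise. If this level is $\le-2-\eta$, the two-outlier estimate \Cref{lem:B-two-outlier} gives the cost $2J\bigl(t_\xi(u,\omega)+\kappa/\sqrt{\xi''(1)}\bigr)$. If it is inside or near the bulk, the indicator costs nothing, but then $t_\xi$ is close to the bulk. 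After the scalar-tilt Laplace bound \Cref{lem:B-tilt-laplace}, the exponent is at most
\[
 \varrho_\xi(u)+\sup_\omega\Bigl\{-\mathfrak p_\xi(\omega)+D_\xi(u,\omega)-2J\bigl(t_\xi(u,\omega)+\kappa'\bigr)\Bigr\},\qquad \kappa'=\kappa/\sqrt{\xi''(1)}.
\]
As $\kappa\downarrow0$ this converges to $\Theta_{2,\xi}(u)$, uniformly on $[-A,-b]$, by continuity of $J$ and the coercivity argument of \Cref{lem:B-Lambda-regularity}; the shift supremum is attained on a compact $\omega$-set, on which $J$ is uniformly continuous. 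Since $b>E_2(\xi)$, \Cref{prop:complexity-upper} and \eqref{eq:Ek} give $\sup_{u\le-b}\Theta_{2,\xi}(u)<0$. Choosing $\kappa$ small keeps the perturbed exponent negative, and Markov finishes the proof.

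The main obstacle is justifying the perturbed two-outlier bound uniformly: one needs $\E[|\det Q|\mathbf 1\{\lambda_1,\lambda_2\le s\}]\le e^{N(\Omega\text{-term}-2J(s')+o(1))}$ uniformly in $s$ and $\omega$ on compacts, including the regime where $s$ is within $\kappa$ of the edge. There the determinant factor must be controlled when eigenvalues sit very close to the shift, so $\log|\det|$ should be bounded above only, which is harmless. I would first try to reuse \Cref{lem:B-two-outlier} verbatim, applied with the threshold $\xi'(1)u+\omega+\kappa$ in place of the shift while keeping the determinant centered at the true shift. The determinant rate is an upper bound that is continuous in the shift, so the discrepancy is absorbed into the uniform continuity in $\kappa$. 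If that lemma is stated only at the shift itself, I would instead rederive it by the standard route: an LDP for the two smallest eigenvalues, obtained by conditioning on them and using the joint GOE density, together with speed-$N^2$ concentration of the bulk.
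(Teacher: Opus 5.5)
Your proposal is correct and follows essentially the same route as the paper's proof (carried out in \Cref{lem:B-soft-stable-count}): Markov's inequality on the marked Kac--Rice count, reduction of the event to $\lambda_2(G_n)\le t_N(u,\omega)+\kappa/c_N$, the two-outlier bound of \Cref{lem:B-two-outlier} with the bulk determinant controlled on the typical-bulk/bounded-norm event, the scalar-tilt Laplace bound, a Gaussian energy tail, and a small $\kappa$ chosen using $\sup_{u\le-b}\Theta_{2,\xi}(u)<0$. The differences are cosmetic: the paper sets $\kappa_{\xi,b}=\vartheta_b/(4L_{\xi,b})$ through an explicit Lipschitz bound on $J$, whereas you use uniform continuity as $\kappa\downarrow0$; and the global-growth hypothesis of \Cref{lem:B-tilt-laplace}, which you leave implicit, is supplied by \Cref{lem:B-det-moments}.
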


\begin{proof}
By Markov's inequality, the probability in \eqref{eq:stable-gap-layer} is bounded by the expected number of index-one critical points below $-Nb$ whose smallest positive Hessian eigenvalue is at most $\kappa$.  \Cref{lem:B-soft-stable-count}, proved in \Cref{app:weighted-KR} by an explicit two-outlier Kac--Rice estimate, bounds this expectation by $e^{-c_{\xi,b}N}$ for a suitable $\kappa_{\xi,b}>0$: the shifted-GOE law \eqref{eq:shifted-GOE} converts the Hessian event into $\lambda_2(G_{N-1})\le t_N(u)+\kappa/c_N$, the uniform two-outlier large-deviation estimate of \Cref{lem:B-two-outlier} supplies, at each value $\omega$ of the shift fluctuation, the cost $2J(t_\xi(u,\omega))-O(\kappa)$, the scalar-tilt Laplace bound of \Cref{lem:B-tilt-laplace} integrates over $\omega$ at the penalty $\mathfrak p_\xi(\omega)$, and the resulting Kac--Rice exponent is at most $\Theta_{2,\xi}(-b)+O(\kappa)<0$ for $\kappa$ small, uniformly over the entire energy range because the Gaussian energy density controls $u\le-C_\xi$ directly.  This proves the claim.
\end{proof}

\subsection{The separating co-core cut}

Set
\[
 \overline\Omega_N(b)=\{\sigma\in\SphereN:H_N(\sigma)\le-Nb\},
 \qquad
 \Omega_N(b)=\operatorname{int}\overline\Omega_N(b)
 =\{H_N<-Nb\}.
\]
A co-core of a one-handle $[-1,1]\times\mathbb D^{N-2}$ is the disk
$\{0\}\times\mathbb D^{N-2}$.  Co-cores are taken neatly and properly embedded in the compact regular sublevel $\overline\Omega_N(b)$, with boundary on $\partial\overline\Omega_N(b)$.

\begin{lemma}[Topological co-core cut]\label{lem:topological-cocore-cut}
Fix $a>E_1(\xi)$ and $\delta>0$.  On the event that
\begin{enumerate}[label=\textup{(\roman*)},leftmargin=*]
 \item $H_N$ is Morse and the levels $-Na$ and $-Nb$ are regular;
 \item every critical point below $-Nb$ has index zero or one;
 \item no index-one critical point lies below $-N(E_1(\xi)+\delta)$;
 \item every component of $\cL_N(a)$ contains exactly one critical point, which is a minimum,
\end{enumerate}
there are a finite set $\cC_N=\cC_N(a,b,\delta)$ of index-one critical points, pairwise disjoint co-core disks $(D_z)_{z\in\cC_N}$, and pairwise disjoint bicollars
\[
 \Psi_z:D_z\times(-2,2)\longrightarrow\overline\Omega_N(b),
 \qquad \Psi_z(x,0)=x,
\]
whose closures are disjoint from $\cA_N(a)\cup\cB_N(a)$, such that
\begin{equation}\label{eq:cut-saddle-range}
 \frac{H_N(z)}N\in I_{\xi,b,\delta},
 \qquad z\in\cC_N.
\end{equation}
Moreover, every path in $\Omega_N(b)$ from $\cA_N(a)$ to $\cB_N(a)$ contains, for some $z\in\cC_N$, a subpath in the bicollar joining the two faces
$\Psi_z(D_z\times\{-1\})$ and $\Psi_z(D_z\times\{1\})$.  If the two boundary sets already lie in different components of $\Omega_N(b)$, one may take $\cC_N=\varnothing$.
\end{lemma}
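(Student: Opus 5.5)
We build $\overline\Omega_N(b)$ as a Morse handlebody and read off the cut from the handle graph, in the spirit of \Cref{prop:spin-gates}. By (i) and (ii), and by \Cref{prop:morse-spherical}, the compact regular sublevel $\overline\Omega_N(b)$ deformation-retracts onto a finite handlebody. This handlebody consists of zero-handles, one for each minimum below $-Nb$, and one-handles, one for each index-one critical point $z$ with $H_N(z)<-Nb$. We attach the handles in increasing order of critical value, so the gradient-like flow gives a diffeomorphism of $\overline\Omega_N(b)$ with this handle decomposition.

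\emph{Step 1: the graph.} Form the finite graph $\mathcal G_N$ whose vertices are the zero-handles and whose edges are the one-handles. An edge joins the vertices, possibly a single vertex, that contain the two endpoints of the attaching sphere $S^0$ of its unstable axis.

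By (iv), each component of $\cL_N(a)$ contains exactly one minimum and no other critical point. Since $a>b$, each such component therefore lies inside the zero-handle of its minimum, after shrinking the handles along the flow. The components of $\cA_N(a)$ and of $\cB_N(a)$ thus mark disjoint vertex sets $V_A^0$ and $V_B^0$.

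\emph{Step 2: the cut.} Take $\cC_N$ to be an inclusion-minimal edge cut separating $V_A^0$ from $V_B^0$ in $\mathcal G_N$. If no path joins them, take $\cC_N=\varnothing$; this is the final clause of the lemma, since then $\cA_N(a)$ and $\cB_N(a)$ lie in different components of $\Omega_N(b)$.

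By (iii), every index-one point has $H_N(z)\ge-N(E_1+\delta)$. By construction $H_N(z)<-Nb$. This gives \eqref{eq:cut-saddle-range}.

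For each $z\in\cC_N$, let $D_z$ be the co-core $\{0\}\times\mathbb D^{N-2}$ of its one-handle, extended along the flow collar so that it is neatly embedded with boundary on $\partial\overline\Omega_N(b)$. Let the bicollar $\Psi_z$ be the thickening $[-1/2,1/2]\times\mathbb D^{N-2}$, reparametrized to $(-2,2)$ in the handle coordinate. Distinct one-handles are disjoint, so the bicollars are pairwise disjoint. They are also disjoint from the zero-handles, and hence have closures disjoint from $\cA_N(a)\cup\cB_N(a)$.

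\emph{Step 3: the crossing property.} Let $\gamma$ be a path in $\Omega_N(b)$ from $\cA_N(a)$ to $\cB_N(a)$. Remove from $\overline\Omega_N(b)$ the open middle slabs $\Psi_z(D_z\times(-1,1))$ for $z\in\cC_N$. Cutting a one-handle along its co-core deletes exactly that edge from $\mathcal G_N$, so the remaining space retracts onto the handlebody of $\mathcal G_N\setminus\cC_N$.

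In that graph $V_A^0$ and $V_B^0$ lie in different components. Hence $\gamma$ must enter some slab. Take a maximal subinterval of $\gamma$ whose interior lies in the open slab. Its endpoints lie on the faces $D_z\times\{\pm1\}$, because the lateral boundary $\partial D_z\times(-1,1)$ lies on $\partial\overline\Omega_N(b)$, which $\gamma\subset\Omega_N(b)$ avoids.

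If every such excursion returned to the face it entered from, each could be replaced by a path on that face. This replacement keeps the path in the complement of the slabs, contradicting the separation. Hence some subpath joins the two faces.

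\emph{Main obstacle.} The delicate point is Step 3's identification of $\overline\Omega_N(b)\setminus\bigcup_z$ slabs with the handlebody of the deleted graph. This needs the co-cores to be properly embedded all the way out to the regular level $-Nb$, which is where they meet the boundary, rather than merely inside the handle. I would obtain this from the flow collar between the last critical value below $-Nb$ and $-Nb$ itself, using the Morse attachment theorem and regularity of $-Nb$ from (i). Disjointness of the bicollars from one another and from the wells follows by choosing the handle thickness small.
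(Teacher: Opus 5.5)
Your proposal is correct and follows the paper's proof step for step: you decompose $\overline\Omega_N(b)$ into $0$- and $1$-handles, collapse it to a finite graph, and take an inclusion-minimal edge cut separating the vertices of $\cA_N(a)$ from those of $\cB_N(a)$; you then build co-cores and thin bicollars of the cut one-handles, identify removing the slabs with deleting the cut edges, and get the energy window from (iii) together with $H_N(z)<-Nb$. Your excursion-replacement argument for the face-to-face subpath spells out the paper's terser ``changes local side'' step; if there are infinitely many excursions, replace each one by its projection along the collar coordinate onto the face it entered from, which keeps the modified path continuous.
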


\begin{proof}
Consider the handle decomposition of the regular sublevel
$\{H_N\le-Nb\}$.  By (ii), it consists only of zero-handles and one-handles.  Collapsing every zero-handle to a vertex and every one-handle to its core interval gives a deformation retraction onto a finite graph $\Gamma_N$.  By (iv), each component of $\cL_N(a)$ contains a unique minimum and therefore determines one vertex of this graph.  Let $V_A$ and $V_B$ be the disjoint vertex sets determined by the components forming $\cA_N(a)$ and $\cB_N(a)$, respectively.

Choose an inclusion-minimal edge cut $C_N$ separating $V_A$ from $V_B$ in $\Gamma_N$; if no graph path joins the two vertex sets, take the empty cut.  For every edge $e\in C_N$, let $z_e$ be the critical point whose one-handle generates $e$, set
$\cC_N=\{z_e:e\in C_N\}$, and choose the standard co-core disk $D_{z_e}$ in that handle.  The handle interiors, and hence the co-cores, may be chosen pairwise disjoint.  Removing these disks from the handlebody is the geometric counterpart of deleting the corresponding edge interiors from $\Gamma_N$: the complement deformation retracts onto $\Gamma_N\setminus C_N$.  Since $C_N$ separates $V_A$ from $V_B$, no path in the complement joins $\cA_N(a)$ to $\cB_N(a)$.  Thus every such path crosses one of the selected co-cores and, for at least one selected disk, changes from one component of its local complement to the other.  Choose pairwise disjoint bicollars of the disks, small enough that their closures avoid the two deep boundary sets.  The lateral boundary of a proper bicollar lies on the boundary of the handlebody.  Hence a path contained in the open sublevel and changing local side contains a subpath that enters through one collar face and leaves through the opposite face.  Rescaling the collar coordinate gives the faces at parameters $-1$ and $1$ in the statement.

Every selected handle occurs below $-Nb$, so $H_N(z_e)<-Nb$.  Assumption (iii) gives
$H_N(z_e)>-N(E_1(\xi)+\delta)$.  This proves \eqref{eq:cut-saddle-range}.
\end{proof}

The preceding lemma supplies the separating disks but no quantitative control of their metric geometry.  The next elementary lemma records that smooth charging profiles nevertheless always exist.

\begin{lemma}[Smooth co-core charging profiles]\label{lem:cocore-profiles}
Under the assumptions of \Cref{lem:topological-cocore-cut}, there are nonnegative smooth functions
$\rho_{N,z}:\SphereN\to[0,\infty)$, $z\in\cC_N$, supported in arbitrarily small ambient neighborhoods of the corresponding bicollars, with the following property: whenever a rectifiable path segment $\gamma$ in the $z$-bicollar joins
$\Psi_z(D_z\times\{-1\})$ to $\Psi_z(D_z\times\{1\})$,
\begin{equation}\label{eq:cocore-charge}
 \int_\gamma\rho_{N,z}\,\dd s\ge1.
\end{equation}
The bicollars, and hence the supports, may be chosen arbitrarily thin.  No assertion about the weighted energies of these profiles is made here.
\end{lemma}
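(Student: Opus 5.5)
The construction works in the collar coordinates supplied by \Cref{lem:topological-cocore-cut}. For each $z\in\cC_N$, the bicollar $\Psi_z:D_z\times(-2,2)\to\overline\Omega_N(b)$ is a diffeomorphism onto its image, and $D_z$ is compact. Write $\tau_z\defeq\mathrm{pr}_2\circ\Psi_z^{-1}$ for the collar coordinate. It is a smooth function on the open bicollar $U_z=\Psi_z(D_z\times(-2,2))$. Restricted to the compact closed sub-bicollar $K_z=\Psi_z(D_z\times[-1,1])$, its gradient (in the round metric of $\SphereN$) is bounded by some finite constant $M_z$, since $\Psi_z^{-1}$ is smooth up to the compact set $K_z$.

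The profile is then $\rho_{N,z}\defeq M_z\,\theta_z$, where $\theta_z:\SphereN\to[0,1]$ is a smooth cutoff equal to $1$ on $K_z$ and supported in a small ambient neighbourhood of $K_z$. Such a cutoff exists by the smooth Urysohn lemma, because $K_z$ is compact and any open neighbourhood of it may be prescribed.

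For the charging bound, take a rectifiable segment $\gamma:[0,L]\to U_z$ parametrized by arclength, joining the face $\{\tau_z=-1\}$ to the face $\{\tau_z=1\}$. We may restrict to a subsegment that stays in $K_z$: starting from the last time $\gamma$ is at $\tau_z=-1$ before it first reaches $\tau_z=1$, the subsegment has $\tau_z\in[-1,1]$ throughout, by continuity and the intermediate value theorem. Then $\tau_z\circ\gamma$ is Lipschitz with $|(\tau_z\circ\gamma)'|\le M_z$ almost everywhere, and it changes by $2$ along the subsegment. Since $\rho_{N,z}=M_z$ on $K_z$, we get
\[
 \int_\gamma\rho_{N,z}\dd s\ \ge\ M_z\cdot\frac{2}{M_z}=2\ \ge\ 1,
\]
which is \eqref{eq:cocore-charge}. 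The factor $2$ leaves slack, so one could equally use $\rho_{N,z}=\tfrac12M_z\theta_z$.

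For the thinness claim: since \Cref{lem:topological-cocore-cut} already allows the bicollars to be chosen arbitrarily thin (reparametrize $\Psi_z(x,\tau)\mapsto\Psi_z(x,\epsilon\tau)$), and the neighbourhood of $K_z$ carrying $\theta_z$ is arbitrary, the supports can be made as thin as desired. Nothing is claimed about $M_z$, which blows up as the collar thins, and that is consistent with the lemma making no energy assertion. The only mildly delicate point is the boundary behaviour, since the bicollar meets $\partial\overline\Omega_N(b)$ laterally. It is handled by working in the ambient sphere: $\theta_z$ is defined on all of $\SphereN$, and the gradient bound is needed only on the compact set $K_z$, where $\Psi_z^{-1}$ extends smoothly because the co-cores are neatly embedded.
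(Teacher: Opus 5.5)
Your proof is correct, but it charges crossings by a different mechanism than the paper.

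The paper's construction:
\begin{itemize}
\item It extends $D_z$ slightly across $\partial\overline\Omega_N(b)$ to a larger smooth disk $\widetilde D_z$.
\item It uses the tubular-neighbourhood theorem to get ambient coordinates $(x,t)$ that agree with the bicollar.
\item It sets $\rho_{N,z}=\chi_z(x)\,\eta'(t)\,|\grad t|$, where $\chi_z$ is a cutoff along the disk and $\eta$ is a monotone transition from $0$ at $t=-1$ to $1$ at $t=1$.
\item On the bicollar this density equals $|\grad(\eta\circ t)|$. The charge of a crossing is therefore bounded below by the total variation of $\eta\circ t$, which is exactly $1$, with no bound on $|\grad t|$ needed.
\end{itemize}

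Your construction instead uses a flat density $M_z\theta_z$ together with a length lower bound. A crossing confined to $K_z$ must have length at least $2/M_z$, because $\tau_z$ is $M_z$-Lipschitz along paths there. Your intermediate-value selection of a subsegment with $\tau_z\in[-1,1]$ correctly puts it inside $K_z$.

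What your route buys: smoothness of $\rho_{N,z}$ on all of $\SphereN$ is automatic, since it is a constant times a Urysohn cutoff. You never need to extend the co-core or the collar coordinate beyond the handlebody.

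What it costs: you need compactness of $K_z$ and boundedness of $|\grad\tau_z|$ up to the lateral boundary. You correctly attribute the latter to the neat embedding of the co-cores.

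What the paper's route buys: the ambient extension is exactly what makes its extension by zero smooth. In return it produces a profile of the shape $\varphi(\tau)\,|\grad\tau|$. This is the template reused in \Cref{app:near-threshold-cocore}, as $\rho_{N,z}=\chi_z(y)\varphi_{N,z}(\tau_z)|\grad\tau_z|$, where weighted energies do matter. Your flat profile would give no usable energy control there, but that is irrelevant to this purely qualitative lemma, which makes no energy assertion.
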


\begin{proof}
A co-core is a compact, two-sided, neatly embedded hypersurface in the handlebody.  Extend $D_z$ slightly across $\partial\overline\Omega_N(b)$ to a smooth disk $\widetilde D_z\subset\SphereN$ whose interior contains $D_z$.  The tubular-neighborhood theorem supplies coordinates $(x,t)$ on a neighborhood of $\widetilde D_z$, agreeing with the stated bicollar on $D_z\times(-2,2)$.  Choose a smooth cutoff $\chi_z$ on $\widetilde D_z$ that equals one on $D_z$ and has compact support in the interior of $\widetilde D_z$.  Also choose a smooth nondecreasing function
$\eta:\R\to[0,1]$ with $\eta(t)=0$ for $t\le-1$,
$\eta(t)=1$ for $t\ge1$, and $\eta'$ supported in $(-3/2,3/2)$.  In the tubular neighborhood set
\[
 \rho_{N,z}(x,t)=\chi_z(x)\eta'(t)|\grad t|
\]
and extend it by zero.  The cutoff and the support condition on $\eta'$ make this extension smooth.  Along the original bicollar one has $\chi_z=1$, and therefore, if $\gamma$ joins the two specified faces, the chain rule and total variation give
\[
 \int_\gamma\rho_{N,z}\,\dd s
 \ge
 \left|\eta(t_z(\gamma(t_+)))-
       \eta(t_z(\gamma(t_-)))\right|=1.
\]
The extensions and tubular neighborhoods may be chosen arbitrarily small, which gives the final assertion.
\end{proof}

\subsection{The high-level extension and overlap-free gluing}

The contribution of paths reaching the level $-Nb$ is unconditional.

\begin{lemma}[High-level charging density]\label{lem:high-level-density}
Fix $a>b$ and a fixed $\beta>0$.  On the derivative event \eqref{eq:derivative-event}, there is a smooth nonnegative function $\rho_N^{\mathrm{hi}}$ such that every rectifiable path starting in $\cA_N(a)\cup\cB_N(a)$ and meeting $\{H_N\ge-Nb\}$ satisfies
\begin{equation}\label{eq:high-level-charge}
 \int_\gamma\rho_N^{\mathrm{hi}}\,\dd s\ge1,
\end{equation}
and
\begin{equation}\label{eq:high-level-energy}
 \int_{\SphereN}(\rho_N^{\mathrm{hi}})^2
 e^{-\beta H_N}\dd\nu_N
 \le \exp\{\beta Nb+q_N\},
 \qquad q_N=o(N).
\end{equation}
For example, one may take $q_N=\beta N^{2/3}+O(\log N)$.
\end{lemma}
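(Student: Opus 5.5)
The plan is to build $\rho_N^{\mathrm{hi}}$ as the gradient of an energy-level cutoff, just as in the proof of \Cref{thm:fixed-temperature}, but with a shell whose width shrinks slowly in $N$. Fix a width $h_N=N^{-1/3}$, so that $Nh_N=N^{2/3}$. Choose a smooth nondecreasing $\chi:\R\to[0,1]$ with
\[
 \chi(s)=0\ (s\le -b-h_N),\qquad \chi(s)=1\ (s\ge -b),\qquad \|\chi'\|_\infty\le 2/h_N .
\]
Set $\phi_N(\sigma)=\chi(H_N(\sigma)/N)$ and define $\rho_N^{\mathrm{hi}}\defeq|\grad\phi_N|$, or a smooth majorant of it that is at most twice as large. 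Since $a>b$, for large $N$ we have $-a<-b-h_N$, so $\phi_N=0$ on $\cA_N(a)\cup\cB_N(a)\subset\cL_N(a)$.

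For the charging bound \eqref{eq:high-level-charge}, take a rectifiable $\gamma$ that starts in $\cA_N(a)\cup\cB_N(a)$ and reaches $\{H_N\ge-Nb\}$. Along $\gamma$, the function $\phi_N$ goes from $0$ to $1$. The chain rule and total variation then give
\[
 \int_\gamma|\grad\phi_N|\,\dd s\ \ge\ \bigl|\phi_N(\gamma(\mathrm{end}))-\phi_N(\gamma(0))\bigr|=1 .
\]
This is the same argument as in \Cref{lem:cocore-profiles}.

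For the energy bound \eqref{eq:high-level-energy}, the support of $\rho_N^{\mathrm{hi}}$ lies in the shell $\{-N(b+h_N)\le H_N\le -Nb\}$. On the derivative event \eqref{eq:derivative-event} we have $|\grad H_N|\le C_\xi\sqrt N$, so
\[
 |\grad\phi_N|\le\frac{2}{Nh_N}\,|\grad H_N|\le\frac{2C_\xi}{\sqrt N\,h_N}=2C_\xi N^{-1/6}.
\]
On the shell, $e^{-\beta H_N}\le e^{\beta N(b+h_N)}=e^{\beta Nb+\beta N^{2/3}}$. The measure $\nu_N$ is a probability measure, so the total integral is at most
\[
 4C_\xi^2N^{-1/3}e^{\beta Nb+\beta N^{2/3}} .
\]
This gives $q_N=\beta N^{2/3}+O(\log N)$, and it is in fact better than claimed, since the prefactor is polynomially small. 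Smoothness is immediate if one replaces $|\grad\phi_N|$ by $\sqrt{|\grad\phi_N|^2+\eps_N^2}$ multiplied by a cutoff supported in a slightly enlarged shell. Such a cutoff changes the bounds only by constant factors, so the $O(\log N)$ absorbs it.

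Any slowly shrinking width $h_N$ with $Nh_N=o(N)$ works. The choice $N^{-1/3}$ matches the stated $q_N$. The only delicate point is the use of the derivative event to control the gradient magnitude. There is no hard obstacle: Morse regularity is not even needed, because the shell bound uses no critical-point structure.
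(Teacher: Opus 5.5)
Your proposal is correct and is essentially the paper's proof. The paper uses the same cutoff, written $\vartheta_N\circ H_N$ on an energy shell of width $\ell_N=N^{2/3}$, which is your $\chi(H_N/N)$ with $h_N=N^{-1/3}$; it gets the charge from the fundamental theorem of calculus along the path, and the energy from $|\grad H_N|\le C_\xi\sqrt N$ together with $e^{-\beta H_N}\le e^{\beta(Nb+N^{2/3})}$ on the shell.

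Your smoothing remark addresses a point the paper skips: $|\grad(\vartheta_N\circ H_N)|$ is only continuous at critical points inside the shell, which already suffices for \Cref{lem:weighted-modulus}. However, a smooth majorant within a factor of two cannot exist near such a point. The construction that works is your additive one, $\sqrt{|\grad\phi_N|^2+\eps_N^2}$ times a cutoff that enlarges the shell by only $O(1)$ in energy, so that $q_N=\beta N^{2/3}+O(\log N)$ is unchanged.
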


\begin{proof}
Put $\ell_N=N^{2/3}$ and choose a smooth nondecreasing function
$\vartheta_N:\R\to[0,1]$ such that
\[
 \vartheta_N(t)=0\quad(t\le-Nb-\ell_N),
 \qquad
 \vartheta_N(t)=1\quad(t\ge-Nb),
 \qquad
 \|\vartheta_N'\|_\infty\le\frac2{\ell_N}.
\]
Set
\[
 \rho_N^{\mathrm{hi}}
 =|\grad(\vartheta_N\circ H_N)|.
\]
Since $a>b$, for all sufficiently large $N$ one has
$-Na<-Nb-\ell_N$.  If a path starts in either deep-well union and reaches the level $-Nb$, the fundamental theorem of calculus along the path yields \eqref{eq:high-level-charge}.

On \eqref{eq:derivative-event},
\[
 (\rho_N^{\mathrm{hi}})^2
 \le \frac{4}{\ell_N^2}|\grad H_N|^2
 \le C_\xi\frac{N}{\ell_N^2}=C_\xi N^{-1/3}.
\]
The density is supported in the shell
$\{-Nb-\ell_N\le H_N\le-Nb\}$.  Since $\nu_N$ is normalized,
\begin{align*}
 \int_{\SphereN}(\rho_N^{\mathrm{hi}})^2
 e^{-\beta H_N}\dd\nu_N
 &\le C_\xi N^{-1/3}
       \exp\{\beta Nb+\beta\ell_N\}\\
 &\le \exp\{\beta Nb+\beta N^{2/3}+O(\log N)\}.
\end{align*}
This is \eqref{eq:high-level-energy}.
\end{proof}

We next give the precise deterministic gluing statement.  It is the weighted capacity--modulus implication specialized to the form needed here.

\begin{lemma}[Weighted modulus produces a smooth trial potential]\label{lem:weighted-modulus}
Let $M$ be a compact connected Riemannian manifold, let $A,B\subset M$ be disjoint compact sets, let $w$ be a positive smooth weight, and let $\rho\ge0$ be continuous.  Suppose every rectifiable path $\gamma$ from $A$ to $B$ satisfies
\begin{equation}\label{eq:modulus-admissible}
 \int_\gamma\rho\,\dd s\ge1.
\end{equation}
Then, for every $\eps>0$, there is $f\in C^\infty(M;[0,1])$, equal to one on $A$ and zero on $B$, such that
\begin{equation}\label{eq:modulus-energy}
 \int_M|\grad f|^2w\,\dd\vol_M
 \le (1+\eps)\int_M\rho^2w\,\dd\vol_M+\eps.
\end{equation}
Moreover, $f$ may be chosen constant on neighborhoods of $A$ and $B$.
\end{lemma}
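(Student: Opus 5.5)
We would prove \Cref{lem:weighted-modulus} by the classical capacity--modulus duality, made constructive through a distance function. First we replace $\rho$ by a strictly positive smooth density with essentially the same energy. Fix a small $\eta>0$ and choose a smooth $\tilde\rho$ with $\tilde\rho\ge\rho$ and $\tilde\rho\ge\eta$ on $M$, such that
\[
 \int_M\tilde\rho^2w\,\dd\vol_M\le(1+\eps/2)\int_M\rho^2w\,\dd\vol_M+\eps/2 .
\]
This is possible because $M$ is compact, $w$ is bounded, and $\rho$ is continuous, so uniform smooth approximation from above plus the constant $\eta$ changes the weighted $L^2$ norm by an arbitrarily small amount. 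Every rectifiable path from $A$ to $B$ still has $\tilde\rho$-length at least one.

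Next we define the weighted distance $d(x)\defeq\inf_\gamma\int_\gamma\tilde\rho\,\dd s$, the infimum running over rectifiable paths from $A$ to $x$, and set $g\defeq1-\min\{d,1\}$. Then $g=1$ on $A$. By admissibility $d\ge1$ on $B$, so $g=0$ there. The function $d$ is the distance in the conformal metric $\tilde\rho^2 g_M$, which is a genuine Riemannian metric because $\tilde\rho\ge\eta>0$. Hence $d$ is Lipschitz and $|\grad d|\le\tilde\rho$ almost everywhere, by the triangle inequality along short geodesic segments and Rademacher's theorem. Consequently $\int|\grad g|^2w\le\int\tilde\rho^2w$. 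To obtain constancy near $A$ and $B$, we apply a slightly steeper ramp: set $g_\theta=\phi_\theta(d)$, where $\phi_\theta=1$ on $[0,\theta]$, $\phi_\theta=0$ on $[1-\theta,\infty)$, and $\phi_\theta$ is linear with slope $(1-2\theta)^{-1}$ in between. Since $d>1-\theta$ only at positive distance from $B$, and $d<\theta$ on a neighborhood of $A$, $g_\theta$ is constant near both sets. Its energy is at most $(1-2\theta)^{-2}\int\tilde\rho^2w$.

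Finally we mollify. Using a finite atlas with a partition of unity, we convolve $g_\theta$ to obtain a smooth $f$ with values in $[0,1]$ that stays constant, equal to $1$ and $0$, on slightly smaller neighborhoods of $A$ and $B$. Standard mollification of a Lipschitz function converges in $W^{1,2}$, and $w$ is continuous and bounded above and below, so $\int|\grad f|^2w\le\int|\grad g_\theta|^2w+\eps/4$ for a fine enough mollifier. Choosing $\theta$ and $\eta$ small then gives \eqref{eq:modulus-energy}.

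The main obstacle is the mollification step. Convolution in charts does not commute with the partition of unity, which creates cross terms involving $g_\theta\grad\psi_i$. These cross terms vanish in the limit because $\sum_i\grad\psi_i=0$ and the mollified pieces converge uniformly. This needs to be checked carefully so that the $(1+\eps)$ multiplicative form survives. An alternative is to use the heat semigroup for a short time, though that loses exact constancy near $A$ and $B$ unless we recompose with a cutoff.
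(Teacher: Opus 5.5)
Your proposal is correct and follows essentially the same route as the paper: a $\rho$-length distance truncated at $1$ with the upper-gradient bound $|\nabla d|\le\rho$ a.e., a slightly steeper ramp so the function is constant near $A$ and $B$, and chart-wise mollification glued by a partition of unity. Your cross-term remark suggests you glue as $\sum_i\psi_i\,(g_\theta)_{\delta,i}$; with that gluing, exact constancy near $A$ and $B$ and $[0,1]$-valuedness are automatic, and the cross terms $\sum_i((g_\theta)_{\delta,i}-g_\theta)\nabla\psi_i$ tend to zero uniformly.

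The remaining differences are cosmetic. You measure distance from $A$ rather than $B$. Your preliminary replacement of $\rho$ by a smooth $\tilde\rho\ge\max\{\rho,\eta\}$ is harmless but unnecessary, since the upper-gradient argument uses only continuity of $\rho$. Also, the neighborhood of $B$ on which $g_\theta=0$ comes from $d\ge1$ on $B$ together with continuity of $d$; your phrase ``$d>1-\theta$ only at positive distance from $B$'' states this backwards.
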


\begin{proof}
For $x\in M$, define the $\rho$-length distance from $B$ by
\[
 d_\rho(x,B)=\inf_{\gamma:B\to x}\int_\gamma\rho\,\dd s,
 \qquad
 u(x)=\min\{1,d_\rho(x,B)\}.
\]
Because $\rho$ is continuous on the compact manifold, $u$ is Lipschitz.  It is zero on $B$, and \eqref{eq:modulus-admissible} implies $u=1$ on $A$.  The standard upper-gradient argument gives
\begin{equation}\label{eq:upper-gradient}
 |\grad u|\le\rho
 \quad\text{almost everywhere.}
\end{equation}
Indeed, along every minimizing geodesic segment from $x$ to a nearby point $y$,
$u(y)-u(x)$ is bounded by the $\rho$-length of the segment; differentiation at almost every $x$ yields \eqref{eq:upper-gradient}.

Fix $0<\eta<1/4$ and choose a smooth nondecreasing map
$\chi_\eta:[0,1]\to[0,1]$ that is zero on $[0,\eta]$, one on $[1-\eta,1]$, and satisfies
$\|\chi_\eta'\|_\infty\le(1-2\eta)^{-1}(1+\eta)$.  Then
$v_\eta=\chi_\eta\circ u$ is constant on neighborhoods of $A$ and $B$ and
\[
 \int_M|\grad v_\eta|^2w\,\dd\vol_M
 \le \frac{(1+\eta)^2}{(1-2\eta)^2}
      \int_M\rho^2w\,\dd\vol_M.
\]
Smooth $v_\eta$ only on the complement of smaller neighborhoods on which it is already constant.  Standard local mollification and a partition of unity give smooth functions converging to $v_\eta$ in the weighted $W^{1,2}$ norm; truncation by a smooth map into $[0,1]$ preserves the boundary values and does not increase the limiting energy.  First choose $\eta$ sufficiently small and then the smoothing error sufficiently small to obtain \eqref{eq:modulus-energy}.
\end{proof}

The square-sum construction in the next proof is the point at which overlap is eliminated: no disjointness, bounded multiplicity, or coloring of the saddle neighborhoods is required.

For $b$ in \eqref{eq:b-range}, define the model-specific aggregate conductance pressure
\begin{equation}\label{eq:Lambda-EK}
 \Lambda^{\mathrm{EK}}_{\xi,\beta}(b)
 \defeq-\frac12\log(\beta e)
 +\sup_{u\in[-E_1(\xi),-b]}
 \left\{-\beta u+\Xi^{\mathrm{EK}}_{1,\xi}(u)\right\}.
\end{equation}

\begin{theorem}[Near-threshold quantitative co-core localization]\label{thm:cocore-localization}
There exists a constant
\begin{equation}\label{eq:b-star-range}
 b_*(\xi)\in(E_2(\xi),E_1(\xi))
\end{equation}
such that the following holds.  (The proof chooses $b_*(\xi)$ through
\eqref{eq:D-bstar}, which forces
$b_*(\xi)\ge E_1(\xi)-\eta_0/4$ with $\eta_0=\kappa c_0^2/16$ from
\Cref{lem:D-stable-manifold}; since $\kappa$ and $c_0$ are not
numerically explicit, the admissible window
$(b_*(\xi),E_1(\xi))$ is a left neighborhood of $E_1(\xi)$ of
uncontrolled, possibly very small, width.)  Fix
\begin{equation}\label{eq:near-threshold-b}
 b\in(b_*(\xi),E_1(\xi)).
\end{equation}
There is a constant $C_{\xi,b}<\infty$ with the following property; here and in \Cref{thm:network-realization,thm:refined-EK} the transverse width parameter $K\ge1$ of \Cref{lem:D-in-chart-cocore} is fixed once and for all at $K=1$, and $C_{\xi,b}$ depends on this choice.  For every fixed sufficiently large $\beta$, every
$a\in(E_1(\xi),F_\xi(\beta)/\beta)$ with $a-E_1(\xi)\ge(E_0(\xi)-E_1(\xi))/4$, and every $\delta>0$, one may take the cut $\cC_N$ to consist of \emph{all} index-one critical points of $H_N$ below $-Nb$, with the stable-disk co-cores, slabs, and smooth charging densities $(\rho_{N,z})_{z\in\cC_N}$ supplied by \Cref{lem:D-stable-manifold,lem:D-in-chart-cocore,lem:D-profile-energy}.  These densities charge every low crossing in the sense of \Cref{lem:D-in-chart-cocore}, every $z\in\cC_N$ satisfies \eqref{eq:cut-saddle-range} with $\delta_\beta$ in place of $\delta$, and, with probability tending to one,
\begin{equation}\label{eq:cocore-localization}
 \sum_{z\in\cC_N}
 \int_{\SphereN}\rho_{N,z}^2e^{-\beta H_N}\dd\nu_N
 \le
 \exp\left\{\frac{C_{\xi,b}N}{\sqrt\beta}+s_N\right\}
 \Bigl(\cK_{N,\beta}(I_{\xi,b,\delta})
 +e^{N\Lambda^{\mathrm{EK}}_{\xi,\beta}(b)}\Bigr),
\end{equation}
where $s_N=o_{\Pp}(N)$.
If the cut is empty, the left-hand side is understood to be zero.  The
deterministic additive term has exactly the pressure that already
appears in \Cref{thm:refined-EK} and is therefore harmless there.
\end{theorem}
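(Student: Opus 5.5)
The plan is a local analysis at each saddle, in Morse charts of size $\beta^{-1/2}$, followed by a comparison of the localized energies with the weights of \eqref{eq:W-def}. The first step is to fix $b$ close enough to $E_1(\xi)$ that the index-one layer $\{z:\ind Q_z=1,\ H_N(z)\le-Nb\}$ is uniformly well behaved with high probability. We want three properties. The stable eigenvalues should be bounded below by $\kappa_{\xi,b}$, which is \Cref{prop:stable-gap-layer}. The unstable eigenvalue $\alpha_z$ should also be bounded below, by the two-outlier estimate. Finally, distinct saddles in the window should be separated by $c_0\sqrt N$, and each saddle's stable disk should reach a fixed energy depth. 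This last property is the role of \Cref{lem:D-stable-manifold}: within an energy gap $\eta_0=\kappa c_0^2/16$ of $-NE_1(\xi)$, the stable manifold of $z$ stays inside $\{H_N<-Nb\}$ on the scale where the quadratic approximation holds. Choosing $b_*(\xi)\ge E_1(\xi)-\eta_0/4$ then guarantees that the co-core of each saddle below $-Nb$ is a genuine stable-disk slab inside a single Morse chart.

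For the cut, rather than taking a minimal cut as in \Cref{lem:topological-cocore-cut}, I would take $\cC_N$ to be \emph{all} index-one saddles below $-Nb$. Any superset of a separating cut still charges every crossing, so \Cref{lem:D-in-chart-cocore} will apply. In the chart $x=\sigma-z$, write $x=(x_1,x')$ with $x_1$ along the unstable eigenvector, and set
\[
\rho_{N,z}(x)=\phi\bigl(\sqrt{\beta\alpha_z}\,x_1\bigr)\sqrt{\beta\alpha_z}\,\chi\bigl(K^{-1}\sqrt\beta\,\|Q_z'^{1/2}x'\|\bigr)/Z_\phi .
\]
Here $\phi$ is a normalized Gaussian profile, $\chi$ is a transverse cutoff, and $K=1$. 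Any low crossing must pass within the transverse window, because outside it the energy exceeds $-Nb$; such crossings are charged by the high-level density of \Cref{lem:high-level-density}, while crossings inside the window are charged with weight at least $1$.

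The energy calculation, which is \Cref{lem:D-profile-energy}, expands $H_N$ to second order around $z$ and controls the cubic remainder by $\sqrt N\|\nabla^3H_N\|_{\op}\le C_\xi$ from \Cref{prop:derivative-scale}. On a ball of radius $O(\beta^{-1/2})$ times the chart scale, this remainder contributes a factor $e^{C N/\sqrt\beta}$; I expect the $\beta^{-1/2}N$ loss to come from $\beta\cdot N^{-1/2}|x|^3$ with $|x|\sim\sqrt{N/\beta}$. The Gaussian integral then gives
\[
\int\rho_{N,z}^2e^{-\beta H_N}\dd\nu_N\le e^{CN/\sqrt\beta}\,\vol(\SphereN)^{-1}(2\pi/\beta)^{(N-3)/2}e^{-\beta H_N(z)}\,\alpha_z/\sqrt{|\det Q_z|},
\]
which is exactly the summand of $\cK_{N,\beta}$. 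Summing over saddles with $H_N(z)/N\in I_{\xi,b,\delta}$ gives the first term in \eqref{eq:cocore-localization}.

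The main obstacle is that the quadratic expansion and the disjointness of charts fail at \emph{soft} saddles, meaning those where $\lambda_2$ or $\alpha_z$ is small, or where two saddles lie close together. I would not try to exclude such saddles deterministically. Instead I would bound their contribution with a crude profile (a chart of fixed size, with energy at most $e^{-\beta H_N(z)+o(N)}$ times a polynomial factor), and then estimate the expected total by a marked Kac--Rice computation in the style of \Cref{thm:weighted-KR}. The mark would be the indicator of softness times this crude weight. The resulting annealed exponent is at most $-\tfrac12\log(\beta e)+\sup_u\{-\beta u+\Xi^{\mathrm{EK}}_{1,\xi}(u)\}+C/\sqrt\beta$, so Markov's inequality produces the additive term $e^{N\Lambda^{\mathrm{EK}}_{\xi,\beta}(b)}$ with $s_N=o_\Pp(N)$. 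The delicate point is to show that the softness penalty, and the volume of a chart of fixed rather than $\beta^{-1/2}$ size, cost no more than the $C_{\xi,b}N/\sqrt\beta$ already allowed. Overlapping charts cause no trouble, because the left side of \eqref{eq:cocore-localization} is a sum of squares taken saddle by saddle.
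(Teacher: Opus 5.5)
Your architecture matches the paper's. You take all index-one saddles below $-Nb$ as the cut, compare per-saddle profiles with $k_{N,\beta}(z)$, and control soft saddles by a marked Kac--Rice bound plus Markov to produce the additive term $e^{N\Lambda^{\mathrm{EK}}_{\xi,\beta}(b)}$. But two steps fail as written.

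\textbf{The charging density is not admissible.} Your density lives in the linear chart at $z$, with a Gaussian in $x_1$ and a transverse cutoff at $\|B_z^{1/2}x'\|\lesssim K\beta^{-1/2}$ (or $K\sqrt{N/\beta}$, if you meant the Gaussian scale). You claim that low crossings outside this window must reach $\{H_N\ge-Nb\}$. That is false: at the edge of the window $H_N$ exceeds $H_N(z)$ by at most about $K^2N/(2\beta)$. With $b<E_1(\xi)$ fixed, $K=1$ and $\beta$ large, this is still below $-Nb$ for every saddle with $H_N(z)\le-N(b+\eps)$.

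The truncated stable disk $D_z=W^s_{\mathrm{loc}}(z)\cap\{H_N\le-Nb\}$ has lateral radius of order $\sqrt N$, far beyond $\sqrt{N/\beta}$. A path in $\Omega_N(b)$ may cross it anywhere, and a crossing outside your window is charged neither by $\rho_{N,z}$ nor by $\rho^{\mathrm{hi}}_N$. So the square-sum density is not admissible in \Cref{lem:weighted-modulus}.

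The role you give \Cref{lem:D-stable-manifold} is accordingly inverted. What matters is not that $W^s_{\mathrm{loc}}(z)$ stays below $-Nb$ where the quadratic approximation holds. It is that the disk leaves $\{H_N\le-Nb\}$ before leaving the $c_0\sqrt N$-chart, on which the graph $\psi_z$ is controlled uniformly in $\alpha_z$; this is what $b>E_1(\xi)-\eta_0/4$ buys.

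The paper's density $\chi_z(y)\varphi_{N,z}(\tau_z)|\grad\tau_z|$ covers the whole slab over $Y_z=\{h_z\le-Nb+\eta_0N/4\}$. Since $|\psi_z(y)|$ can be of order $\|y\|^2/\sqrt N\sim\sqrt N\gg r_N$ at the edge of the disk, the density must be written in the straightened coordinate $\tau_z=s-\psi_z(y)$ rather than $x_1$. The cost of the wide lateral support is paid by splitting at $\|y\|=R\sqrt{N/\beta}$ and using strong convexity, $h_z(y)\ge H_N(z)+\tfrac\kappa4\|y\|^2$, to make the annulus negligible against the Gaussian core.

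Separation (every low path meets some $D_z$) comes from decomposing $\overline\Omega_N(b)\setminus\bigcup_zD_z$ into open basins of minima, via the convergent-orbit characterization of $D_z$. A $c_0\sqrt N$ separation between saddles, which you propose to establish, is neither available when $\alpha_z$ is small nor needed.

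\textbf{The treatment of $\alpha_z$ does not close.} \Cref{lem:B-two-outlier} bounds $\lambda_2(Q_z)$, not $\alpha_z$. Nothing bounds $\alpha_z$ below (\Cref{rem:soft-unstable-mode}), and small $\alpha_z$ is typical: conditionally on index one, the outlier of the shifted GOE sits just below the shift.

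So the unstable Gaussian of width $(\beta\alpha_z)^{-1/2}$ must be truncated at $r_N=K\sqrt{N/\beta}$. This costs a factor $\max\{1,(N\alpha_z)^{-1/2}\}$ against $k_{N,\beta}(z)$. The good/soft split must therefore sit at an exponentially small threshold $\alpha_*=e^{-AN/\sqrt\beta}$, so that this loss is at most $e^{AN/(2\sqrt\beta)}$.

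For soft saddles, a crude fixed-size profile with energy $e^{-\beta H_N(z)+o(N)}$ discards the stable Gaussian factor $\vol(\SphereN)^{-1}(2\pi/\beta)^{(N-2)/2}(\det B_z)^{-1/2}$. The Kac--Rice mark then carries the full determinant. The exponent becomes $-\beta u+\Theta_{1,\xi}(u)$ plus the softness gain, instead of $-\tfrac12\log(\beta e)-\beta u+\Xi^{\mathrm{EK}}_{1,\xi}(u)$. The deficit is at least $\tfrac12\log(\beta e)+0.37$, which is not $O(\beta^{-1/2})$. Offsetting it by lowering the softness threshold would push the good-saddle loss to order $e^{cN\log\beta}$.

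The paper instead keeps the same truncated slab profile for soft saddles, now flat across $[-r_N,r_N]$, and loses only the polynomial factor $1/r_N$. The Jacobian identity $|\det Q_z|(\det B_z)^{-1/2}=\alpha_z(\det B_z)^{1/2}$ turns the soft sum into a half-determinant mark carrying an explicit factor $e^{-AN/\sqrt\beta}$ (\Cref{prop:soft-marked-KR}). Choosing $A>2C_1+2$ beats the profile loss $e^{C_1N/\sqrt\beta}$ before Markov's inequality is applied. The ``delicate point'' you flag is exactly this balance, and it closes only if the stable Gaussian factor is kept and the threshold is exponentially small at rate $A/\sqrt\beta$.
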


\begin{proof}
The proof occupies \Cref{app:near-threshold-cocore}: the quantitative
stable-manifold geometry is \Cref{lem:D-stable-manifold}, the
separation and slab crossing is \Cref{lem:D-in-chart-cocore}, the
profile construction and energy bound is \Cref{lem:D-profile-energy},
the good-saddle summation is \eqref{eq:D-good-sum}, and the
soft-saddle contribution is controlled through the annealed marked
Kac--Rice estimate \Cref{prop:soft-marked-KR}, with the final assembly
in the last subsection of that appendix.
\end{proof}

\begin{theorem}[Global trial function from co-core localization]\label{thm:network-realization}
Fix $b\in(b_*(\xi),E_1(\xi))$, with $b_*(\xi)$ as in \Cref{thm:cocore-localization}.  For every fixed sufficiently large $\beta$, every
$a\in(E_1(\xi),F_\xi(\beta)/\beta)$ with $a-E_1(\xi)\ge(E_0(\xi)-E_1(\xi))/4$, and every $\delta>0$, there are random smooth functions $f_N:\SphereN\to[0,1]$ and random errors $r_N=o_{\Pp}(N)$ such that, with probability tending to one,
\begin{equation}\label{eq:network-boundary}
 f_N=1\text{ on }\cA_N(a),
 \qquad
 f_N=0\text{ on }\cB_N(a),
\end{equation}
and
\begin{align}
 \cE_{N,\beta}(f_N)
 \le{}&
 \exp\left\{\frac{C_{\xi,b}N}{\sqrt\beta}+r_N\right\}
 \Bigl(\cK_{N,\beta}(I_{\xi,b,\delta})
 +e^{N\Lambda^{\mathrm{EK}}_{\xi,\beta}(b)}\Bigr)\notag\\
 &+\exp\{\beta Nb+r_N\}.
 \label{eq:network-realization}
\end{align}
\end{theorem}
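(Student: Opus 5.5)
We assemble $f_N$ from the two charging mechanisms already built, a low-level co-core density and the high-level density, and then pass to a smooth trial potential via \Cref{lem:weighted-modulus}.

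\textbf{Step 1: the good event.} Work on the intersection of the events required by \Cref{lem:topological-cocore-cut}, namely:
\begin{itemize}[leftmargin=*]
 \item Morse regularity and regular levels (\Cref{prop:morse-spherical});
 \item the layering statements (ii)--(iii), from \Cref{prop:layering} applied with $b>E_2(\xi)$ and $E_1(\xi)+\delta$;
 \item the component structure (iv), from \Cref{prop:deep-components};
 \item the derivative event \eqref{eq:derivative-event};
 \item the event of probability tending to one on which \eqref{eq:cocore-localization} holds.
\end{itemize}
Each of these has probability tending to one. Since $a<F_\xi(\beta)/\beta$ and $a>E_1(\xi)>b$, the sets $\cA_N(a)$ and $\cB_N(a)$ are defined, disjoint, and compact.

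\textbf{Step 2: the admissible density.} Take $\cC_N$ and the densities $(\rho_{N,z})$ from \Cref{thm:cocore-localization}, and $\rho^{\mathrm{hi}}_N$ from \Cref{lem:high-level-density}. Define the square-sum density
\[
 \rho_N\defeq\Bigl(\sum_{z\in\cC_N}\rho_{N,z}^2+(\rho_N^{\mathrm{hi}})^2\Bigr)^{1/2}.
\]
It is continuous and dominates each summand pointwise.

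To check admissibility, let $\gamma$ be any rectifiable path from $\cA_N(a)$ to $\cB_N(a)$. There are two cases.
\begin{itemize}[leftmargin=*]
 \item If $\gamma$ meets $\{H_N\ge-Nb\}$, then \eqref{eq:high-level-charge} gives $\int_\gamma\rho_N\,\dd s\ge1$.
 \item Otherwise $\gamma$ stays in $\Omega_N(b)$. The crossing property of the localized cut (\Cref{lem:D-in-chart-cocore}, which plays the role of the face-to-face crossing in \Cref{lem:topological-cocore-cut}) gives a subpath crossing some slab. Then \eqref{eq:cocore-charge} gives $\int_\gamma\rho_{N,z}\,\dd s\ge1$ for that $z$.
\end{itemize}
The square-sum trick is what avoids any overlap or multiplicity accounting. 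The energies simply add:
\[
 \int\rho_N^2e^{-\beta H_N}\dd\nu_N=\sum_z\int\rho_{N,z}^2e^{-\beta H_N}\dd\nu_N+\int(\rho_N^{\mathrm{hi}})^2e^{-\beta H_N}\dd\nu_N.
\]

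\textbf{Step 3: smoothing and the final bound.} Apply \Cref{lem:weighted-modulus} with $M=\SphereN$, weight $w=e^{-\beta H_N}$ (relative to $\nu_N$), and $\eps=e^{-N^2}$. This yields a smooth $f_N\in[0,1]$ equal to $1$ on $\cA_N(a)$ and $0$ on $\cB_N(a)$, which is \eqref{eq:network-boundary}. Its energy satisfies
\[
 \cE_{N,\beta}(f_N)\le(1+\eps)\int\rho_N^2e^{-\beta H_N}\dd\nu_N+\eps.
\]
Now insert the two energy bounds:
\begin{itemize}[leftmargin=*]
 \item \eqref{eq:cocore-localization} contributes the first line of \eqref{eq:network-realization}, with error $s_N$;
 \item \eqref{eq:high-level-energy} contributes $e^{\beta Nb+q_N}$.
\end{itemize}
The additive $\eps$ and the factor $1+\eps$ are absorbed into the error, using $e^{\beta Nb}\ge e^{-N^2}$. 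Setting $r_N=\max\{s_N,q_N\}+\log 2$ gives $r_N=o_\Pp(N)$.

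\textbf{Main obstacle.} The delicate point is the admissibility check in Step 2. \Cref{thm:cocore-localization} takes $\cC_N$ to be all index-one saddles below $-Nb$, with stable-disk slabs rather than the abstract handle co-cores. I must therefore confirm that every low path actually crosses one of these slabs in the charged direction. The path could wander in and out of slabs, or enter the deep unions along a side face.

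My first attempt would be to invoke the crossing guarantee of \Cref{lem:D-in-chart-cocore} directly. Failing that, I would combine \Cref{lem:topological-cocore-cut} with the observation that the full saddle set contains a minimal separating cut, and apply it to the last excursion of $\gamma$ from the deep-well union containing its start. I would also check that the supports of the $\rho_{N,z}$ avoid $\cA_N(a)\cup\cB_N(a)$; this should hold because $a-E_1(\xi)\ge(E_0(\xi)-E_1(\xi))/4$ keeps the deep wells away from the saddle energies.
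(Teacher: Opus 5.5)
Your proposal is correct and is essentially the paper's proof: the same good event, the square-sum density, the two-case admissibility check, and \Cref{lem:weighted-modulus} with smoothing error $e^{-N^2}$, with all remaining factors absorbed into $r_N$. One citation needs fixing, because the densities of \Cref{thm:cocore-localization} are the slab profiles \eqref{eq:D-rho-def} and not the bicollar profiles of \Cref{lem:cocore-profiles}: the low-path charge comes from the half-mass property of \Cref{lem:D-profile-energy}(i) (charge $\tfrac12+\tfrac12$, which holds even when the excursion from \Cref{lem:D-in-chart-cocore}(iii) returns to the same transverse face), not from \eqref{eq:cocore-charge}. With that correction, and with \Cref{lem:D-in-chart-cocore}(ii) keeping the slabs away from $\cA_N(a)\cup\cB_N(a)$, your ``first attempt'' already closes the argument, and neither the crossing-direction issue nor the fallback via \Cref{lem:topological-cocore-cut} arises.
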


\begin{proof}
Work on the intersection of the almost-sure Morse and regular-level event with the events in
\Cref{prop:deep-components,prop:layering,prop:derivative-scale} and
\Cref{thm:cocore-localization}.  Their probability tends to one.  Let
$\cC_N$ and $(\rho_{N,z})_{z\in\cC_N}$ be as in \Cref{thm:cocore-localization}, and let
$\rho_N^{\mathrm{hi}}$ be the density from \Cref{lem:high-level-density}.  Define
\begin{equation}\label{eq:square-sum-density}
 \rho_N\defeq
 \left[(\rho_N^{\mathrm{hi}})^2+
       \sum_{z\in\cC_N}\rho_{N,z}^2\right]^{1/2}.
\end{equation}
This density is continuous even when the supports overlap.

Let $\gamma$ be a rectifiable path from $\cA_N(a)$ to $\cB_N(a)$.  If it meets
$\{H_N\ge-Nb\}$, then \Cref{lem:high-level-density} gives
$\int_\gamma\rho_N\,\dd s\ge1$.  Otherwise the whole path lies in
$\Omega_N(b)$; the stable-disk separation of \Cref{lem:D-in-chart-cocore} then supplies a $z\in\cC_N$ whose truncated stable disk $\gamma$ meets, together with a subpath of $\gamma$ running inside the slab of $z$ from one transverse face to the disk and back to a transverse face, and the half-mass charging property of \Cref{lem:D-profile-energy} gives
\[
 \int_\gamma\rho_N\,\dd s
 \ge\int_\gamma\rho_{N,z}\,\dd s\ge\tfrac12+\tfrac12=1 .
\]
Thus $\rho_N$ is admissible in the sense of \eqref{eq:modulus-admissible}.

Apply \Cref{lem:weighted-modulus} with
$w=e^{-\beta H_N}$ and with a smoothing error, say, $e^{-N^2}$.  This gives a smooth $f_N$ satisfying \eqref{eq:network-boundary} and
\begin{align*}
 \cE_{N,\beta}(f_N)
 &\le (1+e^{-N^2})
 \left[
  \int_{\SphereN}(\rho_N^{\mathrm{hi}})^2e^{-\beta H_N}\dd\nu_N
  +\sum_{z\in\cC_N}
   \int_{\SphereN}\rho_{N,z}^2e^{-\beta H_N}\dd\nu_N
 \right]\\
 &\quad+e^{-N^2}.
\end{align*}
The equality of the square with the sum in \eqref{eq:square-sum-density} is exact, so there is no overlap multiplicity.  Insert \eqref{eq:high-level-energy} and \eqref{eq:cocore-localization}; all remaining terms have logarithm $o_{\Pp}(N)$ and may be absorbed into one random $r_N=o_{\Pp}(N)$.  This proves \eqref{eq:network-realization}.
\end{proof}

\begin{proposition}[Inputs to the localization theorem and the global trial function]\label{prop:network-inputs}
For every $b$ in \eqref{eq:b-range}, the following inputs are unconditional.  Items \textup{(i)}--\textup{(viii)} and \textup{(x)} hold with probability tending to one, while item \textup{(ix)} holds in the probability-limit sense of \eqref{eq:K-pressure}:
\begin{enumerate}[label=\textup{(\roman*)},leftmargin=*]
 \item below $-Nb$, all critical points have index zero or one;
 \item the stable eigenvalues of every index-one point below $-Nb$ are bounded below by $\kappa_{\xi,b}$;
 \item the Hamiltonian and its first three derivatives satisfy \eqref{eq:derivative-event};
 \item no index-one point lies below $-N(E_1(\xi)+\delta)$;
 \item a finite separating co-core cut exists in the interval $I_{\xi,b,\delta}$;
 \item smooth densities charging every low-energy crossing exist;
 \item all paths reaching $\{H_N\ge-Nb\}$ are charged at energy cost $\exp\{\beta Nb+o(N)\}$;
 \item arbitrary overlap among the low-energy profiles produces no multiplicity loss in the global trial energy;
 \item the total Eyring--Kramers saddle conductance satisfies \eqref{eq:K-pressure};
 \item the two boundary sets in \eqref{eq:network-boundary} have Gibbs masses $1/2+o_{\Pp}(1)$.
\end{enumerate}
For $b\in(b_*(\xi),E_1(\xi))$, the additional quantitative comparison is the content of \Cref{thm:cocore-localization}, proved in Appendix~\ref{app:near-threshold-cocore}.
\end{proposition}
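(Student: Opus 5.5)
The statement just above is \Cref{prop:network-inputs}, which collects ten inputs. The plan is to verify each item by pointing to the earlier result that supplies it, and to work throughout on one intersection event whose probability tends to one.

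The event is the intersection of four pieces. The first is the almost-sure Morse realization (\Cref{prop:morse-spherical}); since the levels $-Na$ and $-Nb$ are deterministic, they are almost surely regular. The second is the derivative event \eqref{eq:derivative-event} of \Cref{prop:derivative-scale}, which has probability at least $1-e^{-c_\xi N}$. The third is the layering events of \Cref{prop:layering}, applied with $k=2$ at any level $b'\in(E_2(\xi),b)$ and with $k=1$ at level $E_1(\xi)+\delta$. The fourth is the event of \Cref{prop:deep-components} at level $a>E_1(\xi)$. A finite intersection of events with probability tending to one again has probability tending to one.

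The items then follow directly.
\begin{enumerate}[label=\textup{(\roman*)},leftmargin=*]
 \item This is \Cref{prop:layering}(ii), because $b>E_2(\xi)$.
 \item This is \Cref{prop:stable-gap-layer}, which rests on the two-outlier Kac--Rice count \Cref{lem:B-soft-stable-count}.
 \item This is \Cref{prop:derivative-scale}.
 \item This is \Cref{prop:layering} with $k=1$ and $a=E_1(\xi)+\delta$.
 \item The hypotheses (i)--(iv) of \Cref{lem:topological-cocore-cut} are exactly the events just listed, so the lemma produces a finite separating co-core cut. The saddle-range statement \eqref{eq:cut-saddle-range} holds because every selected handle lies below $-Nb$ and, by item (iv), above $-N(E_1(\xi)+\delta)$.
 \item This is \Cref{lem:cocore-profiles}, applied to the cut from item (v).
 \item This is \Cref{lem:high-level-density} on the derivative event. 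It needs $a>b$, which holds since $a>E_1(\xi)>b$.
 \item This is the algebraic identity behind \eqref{eq:square-sum-density}: the square of the root-sum-of-squares density equals the sum of the squared densities. Together with \Cref{lem:weighted-modulus}, this identity leaves no multiplicity factor, whatever the overlaps.
 \item This is \Cref{cor:K-pressure} applied to the compact interval $I_{\xi,b,\delta}\subset(-\infty,0)$. It holds only in the $\pLimsup$ sense, as the proposition itself states.
\end{enumerate}

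Item (x) needs more than citation, because it requires the fixed-temperature free-energy condition. Here $a$ is taken in $(E_1(\xi),F_\xi(\beta)/\beta)$, as in \Cref{thm:network-realization}. By \Cref{prop:deep-mass}, $\pi_{N,\beta}(\cL_N(a))\to1$ in probability, exponentially fast in $N$. Antipodal invariance of the Gibbs measure, through \eqref{eq:A-B-masses}, then splits this mass equally between $\cA_N(a)$ and $\cB_N(a)$, so each has mass $1/2+o_{\Pp}(1)$.

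The only step that is more than bookkeeping is the quantifier structure: items (v) and (x) require $a$ to be fixed consistently with $\beta$ before $N\to\infty$, while $\delta$ is fixed independently of $N$. I would state explicitly that every event depends only on the fixed parameters $(a,b,\delta,\beta)$, and that no choice depends on $N$. The quantitative comparison for $b\in(b_*(\xi),E_1(\xi))$ is deliberately left out of this proposition and deferred to \Cref{thm:cocore-localization}, so nothing further needs proving here.
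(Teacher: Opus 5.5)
Your proposal is correct and follows essentially the same route as the paper. The paper also proves the proposition by direct citation: \Cref{prop:layering} for (i) and (iv), \Cref{prop:stable-gap-layer}, \Cref{prop:derivative-scale}, \Cref{lem:topological-cocore-cut,lem:cocore-profiles}, \Cref{lem:high-level-density}, the square-sum and modulus argument, and \Cref{cor:K-pressure}, with (x) obtained from \eqref{eq:A-B-masses} and \Cref{prop:deep-mass} using $a<F_\xi(\beta)/\beta$. Your explicit intersection of events and your check $a>E_1(\xi)>b$ for \Cref{lem:high-level-density} only spell out what the paper leaves implicit.
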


\begin{proof}
Items (i) and (iv) follow from \Cref{prop:layering}; item (ii) is
\Cref{prop:stable-gap-layer}; item (iii) is \Cref{prop:derivative-scale}; items (v) and (vi) are
\Cref{lem:topological-cocore-cut,lem:cocore-profiles}; item (vii) is
\Cref{lem:high-level-density}; item (viii) is the square-sum and modulus argument in the proof of \Cref{thm:network-realization}; item (ix) is \Cref{cor:K-pressure}.  For item (x), use \eqref{eq:A-B-masses} and \Cref{prop:deep-mass}, since $a<F_\xi(\beta)/\beta$.
\end{proof}

\begin{remark}[Why the remaining estimate is genuinely additional]\label{rem:soft-unstable-mode}
The stable-gap estimate controls the positive eigenvalues of an index-one Hessian, but it does not control the magnitude
$\alpha_z=|\lambda_-(Q_z)|$ of its unique negative eigenvalue.  This distinction matters at fixed $\beta$.  Consider the deterministic local model
\begin{equation}\label{eq:soft-mode-model}
 U_N(s,y)\defeq h_N-\frac{e^{-N}}2s^2-\frac{s^4}{4N}
          +\frac\kappa2\|y\|^2.
\end{equation}
At the origin its Hessian is
$\operatorname{diag}(-e^{-N},\kappa,\ldots,\kappa)$.  On a ball of radius $O(\sqrt N)$ it satisfies the same derivative scales as \eqref{eq:derivative-event}:
$|\grad U_N|=O(\sqrt N)$,
$\|\nabla^2U_N\|_{\op}=O(1)$, and
$\|\nabla^3U_N\|_{\op}=O(N^{-1/2})$.
Nevertheless, the exact one-dimensional resistance factor in the unstable direction is
\[
 J_N=\int_{\R}
 \exp\left\{-\beta\left(\frac{e^{-N}}2s^2+\frac{s^4}{4N}\right)\right\}\dd s
 \asymp_\beta N^{1/4},
\]
where the estimate follows from the substitution $s=N^{1/4}t$.  The corresponding finite-width conductance factor is $J_N^{-1}\asymp_\beta N^{-1/4}$, whereas the quadratic Eyring--Kramers conductance factor is proportional to
$\sqrt{e^{-N}}=e^{-N/2}$.  The ratio between the finite-width conductance and its quadratic prediction is therefore of order
$e^{N/2}N^{-1/4}$.  For sufficiently large fixed $\beta$, this cannot be absorbed by
$\exp\{C_{\xi,b}N/\sqrt\beta\}$ with $C_{\xi,b}$ independent of $\beta$.

This example is not asserted to occur in the mixed spherical landscape.
It shows only that the derivative scales and the stable spectral gap
do not logically imply \eqref{eq:cocore-localization}.
\Cref{app:near-threshold-cocore} separates the saddles according to
the size of $\alpha_z$: the nonsoft contribution is controlled
deterministically, while the aggregate treatment of exponentially soft
modes is carried out in \Cref{prop:soft-marked-KR}.
\end{remark}

Recall the aggregate conductance pressure $\Lambda^{\mathrm{EK}}_{\xi,\beta}(b)$ defined in \eqref{eq:Lambda-EK}.

\begin{theorem}[Complexity-weighted spherical-spin Eyring--Kramers bound]\label{thm:refined-EK}
Choose $b\in(b_*(\xi),E_1(\xi))$, with $b_*(\xi)$ as in \Cref{thm:cocore-localization}.  There is a threshold $\beta_0(\xi,b)<\infty$ such that, for every fixed $\beta\ge\beta_0(\xi,b)$,
\begin{equation}\label{eq:refined-EK}
 \pLiminf_{N\to\infty}
 \left(-\frac1N\log\gamma_{N,\beta}\right)
 \ge
 F_\xi(\beta)
 -\max\left\{
  \beta b,
  \Lambda^{\mathrm{EK}}_{\xi,\beta}(b)
  +\frac{C_{\xi,b}}{\sqrt\beta}
 \right\}.
\end{equation}
The threshold $\beta_0(\xi,b)$ is chosen so that, by \eqref{eq:F-over-beta}, $F_\xi(\beta)>\beta E_1(\xi)$ and $F_\xi(\beta)/\beta-E_1(\xi)>(E_0(\xi)-E_1(\xi))/4$ hold, and so that $\beta$ exceeds the localization thresholds of \Cref{thm:cocore-localization,thm:network-realization}.  The weaker hypothesis $F_\xi(\beta)>\beta E_1(\xi)$ alone would not suffice: the proof requires a level $a$ with $a-E_1(\xi)\ge(E_0(\xi)-E_1(\xi))/4$ below $F_\xi(\beta)/\beta$, and it invokes the near-threshold theorems, both of which are available only for sufficiently large fixed $\beta$.  Every term on the right-hand side is defined by the mixture $\xi$ alone.  There is no selected-well mass, target-mass penalty, selected-saddle energy, selected determinant, or determinant-transfer premise.
\end{theorem}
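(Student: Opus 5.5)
The plan is to feed the trial function of \Cref{thm:network-realization} into the Rayleigh quotient \eqref{eq:spectral-gap}, exactly as in the proof of \Cref{thm:fixed-temperature}, with the variance supplied by the antipodal boundary sets and the Dirichlet energy supplied by \eqref{eq:network-realization}.

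First fix $\beta_0(\xi,b)$. By \eqref{eq:F-over-beta}, for all large $\beta$ we have $F_\xi(\beta)/\beta>E_1(\xi)+(E_0(\xi)-E_1(\xi))/4$. Take $\beta_0$ beyond this and beyond the localization thresholds of \Cref{thm:cocore-localization,thm:network-realization}.

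For $\beta\ge\beta_0$, choose $a$ with $E_1(\xi)+(E_0(\xi)-E_1(\xi))/4\le a<F_\xi(\beta)/\beta$, and fix any $\delta>0$. \Cref{thm:network-realization} gives, with probability tending to one, a smooth $f_N\in[0,1]$ with $f_N=1$ on $\cA_N(a)$ and $f_N=0$ on $\cB_N(a)$. By \eqref{eq:A-B-masses} and \Cref{prop:deep-mass}, each of these sets has Gibbs mass $\tfrac12+o_{\Pp}(1)$. Hence
\[
\Var_{\pi_{N,\beta}}(f_N)\ge\tfrac12\,\pi(\cA_N(a))\,\pi(\cB_N(a))\ge c>0,
\]
using the elementary bound $\Var(f)\ge\tfrac12\pi(A)\pi(B)(1-0)^2$ for $f$ equal to $1$ on $A$ and $0$ on $B$. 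The normalized Dirichlet energy is $\cE_{N,\beta}(f_N)/Z_{N,\beta}=\cE_{N,\beta}(f_N)e^{-NF_{N,\xi}(\beta)}$. Combining this with \eqref{eq:network-realization} gives
\[
-\tfrac1N\log\gamma_{N,\beta}\ge F_{N,\xi}(\beta)-\tfrac1N\log\Bigl[e^{C_{\xi,b}N/\sqrt\beta+r_N}\bigl(\cK_{N,\beta}(I_{\xi,b,\delta})+e^{N\Lambda^{\mathrm{EK}}_{\xi,\beta}(b)}\bigr)+e^{\beta Nb+r_N}\Bigr]-o(1).
\]

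Next bound $\cK_{N,\beta}(I_{\xi,b,\delta})$ with \Cref{cor:K-pressure}. With probability tending to one, $\tfrac1N\log\cK_{N,\beta}(I_{\xi,b,\delta})\le-\tfrac12\log(\beta e)+\sup_{u\in[-E_1-\delta,-b]}\{-\beta u+\Xi^{\mathrm{EK}}_{1,\xi}(u)\}+\eps$. Since the logarithm of a sum of three terms is the maximum of their logarithms up to $\log3$, and since $F_{N,\xi}\to F_\xi$ in probability, we obtain
\[
\pLiminf_{N\to\infty}\Bigl(-\tfrac1N\log\gamma_{N,\beta}\Bigr)\ge F_\xi(\beta)-\max\Bigl\{\beta b,\ \Lambda^{\delta}_\beta+\tfrac{C_{\xi,b}}{\sqrt\beta}\Bigr\}-\eps,
\]
where $\Lambda^\delta_\beta$ is the pressure taken over the enlarged window $[-E_1-\delta,-b]$.

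The main obstacle is removing $\delta$: \eqref{eq:Lambda-EK} takes the supremum only over $[-E_1(\xi),-b]$. Here $\delta$ is free while $C_{\xi,b}$ does not depend on $\delta$. Moreover, $\Xi^{\mathrm{EK}}_{1,\xi}$ is continuous by \Cref{lem:B-Lambda-regularity}, so the supremum over $[-E_1-\delta,-b]$ converges to the supremum over $[-E_1,-b]$ as $\delta\downarrow0$. Letting $\delta\downarrow0$ and then $\eps\downarrow0$ gives \eqref{eq:refined-EK}.

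Care is needed with the order of limits: $\delta$ must be fixed before $N\to\infty$, and the event of \Cref{thm:cocore-localization} may depend on $\delta$. This is harmless, because the $\pLiminf$ bound holds for each fixed $\delta$ separately and the right-hand side is monotone in $\delta$.

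A cleaner alternative is to discard the part of the window below $-E_1(\xi)-\delta$ directly: by \Cref{prop:layering}, for each $\delta$ there are no index-one saddles there with high probability, so $\cK_{N,\beta}(I_{\xi,b,\delta})=\cK_{N,\beta}([-E_1,-b])+0$ up to the window $[-E_1-\delta,-E_1]$. In that window, $\Theta_{1,\xi}\le0$, so $\Xi^{\mathrm{EK}}_{1,\xi}$ is controlled there. Continuity suffices either way.
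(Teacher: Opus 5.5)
Your proposal is correct and follows the paper's proof essentially step for step. You choose $a$ with $a-E_1(\xi)\ge(E_0(\xi)-E_1(\xi))/4$ below $F_\xi(\beta)/\beta$, and you insert the trial function of \Cref{thm:network-realization} into the Rayleigh quotient, with the antipodal mass balance supplying the variance. You then bound $\cK_{N,\beta}(I_{\xi,b,\delta})$ by \Cref{cor:K-pressure} and absorb the deterministic term $e^{N\Lambda^{\mathrm{EK}}_{\xi,\beta}(b)}$ into the enlarged-window pressure (this absorption is implicit in your write-up). Finally you remove $\delta$ by continuity of $\Xi^{\mathrm{EK}}_{1,\xi}$. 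The paper's double-integral variance bound gives $\pi_{N,\beta}(\cA_N(a))\,\pi_{N,\beta}(\cB_N(a))$ without your factor $\tfrac12$, which is immaterial.

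Your closing ``cleaner alternative'' is unnecessary and does not actually remove anything. \Cref{prop:layering} only excludes saddles below $-N(E_1(\xi)+\delta)$, and those already lie outside $I_{\xi,b,\delta}$, so the continuity argument is the one that does the work.
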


\begin{proof}
Since $\beta\ge\beta_0(\xi,b)$, the definition of the threshold permits a choice of $a\in(E_1(\xi),F_\xi(\beta)/\beta)$ with $a-E_1(\xi)\ge(E_0(\xi)-E_1(\xi))/4$, and \Cref{thm:cocore-localization,thm:network-realization} apply at this $\beta$.  By \eqref{eq:A-B-masses} and \Cref{prop:deep-mass},
\begin{equation}\label{eq:balanced-masses-refined}
 \pi_{N,\beta}(\cA_N(a))
 =\pi_{N,\beta}(\cB_N(a))
 =\frac12+o_{\Pp}(1).
\end{equation}
Use the trial function supplied by \Cref{thm:network-realization}.  The double-integral representation of the variance gives
\begin{equation}\label{eq:variance-network}
 \Var_{\pi_{N,\beta}}(f_N)
 \ge
 \pi_{N,\beta}(\cA_N(a))
 \pi_{N,\beta}(\cB_N(a))
 =\frac14+o_{\Pp}(1).
\end{equation}
Since the numerator in \eqref{eq:spectral-gap} is the unnormalized energy in \eqref{eq:network-realization} divided by $Z_{N,\beta}$,
\begin{equation}\label{eq:gap-network-pre}
 -\frac1N\log\gamma_{N,\beta}
 \ge F_{N,\xi}(\beta)
 -\frac1N\log
 \left(\int|\grad f_N|^2e^{-\beta H_N}\dd\nu_N\right)
 -o_{\Pp}(1).
\end{equation}

For fixed $\delta>0$, \Cref{cor:K-pressure} and \eqref{eq:network-realization} imply the following bound; the deterministic term $e^{N\Lambda^{\mathrm{EK}}_{\xi,\beta}(b)}$ of \eqref{eq:network-realization} is absorbed into the second entry of the maximum, because
\[
 \Lambda^{\mathrm{EK}}_{\xi,\beta}(b)
 \le-\frac12\log(\beta e)
 +\sup_{u\in[-E_1(\xi)-\delta,-b]}
 \bigl\{-\beta u+\Xi^{\mathrm{EK}}_{1,\xi}(u)\bigr\},
\]
the supremum being over the larger interval:
\begin{align}
 &\pLimsup_{N\to\infty}\frac1N\log
 \left(\int|\grad f_N|^2e^{-\beta H_N}\dd\nu_N\right)
 \notag\\
 &\qquad\le
 \max\left\{
 \begin{array}{l}
  \beta b,\\[0.2em]
  \displaystyle
  \frac{C_{\xi,b}}{\sqrt\beta}-\frac12\log(\beta e)
  +\sup_{u\in[-E_1(\xi)-\delta,-b]}
   \{-\beta u+\Xi^{\mathrm{EK}}_{1,\xi}(u)\}
 \end{array}
 \right\}.
 \label{eq:energy-pressure-delta}
\end{align}
Let $\delta\downarrow0$ and use continuity of the GOE and complexity rate functions.  Combining \eqref{eq:free-energy-limit}, \eqref{eq:gap-network-pre}, and \eqref{eq:energy-pressure-delta} proves \eqref{eq:refined-EK}.
\end{proof}

Theorem~\ref{thm:ek} of the main text is the statement
\eqref{eq:refined-EK}: the fixed-$b$ display of the main text is
\eqref{eq:refined-EK} verbatim, and the optimized form there follows
by taking, at the given fixed $\beta$, the infimum of the right-hand
side of \eqref{eq:refined-EK} over the admissible set
$B(\beta)=\{b\in(b_*(\xi),E_1(\xi)):\beta_0(\xi,b)\le\beta\}$---each such
$b$ yields a valid bound at that $\beta$, so the infimum does too;
the temperature is never an optimization variable.  The aggregate
conductance pressure $\Lambda^{\mathrm{EK}}_{\xi,\beta}(b)$ appearing
there is \eqref{eq:Lambda-EK}, the statistic
$\Xi^{\mathrm{EK}}_{1,\xi}=\Theta_{1,\xi}-\tfrac12D_\xi$ is \eqref{eq:Xi-EK}
with the half-determinant rate $D_\xi$ of \eqref{eq:Dp}, the phrase
``$b$ sufficiently close to $E_1(\xi)$'' means $b\in(b_*(\xi),E_1(\xi))$
with $b_*(\xi)$ as in \eqref{eq:b-star-range} and \eqref{eq:D-bstar},
and ``all sufficiently large fixed $\beta$'' means
$\beta\ge\beta_0(\xi,b)$.  This completes the proof of
Theorem~\ref{thm:ek}.

\begin{corollary}[Low-temperature form of the refined exponent]\label{cor:refined-low-temp}
Fix one $b\in(b_*(\xi),E_1(\xi))$, and let $L_{\Xi}(b)$ be a
Lipschitz constant for $\Xi^{\mathrm{EK}}_{1,\xi}$ on
$[-E_1(\xi),-b]$ (\Cref{lem:B-Lambda-regularity}).  For all
$\beta\ge\max\bigl\{\beta_0(\xi,b),\,L_{\Xi}(b),\,
\beta_b\bigr\}$, where $\beta_b$ is the finite threshold defined in
the proof below,
\begin{align}
 \pLiminf_{N\to\infty}
 \left(-\frac1N\log\gamma_{N,\beta}\right)
 \ge{}&F_\xi(\beta)-\beta E_1(\xi)
 -\Xi^{\mathrm{EK}}_{1,\xi}(-E_1(\xi))
 +\frac12\log(\beta e)
 \notag\\
 &-\frac{C_{\xi,b}}{\sqrt\beta},
 \label{eq:refined-low-temp}
\end{align}
where, by \eqref{eq:Xi-vs-Theta} and
$\Theta_{1,\xi}(-E_1(\xi))=0$,
\begin{equation}\label{eq:Xi-endpoint-negative}
 -\Xi^{\mathrm{EK}}_{1,\xi}(-E_1(\xi))
 \ \ge\
 \frac12D_\xi\bigl(-E_1(\xi),\omega^{\mathrm{EK}}_{-E_1(\xi)}\bigr)
 \ \ge\
 \frac12\Bigl(\frac12\log\xi''(1)-\frac12\Bigr)
 \ >\ 0;
\end{equation}
in the pure case
$-\Xi^{\mathrm{EK}}_{1,\xi}(-E_1(\xi))=\tfrac12D_\xi(-E_1(\xi))$.
In particular,
\begin{equation}\label{eq:refined-arrhenius}
 \liminf_{\beta\to\infty}
 \frac1\beta
 \pLiminf_{N\to\infty}
 \left(-\frac1N\log\gamma_{N,\beta}\right)
 \ge E_0(\xi)-E_1(\xi).
\end{equation}
\end{corollary}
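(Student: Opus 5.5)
The plan is to start from \eqref{eq:refined-EK} of \Cref{thm:refined-EK} at the fixed $b$, for $\beta\ge\beta_0(\xi,b)$, and then identify which entry of the maximum is active and where the supremum in $\Lambda^{\mathrm{EK}}_{\xi,\beta}(b)$ sits. The argument has three parts: locate the supremum, compare the two branches, and evaluate the endpoint constant.

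\emph{Locating the supremum.} Let $\beta\ge L_\Xi(b)$, with $L_\Xi(b)$ the Lipschitz constant from \Cref{lem:B-Lambda-regularity}. For $u\in[-E_1(\xi),-b]$ put $g(u)=-\beta u+\Xi^{\mathrm{EK}}_{1,\xi}(u)$. For $u>u'$ we have
\[
g(u)-g(u')\le-\beta(u-u')+L_\Xi(b)(u-u')\le0 .
\]
So $g$ is nonincreasing and its supremum is attained at the left endpoint $u=-E_1(\xi)$. This gives exactly
\[
\Lambda^{\mathrm{EK}}_{\xi,\beta}(b)=-\tfrac12\log(\beta e)+\beta E_1(\xi)+\Xi^{\mathrm{EK}}_{1,\xi}(-E_1(\xi)),
\]
with no residual $o_\beta(1)$ error.

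\emph{Comparing the two branches.} The second entry of the maximum equals
\[
\beta E_1(\xi)+O(\log\beta)+O(\beta^{-1/2}),
\]
where the implicit constants depend on $\xi$ and $b$. The first entry is $\beta b$, and $b<E_1(\xi)$. Hence the difference between the two is at least
\[
\beta\bigl(E_1(\xi)-b\bigr)-\tfrac12\log(\beta e)+\Xi^{\mathrm{EK}}_{1,\xi}(-E_1(\xi))-\frac{|C_{\xi,b}|}{\sqrt\beta},
\]
which is positive for all $\beta$ beyond some finite $\beta_b$. This $\beta_b$ is the threshold named in the statement. Above it the maximum is the Eyring--Kramers branch, and substituting into \eqref{eq:refined-EK} yields \eqref{eq:refined-low-temp}.

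\emph{The endpoint constant.} Here I would use the middle inequality of \eqref{eq:Xi-vs-Theta} at $u=-E_1(\xi)$, together with $\Theta_{1,\xi}(-E_1(\xi))=0$ from the definition \eqref{eq:Ek} and continuity. This gives $-\Xi^{\mathrm{EK}}_{1,\xi}(-E_1(\xi))\ge\tfrac12D_\xi(-E_1(\xi),\omega^{\mathrm{EK}})$. The lower bound $\Omega\ge-\tfrac12$ then gives $D_\xi\ge\tfrac12\log\xi''(1)-\tfrac12$, and this is positive because $\xi''(1)\ge12>e$. In the pure case $v_\xi=0$, so the supremum in \eqref{eq:Xi-EK} collapses to $\omega=0$ and equality holds by \eqref{eq:Xi-EK-explicit}.

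\emph{The Arrhenius limit \eqref{eq:refined-arrhenius}.} Divide \eqref{eq:refined-low-temp} by $\beta$. The terms $\log\beta/\beta$, $\beta^{-1}$ and $\beta^{-3/2}$ vanish as $\beta\to\infty$, and $F_\xi(\beta)/\beta\to E_0(\xi)$ by \Cref{prop:free-energy-ground}. The only point requiring care is the Lipschitz step, which relies on the regularity lemma to give a finite constant on the compact window. Once that holds, the whole argument is bookkeeping on the definitions.
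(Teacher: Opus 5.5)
Your proposal is correct and follows the paper's proof essentially step for step. You use the Lipschitz bound to place the supremum exactly at $u=-E_1(\xi)$, a finite threshold $\beta_b$ to make the Eyring--Kramers branch the active entry of the maximum, \eqref{eq:Xi-vs-Theta} at $u=-E_1(\xi)$ with $\Theta_{1,\xi}(-E_1(\xi))=0$ for the endpoint constant, and division by $\beta$ together with \eqref{eq:F-over-beta} for the Arrhenius limit. Your only additions are small: the non-strict case $\beta=L_\Xi(b)$, and making explicit that positivity uses $\xi''(1)\ge12>e$.
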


\begin{proof}
By \Cref{lem:B-Lambda-regularity}, $\Xi^{\mathrm{EK}}_{1,\xi}$ is
Lipschitz on the compact interval $[-E_1(\xi),-b]$ with constant
$L_\Xi(b)$.  For $\beta>L_\Xi(b)$, the map
$u\mapsto-\beta u+\Xi^{\mathrm{EK}}_{1,\xi}(u)$ is strictly
decreasing on the interval, so its supremum is attained
\emph{exactly} at the left endpoint $u=-E_1(\xi)$:
\begin{align}
 \sup_{u\in[-E_1(\xi),-b]}
 \{-\beta u+\Xi^{\mathrm{EK}}_{1,\xi}(u)\}
 =\beta E_1(\xi)+\Xi^{\mathrm{EK}}_{1,\xi}(-E_1(\xi)).
 \label{eq:endpoint-pressure}
\end{align}
No $o_\beta(1)$ error is incurred.  Since $b<E_1(\xi)$, the
saddle-pressure term in \eqref{eq:refined-EK} dominates the
high-level extension term $\beta b$ once
$\beta(E_1(\xi)-b)\ge\frac12\log(\beta e)+|\Xi^{\mathrm{EK}}_{1,\xi}(-E_1(\xi))|
+C_{\xi,b}\beta^{-1/2}$, which holds for all $\beta$ beyond a finite
threshold $\beta_b$.  Substitute \eqref{eq:endpoint-pressure} into
\eqref{eq:refined-EK} to obtain \eqref{eq:refined-low-temp}; the
sign bound \eqref{eq:Xi-endpoint-negative} is
\eqref{eq:Xi-vs-Theta} at $u=-E_1(\xi)$ together with
$\Theta_{1,\xi}(-E_1(\xi))=0$ from \eqref{eq:Ek}.  For
\eqref{eq:refined-arrhenius}, divide by $\beta$ and use
\eqref{eq:F-over-beta}.
\end{proof}

\begin{remark}[The constant in \eqref{eq:Xi-endpoint-negative} is not $\tfrac14\log\xi''(1)+\tfrac14$]\label{rem:Xi-constant-example}
A previous version of this paper asserted
$-\Xi^{\mathrm{EK}}_{1,\xi}(-E_1(\xi))\ge\tfrac14\log\xi''(1)+\tfrac14$,
using $\Omega(-2)=\tfrac12$ as if every shift contributing to the
half-determinant rate satisfied $t_\xi\le-2$.  That is false for
genuine mixtures.  For $\xi(q)=0.9q^4+0.1q^{12}$ one has
$\xi'(1)=4.8$, $\xi''(1)=24$, $v_\xi^2=5.76$, and a direct
evaluation of the definitions \eqref{eq:Theta-mixed},
\eqref{eq:Ek}, and \eqref{eq:Xi-EK} (bisection for $E_1$,
one-dimensional maximization over $\omega$, high-precision
arithmetic) gives
\begin{center}
\begin{tabular}{@{}lr@{}}
\toprule
$E_1(\xi)$ & $1.8693502$\\
$E_2(\xi)$ & $1.8686770$\\
$-\Xi^{\mathrm{EK}}_{1,\xi}(-E_1(\xi))$ & $1.0409270$\\
$\tfrac12D_\xi(-E_1(\xi),\omega^{\mathrm{crt}})$ & $1.0513485$\\
$\tfrac12D_\xi(-E_1(\xi),\omega^{\mathrm{EK}})$ & $1.0190907$\\
$\tfrac14\log\xi''(1)+\tfrac14$ (previous claim) & $1.0445135$\\
$\tfrac14\log\xi''(1)-\tfrac14$ (certified) & $0.5445135$\\
\bottomrule
\end{tabular}
\end{center}
The counting maximizer sits at $t_\xi\approx-2.0148$, just outside
the semicircle support, whereas the half-determinant maximizer sits
at $t_\xi\approx-1.9485$, inside it; the previous claim fails by
$0.0036$, while the two-sided bound \eqref{eq:Xi-vs-Theta} holds
with room to spare.  Positivity of the determinant correction, and
hence the low-temperature improvement of \Cref{cor:refined-low-temp},
survives with the weaker certified constant.  We thank an anonymous
reader for the counterexample.  (The same computation with
$\xi(q)=q^4$ returns $E_1=1.7833003$ and
$-\Xi^{\mathrm{EK}}_{1,\xi}(-E_1)=\tfrac12D_\xi(-E_1)=0.8964335$,
the pure-case values quoted in \Cref{rem:improvement}.)
\end{remark}

\begin{remark}[Relation to the unconditional bound]\label{rem:refinement-relation}
The determinant correction in \eqref{eq:refined-low-temp} is not
obtained by plugging the fixed-location determinant into the
Eyring--Kramers formula for one critical point.  It comes from the
half-determinant statistic in the aggregate marked Kac--Rice sum.  The
unconditional theorem \eqref{eq:fixed-temperature-bound} is
independent of \Cref{thm:cocore-localization}.  The determinant-level
refinement rests on the annealed marked Kac--Rice estimate
\Cref{prop:soft-marked-KR} proved in \Cref{app:near-threshold-cocore}.
\end{remark}

\section{Numerical experiments}\label{sec:experiments}

All simulations use \texttt{numpy}; the scripts are publicly
available (see the code and data availability statement).
All experiments probe the \emph{pure} case $\xi(x)=x^p$, covered by
\Cref{cor:pure}, where every rate function has a closed form; a
numerical exploration of genuinely mixed $\xi$, for which the rate
functions require the additional one-dimensional shift optimization
of \eqref{eq:Lambda-rk}, is left to future work.  At the even value
$p=4$, numerical root finding on the pure-case
complexity functions
$\Theta_{k,\xi}(-z)=\tfrac12\log(p-1)-\tfrac{(p-2)z^2}{4(p-1)}
-(k+1)I_{1,\xi}(z)$ of \citet{auffinger2013random} gives
\begin{equation}\label{eq:thresholds}
  E_\infty(4)=1.73205,\quad
  E_2(4)=1.77648,\quad
  E_1(4)=1.78330,\quad
  E_0(4)=1.79409.
\end{equation}
The dynamics experiments are run at this value of $p$; only the
dynamical-arrest sweep of \Cref{sec:arrest-exp} uses $p=3$, where
$T_d=1/E_\infty$ exactly makes the comparison with the
\citet{cugliandolo1993analytical} transition parameter-free (it probes
the dynamical transition, not the theorems).  The geometric picture
behind the theorems is summarized in \Cref{fig:gate-morse-geometry}
(\Cref{sec:gate}); \Cref{fig:landscape} renders an actual small
instance of the landscape.

\begin{figure}[t]
\centering
\includegraphics[width=0.82\linewidth]{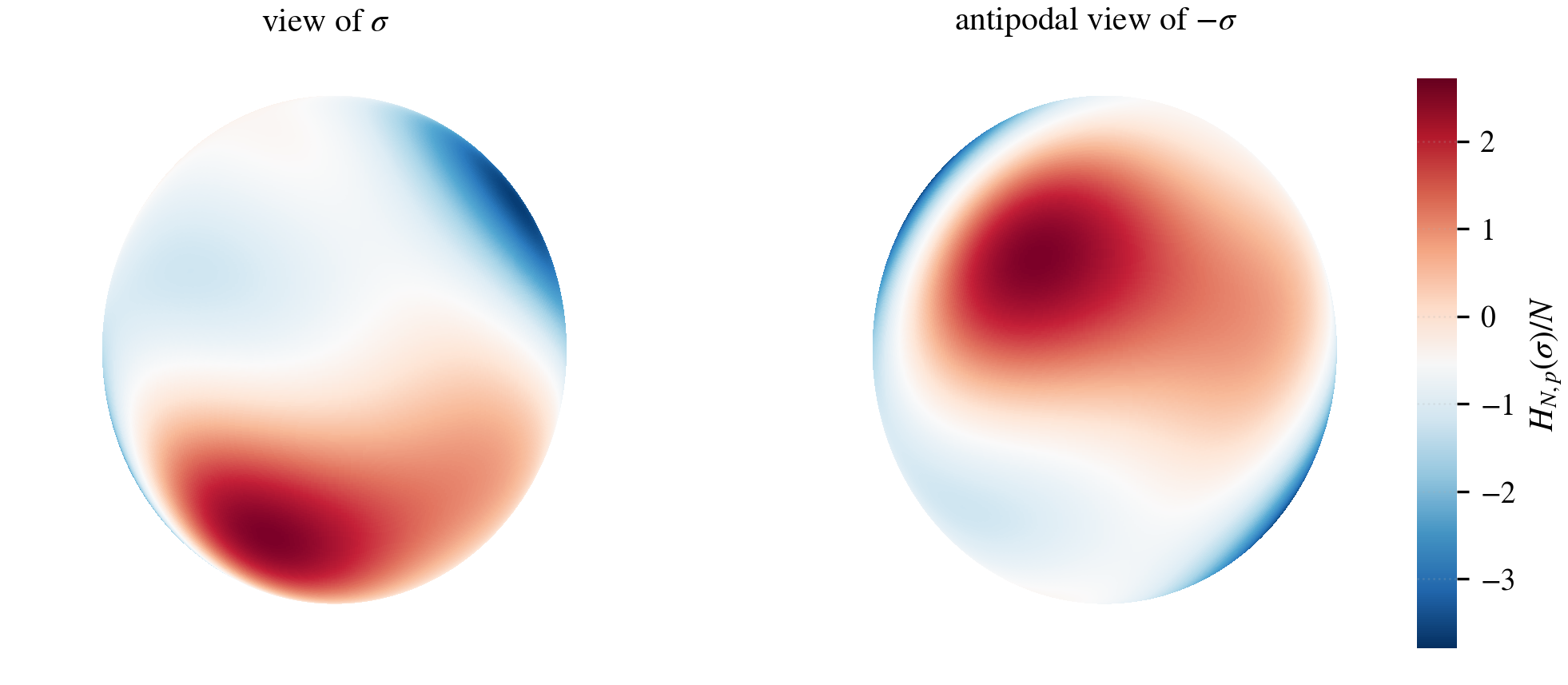}
\caption{An actual pure $p$-spin landscape ($p=4$, $N=3$); blue
regions are wells, red are barriers.  Because $p$ is even,
$H_N(-\sigma)=H_N(\sigma)$: every well has an exact antipodal
twin at the same depth (right panel), the mechanism behind the mass
balance in \Cref{thm:fixed}; white corridors between basins are
crossed at index-one saddles, whose lowest possible level
\Cref{thm:arrhenius} constrains from below.}
\label{fig:landscape}
\end{figure}

\subsection{Langevin dynamics of the $p$-spin model at even $p=4$}
\label{sec:pspin-exp}

We simulate the Euler discretization of spherical Langevin dynamics
$\mathrm d\sigma=-\beta\,\grad_{\mathrm{sph}}H_N\,\mathrm dt
+\sqrt2\,\mathrm dB_t$ for $p=4$, $N=60$, with i.i.d.\ couplings, step
size $\mathrm dt=5\times10^{-3}$, and three disorder samples per
temperature.

\begin{figure}[t]
\centering
\includegraphics[width=0.44\linewidth]{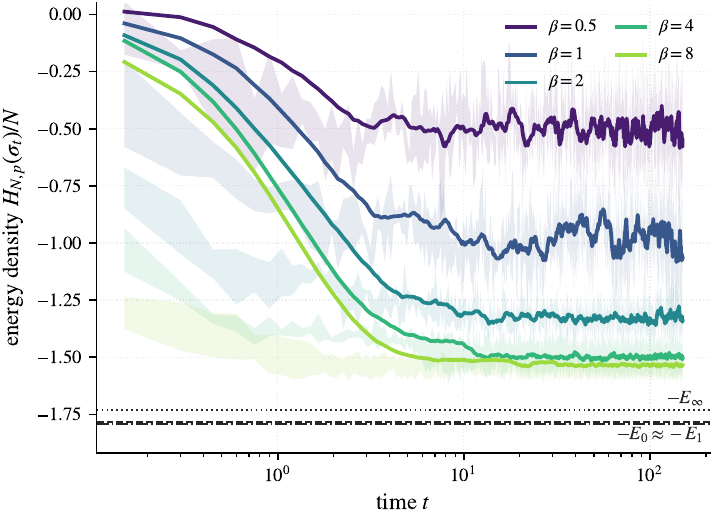}\hfill
\includegraphics[width=0.44\linewidth]{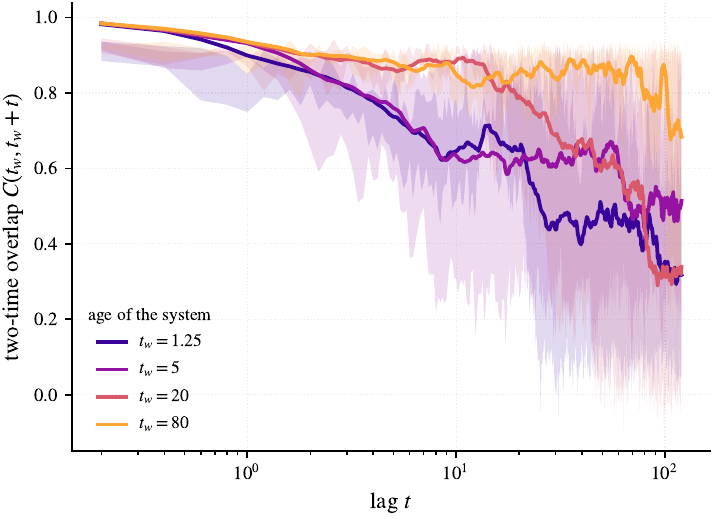}
\caption{\textbf{Left:} energy density along Langevin trajectories
from a random start ($p=4$, $N=60$; line = smoothed median over three
disorder samples, band = min--max).  At low temperature the dynamics
relaxes toward a plateau on the threshold side of the landscape,
above $-E_\infty$ (dotted) and far from the nearly
coincident $-E_1$, $-E_0$ (dashed/dash-dotted); the plateau offset
from $-E_\infty$ is a finite-$N$, finite-time, finite-step effect,
larger than at $p=3$ because barriers grow with $p$.
\textbf{Right:} two-time overlap
$C(t_w,t_w+t)=\langle\sigma_{t_w},\sigma_{t_w+t}\rangle/N$ at
$\beta=2.5$, moderately below the dynamical transition
($\beta_d=1.84$), for increasing $t_w$ (mean over three thermal
histories, band = range): older systems decorrelate more slowly
(aging); deeper in the frozen phase $C$ barely decays over accessible
windows.}
\label{fig:pspin}
\end{figure}

\paragraph{Threshold-side relaxation and aging (\Cref{fig:pspin}).}
For the low-temperature runs the energy density relaxes toward a
plateau on the threshold side of the landscape, above $-E_\infty$,
and stalls there on simulation
time scales, in agreement with the Cugliandolo--Kurchan picture
\citep{cugliandolo1993analytical}: the Gibbs-relevant wells below
$-NE_1$ are not reached on the simulated time scales, consistent with
(though not implied by; see \Cref{sec:rsb-aging}) the spectral-gap
bounds of \Cref{sec:theory}.  The two-time
overlap exhibits aging: the relaxation time grows with the age $t_w$ of
the system.  These are qualitative signatures of
the glassy phase; they do not measure the spectral gap itself.

\begin{figure}[t]
\centering
\includegraphics[width=0.52\linewidth]{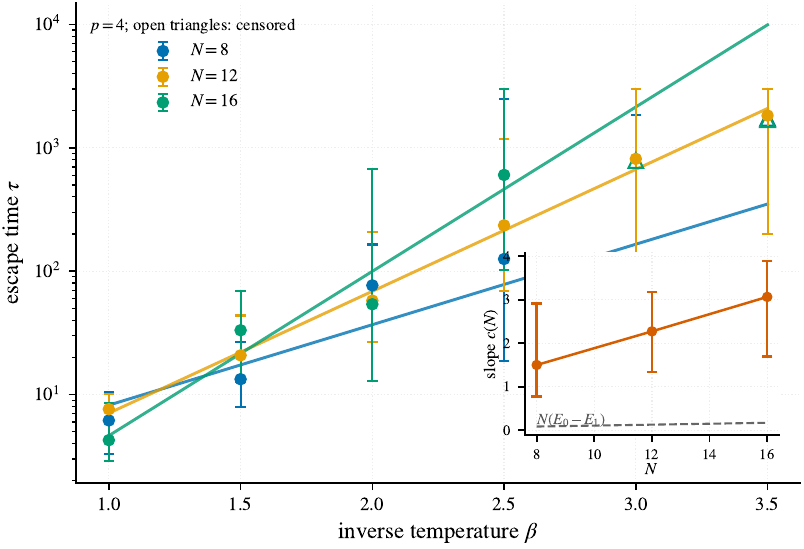}
\caption{Escape from the deepest of ten gradient-descent minima at
$p=4$, $N=8,12,16$ (fifteen disorder samples per point; escape =
overlap with the initial minimum first drops below $0.3$).  Markers:
median with IQR bars; open triangles: censored medians ($>40\%$ of
runs did not escape; lower bounds, excluded from fits).  Lines:
Arrhenius fits $\tau\propto e^{c(N)\beta}$.  \textbf{Inset:} slope
$c(N)$ with bootstrap $95\%$ CIs and the asymptotic constant
$N(E_0-E_1)$ of \Cref{thm:arrhenius} (dashed); the measured
barriers grow with $N$ but exceed $N(E_0-E_1)$ by a large factor at
these sizes---see text.}
\label{fig:arrhenius}
\end{figure}

\paragraph{Activated escape and its $N$-dependence
(\Cref{fig:arrhenius}).}
At small $N$, where activated events are observable, the median escape
time from a deep minimum grows exponentially in $\beta$, and the
fitted effective barrier grows with $N$: $c(8)=1.5$ (bootstrap $95\%$
CI $[0.8,2.9]$), $c(12)=2.3$ ($[1.3,3.2]$), $c(16)=3.1$
($[1.7,3.9]$)---the two qualitative signatures of activated dynamics
with extensive barriers.  This experiment does \emph{not}
quantitatively validate \Cref{thm:arrhenius}: the theorem's
constant $E_0-E_1$ is the asymptotic separation between the
ground-state band and the lowest index-one level, and at $N\le16$ that
separation ($N(E_0-E_1)\approx0.09$--$0.17$) is far smaller than the
$O(1)$ sample-to-sample fluctuations of individual well depths, so the
measured slope reflects the local barrier of the particular
well---exactly as the inset shows.  Single-well escape is also a
different observable from the two-well relaxation the gap measures: in
the same runs, first passage to the antipodal hemisphere (overlap
$<-0.5$) was censored at the horizon for most low-temperature runs,
consistent with---but not a measurement of---the much longer gap time
scale.  These three small sizes support an illustrative claim of
extensive barrier growth, not a quantitative validation of the
constant.

\subsection{Measuring the aggregate saddle weight directly}
\label{sec:saddle-exp}

The distinctive object of \Cref{thm:ek} is the weighted index-one
saddle sum
\begin{equation}\label{eq:W-main}
  \cW_{N,\beta}(I)
  =\sum_{\substack{z:\ \grad_{\mathrm{sph}} H_N(z)=0,\ \ind z=1\\
                 H_N(z)/N\in I}}
  e^{-\beta H_N(z)}
  \frac{\alpha_z}{\sqrt{|\det Q_z|}},
\end{equation}
where $Q_z$ is the Riemannian Hessian and $\alpha_z$ the magnitude of
its unique negative eigenvalue, and the theory bounds the annealed
growth rate by $\sup_{u\in I}\{-\beta
u+\Xi^{\mathrm{EK}}_{1,\xi}(u)\}$ with
$\Xi^{\mathrm{EK}}_{1,\xi}=\Theta_{1,\xi}-\tfrac12D_p$
(\Cref{thm:weighted-KR}).  Rather than test only qualitative
phenomenology, we measure \eqref{eq:W-main} directly at small sizes:
for $p=4$, $N\in\{8,10,12\}$ and eight disorder samples per size, we
enumerate critical points by damped projected Newton iteration from
$4{,}000$ starts per sample (random, gradient-descent- and
ascent-preconditioned), deduplicate up to the exact antipodal
symmetry, classify each point by the inertia of its Riemannian
Hessian, and record energy, $\alpha_z$, and $\log|\det Q_z|$ for
every index-one saddle.

\begin{figure}[t]
\centering
\includegraphics[width=0.92\linewidth]{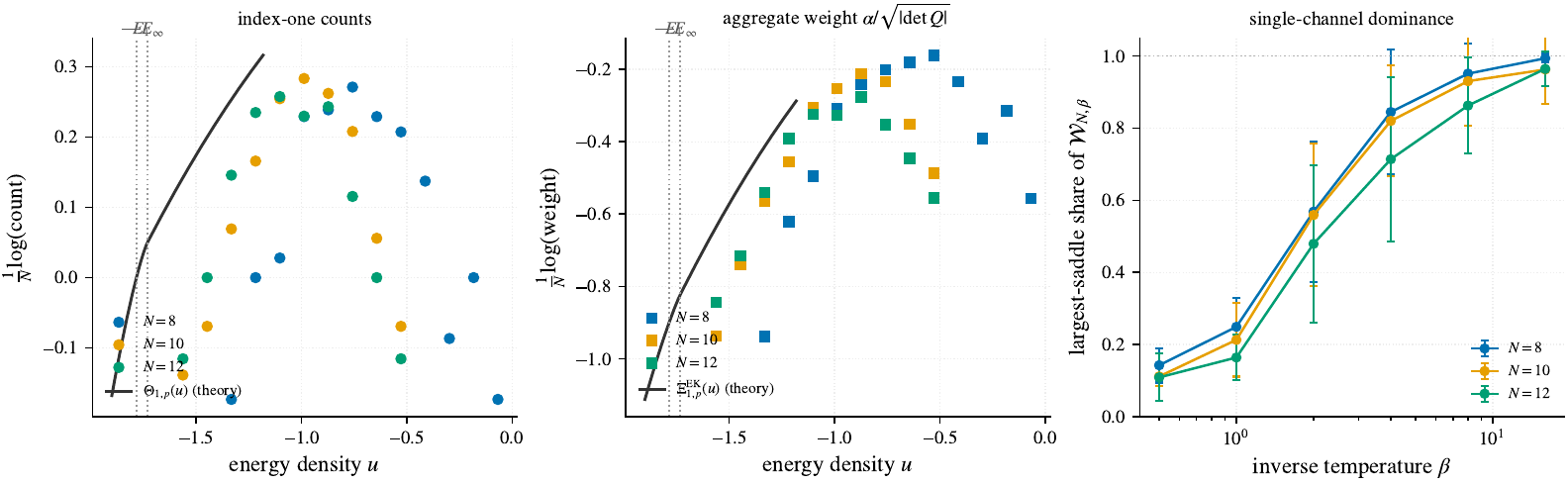}
\caption{Direct measurement of the aggregate index-one weight
($p=4$; $N=8,10,12$; eight disorder samples per size; energy bins of
width $0.115$; disorder-averaged).  \textbf{Left:} binned index-one
counts, $\frac1N\log(\text{count})$, against the complexity
$\Theta_{1,\xi}(u)$ (solid).  \textbf{Middle:} binned
determinant-weighted sums
$\frac1N\log\sum\alpha_z/\sqrt{|\det Q_z|}$ against the conductance
complexity $\Xi^{\mathrm{EK}}_{1,\xi}(u)$ (solid).  \textbf{Right:}
share of the largest single saddle in the full weighted sum
$\cW_{N,\beta}$ versus $\beta$ (mean $\pm$ s.d.\ over disorder
samples).  Vertical dotted lines mark $-E_1$ and $-E_\infty$.}
\label{fig:saddle-sum}
\end{figure}

\paragraph{Results (\Cref{fig:saddle-sum}).}
Three observations.  \emph{First}, the energy-resolved counts and
weighted sums follow the shape of the theoretical rate functions, but
at these sizes the asymptotic band structure is strongly smeared: the
deepest minima fall below $-NE_0$ (down to $u=-1.87$ at $N=8$), and
no index-one saddle is found near $-NE_1$---the maximal complexity
$\Theta_{1,4}\le0.049$ makes the expected near-threshold population
$O(1)$ at these $N$, so the observed index-one saddles concentrate at
higher energies.  The comparison is therefore a consistency check of
the rate functions' shape and trend, not a verification of their
values.  \emph{Second}, the determinant
weighting matters: the weighted profile is visibly displaced from the
raw counts, in the direction predicted by the half-determinant
statistic $-\tfrac12D_p$.  \emph{Third}, the dominance structure is
informative: at moderate $\beta$ the largest single saddle carries
only part of $\cW_{N,\beta}$---the aggregate is genuinely
shared among channels---while as $\beta\to\infty$ the lowest saddle
takes over, the finite-$N$ image of the crossover between the two
branches of \Cref{thm:ek}; the share of the largest channel at fixed
moderate $\beta$ decreases with $N$, consistent with a channel
entropy that grows with dimension.  Discovery diagnostics (new
saddles found in the last quarter of Newton starts) and all
tolerances are reported in \Cref{app:sim}.

\subsection{Dynamical arrest at $p=3$}\label{sec:arrest-exp}

\begin{figure}[t]
\centering
\includegraphics[width=0.85\linewidth]{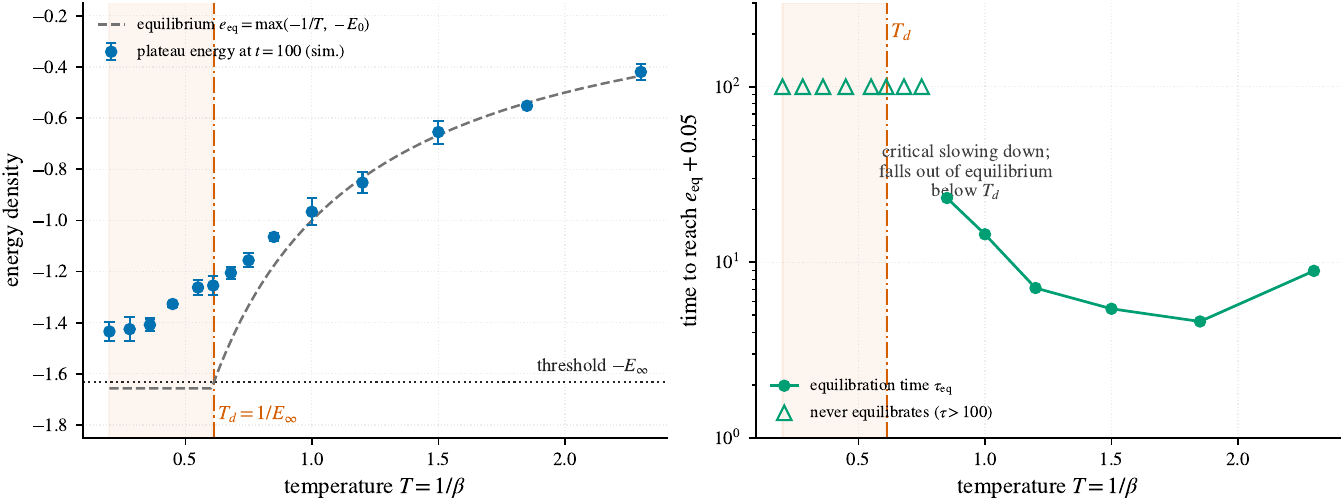}
\caption{Dynamical arrest of $p$-spin Langevin dynamics ($p=3$,
$N=64$, three disorder samples).  \textbf{Left:} plateau energy after
a quench versus temperature: above $T_d$ the data sit on the
equilibrium curve $e_{\mathrm{eq}}(T)$; below $T_d$ the dynamics
arrests near $-E_\infty$ (residual offset: finite-$N$/step-size
effect).  For $p=3$, $T_d=1/E_\infty$ exactly, so the equilibrium
curve reaches the threshold precisely at the transition.
\textbf{Right:} time to reach the equilibrium energy: it grows
steeply as $T\downarrow T_d$ and is unreachable on this horizon
slightly above $T_d$ and for every $T<T_d$ (open triangles are lower
bounds).}
\label{fig:arrest}
\end{figure}

The spectral-gap bounds of \Cref{sec:theory} concern the
low-temperature phase; \Cref{fig:arrest} locates the phase boundary
itself in simulation, at $p=3$, where the comparison with
\citet{cugliandolo1993analytical} is parameter-free (this probes the
dynamical transition, not the theorems, which require even $p\ge4$).
Sweeping the temperature across the dynamical transition
$T_d^2=p(p-2)^{p-2}/\bigl(2(p-1)^{p-1}\bigr)$
\citep{cugliandolo1993analytical}, the measured plateau energy
follows the equilibrium law $e_{\mathrm{eq}}=-1/T$ at high
temperature and detaches at $T_d$, arresting near $-E_\infty$; the
time to reach equilibrium diverges at the same point.  This is the
dynamical counterpart of the positivity condition
$F_\xi(\beta)>\beta E_1(\xi)$ in \Cref{thm:fixed}: below the
transition, equilibrium lives in wells whose worst-case relaxation
time is exponential in $N$, and which the simulated dynamics does not
reach on accessible time scales.

\subsection{The formal branch comparison}\label{sec:transitions}

\begin{figure}[t]
\centering
\includegraphics[width=0.58\linewidth]{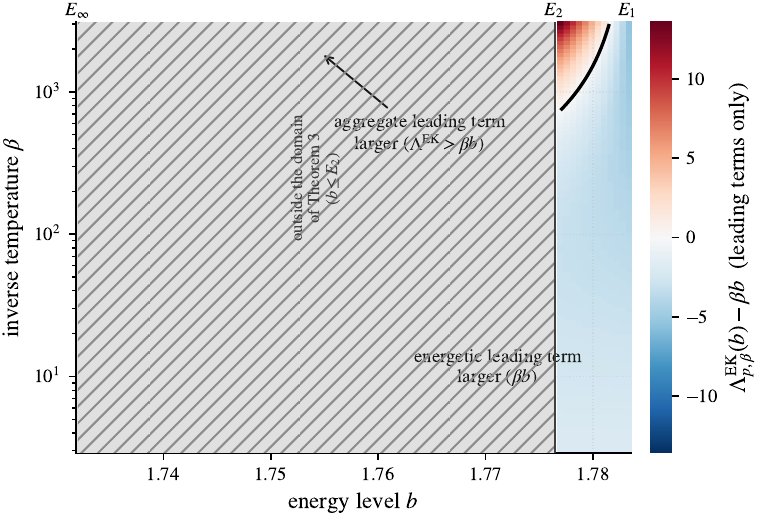}
\caption{\emph{Formal comparison of the two explicit leading terms}
of \Cref{thm:ek} at $p=4$: the sign of
$\Lambda^{\mathrm{EK}}_{\xi,\beta}(b)-\beta b$ in the $(b,\beta)$
plane.  This is \emph{not} the branch boundary of the theorem: the
proved bound compares $\beta b$ with
$\Lambda^{\mathrm{EK}}_{\xi,\beta}(b)+C_{\xi,b}/\sqrt\beta$, and
$C_{\xi,b}$ is not explicit, so the displayed curve compares leading
terms only.  Moreover the theorem holds only for
$b>b_*(\xi)$ with $b_*$ not numerically certified and forced into a
left neighborhood of $E_1(\xi)$ (\Cref{thm:ek}): the hatched region
$b\le E_2(4)=1.77648$ violates even the necessary condition and is
masked, and in the remaining strip the comparison is formal wherever
$b<b_*$.}
\label{fig:mechanism}
\end{figure}

\Cref{fig:mechanism} visualizes the temperature structure of the
\emph{bound} in \Cref{thm:ek} as a formal leading-term comparison:
where $\Lambda^{\mathrm{EK}}_{\xi,\beta}(b)>\beta b$ the aggregate
term controls the displayed maximum, and at very low temperature the
energetic term takes over.  The caption states the two caveats
precisely (unknown $C_{\xi,b}$, uncertified $b_*$); whether the true
escape rate exchanges mechanisms along a similar boundary is exactly
the content of the open matching upper bound (\Cref{rem:scope}).

\section{Discussion and limitations}\label{sec:discussion}

\paragraph{What the theory does and does not say.}  The bounds of
\Cref{sec:theory} are one-sided (\Cref{rem:scope}).  A
matching upper bound would certify the candidate gates as the true
bottleneck; it reduces to two open landscape hypotheses (quenched gate
attainment and local mixing within wells); until then, statements
about the realized escape mechanism remain hypotheses.  The experiments
probe the saddle sum at sizes far from the asymptotic regime: at
$N\le12$ the index thresholds are strongly smeared, and comparisons
with the theory are in shape and trend only.

\paragraph{The evenness restriction.}  Evenness of the mixture is
load-bearing: every theorem uses the exact antipodal symmetry
$H_N(-\sigma)=H_N(\sigma)$---for the mass balance between
the two deep-well unions, for the antisymmetric test function of
\Cref{thm:fixed}, and for the gate analysis between $\pm$ pairs.
For mixtures with odd components (including the pure $p=3$ model)
the deepest wells are not antipodal
images of each other, so the two-well construction does not apply as
stated; a version built on the two deepest wells would require
control of their individual Gibbs masses and separation, which we
do not have; extending to odd components is open.  The exclusion of
quadratic components ($p=2$) is of a different nature: a strong
quadratic part pushes the mixture out of the pure-like class toward
full RSB and topological trivialization, where assumption \textup{(A2)}
fails and the bounds are correctly vacuous.

\paragraph{Open problems.}  First, computable bounds on $b_*(\xi)$ and
$C_{\xi,b}$ would upgrade \Cref{rem:improvement} to a fully certified
finite-$\beta$ statement and \Cref{fig:mechanism} to a true phase
diagram of the bound.  Second, the weighted Kac--Rice estimate
behind \Cref{thm:ek} is an annealed \emph{upper} rate plus Markov's
inequality; a quenched convergence result for
$\frac1N\log\cW_{N,\beta}$---via a second-moment argument in the
spirit of \citet{subag2017complexity}, extended to the
determinant-weighted marks in the window $[-E_1(\xi)-\delta,-b]$ the
proof uses---would establish $\Lambda^{\mathrm{EK}}_{\xi,\beta}$ as
the actual exponential scale; our enumeration experiment is its
finite-$N$ template.  Third, this
paper covers even pure-like mixtures under the strict separation
\textup{(A2)}; verifying \textup{(A2)} for explicit families of
mixtures (beyond the pure case, where it is a theorem), and
understanding the dynamics in full-RSB landscapes---where marginal
connectivity replaces barriers and polynomial-time optimization is
possible \citep{subag2021following}---remain open.  Fourth, the
spectral-gap bounds say nothing about relaxation from a uniform
start, because the certified slow mode is odd under the antipodal
map while a symmetric initial law lives in the even sector
(\Cref{sec:rsb-aging}); proving that aging from random starts
persists to exponential times requires a lower bound on the descent
time of an even observable such as the energy, and would connect the
present barrier estimates with the exact asymptotic energies of the
limiting dynamics \citep{lang2026aging}.

\subsection*{Code and data availability}
All figures are generated by self-contained \texttt{numpy} scripts
with fixed random seeds; the theoretical thresholds in
equation~\eqref{eq:thresholds} are computed in the same scripts by
quadrature and root finding.  The scripts, together with the
per-sample data and diagnostics of the saddle enumeration, are
publicly available at
\url{https://github.com/mbadieik/spectral_gap_for_mixed_spherical_spin_glasses}.

\appendix

\section{GOE determinant powers and weighted Kac--Rice}\label{app:weighted-KR}

This appendix supplies the random-matrix details behind
\Cref{prop:complexity-upper,prop:fixed-location-det} and
\Cref{thm:weighted-KR}.  Put $n=N-1$.  By
\Cref{prop:shifted-GOE}, the conditional Hessian at energy $Nu$ is
\begin{equation}\label{eq:B-Q}
 Q_N(u)\defeq c_NG_n-s_N(u)I_n,
 \qquad
 s_N(u)\defeq\xi'(1)u+\omega_N,
 \qquad
 \omega_N\defeq\frac{v_\xi Z}{\sqrt N},
\end{equation}
with $Z$ standard Gaussian independent of $G_n$ and
$c_N=\sqrt{\tfrac{N-1}N\,\xi''(1)}$.  For a deterministic shift
value $\omega$, write
\begin{equation}\label{eq:B-Q-omega}
 Q_N(u,\omega)\defeq c_NG_n-\bigl(\xi'(1)u+\omega\bigr)I_n,
 \qquad
 t_N(u,\omega)\defeq\frac{\xi'(1)u+\omega}{c_N},
\end{equation}
so that $Q_N(u)=Q_N(u,\omega_N)$ and
$t_N(u,\omega)\to t_\xi(u,\omega)$ of \eqref{eq:t-shift}, uniformly
on compacts.  Every estimate below is proved by conditioning on
$\{\omega_N=\omega\}$, applying a fixed-shift GOE estimate uniformly
on a compact rectangle of $(u,\omega)$, and integrating over the
Gaussian shift by the following elementary Laplace bound; the
penalty $\mathfrak p_\xi$ of \eqref{eq:penalty} is exactly the
speed-$N$ rate of $\omega_N$.  Let
\begin{equation}\label{eq:B-empirical}
 L_n\defeq\frac1n\sum_{i=1}^n\delta_{\lambda_i(G_n)},
\end{equation}
so that
\begin{equation}\label{eq:B-logdet}
 \frac1n\log|\det Q_N(u,\omega)|
 =\log c_N+\int\log|x-t_N(u,\omega)|L_n(\dd x).
\end{equation}

\begin{lemma}[Scalar-tilt Laplace bound]\label{lem:B-tilt-laplace}
Let $\Phi_N:\R\to[0,\infty)$ be measurable functions and let
$\varphi:\R\to\R$ be continuous.  Assume
\begin{enumerate}[label=\textup{(\roman*)},leftmargin=*]
\item \textup{(local bound)} for every compact $W\subset\R$ and
$\eps>0$ there is $N_0$ with
$\Phi_N(\omega)\le e^{N(\varphi(\omega)+\eps)}$ for all
$\omega\in W$ and $N\ge N_0$;
\item \textup{(global growth)} there are $C_0<\infty$ and $N_1$
such that, for all $\omega\in\R$ and $N\ge N_1$,
\[
 \varphi(\omega)\le C_0\bigl(1+\log(1+|\omega|)\bigr)
 \qquad\text{and}\qquad
 \Phi_N(\omega)\le e^{NC_0(1+\log(1+|\omega|))}.
\]
\end{enumerate}
Then
\begin{equation}\label{eq:B-tilt-laplace}
 \limsup_{N\to\infty}\frac1N\log\E\bigl[\Phi_N(\omega_N)\bigr]
 \le\sup_{\omega\in\R}
 \bigl\{\varphi(\omega)-\mathfrak p_\xi(\omega)\bigr\}.
\end{equation}
If $v_\xi=0$ the statement is trivial, the right-hand side being
$\varphi(0)$.
\end{lemma}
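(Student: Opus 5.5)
The plan is a standard Laplace/Varadhan upper bound: split the expectation over $\omega_N=v_\xi Z/\sqrt N$ into a compact window and its complement. The window is handled by the local bound (i). The tail is handled by the global growth bound (ii) against the Gaussian tail of $\omega_N$.

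The case $v_\xi=0$ is trivial, since $\omega_N\equiv0$ and (i) at $W=\{0\}$ gives the claim. So assume $v_\xi>0$ and write $S\defeq\sup_\omega\{\varphi(\omega)-\mathfrak p_\xi(\omega)\}$. By (ii), $\varphi-\mathfrak p_\xi$ grows at most logarithmically minus a quadratic. Hence $S<\infty$, and $S>-\infty$ because $\varphi$ is finite.

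\emph{Tail.} Fix $M>0$. Using (ii) and the Gaussian density of $\omega_N$, which is $\sqrt{N/(2\pi v_\xi^2)}\,e^{-N\omega^2/(2v_\xi^2)}$, we get
\[
 \E\bigl[\Phi_N(\omega_N)\mathbf 1_{\{|\omega_N|>M\}}\bigr]
 \le\sqrt{\tfrac{N}{2\pi v_\xi^2}}\int_{|\omega|>M}
 e^{N\{C_0(1+\log(1+|\omega|))-\omega^2/(2v_\xi^2)\}}\dd\omega .
\]
For $|\omega|\ge M$ with $M$ large, the exponent is at most $-\omega^2/(4v_\xi^2)$ per unit $N$. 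The integral is therefore at most $e^{-NM^2/(8v_\xi^2)}$ for large $N$. Choosing $M$ with $M^2/(8v_\xi^2)>|S|+1$ makes the tail negligible at exponential scale.

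\emph{Window.} Take $W=[-M,M]$ and fix $\eps>0$. Partition $W$ into finitely many intervals $W_j$ of mesh $h$. On each $W_j$, (i) gives $\Phi_N\le e^{N(\varphi+\eps)}$. Uniform continuity of $\varphi$ on $W$ lets us replace $\varphi$ by $\sup_{W_j}\varphi$ at an error $\le\eps$ once $h$ is small. The Gaussian weight gives $\Pp(\omega_N\in W_j)\le e^{-N\inf_{W_j}\mathfrak p_\xi}$. Since $\mathfrak p_\xi$ is also uniformly continuous on $W$, each piece has exponent at most $S+3\eps$. Summing finitely many pieces costs nothing at scale $N$, and the density prefactor $\sqrt N$ is subexponential.

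Combining the two parts gives $\limsup\frac1N\log\E\Phi_N(\omega_N)\le\max\{S+3\eps,\,-|S|-1\}=S+3\eps$; then let $\eps\downarrow0$. The only point needing care is the measurability and the pointwise nature of (i). Since (i) is uniform on compacts for $N\ge N_0$, integrating it against the law of $\omega_N$ restricted to $W_j$ is legitimate. This is the step I would check most closely, although it is routine. The main obstacle is really just choosing $M$ before $\eps$ and $h$, so that the order of limits is consistent.
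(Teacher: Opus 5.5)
Your proposal is correct and follows essentially the same route as the paper. The tail $\{|\omega_N|\ge M\}$ is controlled by the global growth bound (ii) against the Gaussian tail: the paper sums over unit shells, while you integrate the density directly, which makes no difference. The compact window is handled by a fine partition, the local bound (i), uniform continuity of $\varphi$ and $\mathfrak p_\xi$, and the interval estimate $\Pp(\omega_N\in W_j)\le e^{-N\inf_{W_j}\mathfrak p_\xi}$, which gives $S+3\eps$; choosing $M$ first (depending only on $C_0$, $v_\xi$, $S$) and only then $\eps$, the mesh and $N_0$ is exactly the ordering the paper uses.
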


\begin{proof}
Assume $v_\xi>0$ and abbreviate the right-hand side of
\eqref{eq:B-tilt-laplace} by $S$; note $S\ge\varphi(0)>-\infty$.
Since $\omega_N=v_\xi Z/\sqrt N$, for every interval
$[\omega,\omega+\theta]$ one has
$\Pp(\omega_N\in[\omega,\omega+\theta])
\le\exp\{-N\min_{[\omega,\omega+\theta]}\mathfrak p_\xi\}$.
Fix $\eps>0$ and $\Omega_0\ge1$ with
$\Omega_0^2/(4v_\xi^2)\ge C_0\bigl(1+\log(1+\Omega_0)\bigr)+|S|+1$;
the map $\omega\mapsto C_0(1+\log(1+|\omega|))-\omega^2/(4v_\xi^2)$
is decreasing on $[\Omega_0,\infty)$, so on
$\{|\omega_N|\ge\Omega_0\}$ the global growth bound (ii) on
$\Phi_N$ and the Gaussian tail give
\[
 \E\bigl[\Phi_N(\omega_N)\mathbf1_{\{|\omega_N|\ge\Omega_0\}}\bigr]
 \le2\sum_{j\ge0}
 e^{NC_0(1+\log(2+\Omega_0+j))}
 e^{-N(\Omega_0+j)^2/(2v_\xi^2)}
 \le e^{N(S-1)}
\]
for all large $N$.  On $[-\Omega_0,\Omega_0]$, choose a finite
partition into intervals of mesh $\theta$ so small that both
$\varphi$ and $\mathfrak p_\xi$ oscillate by at most $\eps$ on each
interval (uniform continuity); on each interval the local bound (i)
and the probability bound above give a contribution at most
$e^{N(\varphi(\omega_j)-\mathfrak p_\xi(\omega_j)+3\eps)}
\le e^{N(S+3\eps)}$.  Summing the finitely many intervals and
letting $\eps\downarrow0$ proves \eqref{eq:B-tilt-laplace}.
\end{proof}

\begin{remark}[The global growth hypothesis cannot be dropped]\label{rem:B-tilt-laplace-global}
Hypothesis (i) controls $\Phi_N$ only on fixed compact sets and says
nothing about mass escaping to infinity with $N$.  For $v_\xi>0$,
$\varphi\equiv0$, and
$\Phi_N(\omega)=1+e^{N^4}\mathbf1_{[N,N+1]}(\omega)$, hypothesis (i)
holds (on any fixed compact, $\Phi_N=1$ for large $N$), yet
$\frac1N\log\E[\Phi_N(\omega_N)]\ge N^3-\frac{(N+1)^2}{2v_\xi^2}-O(\log N/N)\to+\infty$, whereas the
right-hand side of \eqref{eq:B-tilt-laplace} is $0$.  A previous
version of this lemma omitted (ii); its proof used the bound
silently.  In every application below, (ii) is supplied either
trivially, because $\Phi_N$ is a probability, or by the
operator-norm moment bound \eqref{eq:B-det-moment-omega}: every
Hessian mark used is at most $\|Q_N(u,\omega)\|_{\op}^{rn+1}$ with
$r\le3$, and
$e^{C_0n}(1+|\omega|)^{rn+1}\le e^{N(C_0+r+1)(1+\log(1+|\omega|))}$
is of the required form.
\end{remark}

\begin{lemma}[Regularity of the tilted rates]\label{lem:B-Lambda-regularity}
For every $r\in[0,3]$ and $k\in\{1,2\}$, the supremum in
\eqref{eq:Lambda-rk} is attained, the maximizers lie in a compact
set that is uniform for $u$ in compacts, and
$\Lambda^{(r)}_{k,\xi}$---hence also $\Theta_{k,\xi}$,
$\Xi^{\mathrm{EK}}_{1,\xi}$, and $I_{1,\xi}$---is locally Lipschitz
on $\R$.
\end{lemma}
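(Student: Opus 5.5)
We treat the objective $g_u(\omega)\defeq-\mathfrak p_\xi(\omega)+rD_\xi(u,\omega)-kJ(t_\xi(u,\omega))$ as a continuous function of $(u,\omega)$ and proceed in three steps: continuity, a uniform coercivity estimate, and then local Lipschitz continuity from an envelope argument. The pure case $v_\xi=0$ is trivial: the supremum is evaluation at $\omega=0$, so $\Lambda^{(r)}_{k,\xi}(u)=rD_\xi(u)-kJ(t_\xi(u,0))$. This is a composition of $u\mapsto t_\xi(u,0)$ (affine) with $\Omega$ and $J$, and both are locally Lipschitz: from \eqref{eq:Omega-explicit} and \eqref{eq:Jgoe}, $\Omega$ is $C^1$ with $\Omega'(t)=t/2$ on $[-2,2]$ and $\Omega'(t)=(t-\operatorname{sgn}(t)\sqrt{t^2-4})/2$ outside, while $J'(t)=-\frac12\sqrt{t^2-4}$ for $t\le-2$ and $J'=0$ otherwise. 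Both derivatives are continuous and locally bounded, and $\Omega(t)$ grows like $\log|t|$.

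For $v_\xi>0$, continuity of $g_u(\omega)$ in $(u,\omega)$ is immediate. For coercivity, use $J\ge0$ and $\Omega(t)\le C(1+\log(1+|t|))$. For $u$ in a compact $U$, $|t_\xi(u,\omega)|\le C_U(1+|\omega|)$, so $g_u(\omega)\le C'_U(1+\log(1+|\omega|))-\omega^2/(2v_\xi^2)$. Meanwhile $g_u(0)\ge -C''_U$, since $D_\xi(u,0)$ is bounded below by $\frac12\log\xi''(1)-\frac12$ and $J(t_\xi(u,0))$ is bounded on $U$. Choose $W_U=[-R_U,R_U]$ with $R_U$ large enough that $g_u(\omega)<-C''_U$ whenever $|\omega|>R_U$ and $u\in U$. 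Then the supremum over $\R$ equals the supremum over the compact $W_U$, and by continuity it is attained there. This gives the first two claims, uniformly for $u\in U$ and $r\in[0,3]$.

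For the Lipschitz claim, note that on $U\times W_U$ the function $(u,\omega)\mapsto g_u(\omega)$ is Lipschitz in $u$ uniformly in $\omega$. Indeed $\partial_u g=\frac{\xi'(1)}{\sqrt{\xi''(1)}}\bigl(r\Omega'(t)-kJ'(t)\bigr)$, and $t$ ranges over a compact set where $\Omega'$ and $J'$ are bounded. A supremum over a fixed compact index set of uniformly Lipschitz functions is Lipschitz with the same constant, since $|\sup_\omega g_u-\sup_\omega g_{u'}|\le\sup_\omega|g_u-g_{u'}|$. So $\Lambda^{(r)}_{k,\xi}$ is Lipschitz on $U$. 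Adding the smooth term $\varrho_\xi(u)$ gives the statement for $\Theta_{k,\xi}$ ($r=1$) and $\Xi^{\mathrm{EK}}_{1,\xi}$ ($r=\frac12$). For $I_{1,\xi}$, apply the same argument to the infimum of $\mathfrak p_\xi(\omega)+J(t_\xi(u,\omega))$; coercivity is immediate here, and the minimizer satisfies $\omega^2/(2v_\xi^2)\le J(t_\xi(u,0))$.

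The only step needing care is that the compact set $W_U$ depends on $u$ only through $U$, and the coercivity estimate above provides exactly that. One further detail matters: $\Omega$ is only $C^1$, not analytic, at $|t|=2$, and $J'$ vanishes continuously at $-2$. This is enough because only bounded derivatives are used.
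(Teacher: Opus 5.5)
Your proposal is correct and follows essentially the same route as the paper. Both arguments obtain local Lipschitz bounds for $\Omega$ and $J$ from their explicit derivatives and treat the pure case as immediate. Both then use uniform coercivity in $\omega$ (quadratic penalty, $J\ge0$, logarithmic growth of $D_\xi$) to confine the maximizers to a compact $W_U$, and finish with the upper-envelope argument for a uniformly Lipschitz family. Your explicit lower bound on the objective at $\omega=0$ and the a priori bound $\omega^2/(2v_\xi^2)\le J(t_\xi(u,0))$ for the minimizers of $I_{1,\xi}$ simply spell out details the paper leaves implicit.
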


\begin{proof}
The potential $\Omega$ is differentiable with
$\Omega'(t)=t/2$ for $|t|\le2$ and
$\Omega'(t)=t/2-\operatorname{sgn}(t)\sqrt{t^2-4}/2$ for $|t|>2$, a
function bounded on compacts, and $J$ of \eqref{eq:Jgoe} is
differentiable with $J'(t)=-\tfrac12\sqrt{t^2-4}$ on $(-\infty,-2]$
and $J'=0$ on $(-2,\infty)$; both are therefore locally Lipschitz
in $t$, and $t_\xi(u,\omega)$ is affine in $(u,\omega)$.  If
$v_\xi=0$ the claim follows at once.  If $v_\xi>0$, the objective in
\eqref{eq:Lambda-rk} tends to $-\infty$ as $|\omega|\to\infty$,
uniformly for $u$ in a compact $K$ (the penalty is quadratic, $J\ge0$,
and $D_\xi$ grows logarithmically), so the maximizers lie in a
compact $W_K$; on $K\times W_K$ the objective is Lipschitz in $u$
uniformly in $\omega$, and the upper envelope of a uniformly
Lipschitz family is Lipschitz.  The infimum defining $I_{1,\xi}$ is
handled identically.
\end{proof}

\subsection{Fixed-location logarithmic determinant}

For $\eta>0$, let
\begin{equation}\label{eq:B-trunc-log}
 \ell_{\eta,t}(x)\defeq\log(|x-t|\vee\eta).
\end{equation}
Since $\omega_N\to0$ in probability,
$t_N(u_N,\omega_N)\to t_\xi(u,0)$ in probability whenever
$u_N\to u$.  For fixed $\eta$, the empirical semicircle law gives
\begin{equation}\label{eq:B-trunc-convergence}
 \int\ell_{\eta,t_N(u_N,\omega_N)}\dd L_n
 \longrightarrow
 \int\ell_{\eta,t_\xi(u,0)}(x)\rho_{\sclaw}(x)\dd x
\end{equation}
in probability.  To remove the truncation, let $\bar\rho_n$ be the normalized mean GOE eigenvalue density.  The Hermite-function representation and uniform Plancherel--Rotach estimates give, for each compact $K\subset\R$,
\begin{equation}\label{eq:B-density}
 \sup_n\sup_{x\in K}\bar\rho_n(x)<\infty;
\end{equation}
see \citet[Chapter~6]{mehta2004random}.  Consequently, conditionally
on any value of the independent shift,
\begin{align}
 \E\left[
 \int_{|x-t_N|<\eta}\log\frac\eta{|x-t_N|}L_n(\dd x)
 \right]
 \le C\int_{-\eta}^{\eta}\log\frac\eta{|v|}\dd v
 \le C'\eta.
 \label{eq:B-uniform-integrability}
\end{align}
Markov's inequality, followed by $\eta\downarrow0$, proves uniform integrability of the logarithm.  Substitution into \eqref{eq:B-logdet} yields \eqref{eq:fixed-location-det} with the limit $D_\xi(u)=D_\xi(u,0)$.

\subsection{Changing the determinant power inside the index-one Kac--Rice integral}

Define
\begin{equation}\label{eq:B-index-event}
 \cI_{N,1}(u)\defeq\{\ind Q_N(u)=1\},
\end{equation}
and let $\alpha_N(u)$ be the absolute value of the unique negative eigenvalue on this event.  The next lemma is the matrix input specific to the Eyring--Kramers mark.

\begin{lemma}[Half-determinant upper comparison]\label{lem:half-det}
Let $K\subset(-\infty,0)$ be compact, and define
\begin{align}
 M_N^{\mathrm{EK}}(u)
 &\defeq\E\left[
  \alpha_N(u)|\det Q_N(u)|^{1/2}
  \mathbf1_{\cI_{N,1}(u)}
 \right],\notag\\
 M_N^{\mathrm{crt}}(u)
 &\defeq\E\left[
  |\det Q_N(u)|\mathbf1_{\cI_{N,1}(u)}
 \right].
 \label{eq:B-two-moments}
\end{align}
Uniformly for $u\in K$,
\begin{equation}\label{eq:B-half-vs-full}
 \frac1N\log M_N^{\mathrm{EK}}(u)
 \le\Lambda^{(1/2)}_{1,\xi}(u)+o(1),
 \qquad
 \frac1N\log M_N^{\mathrm{crt}}(u)
 \le\Lambda^{(1)}_{1,\xi}(u)+o(1);
\end{equation}
by \eqref{eq:Xi-vs-Theta}, the half-determinant mark thus lowers the
matrix exponent below the counting exponent by at least
$\tfrac12(\tfrac12\log\xi''(1)-\tfrac12)$.  Only these upper bounds
are needed for the conductance-pressure theorem.
\end{lemma}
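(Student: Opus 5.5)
The plan is to condition on the shift, work at a fixed shift value with GOE large-deviation estimates, and then integrate over the Gaussian shift using \Cref{lem:B-tilt-laplace}. Write $M_N^{\mathrm{EK}}(u)=\E[\Phi^{\mathrm{EK}}_N(u,\omega_N)]$ with
\[
 \Phi^{\mathrm{EK}}_N(u,\omega)\defeq\E\bigl[\alpha|\det Q_N(u,\omega)|^{1/2}\mathbf1_{\{\ind Q_N(u,\omega)=1\}}\bigr],
\]
where the inner expectation runs over $G_n$ only. This is legitimate because, by \Cref{prop:shifted-GOE}, $\omega_N$ is independent of $G_n$. Define $\Phi^{\mathrm{crt}}_N$ the same way. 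The target rate functions are
\[
 \varphi_r(u,\omega)=rD_\xi(u,\omega)-J\bigl(t_\xi(u,\omega)\bigr),\qquad r\in\{1/2,1\}.
\]
Then $\sup_\omega\{\varphi_r-\mathfrak p_\xi\}=\Lambda^{(r)}_{1,\xi}(u)$, which is exactly the right-hand side of \eqref{eq:B-half-vs-full}.

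\emph{Step 1: local bound at fixed shift.} I need $\frac1N\log\Phi_N\le\varphi_r(u,\omega)+\eps$ uniformly on compact rectangles of $(u,\omega)$.

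On $\{\ind=1\}$ the smallest eigenvalue satisfies $\lambda_1(G_n)<t_N$ and $\lambda_2(G_n)\ge t_N$. I split into two cases.

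\begin{itemize}
\item \textbf{Case $t>-2$.} The bound $J=0$ is automatic. Bound the mark by $\|Q\|_{\op}\,|\det Q|^{r}$. Then control $\E|\det Q|^{r}$ by $e^{N(r D_\xi+\eps)}$ using speed-$N^2$ concentration of $L_n$ for the truncated logarithm $\ell_{\eta,t}$. The near-singular part is handled from above by $\log|x-t|\le\ell_{\eta,t}$, so truncating only helps the upper bound. The remaining piece is $\|Q\|^{r}$ on the rare event $\|G\|>C$, which is controlled by $\E\|Q\|^{rn+1}\le e^{C n}$ together with the $e^{-cN}$ (in fact $e^{-cN^2}$ for the bulk) tail probability.
\item \textbf{Case $t\le-2$.} The event forces $\lambda_1(G_n)\le t$, which has speed-$N$ cost $J(t)$ by the GOE smallest-eigenvalue LDP. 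This is uniform in $t$ on compacts, by monotonicity in $t$ plus continuity of $J$. On that event, the determinant factors as $|\lambda_1-t|^{r}\prod_{i\ge2}|\lambda_i-t|^{r}$. The outlier factor is subexponential because it is bounded by $\|Q\|$. The bulk product over $i\ge2$ is controlled as in the first case, with $L_n$ replaced by the empirical measure of the remaining $n-1$ eigenvalues, which is still at semicircle distance $o(1)$ off an $e^{-cN^2}$ event.
\end{itemize}

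In both cases, applying H\"older to the product of the indicator and $\alpha|\det|^{r}$ would lose exponents, so I will not do that. Instead I condition on $\lambda_1$ via the joint-density representation: the law of the remaining eigenvalues given $\lambda_1$ is a GOE-type ensemble with an extra Vandermonde factor. Alternatively, I can cite \Cref{lem:B-matrix-rates} if the paper provides this.

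\emph{Step 2: global growth.} The mark is at most $\|Q_N(u,\omega)\|_{\op}^{rn+1}$. Using \eqref{eq:B-det-moment-omega} gives $\Phi_N\le e^{N C_0(1+\log(1+|\omega|))}$. Also $\varphi_r$ grows at most logarithmically in $|\omega|$, since $\Omega(t)\le\log(|t|+2)+C$ and $J\ge0$. This verifies hypothesis (ii) of \Cref{lem:B-tilt-laplace}, exactly as \Cref{rem:B-tilt-laplace-global} indicates.

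\emph{Step 3: conclude and get uniformity in $u$.} Apply \Cref{lem:B-tilt-laplace} for each fixed $u$. For uniformity over compact $K$, make the local bound of Step 1 uniform on $K\times W$, run the Laplace argument with constants uniform in $u$, and then use the Lipschitz continuity from \Cref{lem:B-Lambda-regularity}. The final comparison with the counting exponent is then just \eqref{eq:Xi-vs-Theta}.

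The main obstacle is Step 1 in the outlier case. There I need the joint upper bound "cost $J(t)$ for the outlier" times "bulk determinant at rate $D_\xi$", uniformly in $t$ near $-2$. When $t$ sits inside or just at the edge of the bulk, the log singularity is the delicate point. I would first try the conditional-on-$\lambda_1$ joint density bound used by \citet{auffinger2013random}, adapted to the real power $r$.
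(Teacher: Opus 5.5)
Your plan is the paper's own proof. You condition on $\omega_N$, prove the fixed-shift bound $e^{N(rD_\xi(u,\omega)-J(t_\xi(u,\omega))+\eps)}$ uniformly on compact $(u,\omega)$-rectangles, check the growth hypothesis via \eqref{eq:B-det-moment-omega}, and integrate the shift with \Cref{lem:B-tilt-laplace}. The fixed-shift bound comes from the speed-$N$ outlier estimate, speed-$N^2$ bulk concentration and the determinant-moment tail.

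The step you call the main obstacle is not one. You need neither H\"older nor conditioning on $\lambda_1$ through the joint eigenvalue density. Intersect the bounded-norm event $\mathcal N_R$, the typical-bulk event $\mathcal B_\eps$ and $\{\lambda_1\le t\}$. On that intersection the mark is bounded \emph{deterministically} by $e^{N(rD_\xi(u,\omega)+\eps)}$, for three reasons: for $i\ge2$ you have $\log(\lambda_i-t)\le\phi_\eta(\lambda_i)$ with the capped truncated logarithm $\phi_\eta$; the excluded eigenvalue $\lambda_1$ costs only $\log(1/\eta)$; and $\alpha\le c_N(R+|t|)$. So this piece contributes at most $e^{N(rD_\xi+\eps)}\Pp(\lambda_1\le t)$, which is already the product you want.

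The complementary events contribute $e^{C_RN-cN^2}$ and $e^{-c_0nR^2}$. The second is negligible once $R$ is chosen with $c_0R^2>1+\sup_{K\times W}(2|D_\xi|+J)$; this is why the $R$-dependence of the norm tail matters, not a fixed $e^{-cN}$ rate.

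Eigenvalues close to $t$ only decrease the integrand, so the edge regime is harmless for an upper bound, just as you noted for $t>-2$. This is exactly the decomposition of \Cref{lem:B-matrix-rates}(b)--(c), to which the paper's proof of this lemma defers.
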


\begin{proof}
Condition on $\{\omega_N=\omega\}$, so that the matrix is the
deterministically shifted GOE $Q_N(u,\omega)$, and write the ordered
GOE eigenvalues as $\lambda_1\le\cdots\le\lambda_n$.  On the
index-one event, exactly one eigenvalue lies below the threshold
$t_N(u,\omega)$; when $t_N(u,\omega)\le-2-c$ this is a speed-$N$
event of cost $J(t_\xi(u,\omega))-o(1)$
(\Cref{lem:B-matrix-rates}(a)), and when $t_N(u,\omega)>-2-c$ we use
only $J\ge0$, which is consistent with the convention
\eqref{eq:Jgoe}.  Meanwhile the empirical bulk remains near the
semicircle law: deviations of $L_n$ have speed $N^2$
\citep{benarous1997large}, so conditioning on one outlier, or
tilting by a determinant power $r\in[1/2,1]$, cannot move the
empirical bulk at the $N$ scale.  On the typical-bulk, bounded-norm
event, the bulk determinant power $r$ contributes at most
$e^{N(rD_\xi(u,\omega)+\eps)}$; the one outlier contributes a single
eigenvalue whose magnitude, like $\alpha_N(u)$, has logarithm $o(N)$
on every speed-$N$ relevant compact set, and eigenvalues
exponentially close to the shift only decrease the marked
integrand.  The bulk-deviation complement has probability
$e^{-cN^2}$ against a determinant growth of at most $e^{C_RN}$, and
the operator-norm tail is controlled by the determinant-moment
estimate of \Cref{lem:B-det-moments}.  This gives, uniformly for
$(u,\omega)$ in compacts and $r\in[\tfrac12,1]$,
\[
 \E\Bigl[
  \alpha_N|\det Q_N(u,\omega)|^{r}\,
  \mathbf1_{\{\ind Q_N(u,\omega)=1\}}
 \Bigr]
 \le\exp\Bigl\{N\Bigl(rD_\xi(u,\omega)
 -J\bigl(t_\xi(u,\omega)\bigr)+\eps\Bigr)\Bigr\},
\]
with the crude growth bound
$\E[\,\cdot\,]\le\E\|Q_N(u,\omega)\|_{\op}^{rn+1}
\le e^{C_0n}(1+|\omega|)^{rn+1}$ for large $|\omega|$ from
\Cref{lem:B-det-moments}.  \Cref{lem:B-tilt-laplace} then integrates
over the shift and yields \eqref{eq:B-half-vs-full}.  A fully
quantitative version of the truncation and outlier decomposition
used here is given in \Cref{lem:B-matrix-rates} below.
\end{proof}

\subsection{Marked one-point Kac--Rice formula}

Let $J\subset(-\infty,0)$ be a small compact interval.  Rotational invariance and the one-point Kac--Rice formula give
\begin{align}
 \E\cW_{N,\beta}(J)
 ={}&\vol(\SphereN)
 p_{\grad H_N(\mathbf n)}(0)
 \int_J N p_{H_N(\mathbf n)}(Nu)e^{-\beta Nu}
 \notag\\
 &\quad\times
 \E\left[
  \alpha_N(u)|\det Q_N(u)|^{1/2}
  \mathbf1_{\cI_{N,1}(u)}
 \right]\dd u,
 \label{eq:B-marked-KR}
\end{align}
where $\mathbf n$ is any fixed point of $\SphereN$ and the matrix
expectation runs over the GOE matrix \emph{and} the shift
fluctuation $\omega_N$ of \eqref{eq:B-Q}.  The standard index-one count has the identical prefactor and integral, with the matrix expectation replaced by
\begin{equation}\label{eq:B-standard-KR-matrix}
 \E\left[
 |\det Q_N(u)|\mathbf1_{\cI_{N,1}(u)}
 \right].
\end{equation}
\Cref{lem:half-det}, or equivalently \Cref{lem:B-matrix-rates}(b) with $r=1/2$ combined with the prefactor limit \Cref{lem:B-matrix-rates}(d), gives the local marked upper rate
\begin{equation}\label{eq:B-marked-rate}
 -\beta u+\varrho_\xi(u)+\Lambda^{(1/2)}_{1,\xi}(u)
 =-\beta u+\Xi^{\mathrm{EK}}_{1,\xi}(u).
\end{equation}
Cover a compact interval $I$ by finitely many bins of width $\delta$, use the maximum of $e^{-\beta Nu}$ in each bin, and apply the uniform version of \eqref{eq:B-marked-rate}.  Sending first $N\to\infty$ and then $\delta\downarrow0$, using the local Lipschitz continuity of $\Xi^{\mathrm{EK}}_{1,\xi}$ (\Cref{lem:B-Lambda-regularity}), proves \eqref{eq:weighted-KR-expectation}.

\subsection{One-point rate identities for marked index-one integrands}

The following lemma collects the shifted-GOE rates used by the soft-saddle estimates in \Cref{app:near-threshold-cocore}; it also makes the truncation argument of \Cref{lem:half-det} quantitative.  Recall $Q_N(u)=Q_N(u,\omega_N)$ and $t_N(u,\omega)$ from \eqref{eq:B-Q}--\eqref{eq:B-Q-omega}.  On the index-one event $\cI_{N,1}(u)$, write $\alpha_N(u)$ for the modulus of the unique negative eigenvalue and $B_N(u)$ for the restriction of $Q_N(u)$ to its stable eigenspace, so that
$\det B_N(u)=|\det Q_N(u)|/\alpha_N(u)$.  Define the Kac--Rice prefactor rate
\begin{equation}\label{eq:B-prefactor-rate}
 \varrho_N(u)
 \defeq\frac1N\log\Bigl[\vol(\SphereN)\,
 p_{\grad H_N(\mathbf n)}(0)\,
 N p_{H_N(\mathbf n)}(Nu)\Bigr],
\end{equation}
so that \eqref{eq:B-marked-KR} with the mark removed becomes the identity
\begin{equation}\label{eq:B-KR-identity}
 \E\Crt_{N,1}(B)
 =\int_B e^{N\varrho_N(u)}\,
 \E\bigl[\alpha_N(u)\det B_N(u)\,
 \mathbf1_{\cI_{N,1}(u)}\bigr]\dd u
\end{equation}
for every Borel set $B\subset\R$.

The next lemma supplies the uniform integrability off the operator-norm event that every determinant-power estimate below requires; without it, Cauchy--Schwarz against the norm tail alone is insufficient because the determinant itself grows with the operator norm.

\begin{lemma}[Determinant moments off the norm event]\label{lem:B-det-moments}
Let $K,W\subset\R$ be compact.  There are constants $c_0>0$, $C_0<\infty$, and $R_0\ge3$, depending only on $K$, $W$, and $\xi$, such that for every $r\in[\tfrac12,3]$, every $u\in K$, every $R\ge R_0$, and every $n$:
\begin{equation}\label{eq:B-det-moment}
 \sup_{\omega\in W}
 \E\bigl[\|Q_N(u,\omega)\|_{\op}^{rn+1}\,
 \mathbf1_{\{\|G_n\|_{\op}\ge R\}}\bigr]
 \le e^{-c_0nR^2},
 \qquad
 \sup_{\omega\in W}
 \E\bigl[\|Q_N(u,\omega)\|_{\op}^{rn+1}\bigr]\le e^{C_0n};
\end{equation}
moreover, for every $\omega\in\R$,
\begin{equation}\label{eq:B-det-moment-omega}
 \E\bigl[\|Q_N(u,\omega)\|_{\op}^{rn+1}\bigr]
 \le e^{C_0n}(1+|\omega|)^{rn+1},
\end{equation}
and the same three bounds hold for the randomly shifted matrix
$Q_N(u)$, using
$\E_Z\bigl[(1+|\omega_N|)^{rn+1}\bigr]\le e^{C_1n}$.
In particular, since every Hessian mark used below is bounded by $\|Q_N(u)\|_{\op}^{rn+1}$ for some $r\le3$, its expectation restricted to $\{\|G_n\|_{\op}\ge R\}$ is at most $e^{-c_0nR^2}$.
\end{lemma}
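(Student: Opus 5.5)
The approach is to bound the operator norm by the GOE norm plus the shift, and then integrate the GOE norm tail. From \eqref{eq:B-Q-omega},
\[
 \|Q_N(u,\omega)\|_{\op}\le c_N\|G_n\|_{\op}+|\xi'(1)u+\omega|
 \le C_1\bigl(\|G_n\|_{\op}+1+|\omega|\bigr),
\]
with $C_1$ depending only on $\xi$ and $K$, since $c_N\le\sqrt{\xi''(1)}$. For the GOE norm I would use the standard tail bound
\[
 \Pp(\|G_n\|_{\op}\ge s)\le e^{-c\,n s^2}\qquad\text{for } s\ge 3.
\]
This follows from Gaussian concentration of the $1$-Lipschitz map $G\mapsto\|G\|_{\op}$: in the normalization \eqref{eq:GOE-normalization} the entries have variance of order $1/n$, so the Lipschitz constant with respect to the underlying standard Gaussians is $O(n^{-1/2})$. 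The mean satisfies $\E\|G_n\|_{\op}\le 2+o(1)$, or crudely $\le 2.5$ for large $n$, with finitely many small $n$ absorbed into constants.

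For the restricted moment, put $p=rn+1\le 3n+1$ and $X=\|G_n\|_{\op}$. For $\omega\in W$ we have $1+|\omega|\le C_W$, so on $\{X\ge R\ge R_0\ge 3\}$ with $R_0\ge C_W$ the norm bound gives $\|Q\|_{\op}\le 2C_1X$. Then
\[
 \E\bigl[(2C_1X)^p\mathbf 1_{\{X\ge R\}}\bigr]
 =(2C_1)^p\Bigl(R^p\Pp(X\ge R)+\int_R^\infty p\,s^{p-1}\Pp(X\ge s)\dd s\Bigr).
\]
Each term is bounded by $e^{(3n+1)\log(2C_1 s)+\log p-cns^2}$, integrated over $s\ge R$. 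Choosing $R_0$ large, depending on $C_1$ and $c$, makes $(3n+1)\log(2C_1s)+\log(3n+1)\le \tfrac c2 ns^2$ for all $s\ge R_0$ and $n\ge1$. The integral $\int_R^\infty e^{-\frac c2 ns^2}\dd s$ is then at most $e^{-\frac c2 nR^2}$, up to a factor that is absorbed by shrinking the exponent slightly. This yields $e^{-c_0nR^2}$ with $c_0=c/4$.

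The unrestricted bound on compact $W$ follows by splitting at $R_0$. On $\{X<R_0\}$ we have $\|Q\|_{\op}^p\le (C_1(R_0+C_W))^{3n+1}\le e^{C_0n}$, and the complementary piece was bounded above by $e^{-c_0nR_0^2}\le1$. For general $\omega$ in \eqref{eq:B-det-moment-omega}, the same norm bound gives
\[
 \|Q\|_{\op}\le C_1(X+1)(1+|\omega|),
\]
since $X+1+|\omega|\le(X+1)(1+|\omega|)$. Hence $\E\|Q\|_{\op}^p\le(1+|\omega|)^p\,C_1^p\,\E(X+1)^p$, and $\E(X+1)^p\le e^{C n}$ by the same tail integration. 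Note that the restricted bound \eqref{eq:B-det-moment} is stated only on compact $W$, so it is never needed uniformly in $\omega$.

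For the randomly shifted matrix $Q_N(u)=Q_N(u,\omega_N)$, $Z$ is independent of $G_n$, so I condition on $\omega_N$, apply \eqref{eq:B-det-moment-omega}, and average. It remains to check $\E_Z\bigl[(1+v_\xi|Z|/\sqrt N)^{p}\bigr]\le e^{C_1n}$. Using $(1+x)^p\le e^{px}$, this is at most $\E e^{p v_\xi|Z|/\sqrt N}\le 2e^{p^2v_\xi^2/(2N)}=e^{O(n)}$ because $p\le 3n+1$.

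The restricted bound for $Q_N(u)$ needs slightly more care, since $\omega_N$ is not confined to $W$. I would split on $\{|\omega_N|\le R\}$, where the compact argument applies with $W$ enlarged to $[-R,R]$, giving $1+|\omega|\le 1+R\le 2R$, so the bound $\|Q\|_{\op}\le C X$ still holds on $\{X\ge R\}$. On $\{|\omega_N|>R\}$ I would use Cauchy--Schwarz against $\Pp(|Z|>R\sqrt N/v_\xi)\le e^{-NR^2/(2v_\xi^2)}$ together with the unrestricted $e^{O(n)}$ moment bound. For $R_0$ large this yields $e^{-c_0nR^2}$ after adjusting $c_0$.

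The final sentence of the lemma follows from monotonicity of the expectation.

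The main obstacle is only bookkeeping: the polynomial growth $s^{3n+1}$ must be dominated by the Gaussian factor $e^{-cns^2}$ uniformly in $n$. That is exactly why $R_0$ has to be taken large depending on $\xi$, $K$ and $W$.
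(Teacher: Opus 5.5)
Your proposal is correct and follows the paper's proof in all essentials: the norm bound $\|Q_N(u,\omega)\|_{\op}\le C(1+\|G_n\|_{\op})(1+|\omega|)$, the Gaussian-concentration tail $\Pp(\|G_n\|_{\op}\ge s)\le e^{-cns^2}$ for large $s$, domination of the factor $s^{rn+1}$ by $e^{-cns^2}$ after enlarging $R_0$, a split at $R_0$ for the unrestricted moment, and independence of $Z$ and $G_n$ for the random shift. The differences are cosmetic: you use layer-cake integration instead of the paper's dyadic shells, and an exponential-moment bound for $\E_Z$ instead of Gaussian moments. Your explicit split on $\{|\omega_N|\le R\}$ for the restricted random-shift bound is, if anything, more careful than the paper's one-line factorization. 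That factorization implicitly requires enlarging $R_0$ to absorb the factor $e^{C_1n}$.
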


\begin{proof}
Write $\|Q_N(u,\omega)\|_{\op}\le c_N\|G_n\|_{\op}+\xi'(1)|u|+|\omega|\le C_{K,W}(1+\|G_n\|_{\op})$ for $(u,\omega)\in K\times W$, with $C_{K,W}=\sqrt{\xi''(1)}+\xi'(1)\sup_K|u|+\sup_W|\omega|$; for general $\omega$ the same bound holds with $C_{K,W}(1+|\omega|)$ in place of $C_{K,W}$, which yields \eqref{eq:B-det-moment-omega} from \eqref{eq:B-det-moment}.  Gaussian concentration for the operator norm of the GOE matrix gives $c_1>0$ and $R_0\ge3$ with
\begin{equation}\label{eq:B-norm-tail}
 \Pp\bigl(\|G_n\|_{\op}\ge s\bigr)\le e^{-c_1ns^2}
 \qquad(s\ge R_0,\ n\ge1).
\end{equation}
Decompose $\{\|G_n\|_{\op}\ge R\}$ into the shells
$S_k=\{2^kR\le\|G_n\|_{\op}<2^{k+1}R\}$, $k\ge0$.  On $S_k$ the integrand is at most $[C_{K,W}(1+2^{k+1}R)]^{rn+1}\le[4C_{K,W}2^kR]^{rn+1}$, so
\[
 \E\bigl[\|Q_N(u,\omega)\|_{\op}^{rn+1}\mathbf1_{\{\|G_n\|_{\op}\ge R\}}\bigr]
 \le\sum_{k\ge0}
 \exp\bigl\{(rn+1)\log(4C_{K,W}2^kR)-c_1n4^kR^2\bigr\}.
\]
Enlarge $R_0$ so that, for $R\ge R_0$,
$c_1R^2\ge8\log(4C_{K,W}R)+4$.  Then for every $k\ge0$ and $r\le3$,
\[
 (rn+1)\log(4C_{K,W}2^kR)
 \le4n\bigl[\log(4C_{K,W}R)+k\log2\bigr]
 \le\tfrac{c_1}2n4^kR^2-2n-kn,
\]
using $rn+1\le4n$, the inequality $4^k\ge1+3k$, and the two consequences
$\tfrac{c_1}2R^2\ge4\log(4C_{K,W}R)+2$ and
$\tfrac32c_1R^2\ge4\log2+1$ of the choice of $R_0$; hence the $k$th summand is at most $e^{-c_1n4^kR^2/2}e^{-kn}$ and the series is bounded by $2e^{-c_1nR^2/2}$.  This proves the first bound with $c_0=c_1/4$.  The second bound follows by splitting at $R_0$: the contribution of $\{\|G_n\|_{\op}<R_0\}$ is at most $[C_{K,W}(1+R_0)]^{rn+1}\le e^{C_0n}$, and the tail is smaller than one.  Finally, for the randomly shifted matrix, independence of $Z$ and $G_n$ factorizes the expectation, and
$\E_Z[(1+|\omega_N|)^{rn+1}]\le2^{rn+1}\bigl(1+\E|v_\xi Z/\sqrt N|^{rn+1}\bigr)
\le2^{rn+1}\bigl(1+(v_\xi\sqrt{(rn+1)/N})^{rn+1}\bigr)\le e^{C_1n}$,
since $rn+1\le4N$.
\end{proof}

\begin{lemma}[Marked index-one rates]\label{lem:B-matrix-rates}
Let $K\subset(-\infty,0)$ be compact.  The following hold uniformly for $u\in K$.
\begin{enumerate}[label=\textup{(\alph*)},leftmargin=*]
 \item \textup{(Outlier upper rate.)}  For every compact $W\subset\R$ and $\eps>0$ there is $N_0(\eps,K,W)$ such that
 \[
  \Pp\bigl(\lambda_1(G_n)\le t_N(u,\omega)\bigr)
  \le\exp\bigl\{-N\bigl(J(t_\xi(u,\omega))-\eps\bigr)\bigr\},
  \qquad (u,\omega)\in K\times W,\ N\ge N_0,
 \]
 where $J$ is the one-sided outlier rate \eqref{eq:Jgoe}; consequently, integrating over the shift,
 \[
  \Pp\bigl(\ind Q_N(u)\ge1\bigr)
  \le\exp\bigl\{-N\bigl(I_{1,\xi}(u)-\eps\bigr)\bigr\}.
 \]
 The rate $J$ coincides on $(-\infty,-2]$ with the one-outlier GOE rate function
 \[
  J_{\mathrm{GOE}}(t)=\frac12\int_t^{-2}\sqrt{x^2-4}\,\dd x ,
 \]
 and in the pure case $J(t_\xi(u,0))=I_{1,\xi}(u)$ for $u\le-E_\infty(\xi)$.
 \item \textup{(Fractional determinant upper bound.)}  For every $r\in[\tfrac12,2]$ and $\eps>0$, for all sufficiently large $N$,
 \[
  \E\bigl[(\det B_N(u))^{r}\,
  \mathbf1_{\cI_{N,1}(u)}\bigr]
  \le\exp\bigl\{N\bigl(\Lambda^{(r)}_{1,\xi}(u)+\eps\bigr)\bigr\};
 \]
 moreover, conditionally on $\{\omega_N=\omega\}$ with $(u,\omega)\in K\times W$,
 \[
  \E\bigl[(\det B_N(u,\omega))^{r}\,
  \mathbf1_{\{\ind Q_N(u,\omega)=1\}}\bigr]
  \le\exp\bigl\{N\bigl(rD_\xi(u,\omega)-J(t_\xi(u,\omega))+\eps\bigr)\bigr\}.
 \]
 \item \textup{(Standard mark rate.)}  Uniformly for $u\in K$,
 \[
  \limsup_{N\to\infty}\frac1N\log
  \E\bigl[\alpha_N(u)\det B_N(u)\,
  \mathbf1_{\cI_{N,1}(u)}\bigr]
  \le \Lambda^{(1)}_{1,\xi}(u).
 \]
 \item \textup{(Prefactor limit.)}
 \[
  \lim_{N\to\infty}\varrho_N(u)
  =\varrho_\xi(u)
  =\frac12-\frac12\log\xi'(1)-\frac{u^2}2,
 \]
 uniformly on compacts; consequently, by \eqref{eq:Theta-mixed}, the annealed index-one rate at energy $u$ is $\Theta_{1,\xi}(u)=\varrho_\xi(u)+\Lambda^{(1)}_{1,\xi}(u)$, and in the pure case the identity
 $\varrho_\xi(u)+D_\xi(u)-J(t_\xi(u,0))=\Theta_{1,\xi}(u)$ recovers the explicit formula \eqref{eq:Theta-explicit} with $k=1$.
\end{enumerate}
\end{lemma}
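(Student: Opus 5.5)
The four items are proved separately, and in every case we first condition on the shift value $\omega_N=\omega$, work with the deterministically shifted GOE $Q_N(u,\omega)$, and then integrate over the shift. The passage from the fixed-shift bound to the rate with the supremum over $\omega$ is always \Cref{lem:B-tilt-laplace}. Its local hypothesis~(i) comes from the fixed-shift estimate, and its global hypothesis~(ii) comes from \eqref{eq:B-det-moment-omega} of \Cref{lem:B-det-moments}. For probabilities, hypothesis~(ii) holds trivially because $\Phi_N\le1$. Uniformity in $u\in K$ follows because $t_N(u,\omega)\to t_\xi(u,\omega)$ uniformly on $K\times W$ and $J$, $\Omega$ are locally Lipschitz (\Cref{lem:B-Lambda-regularity}).

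\textbf{Item (a).} We use the known large-deviation principle for the smallest GOE eigenvalue at speed $n$, with rate $J_{\mathrm{GOE}}(t)$ for $t<-2$ and rate $0$ inside the bulk. For $t$ bounded away from $-2$ from below, this gives $\Pp(\lambda_1\le t)\le e^{-n(J(t)-\eps)}$. Monotonicity in $t$, together with continuity of $J$, upgrades this to uniformity over $t$ in compacts. When $t>-2-c$, we use the trivial bound $1$ together with the fact that $J$ is small there. Since $n=N-1$, the speed change costs nothing. Integrating over the shift with $\Phi_N(\omega)=\Pp(\lambda_1(G_n)\le t_N(u,\omega))$ and applying \Cref{lem:B-tilt-laplace} with $\varphi=-J\circ t_\xi$ gives the rate $\sup_\omega\{-\mathfrak p_\xi-J\}=-I_{1,\xi}(u)$. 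The pure-case identities are direct substitution at $\omega=0$.

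\textbf{Items (b) and (c).} For fixed $\omega$, we split the expectation into three events: $\{\|G_n\|_{\op}\ge R\}$, the event that $L_n$ is not $\eta$-close to $\rho_{\sclaw}$ in a bounded-Lipschitz metric, and the good event.
\begin{itemize}
\item On the first event we use \Cref{lem:B-det-moments}, which gives at most $e^{-c_0nR^2}$. This is negligible once $R$ is large.
\item The second event has probability at most $e^{-cN^2}$ by \citet{benarous1997large}. By Cauchy--Schwarz against the moment bound $e^{C_0n}$, this contribution is also negligible.
\item On the good event, $\det B_N=\prod_{i\ge2}|c_N\lambda_i-s|$, where $s=\xi'(1)u+\omega$. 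We bound $\log|x-t|\le\log(|x-t|\vee\eta)$, which is a bounded continuous test function on $[-R,R]$. Hence $\frac1n\log\det B_N\le\log c_N+\int\ell_{\eta,t}\,\dd\rho_{\sclaw}+o_\eta(1)$, and the right-hand side tends to $D_\xi(u,\omega)$ as $\eta\downarrow0$, uniformly in $t$ on compacts. Small eigenvalue gaps only make the product smaller, which is the favorable direction.
\end{itemize}
Multiplying by the indicator of the index-one event, which is contained in $\{\lambda_1\le t_N\}$, and using (a) gives the fixed-shift bound in (b). \Cref{lem:B-tilt-laplace} then yields $\Lambda^{(r)}_{1,\xi}$. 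For (c) we need one extra factor $\alpha_N\le\|Q_N\|_{\op}\le C(R+|\omega|)$. This factor is subexponential on the good event, and on the bad events it is absorbed into the $\|Q_N\|^{rn+1}$ moment bound with $r=1$.

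\textbf{Item (d).} Each ingredient is explicit. Writing $\mathbf n$ for the base point:
\begin{itemize}
\item $H_N(\mathbf n)\sim\mathcal N(0,N)$, so $\frac1N\log[Np_{H_N(\mathbf n)}(Nu)]\to-u^2/2$.
\item The tangent gradient at $\mathbf n$ is Gaussian with covariance $\xi'(1)I_{N-1}$ in the paper's normalization (the covariance computation is in \Cref{app:geometry}), so $\frac1N\log p_{\grad H_N(\mathbf n)}(0)\to-\frac12\log(2\pi\xi'(1))$.
\item \eqref{eq:sphere-volume-asymptotic} gives $\frac1N\log\vol(\SphereN)\to\frac12\log(2\pi e)$.
\end{itemize}
Summing these gives $\varrho_\xi(u)$, with uniformity on compacts evident. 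The stated consequences then follow from \eqref{eq:B-KR-identity}, (c), and the pure-case collapse of \Cref{rem:pure-collapse}.

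The main obstacle is (b): controlling $\det^r$ on the good-bulk event uniformly in both $t$ and $\omega$ while simultaneously imposing the outlier constraint. The outlier event has speed $N$, whereas the bulk control was established at speed $N^2$, so we must confirm that conditioning on a single outlier does not move the bulk. We intend to handle this by taking the product bound above and the outlier probability from (a) separately, so that no conditioning is needed, since the upper bound on the determinant holds pointwise on the good event.
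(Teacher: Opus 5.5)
Your proposal is correct and follows essentially the paper's own proof. You condition on $\omega_N=\omega$ and split into three events: the norm-tail event (handled by \Cref{lem:B-det-moments}), the speed-$N^2$ bulk-deviation event \citep{benarous1997large}, and the good event, on which $\det B_N$ is bounded pointwise via a truncated logarithm and multiplied by the outlier probability from (a) instead of conditioning on the outlier; you then integrate the shift with \Cref{lem:B-tilt-laplace}, get (c) from $\alpha_N\le\|Q_N\|_{\op}$, and get (d) from the explicit Gaussian densities.

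One parameter slip needs fixing. The function $\log(|x-t|\vee\eta)$ has Lipschitz constant $1/\eta$, so the bounded-Lipschitz radius of your good event must be chosen much smaller than $\eta$, not equal to it, for the error to be $o(1)$. With that choice, your single $t$-independent ball around $\rho_{\sclaw}$ gives the uniformity in $(u,\omega)$ more directly than the paper's finite-net transfer of $(u,\omega)$-dependent bulk events.
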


\begin{proof}
Fix a compact $W\subset\R$; by \Cref{lem:B-Lambda-regularity} and \Cref{lem:B-tilt-laplace} it suffices, for (b) and (c), to prove the conditional estimates uniformly on $K\times W$ and to control large $|\omega|$ by the growth bound \eqref{eq:B-det-moment-omega}.  Abbreviate $t=t_N(u,\omega)$.  Uniformity on $K\times W$ is obtained as follows.  The map $(u,\omega)\mapsto t_N(u,\omega)$ is affine with coefficients converging to those of $t_\xi$, the potential $\Omega$ and the rate $J$ are Lipschitz on compacts (\Cref{lem:B-Lambda-regularity}), and hence there is $L_{K,W}<\infty$ with
\[
 |\Omega(t_N(u,\omega))-\Omega(t_N(u',\omega'))|
 +|D_\xi(u,\omega)-D_\xi(u',\omega')|
 +|J(t_\xi(u,\omega))-J(t_\xi(u',\omega'))|
 \le L_{K,W}\bigl(|u-u'|+|\omega-\omega'|\bigr)
\]
for all $(u,\omega),(u',\omega')\in K\times W$ and all large $N$.  Three transfer mechanisms are needed, because the events used below depend on $(u,\omega)$ through more than the threshold value.

\emph{First (bulk events).}  The truncated logarithm $\phi_{\eta,t}(x)=\log\bigl(((x-t)\vee\eta)\wedge2R\bigr)$ used in (b) depends on $t$ through its entire graph, not only through the value $\Omega(t)$; it satisfies the sup-norm Lipschitz bound
\[
 \sup_{x\in\R}\bigl|\phi_{\eta,t}(x)-\phi_{\eta,t'}(x)\bigr|
 \le\frac{|t-t'|}{\eta},
\]
because $x\mapsto((x-t)\vee\eta)\wedge2R$ is $1$-Lipschitz in $t$ and takes values in $[\eta,2R]$, where the logarithm is $\eta^{-1}$-Lipschitz.  Given $\eps>0$, fix the mesh
$\theta=\min\{\eps\eta/16,\ \eps/(16L_{K,W})\}$ and a finite $\theta$-net $(u_1,\omega_1),\ldots,(u_m,\omega_m)$ of $K\times W$.  For $(u,\omega)$ and its nearest net point, every realization satisfies, for all large $N$,
$|\int\phi_{\eta,t_N(u,\omega)}\dd L_n-\int\phi_{\eta,t_\xi(u_j,\omega_j)}\dd L_n|\le\eps/8$ and $|\Omega(t_N(u,\omega))-\Omega(t_\xi(u_j,\omega_j))|\le\eps/8$; hence the complement of the $(u,\omega)$-dependent bulk event
$\{\int\phi_{\eta,t_N(u,\omega)}\dd L_n\le\Omega(t_N(u,\omega))+\eps/2\}$
is contained in the \emph{fixed} closed event
$\{\int\phi_{\eta,t_\xi(u_j,\omega_j)}\dd L_n\ge\Omega(t_\xi(u_j,\omega_j))+\eps/4\}$.

\emph{Second (uniform positive $N^2$-rate).}  Each of the $m$ fixed closed events above excludes the semicircle law: $\nu\mapsto\int\phi_{\eta,t}\dd\nu$ is bounded and continuous, and $\int\phi_{\eta,t}\rho_{\sclaw}\dd x\le\Omega(t)+C\eta$ for every $t$, with equality up to $\eps/8$ after decreasing $\eta$.  The large-deviation upper bound at speed $N^2$ of \citet{benarous1997large}, whose good rate function vanishes only at the semicircle law, therefore supplies constants $c_j(\eps,\eta)>0$ with probability at most $e^{-c_jN^2}$, and $c(\eps,\eta)=\min_{1\le j\le m}c_j>0$ serves simultaneously for every $(u,\omega)\in K\times W$.  Every speed-$N^2$ bulk estimate below is uniform on $K\times W$ in this sense.

\emph{Third (outlier estimates).}  The events $\{\lambda_1(G_n)\le t\}$ are monotone in $t$; the same finite net, the Lipschitz bound on $J$, and monotonicity transfer the pointwise outlier estimates to all intermediate $(u,\omega)$ at the cost of replacing $\eps$ by $2\eps$.

Together, these three mechanisms convert every pointwise estimate below into a uniform one, with a single deterministic error sequence valid simultaneously on $K\times W$.  We use this upgrade silently in what follows.

\emph{(a)}  For a fixed threshold $s\le-2-c$, the large-deviation upper bound for the smallest GOE eigenvalue \citep[Theorem~A.1]{auffinger2013random} (stated there for the $k$th largest eigenvalue above the right edge; it applies to the smallest eigenvalue below the left edge after the reflection $G\mapsto-G$, under which the GOE law is invariant) gives $\Pp(\lambda_1(G_n)\le s)\le e^{-n(J_{\mathrm{GOE}}(s)-\eps)}$ for large $n$.  If $t_\xi(u,\omega)>-2$ the claimed bound is trivial since $J=0$ there; thresholds in $[-2-c,-2]$ are handled by monotonicity and the vanishing of $J_{\mathrm{GOE}}$ at $-2$.  The moving threshold and the uniformity are handled by the third transfer mechanism.  For the integrated statement, apply \Cref{lem:B-tilt-laplace} to $\Phi_N(\omega)=\Pp(\lambda_1(G_n)\le t_N(u,\omega))\le1$ with $\varphi(\omega)=-J(t_\xi(u,\omega))$, and note
$\sup_\omega\{-J-\mathfrak p_\xi\}=-I_{1,\xi}(u)$ by \eqref{eq:I1}; the event $\{\ind Q_N(u)\ge1\}$ is contained in $\{\lambda_1(G_n)\le t_N(u,\omega_N)\}$.  The antiderivative identity
\begin{equation}\label{eq:B-antiderivative}
 \frac12\int_2^{s}\sqrt{x^2-4}\,\dd x
 =\frac{s\sqrt{s^2-4}}4
 -\log\frac{s+\sqrt{s^2-4}}2,
 \qquad s\ge2,
\end{equation}
identifies $J_{\mathrm{GOE}}$ with the explicit expression used in \eqref{eq:Omega-explicit}; in the pure case $v_\xi=0$ the infimum in \eqref{eq:I1} is evaluation at $\omega=0$.

\emph{(b)}  Condition on $\{\omega_N=\omega\}$, $(u,\omega)\in K\times W$.  Fix $\eta\in(0,1)$ and $R=R(K,W)$ large, and let
$\phi_\eta(x)=\log\bigl(((x-t)\vee\eta)\wedge2R\bigr)$, a bounded Lipschitz function.  Introduce the events
\[
 \mathcal N_R=\{\|G_n\|_{\op}\le R\},
 \qquad
 \mathcal B_\eps
 =\Bigl\{\textstyle\int\phi_\eta\dd L_n
 \le\Omega(t)+\tfrac\eps2\Bigr\}.
\]
On $\mathcal N_R\cap\mathcal B_\eps\cap\{\lambda_1\le t\}$, using $\log(\lambda_i-t)\le\phi_\eta(\lambda_i)$ for $i\ge2$,
\[
 \log\det B_N(u,\omega)
 \le n\log c_N+n\int\phi_\eta\dd L_n
 +\log\tfrac1\eta
 \le N\bigl(D_\xi(u,\omega)+\eps\bigr)
\]
for all large $N$, because $c_N\to\sqrt{\xi''(1)}$ and $\Omega(t_N(u,\omega))\to\Omega(t_\xi(u,\omega))$ uniformly on $K\times W$; the contribution of this event is at most
$e^{rN(D_\xi(u,\omega)+\eps)}\Pp(\lambda_1\le t)$, and (a) applies (if $t>-2$ the index-one event costs nothing at speed $N$ and $J=0$).  On $\mathcal N_R\setminus\mathcal B_\eps$, the empirical measure lies in a closed set not containing $\rho_{\sclaw}$; by the first and second transfer mechanisms, this event is contained in one of finitely many fixed closed events, so the large-deviation principle at speed $N^2$ of \citet{benarous1997large} bounds the probability by $e^{-c(\eps,\eta)N^2}$ uniformly, while $(\det B_N)^r\le(c_N(R+|t|))^{rn}\le e^{C_{K,W}N}$ there.  On $\mathcal N_R^c$, one has $(\det B_N)^r\le\|Q_N(u,\omega)\|_{\op}^{rn}$ with $r\le2$, so \Cref{lem:B-det-moments} bounds the contribution by $e^{-c_0nR^2}$; this is negligible against the first term once $R=R(K,W)$ is chosen with $c_0R^2\ge1+\sup_{K\times W}(2|D_\xi|+J)$, because $D_\xi$ and $J$ are bounded on $K\times W$.  This proves the conditional bound; the unconditional bound follows from \Cref{lem:B-tilt-laplace} with $\varphi(\omega)=rD_\xi(u,\omega)-J(t_\xi(u,\omega))$, whose logarithmic growth in $|\omega|$ is supplied by \eqref{eq:B-det-moment-omega}, and the supremum is $\Lambda^{(r)}_{1,\xi}(u)$ by \eqref{eq:Lambda-rk}.

\emph{(c)}  This is (b) with $r=1$ together with $\alpha_N(u)\le c_N(R+|t|)\le C_{K,W}$ on $\mathcal N_R$; the complement is controlled as in (b), and the shift is integrated in the same way.  We do not state a matching lower bound: all uses of the marked rates in this paper are upper bounds, and the corresponding lower bounds---known in the pure case from the complexity asymptotics of \citet{auffinger2013random} and the second-moment analysis of \citet{subag2017complexity}---are not needed.

\emph{(d)}  The densities in \eqref{eq:B-prefactor-rate} are explicit Gaussians.  Differentiating the covariance \eqref{eq:covariance} shows that the tangent gradient at a fixed point has i.i.d.\ $N(0,\xi'(1))$ coordinates and that $H_N(\mathbf n)\sim N(0,N)$ by the normalization $\xi(1)=1$, so
$p_{\grad H_N(\mathbf n)}(0)=(2\pi \xi'(1))^{-(N-1)/2}$ and
$Np_{H_N(\mathbf n)}(Nu)=N(2\pi N)^{-1/2}e^{-Nu^2/2}$.  With \eqref{eq:sphere-volume-asymptotic},
\begin{equation}\label{eq:B-prefactor-limit}
 \varrho_N(u)\longrightarrow
 \varrho_\xi(u)=\frac12\log(2\pi e)-\frac12\log(2\pi \xi'(1))-\frac{u^2}2
 =\frac12-\frac12\log \xi'(1)-\frac{u^2}2,
\end{equation}
uniformly on compacts.  In the pure case, \eqref{eq:Dxi}, \eqref{eq:Omega-explicit}, and \eqref{eq:B-antiderivative} give
\[
 \varrho_\xi(u)+D_\xi(u)-J(t_\xi(u,0))
 =\frac12\log(p-1)
 -\frac{p-2}{4(p-1)}u^2
 -2J(t_\xi(u,0))
 =\Theta_{1,\xi}(u),
 \qquad u\le-E_\infty(\xi),
\]
which is \eqref{eq:Theta-explicit} with $k=1$.
\end{proof}

The same machinery yields the annealed complexity upper bound quoted
as \Cref{prop:complexity-upper}.

\begin{lemma}[Annealed index-$k$ upper bound]\label{lem:B-index-count}
Let $k\in\{1,2\}$ and $a>0$.  Then
\begin{equation}\label{eq:B-index-count}
 \limsup_{N\to\infty}\frac1N
 \log\E\Crt_{N,\ge k}((-\infty,-a])
 \le\sup_{u\le-a}\Theta_{k,\xi}(u).
\end{equation}
\end{lemma}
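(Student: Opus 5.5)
The bound is the annealed Kac--Rice count with the full determinant mark $|\det Q|$ on $\{\ind Q\ge k\}$, evaluated with the randomly shifted GOE law of \Cref{prop:shifted-GOE}. The shift is handled by the scalar-tilt Laplace bound of \Cref{lem:B-tilt-laplace}, and an energy tail takes care of the unbounded range $u\le-a$. By rotational invariance the counting analogue of \eqref{eq:B-KR-identity} reads
\[
 \E\Crt_{N,\ge k}((-\infty,-a])
 =\int_{-\infty}^{-a}e^{N\varrho_N(u)}\,
 \E\bigl[|\det Q_N(u)|\,\mathbf1_{\{\ind Q_N(u)\ge k\}}\bigr]\dd u .
\]
I split the $u$-integral at $-C$ for a large constant $C$, into a compact part $[-C,-a]$ and a tail $(-\infty,-C]$.

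\emph{Compact part.} Condition on $\omega_N=\omega$. On $\{\ind\ge k\}$ we have $\lambda_k(G_n)\le t_N(u,\omega)$. For $k=1$, the argument of \Cref{lem:B-matrix-rates}(b) with $r=1$ applies verbatim. The only change is that the index is now $\ge1$ rather than $=1$, and this changes nothing: on the good bulk event $\mathcal N_R\cap\mathcal B_\eps$ the bound $|\det Q|\le e^{N(D_\xi(u,\omega)+\eps)}$ holds irrespective of how many eigenvalues lie below $t$, and the probability is bounded by \Cref{lem:B-matrix-rates}(a). For $k=2$, I replace (a) by the two-outlier estimate (\Cref{lem:B-two-outlier}), which gives $\Pp(\lambda_2(G_n)\le t)\le e^{-N(2J(t_\xi(u,\omega))-\eps)}$, uniformly by the third transfer mechanism. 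The bulk-deviation complement costs $e^{-cN^2}$, and the norm complement is controlled by \Cref{lem:B-det-moments}. Integrating over $\omega$ with \Cref{lem:B-tilt-laplace}, with growth hypothesis (ii) supplied by \eqref{eq:B-det-moment-omega}, yields the matrix rate $\Lambda^{(1)}_{k,\xi}(u)+\eps$ uniformly on $[-C,-a]$. Combined with \Cref{lem:B-matrix-rates}(d) and a finite binning in $u$, this gives exponent $\sup_{u\in[-C,-a]}\Theta_{k,\xi}(u)$.

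\emph{Tail.} Drop the index indicator and bound the matrix expectation by $\E\|Q_N(u)\|_{\op}^{n}$. By \Cref{lem:B-det-moments}, applied with the deterministic part of the shift absorbed into $\omega$ via \eqref{eq:B-det-moment-omega}, this is at most $e^{C_0n}(1+\xi'(1)|u|)^{n}$. The prefactor supplies $e^{-Nu^2/2}$ up to $e^{O(N)}$, so the integrand is at most $\exp\{N(C'+\log(1+|u|)-u^2/2)\}$. Choosing $C$ so large that this exponent stays below $\sup_{u\le-a}\Theta_{k,\xi}(u)-1$ for $|u|\ge C$, and integrating the Gaussian tail, shows the tail is negligible. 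Since $\Theta_{k,\xi}\to-\infty$, the supremum over $u\le-a$ is finite, and combining the two parts gives \eqref{eq:B-index-count}.

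The main obstacle is the uniformity in the compact step for $k=2$: I need the two-outlier bound to hold uniformly over the moving threshold $t_N(u,\omega)$ and jointly with the bulk determinant tilt. I would first try to reduce this, via monotonicity in $t$ and a finite net, to fixed-threshold statements, exactly as in the transfer mechanisms used in the proof of \Cref{lem:B-matrix-rates}.
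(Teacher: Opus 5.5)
Your proposal is correct and follows the paper's proof essentially step for step. It uses the same Kac--Rice identity with the full-determinant mark on $\{\ind Q_N(u)\ge k\}$, the same split at $-C$, and the same conditioning on the shift, with \Cref{lem:B-matrix-rates}(a) for $k=1$ and \Cref{lem:B-two-outlier} for $k=2$. Two small points: \Cref{lem:B-two-outlier} only covers thresholds bounded away from $-2$, so near the edge you need the trivial bound from $J(-2)=0$, which you don't mention; and your claim that the bulk determinant bound holds irrespective of how many eigenvalues lie below $t$ is right only with a two-sided truncation $\log\bigl((|x-t|\vee\eta)\wedge2R\bigr)$ rather than the one-sided one of \Cref{lem:B-matrix-rates}(b). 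In the tail you absorb $\xi'(1)u$ into the shift and invoke \eqref{eq:B-det-moment-omega}, whereas the paper factors $|\det Q_N(u)|$ by the triangle inequality; the two are equivalent.
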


\begin{proof}
The Kac--Rice identity \eqref{eq:B-KR-identity}, extended to the nonnegative Borel mark $|\det Q_N(u)|\mathbf1_{\{\ind Q_N(u)\ge k\}}$ by the monotone truncation argument of \Cref{prop:soft-marked-KR}, gives
\[
 \E\Crt_{N,\ge k}(B)
 =\int_B e^{N\varrho_N(u)}\,
 \E\bigl[|\det Q_N(u)|\,
 \mathbf1_{\{\ind Q_N(u)\ge k\}}\bigr]\dd u .
\]
\emph{Compact energy range.}  Fix $C\ge a+1$, to be chosen in the tail step, and let $K_C=[-C,-a]$.  Conditionally on $\{\omega_N=\omega\}$ with $\omega$ in a compact $W$, the event $\{\ind Q_N(u,\omega)\ge k\}$ forces $\lambda_k(G_n)\le t_N(u,\omega)$, whose probability is at most $e^{-n(kJ(t_\xi(u,\omega))-\eps)}$: for $k=1$ this is \Cref{lem:B-matrix-rates}(a), and for $k=2$ it is the two-outlier estimate \Cref{lem:B-two-outlier} below (whose proof is independent of this lemma) when $t_\xi(u,\omega)\le-2-c$, and trivial otherwise since $J\ge0$ vanishes near the edge and $J$ is continuous.  On the events of the proof of \Cref{lem:B-matrix-rates}(b), $|\det Q_N(u,\omega)|\le C_{K,W}\,e^{N(D_\xi(u,\omega)+\eps)}$ (the at most $k\le2$ outlier eigenvalues contribute a bounded factor on $\mathcal N_R$), and the complementary events are controlled exactly as there.  Hence the conditional matrix expectation is at most $e^{N(D_\xi(u,\omega)-kJ(t_\xi(u,\omega))+2\eps)}$, uniformly on $K_C\times W$; the global growth hypothesis of \Cref{lem:B-tilt-laplace} is supplied by \eqref{eq:B-det-moment-omega}, since $|\det Q_N(u,\omega)|\le\|Q_N(u,\omega)\|_{\op}^{n}$, and the limiting function $\varphi(\omega)=D_\xi(u,\omega)-kJ(t_\xi(u,\omega))$ grows at most logarithmically in $|\omega|$.  \Cref{lem:B-tilt-laplace} then integrates the shift to give
\[
 \E\bigl[|\det Q_N(u)|\,\mathbf1_{\{\ind Q_N(u)\ge k\}}\bigr]
 \le e^{N(\Lambda^{(1)}_{k,\xi}(u)+3\eps)},
 \qquad u\in K_C,
\]
uniformly.  Multiplying by $e^{N\varrho_N(u)}$, using \Cref{lem:B-matrix-rates}(d), covering $K_C$ by finitely many bins, and using the continuity of $\Theta_{k,\xi}$ (\Cref{lem:B-Lambda-regularity}) bounds the contribution of $K_C$ by $e^{N(\sup_{u\in K_C}\Theta_{k,\xi}(u)+4\eps)}$.

\emph{Energy tail.}  For $u\le-C$, drop the index indicator.  The Gaussian densities give the deterministic bound $\varrho_N(u)\le C_\xi'-u^2/2$ for all $u$ and $N$.  For the determinant, factor the $u$- and $\omega$-dependence deterministically:
\[
 |\det Q_N(u)|
 \le\bigl(c_N\|G_n\|_{\op}+\xi'(1)|u|+|\omega_N|\bigr)^n
 \le(1+|u|)^n\,3^n\bigl[(c_N\|G_n\|_{\op})^n+\xi'(1)^n+|\omega_N|^n\bigr],
\]
and $\E[(c_N\|G_n\|_{\op})^n]\le e^{C_0n}$ together with $\E|\omega_N|^n\le(v_\xi\sqrt{n/N})^n\le(2v_\xi)^n$ give
$\E|\det Q_N(u)|\le e^{n\log(C_\xi''(1+|u|))}$ with $C_\xi''$ independent of $u$.  The integrand is therefore at most
$\exp\{N(C_\xi'''+\log(1+|u|)-u^2/2)\}\le e^{-Nu^2/4}$
once $C=C(\xi)$ is large enough, and
$\int_{-\infty}^{-C}e^{-Nu^2/4}\dd u\le e^{-NC^2/8}$ for all large $N$.  Combining the two contributions and letting $\eps\downarrow0$ proves \eqref{eq:B-index-count}, since $\sup_{u\le-a}\Theta_{k,\xi}(u)\ge\Theta_{k,\xi}(-a)>-C^2/8$ after enlarging $C$ if necessary.
\end{proof}

\subsection{Uniform stable gap below the \texorpdfstring{$E_2$}{E2} layer}\label{subsec:B-stable-gap}

This subsection proves \Cref{prop:stable-gap-layer} through an explicit, quantitative two-outlier estimate.  The first lemma is the required uniform large-deviation input for the second smallest GOE eigenvalue.

\begin{lemma}[Uniform two-outlier estimate]\label{lem:B-two-outlier}
Let $S\subset(-\infty,-2)$ be compact and $\eps>0$.  For all sufficiently large $n$,
\begin{equation}\label{eq:B-two-outlier-LDP}
 \Pp\bigl(\lambda_2(G_n)\le s\bigr)
 \le\exp\bigl\{-n\bigl(2J_{\mathrm{GOE}}(s)-\eps\bigr)\bigr\}
 \qquad\text{for every }s\in S,
\end{equation}
where $J_{\mathrm{GOE}}$ is the one-outlier rate of \Cref{lem:B-matrix-rates}\textup{(a)}.
\end{lemma}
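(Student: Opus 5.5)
We would prove \eqref{eq:B-two-outlier-LDP} directly from the joint eigenvalue density of the GOE, in the style of the proof of \citet[Theorem~A.1]{auffinger2013random}, and avoid abstract large-deviation machinery so that uniformity in $s\in S$ is explicit. Write the GOE eigenvalue density as
\[
 p_n(\lambda)=Z_n^{-1}\prod_{i<j}|\lambda_i-\lambda_j|\,
 e^{-\frac n4\sum_i\lambda_i^2}.
\]
On the event $\{\lambda_2\le s\}$ the two smallest eigenvalues $x_1\le x_2$ both lie in $(-\infty,s]$. After symmetrization over which indices they occupy (at most $n^2$ choices), we integrate out $x_1,x_2$ against the conditional density of the remaining $n-2$ eigenvalues. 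That remaining density is a GOE-type ensemble of size $n-2$ with a slightly rescaled potential. The ratio $Z_{n-2}'/Z_n$ is computed exactly from the Selberg/Mehta formula and is $e^{O(n)}$ with an explicit constant, and that constant is what makes the two costs add up to exactly $2J_{\mathrm{GOE}}$.

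The key steps, in order, are as follows.

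\textbf{Step 1 (first factor).} Bound
\[
 \prod_{j\ge3}|x_a-\lambda_j|\,e^{-\frac n4x_a^2}\le e^{n(\Omega(x_a)-x_a^2/4)+o(n)}
\]
for $a=1,2$. This holds on the event that the empirical measure of the $n-2$ remaining eigenvalues is within distance $\delta$ of the semicircle law and their norm is at most $R$. Off that event we use the speed-$n^2$ deviation bound of \citet{benarous1997large} and the norm tail \eqref{eq:B-norm-tail}. Since $x\mapsto\log|x-\lambda|$ is continuous and bounded for $x\le s\le-2-c$ and $\lambda\ge-2-c/2$, the bulk integral is uniformly continuous on $S$.

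\textbf{Step 2 (second factor).} The factor $|x_1-x_2|$ is at most $2(R+|s|)$, so it is harmless. Integrate $x_1,x_2$ over $(-\infty,s]^2$; the sharp constant from the Selberg ratio combines with $\Omega(x)-x^2/4$ to give
\[
 \sup_{x\le s}\Bigl\{\Omega(x)-\frac{x^2}4+\frac12\Bigr\}=-J_{\mathrm{GOE}}(s),
\]
by the identity $\Omega(t)=t^2/4-\tfrac12-J(t)$ of \Cref{rem:pure-collapse}. The supremum is attained at $x=s$ because $J_{\mathrm{GOE}}$ is decreasing on $(-\infty,-2]$.

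\textbf{Step 3 (uniformity).} Uniformity over the compact $S$ follows from monotonicity of $\{\lambda_2\le s\}$ in $s$ and a finite net, exactly as in the third transfer mechanism of \Cref{lem:B-matrix-rates}. This costs an extra $\eps$.

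The main obstacle is Step 1 at the level of exact constants. One must check that conditioning on two eigenvalues far below the edge does not shift the remaining bulk at speed $n$. It does not, because the bulk is tilted only by the bounded factor $\prod_j|x_a-\lambda_j|$, which is negligible at speed $n^2$. One must also check that the normalization ratio contributes exactly $+\tfrac12$ per outlier and nothing more. We would verify the normalization by running the same argument with $s$ replaced by a point inside the bulk, where the probability is of order one, which forces the constant.
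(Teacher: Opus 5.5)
Your argument is correct, but it takes a different route from the paper. The paper computes nothing. It invokes \citet[Theorem~A.1]{auffinger2013random}, the speed-$n$ LDP for the $k$th extreme GOE eigenvalue with rate $kI_1$, at $k=2$ after the reflection $G\mapsto-G$. It uses only the LDP upper bound on the closed set $\{\lambda_2\le s\}$ and identifies the infimum as $2J_{\mathrm{GOE}}(s)$ by monotonicity of $J_{\mathrm{GOE}}$. It then obtains uniformity over $S$ from a finite net plus monotonicity in $s$, which is exactly your Step~3.

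Your Steps~1--2 are in effect a self-contained proof of the $k=2$ upper bound of that cited theorem, done directly from the Coulomb-gas density. The constants check out. With the weight $e^{-\frac n4\sum\lambda_i^2}$, Mehta's integral gives $\log(Z'_{n-2}/Z_n)=n-\log(n/2)+O(1)$, which is exactly $+\tfrac12$ per outlier. Moreover $\Omega(x)-x^2/4+\tfrac12=-J_{\mathrm{GOE}}(x)$ for $x\le-2$. Your route makes the factor $2$ visibly two independent single-outlier costs and does not depend on the cited appendix; the paper's route buys brevity.

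Three small repairs are needed.
\begin{itemize}
\item Your good event (empirical measure near the semicircle, norm at most $R$) does not keep the remaining eigenvalues above $-2-c/2$. On $\{\lambda_2\le s\}$ a third eigenvalue may also lie below $s$, so the continuity claim in Step~1 is unjustified as stated. Since you need only an upper bound, replace $\log|x_a-\lambda_j|$ by $\log(|x_a-\lambda_j|\vee\eta)$ with $\eta<c$. This is an $\eta^{-1}$-Lipschitz test function whose semicircle integral is exactly $\Omega(x_a)$ for $x_a\le-2-c$; it is the truncation used in \Cref{lem:B-matrix-rates}(b).
\item The variables $x_1,x_2$ range over the unbounded set $(-\infty,s]$, so the bound $|x_1-x_2|\le2(R+|s|)$ is false as written. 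Bound the factor by $|x_1|+|x_2|$ instead. Treat $x_a\le-M$ with the crude estimate $\prod_j|x_a-\lambda_j|\le(|x_a|+R)^{n}$ against $e^{-nx_a^2/4}$.
\item The bulk consistency check shows only that the normalization exponent is \emph{at least} $1$, since otherwise an order-one probability would be exponentially small. That is the wrong direction for an upper bound. The exact Mehta evaluation is what certifies the constant, so rely on it.
\end{itemize}
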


\begin{proof}
Theorem~A.1 of \citet{auffinger2013random} states that, for each fixed $k\ge1$, the $k$th largest GOE eigenvalue satisfies a large-deviation principle at speed $n$ with good rate function $kI_1(x)$ for $x$ at or above the right edge and $+\infty$ below it (an eigenvalue of fixed order inside the bulk is a speed-$n^2$ event).  Applying this with $k=2$ after the reflection $G\mapsto-G$, under which the GOE law is invariant, the second smallest eigenvalue $\lambda_2(G_n)$ satisfies a marginal large-deviation principle at speed $n$ whose good rate function equals $2J_{\mathrm{GOE}}(s)$ for $s\le-2$ and $+\infty$ for $s>-2$: pushing the second eigenvalue below the edge forces two eigenvalues below $s$, at cost $J_{\mathrm{GOE}}$ each.  Only the large-deviation \emph{upper bound} for the closed set $\{\lambda_2\le s\}$ is used; its rate infimum is $2J_{\mathrm{GOE}}(s)$ since $J_{\mathrm{GOE}}$ is continuous and strictly decreasing on $(-\infty,-2]$.  This is a marginal statement about a single order statistic; no joint principle for the pair $(\lambda_1,\lambda_2)$ is invoked.  For the uniformity, enlarge $S$ to a compact interval $[s_-,s_+]\subset(-\infty,-2)$, choose a finite partition $s_-=s_0<s_1<\dots<s_m=s_+$ with $J_{\mathrm{GOE}}(s_{j})-J_{\mathrm{GOE}}(s_{j+1})\le\eps/4$ for every $j$ (possible by uniform continuity), and apply the pointwise bound with $\eps/4$ at each $s_{j+1}$.  For $s\in[s_j,s_{j+1}]$, monotonicity gives
\[
 \Pp(\lambda_2\le s)\le\Pp(\lambda_2\le s_{j+1})
 \le e^{-n(2J_{\mathrm{GOE}}(s_{j+1})-\eps/4)}
 \le e^{-n(2J_{\mathrm{GOE}}(s)-\eps)}
\]
for all $n$ large, simultaneously for all $j$ because the net is finite.
\end{proof}

\begin{lemma}[Expected count of index-one points with a soft stable eigenvalue]\label{lem:B-soft-stable-count}
Fix $b>E_2(\xi)$.  There are $\kappa_{\xi,b}>0$ and $c_{\xi,b}>0$ such that, for all sufficiently large $N$,
\begin{equation}\label{eq:B-soft-stable-count}
 \E\#\bigl\{z:\ \grad H_N(z)=0,\ H_N(z)\le-Nb,\
 \ind Q_z=1,\ \lambda_2(Q_z)\le\kappa_{\xi,b}\bigr\}
 \le e^{-c_{\xi,b}N}.
\end{equation}
\end{lemma}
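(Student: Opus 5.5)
The plan is to bound the expected count by the Kac--Rice identity with the mark $|\det Q_N(u)|\,\mathbf1_{\{\ind Q_N(u)=1,\ \lambda_2(Q_N(u))\le\kappa\}}$. This mark is a nonnegative Borel function of the Hessian, so the identity extends to it by the monotone truncation argument. The resulting expectation is
\[
 \int_{-\infty}^{-b}e^{N\varrho_N(u)}\,
 \E\bigl[|\det Q_N(u)|\,
 \mathbf1_{\{\ind Q_N(u)=1,\ \lambda_2(Q_N(u))\le\kappa\}}\bigr]\dd u .
\]
I will split the energy range into a tail $u\le-C$ and a compact part $K_C=[-C,-b]$, exactly as in \Cref{lem:B-index-count}.

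\emph{Tail.} On the tail I drop the indicator entirely. The Gaussian prefactor bound $\varrho_N(u)\le C'_\xi-u^2/2$, combined with the determinant moment bound $\E|\det Q_N(u)|\le e^{n\log(C''_\xi(1+|u|))}$, gives an integrand of at most $e^{-Nu^2/4}$. This contribution is therefore $\le e^{-NC^2/8}$ once $C=C(\xi)$ is large.

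\emph{Compact part.} I condition on $\omega_N=\omega$ with $\omega$ in a compact $W$. Since $Q_N(u,\omega)=c_NG_n-\xi'(1)u-\omega$, the condition $\lambda_2(Q)\le\kappa$ is the event $\lambda_2(G_n)\le t_N(u,\omega)+\kappa/c_N$. On $\mathcal N_R$ intersected with the typical-bulk event of \Cref{lem:B-matrix-rates}(b), the determinant is bounded as follows:
\begin{itemize}
\item the one negative eigenvalue and the one soft stable eigenvalue contribute a factor bounded by $C_{K,W}^2$;
\item the remaining eigenvalues, bounded through the truncated logarithm, give $e^{N(D_\xi(u,\omega)+\eps)}$.
\end{itemize}
The complement events are handled as in that proof: the speed-$N^2$ bulk deviation bound and \Cref{lem:B-det-moments}.

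Next I apply the probability bound. When $t_\xi(u,\omega)+\kappa/\sqrt{\xi''(1)}\le-2-c$, \Cref{lem:B-two-outlier} gives probability at most $e^{-n(2J(t_\xi(u,\omega)+\kappa/\sqrt{\xi''(1)})-\eps)}$. Since $J$ is Lipschitz on compacts, this is at most $e^{-N(2J(t_\xi(u,\omega))-L\kappa-2\eps)}$. When the shifted threshold lies near or above the edge, both $J(t_\xi)$ and the target rate are $O(\kappa)$ small, and the bound holds trivially with an $O(\kappa)$ loss. This step uses the continuity of $J$ and $J(-2)=0$, and transfers to uniform-in-$(u,\omega)$ form by the finite-net/monotonicity mechanism.

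The conditional expectation is then at most $e^{N(D_\xi(u,\omega)-2J(t_\xi(u,\omega))+L\kappa+3\eps)}$. Integrating over the shift with \Cref{lem:B-tilt-laplace}, whose global growth hypothesis is supplied by \eqref{eq:B-det-moment-omega}, yields $e^{N(\Lambda^{(1)}_{2,\xi}(u)+L\kappa+4\eps)}$. Multiplying by $e^{N\varrho_N(u)}$ and covering $K_C$ by bins, the compact contribution is bounded by $\exp\{N(\sup_{u\le-b}\Theta_{2,\xi}(u)+L\kappa+5\eps)\}$.

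\emph{Conclusion.} Since $b>E_2(\xi)$, \Cref{prop:complexity-upper} gives $\sup_{u\le-b}\Theta_{2,\xi}(u)=-2c<0$. I choose $\kappa_{\xi,b}=c/(2L)$ and $\eps$ small, which makes the exponent $\le-c$. Taking $c_{\xi,b}=\min(c,C^2/8)/2$ finishes the proof.

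The main obstacle is the uniformity of the two-outlier bound with the moving, $\kappa$-shifted threshold near the spectral edge. This is where the Lipschitz constant $L$ must be taken uniform on the compact set $K_C\times W$; the compactness of the maximizers from \Cref{lem:B-Lambda-regularity} ensures this is legitimate.
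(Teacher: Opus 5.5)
Your proposal is correct and follows the paper's proof essentially step for step: the Kac--Rice identity with the soft-stable indicator mark, the deterministic tail bound for $u\le-C$, the spectral mapping to $\lambda_2(G_n)\le t_N(u,\omega)+\kappa/c_N$, the two-outlier estimate with a Lipschitz loss of order $L\kappa$, the scalar-tilt Laplace integration over the shift, and the choice of $\kappa_{\xi,b}$ proportional to $-\sup_{u\le-b}\Theta_{2,\xi}(u)/L$ (your $c/(2L)$ is exactly the paper's $\vartheta_b/(4L)$). You also note that the Lipschitz constant must be fixed on a compact set containing the uniformly bounded shift maximizers; the paper passes over this quickly, and it is the right way to make the $L\kappa$ loss legitimate in the Laplace step.
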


\begin{proof}
Write $\cM_N(\kappa)$ for the count on the left of \eqref{eq:B-soft-stable-count}.  By the identity \eqref{eq:B-KR-identity} extended by the Hessian indicator, which is a nonnegative Borel mark,
\begin{equation}\label{eq:B-M-KR}
 \E\cM_N(\kappa)
 =\int_{-\infty}^{-b}e^{N\varrho_N(u)}\,
 \E\bigl[|\det Q_N(u)|\,
 \mathbf1_{\{\ind Q_N(u)=1,\ \lambda_2(Q_N(u))\le\kappa\}}\bigr]\dd u .
\end{equation}

\emph{Spectral mapping.}  Conditionally on $\{\omega_N=\omega\}$, one has $\lambda_i(Q_N(u,\omega))=c_N\lambda_i(G_n)-(\xi'(1)u+\omega)$, so the Hessian event in \eqref{eq:B-M-KR} implies
\begin{equation}\label{eq:B-second-outlier}
 \lambda_2(G_n)\le t_N(u,\omega)+\frac\kappa{c_N}.
\end{equation}

\emph{Compact energy range.}  Fix a constant $C=C(\xi)\ge b+1$ chosen in the tail step below, let $K_C=[-C,-b]$, and fix a compact $W\subset\R$ for the shift.  Split the conditional expectation in \eqref{eq:B-M-KR} on the events of the proof of \Cref{lem:B-matrix-rates}(b).  On $\mathcal N_R\cap\mathcal B_\eps$ one has $|\det Q_N(u,\omega)|\le e^{N(D_\xi(u,\omega)+\eps)}$, so \eqref{eq:B-second-outlier} and the two-outlier estimate \Cref{lem:B-two-outlier}, applied where the displaced threshold $t_N(u,\omega)+\kappa/c_N$ lies below $-2$ (and trivially where it does not, since $J\ge0$ vanishes there and is continuous at the edge), give
\[
 \E\bigl[|\det Q_N(u,\omega)|\,\mathbf1_{\{\cdots\}}\,
 \mathbf1_{\mathcal N_R\cap\mathcal B_\eps}\bigr]
 \le\exp\Bigl\{N\Bigl(D_\xi(u,\omega)
 -2J\bigl(t_\xi(u,\omega)+\tfrac\kappa{c_N}\bigr)+2\eps\Bigr)\Bigr\},
 \qquad (u,\omega)\in K_C\times W .
\]
The rate $J$ is Lipschitz on compacts with a constant $L_{\xi,b}$ (\Cref{lem:B-Lambda-regularity}), so
\[
 2J\bigl(t_\xi(u,\omega)+\tfrac\kappa{c_N}\bigr)
 \ge2J\bigl(t_\xi(u,\omega)\bigr)-L_{\xi,b}\,\kappa-o(1),
\]
uniformly on $K_C\times W$.  Integrating the shift with \Cref{lem:B-tilt-laplace} (the large-$|\omega|$ growth being supplied by \eqref{eq:B-det-moment-omega}), and using \eqref{eq:B-prefactor-limit} together with the definition
$\Theta_{2,\xi}=\varrho_\xi+\Lambda^{(1)}_{2,\xi}$ of \eqref{eq:Theta-mixed}, the contribution of $K_C$ to \eqref{eq:B-M-KR} has $\frac1N\log$ at most
\[
 \sup_{u\in K_C}\Theta_{2,\xi}(u)+L_{\xi,b}\,\kappa+3\eps+o(1).
\]
Since $b>E_2(\xi)$, one has $\Theta_{2,\xi}(u)<0$ for every $u\le-b$ and $\Theta_{2,\xi}(u)\to-\infty$ as $u\to-\infty$, so $\vartheta_b\defeq-\sup_{u\le-b}\Theta_{2,\xi}(u)>0$ by continuity (\Cref{lem:B-Lambda-regularity}); choose
\[
 \eps=\frac{\vartheta_b}{12},
 \qquad
 \kappa_{\xi,b}=\frac{\vartheta_b}{4L_{\xi,b}},
\]
so that this exponent is at most $-\vartheta_b/2<0$ for all large $N$.  The complementary events on $K_C$ are handled exactly as in \Cref{lem:B-matrix-rates}(b): on $\mathcal N_R\setminus\mathcal B_\eps$ the integrand is at most $e^{C_{K,W}N}$ while the probability is at most $e^{-cN^2}$; off $\mathcal N_R$, \Cref{lem:B-det-moments} bounds the contribution by $e^{-c_0nR^2}$, and $R$ is chosen with $c_0R^2\ge1+\sup_{K_C}|\varrho_\xi|$.

\emph{Energy tail.}  For $u\le-C$, drop the Hessian indicator.  The defining Gaussian densities give the deterministic bound
$\varrho_N(u)\le C_\xi'-u^2/2$ for all $u$ and all $N$.  For the determinant, note that \Cref{lem:B-det-moments} is stated only for $(u,\omega)$ in compacts and cannot be applied at unbounded $u$; instead, factor out the $u$- and $\omega$-dependence deterministically as in the proof of \Cref{lem:B-index-count}:
\[
 |\det Q_N(u)|
 \le\bigl(c_N\|G_n\|_{\op}+\xi'(1)|u|+|\omega_N|\bigr)^n
 \le(1+|u|)^n\,3^n\bigl[(c_N\|G_n\|_{\op})^n+\xi'(1)^n+|\omega_N|^n\bigr]
 \qquad\text{for every }u\in\R,
\]
and $\E[(c_N\|G_n\|_{\op})^n]\le1+\E\|Q_N(0,0)\|_{\op}^{n+1}\le e^{C_0n}$ (\Cref{lem:B-det-moments} applied at the single point $(0,0)$ with $r=1$), while $\E|\omega_N|^n\le(v_\xi\sqrt{n/N})^n\le(2v_\xi)^n$.  Hence
$\E|\det Q_N(u)|\le e^{n\log(C_\xi''(1+|u|))}$ with $C_\xi''$ independent of $u$, so the integrand of \eqref{eq:B-M-KR} is at most
$\exp\{N(C_\xi'''+\log(1+|u|)-u^2/2)\}\le e^{-Nu^2/4}$
once $C=C(\xi)$ is large enough, and
\[
 \int_{-\infty}^{-C}e^{-Nu^2/4}\dd u\le e^{-NC^2/8}
\]
for all large $N$.  Combining the three contributions proves \eqref{eq:B-soft-stable-count} with
\[
 c_{\xi,b}=\tfrac12\min\bigl\{\vartheta_b/2,\,1,\,C^2/8\bigr\}.
\]
\end{proof}

By Markov's inequality, \Cref{lem:B-soft-stable-count} proves \Cref{prop:stable-gap-layer} with the stated $\kappa_{\xi,b}$; no intersection with the derivative event is needed, since the energy tail is controlled directly inside the Kac--Rice integral.

\section{Spherical geometry and tensor derivative bounds}\label{app:geometry}

The radius-$\sqrt N$ geometry is used both in the shifted-GOE law and in the $N/\sqrt\beta$ localization scale.  Let
\begin{equation}\label{eq:C-projection}
 P_\sigma\defeq I-\frac1N\sigma\sigma^\top.
\end{equation}
For an ambient $C^2$ extension $F$, the spherical gradient and Hessian are
\begin{align}
 \grad_{\SphereN}F(\sigma)
 &=P_\sigma\nabla F(\sigma),
 \label{eq:C-gradient}\\
 \Hess_{\SphereN}F(\sigma)
 &=P_\sigma\nabla^2F(\sigma)P_\sigma
 -\frac{\langle\sigma,\nabla F(\sigma)\rangle}{N}
 I_{T_\sigma\SphereN}.
 \label{eq:C-Hessian}
\end{align}
For each pure component, Euler's identity for the degree-$p$
homogeneous polynomial $H_{N,p}$ gives
$\langle\sigma,\nabla H_{N,p}(\sigma)\rangle=pH_{N,p}(\sigma)$, so
for the mixture
\begin{equation}\label{eq:C-Euler}
 \langle\sigma,\nabla H_N(\sigma)\rangle
 =\sum_{p\in P}p\,\gamma_p\,H_{N,p}(\sigma)
 \ \defeq\ NW(\sigma).
\end{equation}
The vector $(H_{N,p}(\sigma))_{p\in P}$ is centered Gaussian with
independent coordinates of variance $N$, and
$H_N(\sigma)=\sum_p\gamma_pH_{N,p}(\sigma)$, so conditioning on
$H_N(\sigma)=Nu$ (with $\xi(1)=\sum_p\gamma_p^2=1$) gives the
conditional law
\begin{equation}\label{eq:C-radial-law}
 W\;\Big|\;\{H_N(\sigma)=Nu\}
 \ \sim\
 N\Bigl(\xi'(1)u,\ \frac{v_\xi^2}{N}\Bigr),
 \qquad
 v_\xi^2=\sum_pp^2\gamma_p^2-\Bigl(\sum_pp\gamma_p^2\Bigr)^2,
\end{equation}
by the standard Gaussian conditioning formulas
($\E[W\mid H_N=Nu]=\sum_pp\gamma_p^2\,u=\xi'(1)u$ and
$\Var(W\mid H_N)=\Var W-\Cov(W,H_N/N)^2/\Var(H_N/N)$ with
$\Var W=(\xi'(1)+\xi''(1))/N$ and $\Cov(W,H_N/N)=\xi'(1)/N$,
$\Var(H_N/N)=1/N$).  Through \eqref{eq:C-Hessian}, the radial term
$-WI$ explains the randomly shifted diagonal
$-(\xi'(1)u+\omega_N)I$ in \eqref{eq:shifted-GOE}: in the pure case
$W=pH_N/N$ is a deterministic function of the energy and
$v_\xi=0$, which is precisely Euler's identity degenerating the
shift fluctuation.

To prove \Cref{prop:derivative-scale}, it suffices by the triangle
inequality over the finitely many components to treat one pure
component $H_{N,p}$.  Write $\sigma=\sqrt N x$ with $\|x\|=1$ and let $\mathcal J_N$ denote the Gaussian $p$-linear form in \eqref{eq:hamiltonian}.  A fixed $1/4$-net of the unit sphere has at most $9^N$ points.  Gaussian tails on the $p$-fold product net, followed by the standard multilinear extension inequality, yield
\begin{equation}\label{eq:C-tensor}
 \Pp\left(
 \sup_{\|x_1\|=\cdots=\|x_p\|=1}
 |\mathcal J_N(x_1,\ldots,x_p)|
 \le C_p\sqrt N
 \right)\ge1-e^{-c_pN}.
\end{equation}
The $r$th ambient derivative, evaluated on unit directions, is a $p$-dependent constant times
\begin{equation}\label{eq:C-r-derivative}
 N^{(1-r)/2}
 \mathcal J_N(v_1,\ldots,v_r,x,\ldots,x),
 \qquad0\le r\le3.
\end{equation}
Equations \eqref{eq:C-tensor}--\eqref{eq:C-r-derivative} give the four scales in \eqref{eq:derivative-event} for each component, hence for $H_N$.  Covariant differentiation of \eqref{eq:C-gradient}--\eqref{eq:C-Hessian} introduces only powers of $N^{-1/2}$ and preserves them.

Finally, in geodesic normal coordinates $x$ with $r=\|x\|$ around any point of $\SphereN$, the volume Jacobian is
\begin{equation}\label{eq:C-Jacobian}
 J_N(r)\defeq\left[\frac{\sin(r/\sqrt N)}{r/\sqrt N}\right]^{N-2}.
\end{equation}
On the natural Gaussian scale $r\le R\sqrt{N/\beta}$,
\begin{equation}\label{eq:C-geometry-error}
 e^{-C_RN/\beta}\le J_N(r)\le1,
 \qquad
 g^{ij}(x)=\delta_{ij}+O_R(\beta^{-1}).
\end{equation}
At a critical point, \Cref{prop:derivative-scale} gives
\begin{equation}\label{eq:C-Taylor-error}
 \beta\left|H_N(\exp_\sigma x)-H_N(\sigma)
 -\frac12\langle x,Q_\sigma x\rangle\right|
 \le C_{\xi,R}\frac{N}{\sqrt\beta}.
\end{equation}
This is the origin of the localization loss in \eqref{eq:cocore-localization} and \eqref{eq:network-realization}.

\section{Near-threshold co-core localization: proof}\label{app:near-threshold-cocore}

This appendix proves \Cref{thm:cocore-localization}.  The quantitative
stable-manifold geometry, the stable-disk separation and slab
crossing, the finite-width profile calculation, and the good-saddle
summation are developed first.  The soft-saddle contribution is
controlled by the annealed marked Kac--Rice estimate
\Cref{prop:soft-marked-KR}, proved using the determinant-moment
estimate of \Cref{lem:B-det-moments} and the marked one-point rates of
\Cref{lem:B-matrix-rates}; the deterministic additive term in
\eqref{eq:cocore-localization} makes any lower bound on the random
conductance sum unnecessary.

\subsection{Uniform stable co-cores near the first saddle threshold}

Fix once and for all
\begin{equation}\label{eq:D-bbar}
 \bar b\in(E_2(\xi),E_1(\xi)).
\end{equation}
On the event in \Cref{prop:stable-gap-layer}, every index-one critical point below $-N\bar b$ has a stable Hessian block bounded below by
\begin{equation}\label{eq:D-stable-block}
 B_z\ge\kappa I,
 \qquad \kappa\defeq \kappa_{\xi,\bar b}>0.
\end{equation}
Intersect this event with the derivative event \eqref{eq:derivative-event}, and let $L=L_\xi$ be a deterministic bound for the quantities in \eqref{eq:derivative-functional}.

\begin{lemma}[Uniform quantitative local stable-manifold co-core]\label{lem:D-stable-manifold}
There are constants $c_0,C_0,\eta_0>0$, depending only on $\xi$, $\bar b$, $L$, and $\kappa$, such that the following holds on the preceding event.  Let $z$ be any index-one critical point with $H_N(z)\le-N\bar b$.  In geodesic normal coordinates $(s,y)\in\R\times\R^{N-2}$ aligned with the negative and positive eigenspaces of $Q_z$, the local stable manifold of the vector field $-\grad H_N$ is a graph
\begin{equation}\label{eq:D-stable-graph}
 W^s_{\mathrm{loc}}(z)
 =\{(\psi_z(y),y):\|y\|\le c_0\sqrt N\},
\end{equation}
where $\psi_z$ is $C^2$ with
\begin{equation}\label{eq:D-psi-bounds}
\begin{gathered}
 \psi_z(0)=0,
 \qquad D\psi_z(0)=0,
 \qquad
 |\psi_z(y)|\le \frac{C_0}{\sqrt N}\|y\|^2,\\
 \|D\psi_z(y)\|\le \frac{C_0}{\sqrt N}\|y\|,
 \qquad
 \|D^2\psi_z(y)\|\le \frac{C_0}{\sqrt N}.
\end{gathered}
\end{equation}
Moreover, the restriction $h_z(y)=H_N(\psi_z(y),y)$ is strongly convex:
\begin{equation}\label{eq:D-restricted-hessian}
 D^2h_z(y)\ge\frac\kappa2\,I,
 \qquad \|y\|\le c_0\sqrt N,
\end{equation}
and consequently
\begin{equation}\label{eq:D-stable-energy}
 H_N(\psi_z(y),y)
 \ge H_N(z)+\frac\kappa4\|y\|^2,
 \qquad \|y\|\le c_0\sqrt N;
\end{equation}
one may take
\begin{equation}\label{eq:D-eta0}
 \eta_0\defeq\frac{\kappa c_0^2}{16}.
\end{equation}
The constants are uniform in the unstable eigenvalue $\alpha_z=|\lambda_-(Q_z)|$.
\end{lemma}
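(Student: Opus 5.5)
The plan is to reduce to an $N$-independent problem by rescaling, build the local stable manifold as a \emph{strong stable} manifold whose existence needs only the stable gap $\kappa$, and then read off \eqref{eq:D-psi-bounds}--\eqref{eq:D-stable-energy} by Taylor expansion.

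\textbf{Step 1: rescaling.} Put $x=\sqrt N\,x'$ in geodesic normal coordinates at $z$ and $h(x')=N^{-1}H_N(\exp_z(\sqrt N x'))$. On the derivative event \eqref{eq:derivative-event} the bounds $\|\Hess H_N\|_{\op}\le L$ and $\sqrt N\|\nabla^3H_N\|_{\op}\le L$ become $\|D^2h\|\le L$ and $\|D^3h\|\le C_L$ on the unit-radius rescaled sphere. The metric there satisfies $g_{ij}=\delta_{ij}+O(|x'|^2)$ with $N$-independent constants, as in \eqref{eq:C-Jacobian}--\eqref{eq:C-geometry-error}. At $0$ we have $\nabla h=0$ and $D^2h(0)=\operatorname{diag}(-\alpha_z,B_z)$ with $B_z\ge\kappa I$ by \eqref{eq:D-stable-block}, while $\alpha_z\in(0,L]$ is otherwise uncontrolled. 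Every claimed bound is the rescaled image of a dimension-free statement on a ball of radius $c_0$. For example, $|\psi'(y')|\le C_0|y'|^2$ becomes $|\psi_z(y)|\le C_0\|y\|^2/\sqrt N$, and $\|D^2\psi'\|\le C_0$ becomes $\|D^2\psi_z\|\le C_0/\sqrt N$. The one genuine subtlety is that the nonlinearity is controlled in operator norms uniformly in dimension; the argument below only ever uses operator-norm Lipschitz bounds, so no dimension enters.

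\textbf{Step 2: construction of the manifold.} Write the rescaled flow $-\grad h$ as
\[
\dot s=\alpha_z s+f_1(s,y),\qquad \dot y=-B_zy+f_2(s,y),
\]
where $f=(f_1,f_2)$ vanishes to second order at $0$ and has $\|Df\|\le C|(s,y)|$ on the $c_0$-ball (after a smooth cutoff outside it). I would not use hyperbolicity, because $\alpha_z$ may be exponentially small (compare \Cref{rem:soft-unstable-mode}). Instead I would use the Lyapunov--Perron method in the exponentially weighted space $\{\,(s,y)(\cdot):\sup_{t\ge0}e^{\kappa t/2}|(s,y)(t)|<\infty\,\}$. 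The linear dichotomy there has rates $\le-\kappa$ on the $y$-block and $\ge0>-\kappa/2$ on the $s$-block. The integral equation is
\[
s(t)=-\int_t^\infty e^{\alpha_z(t-\tau)}f_1\,\dd\tau,\qquad
y(t)=e^{-B_zt}y_0+\int_0^te^{-B_z(t-\tau)}f_2\,\dd\tau .
\]
Its solution operator has norm bounded by constants depending only on $\kappa$, because $e^{\alpha_z(t-\tau)}\le1$ for $\tau\ge t$, irrespective of $\alpha_z$. For $c_0$ small the map is a contraction, giving $\psi'(y_0)=s(0)$. This strong stable manifold coincides with $W^s_{\mathrm{loc}}(z)$: since $\alpha_z>0$, any trajectory converging to $z$ inside the ball must have its $s$-component satisfy the same integral equation. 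Differentiating the fixed point twice, using the $C^3$ bound on $h$ and the fiber-contraction argument, gives $\psi'\in C^2$ with $D\psi'(0)=0$ and $\|D^2\psi'\|\le C_0$; the first two bounds in \eqref{eq:D-psi-bounds} then follow by integration.

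\textbf{Step 3: convexity and energy growth.} Compute
\[
D^2h_z(y)=P^\top D^2H_N\,P+\partial_sH_N\,D^2\psi_z ,
\]
where $P=(D\psi_z,I)$. The first term is $B_z+O(\|y\|/\sqrt N)$ by the third-derivative bound and $\|D\psi_z\|\le C_0\|y\|/\sqrt N$. For the second term, $|\partial_sH_N|\le L\|(\psi_z,y)\|\le C\|y\|$ while $\|D^2\psi_z\|\le C_0/\sqrt N$. Hence $D^2h_z\ge\kappa-C\|y\|/\sqrt N\ge\kappa/2$ once $c_0\le\kappa/(2C)$, which is \eqref{eq:D-restricted-hessian}. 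Because $h_z(0)=H_N(z)$ and $Dh_z(0)=0$, integrating along rays gives \eqref{eq:D-stable-energy} even with the constant $\kappa/4$. At the boundary $\|y\|=c_0\sqrt N$ this is an energy rise of $N\kappa c_0^2/4\ge4N\eta_0$, which motivates the choice \eqref{eq:D-eta0}.

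\textbf{Main obstacle.} The delicate step is Step 2, the uniformity in $\alpha_z$. Standard stable-manifold theorems give size and $C^2$ bounds degenerating with the hyperbolicity constant $\min(\alpha_z,\kappa)$. I would handle this through the one-sided weight $e^{\kappa t/2}$ as above, checking carefully that the $C^2$ estimates for $\psi'$ also depend only on the gap between $-\kappa$ and $0$, and not on $\alpha_z$.
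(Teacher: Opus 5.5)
\textbf{Gap: identifying the graph with $W^s_{\mathrm{loc}}(z)$.} Your architecture matches the paper's, and those parts are sound:
\begin{itemize}
\item rescaling by $\sqrt N$;
\item Lyapunov--Perron in a one-sided exponentially weighted space, where $\int_t^\infty e^{-\alpha_z(\tau-t)}e^{-\mu\tau}\,\dd\tau\le e^{-\mu t}/\mu$ holds independently of $\alpha_z$;
\item fiber contraction for the $C^2$ bounds;
\item the restricted-Hessian estimate with $|\partial_sH_N|\,\|D^2\psi_z\|=O(c_0)$.
\end{itemize}
The gap is the claim that every trajectory staying in the ball and converging to $z$ lies on the Lyapunov--Perron graph. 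You justify it by noting that its $s$-component satisfies the same integral equation. But once $\alpha_z>0$, that is true of \emph{every bounded} forward solution, because the boundary term $e^{-\alpha_z(T'-t)}x_s(T')$ vanishes whenever $x_s$ is bounded. Uniqueness of the fixed point, however, holds only in the weighted class $\sup_t e^{\mu t}\|x(t)\|<\infty$. In the unweighted sup norm the operator's Lipschitz constant is of order $c_0(\alpha_z^{-1}+\kappa^{-1})$, which is not small when $\alpha_z$ is tiny.

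Bounded solutions of the integral system really are non-unique. A second equilibrium at distance $O(\alpha_z)$ from $z$ is a concrete example. It arises, for instance, from a rescaled profile $-\alpha_z s^2/2+cs^3$, which is compatible with the derivative event. Its constant trajectory stays in the ball and solves both integral equations, yet it is not on the graph. So you must show that a convergent orbit actually belongs to the weighted space. That requires hyperbolicity, at least qualitatively, which is exactly what you set out to avoid.

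\textbf{How the paper closes it, and two smaller points.} The paper's Step~(2b) closes the gap as follows.
\begin{itemize}
\item On the tiny ball of radius $\rho_\alpha=\min\{c_0,[2C_V(\alpha_z^{-1}+\kappa^{-1})]^{-1}\}$, the \emph{unweighted} Lyapunov--Perron map is a contraction.
\item A convergent orbit eventually stays in $\rho_\alpha/2$. From that time $T$ on, it coincides with the Lyapunov--Perron solution with datum $x_y(T)$, so it decays like $e^{-\mu t}$.
\item The whole orbit therefore lies in the weighted space, and $\alpha$-uniform uniqueness there puts $x(0)$ on the graph.
\end{itemize}
The $\alpha_z$-dependence enters only this qualitative identification, never the quantitative bounds. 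You need this step: \Cref{lem:D-in-chart-cocore}(i) relies on exactly this characterization of $D_z$ as the set of points flowing to $z$.

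Two minor points. First, with weight exactly $\mu=\kappa/2$, the second-derivative equation resonates in the $2\mu$-weighted norm, since $\int_0^t e^{-\kappa(t-\tau)}e^{-\kappa\tau}\,\dd\tau=te^{-\kappa t}$. Take $\mu<\kappa/2$ (the paper uses $\kappa/3$), or estimate $D^2$ of the fixed point in the $\mu$-norm instead. Second, weighted solutions with $\|y_0\|\le c_0$ reach radius $2c_0$. Any cutoff must therefore lie outside the $2c_0$-ball; otherwise your graph belongs to the modified vector field. The paper avoids the issue by working on the ball of radius $2c_0$ with no cutoff.
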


\begin{proof}
Write $x=(s,y)$, $\alpha=\alpha_z$, $B=B_z$, and $V=-\grad H_N$ read in the normal chart, so that $V(0)=0$ and $DV(0)=-Q_z=\operatorname{diag}(\alpha,-B)=:A$, with $\alpha\in(0,L]$ and $B\ge\kappa I$ by \eqref{eq:D-stable-block} and the derivative event.

\emph{Step 1: rescaled vector field.}  Set $\hat x=x/\sqrt N$ and
$\hat V(\hat x)=N^{-1/2}V(\sqrt N\hat x)=A\hat x+\hat F(\hat x)$
on the ball $\hat B=\{\|\hat x\|\le2c_0\}$, $c_0\le1$.  In the geodesic chart of the radius-$\sqrt N$ sphere at scale $\|x\|\le2c_0\sqrt N$, the metric satisfies $g^{ij}=\delta_{ij}+O(c_0^2)$ with $\partial g=O(c_0/\sqrt N)$ and $\partial^2g=O(1/N)$, as in \Cref{app:geometry}, while the derivative event gives $\|\partial H_N(x)\|\le2L\|x\|$ near $z$, $\|\partial^2H_N\|\le2L$, and $\|\partial^3H_N\|\le2L/\sqrt N$ in the chart.  Hence
\begin{align*}
 \|D^2V(x)\|
 &\le\|\partial^3H_N\|\,O(1)
 +\|\partial^2H_N\|\,O(c_0/\sqrt N)
 +\|\partial H_N(x)\|\,O(1/N)\\
 &\le\frac{C_V}{\sqrt N},
 \qquad \|x\|\le2c_0\sqrt N,
\end{align*}
with $C_V=C_V(\xi,L)$.  Since $D^2\hat V(\hat x)=\sqrt N\,D^2V(\sqrt N\hat x)$ and $D\hat F(\hat x)=DV(\sqrt N\hat x)-DV(0)$, the mean value theorem gives, on $\hat B$,
\begin{equation}\label{eq:D-Fhat-bounds}
 \hat F(0)=0,\qquad D\hat F(0)=0,\qquad
 \|D\hat F(\hat x)\|\le C_V\|\hat x\|\le\delta\defeq 2C_Vc_0,\qquad
 \|D^2\hat F(\hat x)\|\le C_V .
\end{equation}
No fourth derivative of $H_N$ is used.

\emph{Step 2: Lyapunov--Perron fixed point.}  Fix the weight $\mu=\kappa/3$ and, for a stable datum $\xi\in\R^{N-2}$ with $\|\xi\|\le c_0$, consider on the Banach space
\[
 X=\Bigl\{x\in C([0,\infty);\R^{N-1}):\ \|x\|_\mu\defeq \sup_{t\ge0}e^{\mu t}\|x(t)\|<\infty\Bigr\}
\]
the map
\[
 (T_\xi x)(t)
 =\Bigl(
 -\int_t^\infty e^{-\alpha(\tau-t)}\hat F_s(x(\tau))\dd\tau,\;
 e^{-Bt}\xi+\int_0^te^{-B(t-\tau)}\hat F_y(x(\tau))\dd\tau
 \Bigr).
\]
Using $\|e^{-Bt}\|\le e^{-\kappa t}$ and $\alpha\ge0$, the two kernel estimates
\[
 \int_t^\infty e^{-\alpha(\tau-t)}e^{-\mu\tau}\dd\tau
 \le\frac{e^{-\mu t}}{\alpha+\mu}\le\frac3\kappa e^{-\mu t},
 \qquad
 \int_0^te^{-\kappa(t-\tau)}e^{-\mu\tau}\dd\tau
 \le\frac{e^{-\mu t}}{\kappa-\mu}=\frac3{2\kappa}e^{-\mu t},
\]
together with $\|\hat F(x)-\hat F(x')\|\le\delta\|x-x'\|$ on $\hat B$, show that $T_\xi$ is a contraction with constant $\tfrac{9\delta}{2\kappa}$ on the closed ball $\{\|x\|_\mu\le2\|\xi\|\}$ of $X$, and maps it into itself because $\|T_\xi(0)\|_\mu\le\|\xi\|$ and $\|\hat F(x)\|\le\delta\|x\|$.  Require
\begin{equation}\label{eq:D-c0-first}
 \delta=2C_Vc_0\le\frac\kappa9,
\end{equation}
so that the contraction constant is at most $\tfrac12$.  Let $x(\cdot\,;\xi)$ be the fixed point; it solves $\dot x=Ax+\hat F(x)$ with $y(0)=\xi$, stays in $\hat B$, and decays at rate $\mu$.  Define $\hat\psi(\xi)=x_s(0;\xi)$.  Every solution starting on the resulting graph remains on it by uniqueness in $X$, so the graph is invariant.

We record two identification statements for later use; both are allowed to use $\alpha$-dependent constants, while the quantitative bounds of the lemma are not.  We emphasize that the graph is \emph{not} claimed to coincide with the set of all bounded forward-trapped orbits: when $\alpha$ is small, a forward-trapped orbit need not lie on the stable graph (a second equilibrium at distance of order $\alpha$ from the origin already provides a counterexample), and the contraction above gives uniqueness only in the exponentially weighted space $X$.

\emph{(2a) Trapped orbits with exponential decay.}  The contraction estimate for $T_\xi$ used only that candidate trajectories take values in $\hat B$, so $T_\xi$ has at most one fixed point in the closed subset $\{x\in X:\ x(t)\in\hat B\ \text{for all }t\ge0\}$ of $X$.  Consequently, if a solution $x$ remains in $\hat B$ for all $t\ge0$ and satisfies $\sup_{t\ge0}e^{\mu t}\|x(t)\|<\infty$, then it lies on the graph.  Indeed, for $T'\ge t$, variation of constants gives
\[
 x_s(t)=e^{-\alpha(T'-t)}x_s(T')
 -\int_t^{T'}e^{-\alpha(\tau-t)}\hat F_s(x(\tau))\dd\tau;
\]
since $\alpha>0$ and $x_s(T')\to0$, letting $T'\to\infty$ yields the $s$-component equation of $T_\xi$ with $\xi=x_y(0)$, the $y$-component equation is the forward variation-of-constants formula, and uniqueness identifies $x=x(\cdot\,;\xi)$, so $x(0)=(\hat\psi(\xi),\xi)$.

\emph{(2b) Convergent orbits.}  Every solution that remains in $\hat B$ for all $t\ge0$ and converges to $0$ as $t\to\infty$ lies on the graph.  Set
\[
 \rho_\alpha
 =\min\Bigl\{c_0,\ \bigl[2C_V(\alpha^{-1}+\kappa^{-1})\bigr]^{-1}\Bigr\}>0,
\]
so that, by \eqref{eq:D-Fhat-bounds}, the Lipschitz constant of $\hat F$ on $\{\|\hat x\|\le\rho_\alpha\}$ is at most $C_V\rho_\alpha$ and $C_V\rho_\alpha(\alpha^{-1}+\kappa^{-1})\le\tfrac12$.  Choose $T$ with $\sup_{\tau\ge T}\|x(\tau)\|\le\rho_\alpha/2$ and put $\xi_T=x_y(T)$.  On the complete metric space
\[
 \cC_T=\bigl\{w\in C_b([0,\infty);\R^{N-1}):\ \|w(t)\|\le\rho_\alpha\ \text{for all }t\bigr\}
\]
with the supremum norm, the elementary kernel bounds
$\int_t^\infty e^{-\alpha(\tau-t)}\dd\tau=\alpha^{-1}$ and
$\int_0^te^{-\kappa(t-\tau)}\dd\tau\le\kappa^{-1}$
show that $T_{\xi_T}$ maps $\cC_T$ into itself, because
\[
 \|(T_{\xi_T}w)(t)\|
 \le\|\xi_T\|+C_V\rho_\alpha(\alpha^{-1}+\kappa^{-1})\,\rho_\alpha
 \le\frac{\rho_\alpha}2+\frac{\rho_\alpha}2,
\]
and is a contraction with constant at most $\tfrac12$.  The shifted solution $x(T+\cdot)$ belongs to $\cC_T$ and is a fixed point of $T_{\xi_T}$: the $s$-component equation follows exactly as in (2a), the boundary term vanishing because $x_s(T')\to0$, and the improper integral converges absolutely since $\alpha>0$ and the integrand is bounded.  The Lyapunov--Perron solution $x(\cdot\,;\xi_T)$ also belongs to $\cC_T$, since $\|x(t;\xi_T)\|\le2\|\xi_T\|e^{-\mu t}\le\rho_\alpha$.  Uniqueness of the fixed point gives $x(T+\cdot)=x(\cdot\,;\xi_T)$; in particular $x(T)=(\hat\psi(\xi_T),\xi_T)$ lies on the graph and $\|x(T+t)\|\le2\|\xi_T\|e^{-\mu t}$.  The full trajectory therefore satisfies $\sup_{t\ge0}e^{\mu t}\|x(t)\|\le\max\{e^{\mu T}\sup_{[0,T]}\|x\|,\ 2\|\xi_T\|e^{\mu T}\}<\infty$, so (2a) applies to $x$ itself and yields $x(0)=(\hat\psi(x_y(0)),x_y(0))$ on the graph, as claimed.

\emph{Step 3: first and second derivatives.}  By the fiber-contraction theorem \citep[Chapter~5]{hirsch1977invariant}, applied twice, the fixed point is $C^2$ in $\xi$ and its derivatives are the fixed points of the formally differentiated equations.  The first derivative $\Phi=D_\xi x$ satisfies the linear fixed-point equation obtained by replacing $\hat F(x)$ with $D\hat F(x)\Phi$ and $e^{-Bt}\xi$ with $e^{-Bt}$; the kernel estimates above give
$\|\Phi\|_\mu\le1+\tfrac{9\delta}{2\kappa}\|\Phi\|_\mu$, hence $\|\Phi\|_\mu\le2$.  The second derivative $\Psi=D^2_\xi x$ satisfies the same linear equation with the inhomogeneity built from $D^2\hat F(x)(\Phi,\Phi)$, whose norm at time $\tau$ is at most $4C_Ve^{-2\mu\tau}$ by \eqref{eq:D-Fhat-bounds}.  Since $2\mu=\tfrac{2\kappa}3<\kappa$, the analogous kernel estimates with weight $2\mu$,
\[
 \int_t^\infty e^{-\alpha(\tau-t)}e^{-2\mu\tau}\dd\tau
 \le\frac3{2\kappa}e^{-2\mu t},
 \qquad
 \int_0^te^{-\kappa(t-\tau)}e^{-2\mu\tau}\dd\tau
 \le\frac3{\kappa}e^{-2\mu t},
\]
give
$\|\Psi\|_{2\mu}\le\tfrac9{2\kappa}\bigl(4C_V+\delta\|\Psi\|_{2\mu}\bigr)$,
hence, by \eqref{eq:D-c0-first},
\begin{equation}\label{eq:D-Psi-bound}
 \|D^2\hat\psi(\xi)\|
 =\|\Psi_s(0)\|
 \le\|\Psi\|_{2\mu}
 \le\frac{36C_V}{\kappa}=:\hat C .
\end{equation}
At $\xi=0$ one has $x(\cdot\,;0)\equiv0$ and $\Phi(t)=(0,e^{-Bt})$, so
$\hat\psi(0)=0$ and $D\hat\psi(0)=0$; Taylor's theorem and \eqref{eq:D-Psi-bound} then give
$\|D\hat\psi(\xi)\|\le\hat C\|\xi\|$ and $|\hat\psi(\xi)|\le\tfrac{\hat C}2\|\xi\|^2$.  All kernel estimates used only $\alpha\ge0$, which is the uniformity in $\alpha_z$.

\emph{Step 4: descaling.}  Set $\psi_z(y)=\sqrt N\,\hat\psi(y/\sqrt N)$ for $\|y\|\le c_0\sqrt N$.  Then
$D\psi_z(y)=D\hat\psi(\hat y)$ and $D^2\psi_z(y)=N^{-1/2}D^2\hat\psi(\hat y)$, so \eqref{eq:D-psi-bounds} holds with $C_0=\hat C$; and \eqref{eq:D-stable-graph} holds because the graph of $\hat\psi$ is the local stable manifold in the rescaled chart.  Smoothness of $H_N$ gives $\psi_z\in C^\infty$ by the classical stable-manifold theorem, though only $C^2$ is used.

\emph{Step 5: restricted Hessian.}  Write $h_z(y)=H(\psi_z(y),y)$ with $H=H_N$ in the chart.  Then
\[
 D^2h_z
 =\partial^2_{yy}H
 +\partial^2_{sy}H\otimes D\psi_z
 +D\psi_z\otimes\partial^2_{ys}H
 +(\partial^2_{ss}H)\,D\psi_z\otimes D\psi_z
 +(\partial_sH)\,D^2\psi_z ,
\]
evaluated at $(\psi_z(y),y)$.  On the ball $\|y\|\le c_0\sqrt N$: the chart Hessian differs from $Q_z$ by at most $C_2c_0$ in operator norm (third-derivative and metric errors), so
$\partial^2_{yy}H\ge(\kappa-C_2c_0)I$ and, because the coordinates diagonalize $Q_z$, $\|\partial^2_{sy}H\|\le C_2c_0$; also $\|D\psi_z\|\le C_0c_0$ and $|\partial^2_{ss}H|\le2L$.  For the last term --- the one that requires the second-derivative bound --- the gradient satisfies $\|\partial H(x)\|\le2L\|x\|\le C_3c_0\sqrt N$ on the ball, so
\[
 \|(\partial_sH)\,D^2\psi_z\|
 \le C_3c_0\sqrt N\cdot\frac{C_0}{\sqrt N}
 =C_0C_3\,c_0 .
\]
Collecting terms, $D^2h_z\ge(\kappa-C_4c_0)I$ with $C_4=C_4(\xi,L,\kappa)$; decreasing $c_0$ so that $C_4c_0\le\kappa/2$, in addition to \eqref{eq:D-c0-first}, proves \eqref{eq:D-restricted-hessian}.  Since $z$ is a critical point, $Dh_z(0)=\partial_yH(0)+\partial_sH(0)D\psi_z(0)=0$, so \eqref{eq:D-restricted-hessian} and Taylor's theorem give
$h_z(y)\ge h_z(0)+\tfrac\kappa4\|y\|^2$, which is \eqref{eq:D-stable-energy}.  Equation \eqref{eq:D-eta0} is then a convenient fixed energy margin.
\end{proof}

\begin{remark}[Consequences for the co-core geometry]\label{rem:D-convexity-consequences}
Two facts used in \Cref{lem:D-in-chart-cocore} follow at once from \eqref{eq:D-restricted-hessian}.  First, $h_z$ is strongly convex on the coordinate ball, so its only critical point is $y=0$ and every value in $(h_z(0),\,h_z(0)+\eta_0N]$ is a regular value of $h_z$; equivalently, since $-\grad H_N$ is tangent to the invariant graph, no critical point of $H_N$ other than $z$ lies on $W^s_{\mathrm{loc}}(z)$.  Second, each sublevel set $\{h_z\le\lambda\}$ with $\lambda\le h_z(0)+\eta_0N$ is a compact convex set with smooth boundary, contained in the open ball $\{\|y\|<c_0\sqrt N\}$ by \eqref{eq:D-stable-energy} and \eqref{eq:D-eta0}, and is therefore a smoothly embedded $(N-2)$-disk.
\end{remark}

Choose
\begin{equation}\label{eq:D-bstar}
 b_*(\xi)>
 \max\left\{\bar b,E_1(\xi)-\frac{\eta_0}{4}\right\},
 \qquad b_*(\xi)<E_1(\xi).
\end{equation}
Fix $b\in(b_*(\xi),E_1(\xi))$, $a>E_1(\xi)$, a sufficiently large fixed $\beta$, and $\delta>0$.  Choose
\begin{equation}\label{eq:D-delta-beta}
 0<\delta_\beta<
 \min\left\{
 \delta,\frac{a-E_1(\xi)}4,\frac{\eta_0}4,\beta^{-2}
 \right\}.
\end{equation}
The threshold exclusion of \Cref{prop:layering} (via the annealed upper bound \Cref{lem:B-index-count}) implies that, with probability tending to one, there is no index-one critical point below $-N(E_1(\xi)+\delta_\beta)$.  Let $\cC_N$ be the set of \emph{all} index-one critical points of $H_N$ below $-Nb$; no minimal cut selection is needed, because the energy comparisons below only use that $\cC_N$ is contained in the set of all index-one points in the energy window.  A terminological caution: $\cC_N$ together with its stable disks is a \emph{separating co-core family}, not an inclusion-minimal edge cut in the graph sense of \Cref{lem:topological-cocore-cut}; throughout this appendix the word ``cut'' is used only in the separating sense, and no minimality is claimed or used.  Every $z\in\cC_N$ then satisfies
\begin{equation}\label{eq:D-saddle-window}
 -E_1(\xi)-\delta_\beta
 \le \frac{H_N(z)}N\le-b
\end{equation}
and hence
\begin{equation}\label{eq:D-height-to-b}
 -Nb-H_N(z)
 \le N\bigl(E_1(\xi)+\delta_\beta-b\bigr)
 <\frac{\eta_0N}{2}.
\end{equation}

In what follows we also assume, decreasing $c_0$ in \Cref{lem:D-stable-manifold} once and for all, that
\begin{equation}\label{eq:D-c0-extra}
 C_0c_0\le\tfrac18
 \qquad\text{and}\qquad
 \eta_0=\frac{\kappa c_0^2}{16}\le\frac{E_0(\xi)-E_1(\xi)}8 .
\end{equation}

\begin{lemma}[Stable-disk separation and slab crossing]\label{lem:D-in-chart-cocore}
Fix $K\ge1$.  For $b>b_*(\xi)$ there is $\beta_1=\beta_1(\xi,b,K)$ such that for all fixed $\beta\ge\beta_1$ the following holds on the intersection of the almost-sure Morse event with the events of \Cref{prop:layering,prop:deep-components,prop:stable-gap-layer,prop:derivative-scale} and the no-low-saddle event above.  For $z\in\cC_N$ define the truncated stable disk
\begin{equation}\label{eq:D-cocore-disk}
 D_z\defeq W^s_{\mathrm{loc}}(z)\cap\{H_N\le-Nb\}
 =\{(\psi_z(y),y):\ h_z(y)\le-Nb\},
\end{equation}
the slab domain $Y_z=\{y:\ h_z(y)\le-Nb+\eta_0N/4\}$, and the slab
\begin{equation}\label{eq:D-slab-map}
 \Sigma_z\defeq\Phi_z\bigl([-r_N,r_N]\times Y_z\bigr),
 \qquad
 \Phi_z(t,y)\defeq(\psi_z(y)+t,y),
 \qquad r_N\defeq K\sqrt{\frac N\beta},
\end{equation}
in the normal chart of \Cref{lem:D-stable-manifold}.  Then:
\begin{enumerate}[label=\textup{(\roman*)},leftmargin=*,itemsep=0.3em]
 \item \textup{(Confinement.)}  $D_z$ is a smoothly embedded $(N-2)$-disk with boundary in $\{H_N=-Nb\}$, and it equals the set of points $x$ with $H_N(x)\le-Nb$ whose forward orbit under $-\grad H_N$ converges to $z$.  In particular, the disks $(D_z)_{z\in\cC_N}$ are pairwise disjoint.
 \item \textup{(Slab structure.)}  $\Phi_z$ is a diffeomorphism onto $\Sigma_z$ with $\det D\Phi_z=1$ and $\|D\Phi_z^{\pm1}\|\le2$; the lateral boundary $\Phi_z([-r_N,r_N]\times\partial Y_z)$ lies in $\{H_N\ge-Nb+\eta_0N/8\}$; and $\Sigma_z\cap(\cA_N(a)\cup\cB_N(a))=\varnothing$.
 \item \textup{(Separation and crossing.)}  Every continuous path in $\Omega_N(b)$ from $\cA_N(a)$ to $\cB_N(a)$ meets $D_z$ for some $z\in\cC_N$; around any such meeting it contains a subpath of $\Sigma_z$ whose two endpoints lie on the transverse faces $\{t=\pm r_N\}$ (possibly the same face twice) and which meets the central sheet $\{t=0\}$ in between.
\end{enumerate}
\end{lemma}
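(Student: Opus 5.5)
I would prove the three items in order, working throughout in the normal chart of \Cref{lem:D-stable-manifold} on the stated intersection of events, and using only \eqref{eq:D-psi-bounds}, \eqref{eq:D-restricted-hessian}, \eqref{eq:D-stable-energy}, \eqref{eq:D-height-to-b} and \eqref{eq:D-c0-extra}.

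\emph{Item (i).} By \eqref{eq:D-height-to-b}, the level $-Nb$ lies strictly inside $(h_z(0),h_z(0)+\eta_0N/2)$. \Cref{rem:D-convexity-consequences} then shows that $\{h_z\le-Nb\}$ is a smooth convex $(N-2)$-disk in the open coordinate ball. Its image under the graph map $y\mapsto(\psi_z(y),y)$ is therefore an embedded disk, with boundary in $\{H_N=-Nb\}$.

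For the dynamical characterization, I would check both inclusions:
\begin{itemize}
\item \emph{Graph points flow to $z$.} Such points lie on the invariant graph, so the energy decreases along the flow, the orbit stays in the sublevel disk, and it converges to $z$ by the exponential decay of Step~2.
\item \emph{Points flowing to $z$ lie on the graph.} Take $x$ with $H_N(x)\le-Nb$ whose orbit converges to $z$. Its energy decreases, so the orbit remains in $\{H_N\le-Nb\}$. Near $z$ the orbit is trapped in the rescaled ball, and (2b) places its tail on the graph.
\end{itemize}
The step I expect to be delicate is showing that the whole orbit, not only its tail, stays inside the chart ball $\hat B$. I would argue that a point leaving the ball in backward time along the graph would have $h_z\ge h_z(0)+\kappa c_0^2N/4>-Nb$, by \eqref{eq:D-stable-energy}, contradicting monotonicity of energy. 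Disjointness of the disks is then immediate, since distinct $z$ have distinct forward limits.

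\emph{Item (ii).} The Jacobian of $\Phi_z$ is unipotent triangular, so $\det D\Phi_z=1$. Its off-diagonal block is $D\psi_z$, of norm at most $C_0c_0\le1/8$, which gives $\|D\Phi_z^{\pm1}\|\le2$.

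For the lateral boundary, take $y\in\partial Y_z$, so that $h_z(y)=-Nb+\eta_0N/4$. Taylor expansion in $t$ gives
\[
|H_N(\psi_z(y)+t,y)-h_z(y)|\le|\partial_sH|\,r_N+Lr_N^2 .
\]
On the stable graph, $|\partial_sH|=O(\|y\|^2/\sqrt N)$ by the invariance, so this error is $O(N/\sqrt\beta)+O(K^2N/\beta)$. Choosing $\beta\ge\beta_1$ makes it smaller than $\eta_0N/8$.

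For disjointness from the wells, the slab energy is bounded below by $H_N(z)-O(N/\sqrt\beta)\ge-N(E_1+\delta_\beta)-O(N/\sqrt\beta)$. Using \eqref{eq:D-delta-beta}, this exceeds $-Na$ because $a-E_1\ge4\delta_\beta$ and $\beta$ is large. This is the one place where $\beta_1$ depends on $K$.

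\emph{Item (iii).} I would use the Morse handle structure of $\overline\Omega_N(b)$ from \Cref{lem:topological-cocore-cut}. That structure has only 0- and 1-handles. Each deep-well component, and each point of $\Omega_N(b)$ off the stable disks, flows under $-\grad H_N$ to a minimum inside $\overline\Omega_N(b)$.

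The forward-limit map is locally constant on the complement of $\bigcup_z D_z$ within $\Omega_N(b)$, and it takes different values on $\cA_N(a)$ and $\cB_N(a)$. Hence any path between the two wells must meet some $D_z$, since by item (i) these are exactly the points whose limits are saddles.

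For the slab crossing, I would take a meeting time and follow the path in both directions until it first leaves $\Sigma_z$. It cannot exit through the lateral boundary, because that boundary lies above $-Nb$ while the path lies in $\Omega_N(b)$. So both exits occur through the faces $t=\pm r_N$. If instead the path never exits on one side, it must stay in the slab until it reaches a deep well, which contradicts the disjointness in item (ii). The subpath between the two exits meets $t=0$, as required.
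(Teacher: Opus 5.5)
Your plan follows the paper's proof essentially step for step: convexity of $h_z$ for the disk structure, the convergent-orbit identification (2b) plus energy monotonicity to exclude an earlier exit from the chart ball, the unipotent triangular Jacobian, the decomposition of $\overline\Omega_N(b)$ into open basins of minima off the stable disks, and the maximal-subinterval crossing. One step, however, does not give the uniformity the lemma asserts.

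\textbf{The slab--well disjointness in (ii).} To show $\Sigma_z\cap(\cA_N(a)\cup\cB_N(a))=\varnothing$ you use $a-E_1(\xi)>4\delta_\beta$ and then ``$\beta$ large''. This only yields the margin $a-E_1(\xi)-\delta_\beta\ge\tfrac34(a-E_1(\xi))$. So the threshold on $\beta$ needed to absorb the transverse energy drop depends on $a$, while the lemma claims $\beta_1=\beta_1(\xi,b,K)$.

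This matters because in \Cref{thm:cocore-localization} the level $a$ is chosen in $(E_1(\xi),F_\xi(\beta)/\beta)$ only after $\beta$ is fixed. When $a-E_1(\xi)$ is small compared with $\alpha_zK^2/\beta$, nothing prevents the slab point at distance $r_N$ along the unstable axis from lying in $\cL_N(a)$; there $H_N\approx H_N(z)-\alpha_zK^2N/(2\beta)$. The bound $\delta_\beta<\beta^{-2}$ does not help.

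The missing ingredient is the uniform margin $a-E_1(\xi)\ge(E_0(\xi)-E_1(\xi))/4\ge2\eta_0$. This is the standing hypothesis on $a$ in \Cref{thm:cocore-localization}, combined with \eqref{eq:D-c0-extra}. With it, take $\beta_1(\xi,b,K)$ so that the transverse change of $H_N$ across the slab is at most $\eta_0N/8$; this is the same estimate you use on the lateral boundary. Then $H_N\ge H_N(z)-\eta_0N/8\ge-N(E_1(\xi)+\delta_\beta)-\eta_0N/8>-Na$ on all of $\Sigma_z$, uniformly in admissible $a$.

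Your remark that this is the only place $\beta_1$ depends on $K$ is also inaccurate. The lateral-boundary estimate needs $\beta$ large in terms of $K$ too, since $r_N=K\sqrt{N/\beta}$.

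\textbf{A smaller point in (i).} Run the exit argument in a ball of radius strictly below $c_0\sqrt N$ (the paper uses $\tfrac78c_0\sqrt N$), not in the rescaled ball $\hat B$ of radius $2c_0$. The graph and \eqref{eq:D-stable-energy} are only available for $\|y\|\le c_0\sqrt N$. At the exit point, $|\psi_z(y)|\le\|y\|/8$ (from \eqref{eq:D-psi-bounds} and \eqref{eq:D-c0-extra}) gives $\|y\|\ge\tfrac34c_0\sqrt N$, hence $H_N>H_N(z)+2\eta_0N>-Nb$. Your constant $\kappa c_0^2N/4$ presumes $\|y\|=c_0\sqrt N$ exactly, but any gain exceeding $\eta_0N/2$ suffices.
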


\begin{proof}
\emph{(i): confinement.}  If $h_z(y)\le-Nb+\eta_0N/4$, then \eqref{eq:D-stable-energy} and \eqref{eq:D-height-to-b} give $\kappa\|y\|^2/4\le-Nb-H_N(z)+\eta_0N/4<3\eta_0N/4$, so $\|y\|^2<3c_0^2N/16$: the domains of $D_z$ and $Y_z$ lie in $\{\|y\|<c_0\sqrt N/2\}$, well inside the coordinate ball.  By \Cref{rem:D-convexity-consequences}, $D_z$ is a smoothly embedded disk with boundary in the level $-Nb$, and every point of $D_z$ converges to $z$ under the forward flow.  Conversely, let $x$ satisfy $H_N(x)\le-Nb$ with forward orbit converging to $z$, write $\bar B_z$ for the closed chart ball of radius $\tfrac78c_0\sqrt N$ around $z$, and let
\[
 s_*=\inf\bigl\{t\ge0:\ \text{the orbit of }x\text{ from time }t\text{ onward stays in }\bar B_z\bigr\},
\]
which is finite by convergence.  The orbit from time $s_*$ onward remains in that ball and converges to $z$, hence lies on the graph $W^s_{\mathrm{loc}}(z)$ by the convergent-orbit identification (2b) in Step~2 of the proof of \Cref{lem:D-stable-manifold}; forward trapping alone is not invoked, since it does not characterize the stable graph when $\alpha_z$ is small.  If $s_*>0$, the orbit point at time $s_*$ lies on the sphere of radius $\tfrac78c_0\sqrt N$ and on the graph; by \eqref{eq:D-c0-extra} its $y$-component satisfies $\|y\|\ge\tfrac89\cdot\tfrac78c_0\sqrt N\ge\tfrac34c_0\sqrt N$, so \eqref{eq:D-stable-energy} bounds $H_N$ at that point from below:
\[
 H_N
 \ \ge\ H_N(z)+\frac\kappa4\cdot\frac9{16}c_0^2N
 \ >\ H_N(z)+2\eta_0N
 \ >\ -Nb
\]
by \eqref{eq:D-height-to-b}; but $H_N$ is nonincreasing along the orbit and starts at $H_N(x)\le-Nb$, a contradiction.  Hence $s_*=0$: the point $x$ itself lies on the graph, and $h_z$ at its $y$-component equals $H_N(x)\le-Nb$, so $x\in D_z$.  Pairwise disjointness of the disks is immediate from this characterization: a common point of $D_z$ and $D_{z'}$ would have forward orbit converging to both $z$ and $z'$, and uniqueness of the gradient-flow limit forces $z=z'$.

\emph{(ii): slab structure.}  $\Phi_z$ has the explicit smooth inverse $(s,y)\mapsto(s-\psi_z(y),y)$, is triangular with unit determinant, and $\|D\Phi_z^{\pm1}\|\le1+\|D\psi_z\|\le1+C_0c_0\le2$ by \eqref{eq:D-psi-bounds} and \eqref{eq:D-c0-extra}.  On the slab, the gradient bound of Step~1 of the proof of \Cref{lem:D-stable-manifold} gives $\|\partial H_N\|\le2L(\|y\|+|\psi_z(y)|+r_N)\le2Lc_0\sqrt N$, so moving from the graph a transverse distance at most $r_N$ changes $H_N$ by at most
\begin{equation}\label{eq:D-transverse-change}
 r_N\cdot2Lc_0\sqrt N=2LKc_0\frac N{\sqrt\beta}
 \le\frac{\eta_0N}8
 \qquad(\beta\ge\beta_1).
\end{equation}
On the lateral boundary, $h_z=-Nb+\eta_0N/4$, so $H_N\ge-Nb+\eta_0N/8$ there.  On all of $\Sigma_z$,
\[
 H_N
 \ \ge\ H_N(z)-\frac{\eta_0N}8
 \ \ge\ -N(E_1(\xi)+\delta_\beta)-\frac{\eta_0N}8
 \ >\ -Na,
\]
because $a-E_1(\xi)\ge(E_0(\xi)-E_1(\xi))/4\ge2\eta_0$ by \eqref{eq:D-c0-extra} and $\delta_\beta$ is small; hence $\Sigma_z$ avoids the deep sets, with a threshold $\beta_1$ depending only on $\xi$, $b$, and $K$.

\emph{(iii): separation.}  Write $M=\overline\Omega_N(b)=\{H_N\le-Nb\}$.  On the stated events, the critical points of $H_N$ in $M$ are local minima and the elements of $\cC_N$.  Forward orbits of $-\grad H_N$ remain in $M$, and each converges to a single critical point: the $\omega$-limit set of a gradient orbit of a Morse function on a compact manifold is a connected set of critical points, and critical points are isolated.  For each minimum $m$, the basin $U_m=\{x\in M:\ \text{orbit of }x\text{ converges to }m\}$ is open in $M$, by the standard Lyapunov argument at an asymptotically stable equilibrium: $U_m$ is the set of points whose orbit enters a fixed attracting sublevel neighborhood of $m$, an open condition.  If $x\in M$ belongs to no $U_m$, its orbit converges to some $z\in\cC_N$, and part (i) gives $x\in D_z$.  Hence
\[
 M\setminus\bigcup_{z\in\cC_N}D_z
 =\bigsqcup_{m}\;U_m\setminus\bigcup_{z\in\cC_N}D_z
\]
is a disjoint union of open subsets of $M$, so every connected subset of $M\setminus\bigcup_zD_z$ lies in a single basin.  Let $\gamma$ be a path in $\Omega_N(b)\subset M$ from $\cA_N(a)$ to $\cB_N(a)$.  The orbit of $\gamma(0)$ stays in the deep component of $\cA_N(a)$ containing it ($H_N$ decreases and orbits are connected) and therefore converges to the unique critical point of that component, a minimum $m_+$ with $s_{N,a}(m_+)=1$ (\Cref{prop:deep-components}); similarly $\gamma(1)$ converges to a minimum $m_-$ with $s_{N,a}(m_-)=-1$, and $m_+\ne m_-$.  If $\gamma$ avoided every $D_z$ it would lie in one basin, forcing $m_+=m_-$; so $\gamma$ meets $D_z$ for some $z\in\cC_N$, necessarily in the relative interior of $D_z$ since $H_N<-Nb$ on $\gamma$.

\emph{Crossing.}  The meeting point lies in the interior of $\Sigma_z$ (it has $t=0$, and its $y$-component lies in the interior of $Y_z$).  Take the maximal subinterval of the parameter containing the meeting time on which $\gamma$ stays in $\Sigma_z$.  Its endpoints lie on $\partial\Sigma_z$: they cannot be endpoints of $\gamma$ (the deep sets avoid $\Sigma_z$ by (ii)), and they cannot lie on the lateral boundary (there $H_N\ge-Nb+\eta_0N/8$, while $H_N<-Nb$ on $\gamma$).  Hence both endpoints lie on the transverse faces $\{t=\pm r_N\}$, and the subpath meets $\{t=0\}$ at the meeting point.
\end{proof}

\subsection{Finite-width profiles and the good-saddle estimate}

Write $d=N-1$ and decompose the saddle Hessian as
\begin{equation}\label{eq:D-Hessian-split}
 Q_z=\begin{pmatrix}-\alpha_z&0\\0&B_z\end{pmatrix},
 \qquad B_z\ge\kappa I.
\end{equation}
After straightening the stable graph, the following holds on a Gaussian core $\{\|y\|\le R\sqrt{N/\beta},\ |t|\le r_N\}$.  Write $x=\Phi_z(t,y)=(t+\psi_z(y),y)$ in the normal chart at $z$.  By \eqref{eq:D-psi-bounds}, $|\psi_z(y)|\le C_0\|y\|^2/\sqrt N\le C_0R^2\sqrt N/\beta$, so $\|x\|\le(K+R+1)\sqrt{N/\beta}$ for all $\beta$ above a fixed threshold.  The critical-point Taylor estimate \eqref{eq:C-Taylor-error}, applied at radius $(K+R+1)\sqrt{N/\beta}$, gives
\[
 H_N(x)
 \ge H_N(z)+\tfrac12\langle x,Q_zx\rangle
 -C_{\xi,R,K}\frac N{\beta^{3/2}},
\]
and, in the coordinates diagonalizing $Q_z$,
\[
 \tfrac12\langle x,Q_zx\rangle
 =-\frac{\alpha_z}2\bigl(t+\psi_z(y)\bigr)^2
 +\frac12\langle y,B_zy\rangle
 =-\frac{\alpha_z}2t^2+\frac12\langle y,B_zy\rangle
 -\alpha_zt\,\psi_z(y)-\frac{\alpha_z}2\psi_z(y)^2 .
\]
The two cross terms are the straightening cost, and each is $O(N/\beta^{3/2})$ explicitly: using $\alpha_z\le2L$ on the derivative event, $|t|\le r_N=K\sqrt{N/\beta}$, and the bound on $\psi_z$ above,
\begin{gather*}
 |\alpha_zt\,\psi_z(y)|
 \le2L\cdot K\sqrt{\tfrac N\beta}\cdot C_0R^2\frac{\sqrt N}\beta
 =2LKC_0R^2\,\frac N{\beta^{3/2}},\\
 \frac{\alpha_z}2\psi_z(y)^2
 \le LC_0^2R^4\,\frac N{\beta^2}
 \le LC_0^2R^4\,\frac N{\beta^{3/2}} .
\end{gather*}
Collecting the three error terms yields
\begin{equation}\label{eq:D-straightened-Taylor}
 H_N(\Phi_z(t,y))
 \ge H_N(z)-\frac{\alpha_z}{2}t^2
 +\frac12\langle y,B_zy\rangle
 -C_{\xi,b,R}\frac{N}{\beta^{3/2}},
\end{equation}
and the volume and inverse-metric distortions contribute at most
$\exp\{C_{\xi,b,R}N/\beta\}$ by \eqref{eq:C-geometry-error}.  Both estimates are uniform over the cut saddles, because every constant above comes from \Cref{lem:D-stable-manifold} and the deterministic derivative event \eqref{eq:derivative-event}.

Fix once and for all a smooth even function $\zeta:\R\to[0,1]$ with $\zeta=1$ on $[-\tfrac12,\tfrac12]$ and $\operatorname{supp}\zeta\subset(-1,1)$, and put
\begin{equation}\label{eq:D-Itrunc}
 J_{N,z}\defeq\int_{-r_N}^{r_N}\zeta\Bigl(\frac v{r_N}\Bigr)e^{-\beta\alpha_zv^2/2}\dd v,
 \qquad
 \varphi_{N,z}(t)\defeq
 \frac1{J_{N,z}}\,\zeta\Bigl(\frac t{r_N}\Bigr)e^{-\beta\alpha_zt^2/2},
\end{equation}
a smooth, even, probability density on $[-r_N,r_N]$ vanishing to all orders at the endpoints.  Let $\tau_z(x)=s-\psi_z(y)$ be the transverse coordinate on the slab $\Sigma_z$, and let $\chi_z:\R^{N-2}\to[0,1]$ be a smooth cutoff with $\chi_z=1$ on $\{h_z\le-Nb+\eta_0N/8\}$ and $\operatorname{supp}\chi_z\subset\{h_z<-Nb+\eta_0N/4\}$ (for instance a fixed smooth function of $(h_z(y)+Nb)/(\eta_0N)$).  Define
\begin{equation}\label{eq:D-rho-def}
 \rho_{N,z}
 \defeq\chi_z(y)\,\varphi_{N,z}(\tau_z)\,|\grad\tau_z|
 \quad\text{on }\Sigma_z,
 \qquad
 \rho_{N,z}=0\quad\text{off }\Sigma_z,
\end{equation}
which is $C^\infty$ on $\SphereN$, not merely continuous.  Indeed, $\psi_z$ --- hence $\tau_z$ and the nowhere-vanishing field $\grad\tau_z$ --- is $C^\infty$ (Step~4 of the proof of \Cref{lem:D-stable-manifold}), so $\rho_{N,z}$ is smooth on the open slab; and since $\operatorname{supp}\zeta\subset(-1,1)$ and $\operatorname{supp}\chi_z\subset\{h_z<-Nb+\eta_0N/4\}$, the product $\chi_z(y)\,\varphi_{N,z}(\tau_z)$ vanishes on a neighborhood of the topological boundary of $\Sigma_z$ ($\varphi_{N,z}$ near the transverse faces, $\chi_z$ near the lateral boundary), so the extension by zero is $C^\infty$.  This supplies the smoothness of the charging densities asserted in \Cref{thm:cocore-localization}.

For one saddle, denote its normalized Eyring--Kramers contribution by
\begin{equation}\label{eq:D-kz}
 k_{N,\beta}(z)
 \defeq\frac1{\vol(\SphereN)}
 \left(\frac{2\pi}{\beta}\right)^{(N-3)/2}
 e^{-\beta H_N(z)}
 \frac{\sqrt{\alpha_z}}{\sqrt{\det B_z}}.
\end{equation}
Thus $\cK_{N,\beta}(I)$ is the sum of \eqref{eq:D-kz} over the saddles in $I$.

\begin{lemma}[Charging and truncated profile energy]\label{lem:D-profile-energy}
There are $C_1=C_1(\xi,b,K)<\infty$ and $\beta_2=\beta_2(\xi,b,K)$ such that, for every fixed $\beta\ge\beta_2$, on the event of \Cref{lem:D-in-chart-cocore} the densities \eqref{eq:D-rho-def} satisfy, deterministically and simultaneously for every $z\in\cC_N$ and all large $N$:
\begin{enumerate}[label=\textup{(\roman*)},leftmargin=*,itemsep=0.3em]
 \item \textup{(Half-mass charging.)}  Every rectifiable subpath of $\Sigma_z\cap\{H_N<-Nb\}$ joining a transverse face of the slab to the central sheet $\{\tau_z=0\}$ satisfies $\int\rho_{N,z}\,\dd s\ge\tfrac12$.  Consequently every crossing subpath supplied by \Cref{lem:D-in-chart-cocore}\textup{(iii)} satisfies $\int\rho_{N,z}\,\dd s\ge1$.
 \item \textup{(Energy.)}
\begin{equation}\label{eq:D-profile-bound}
 \int_{\SphereN}\rho_{N,z}^2e^{-\beta H_N}\dd\nu_N
 \le
 C_1\exp\left\{\frac{C_1N}{\sqrt\beta}\right\}
 \max\left\{1,\frac1{\sqrt{N\alpha_z}}\right\}
 k_{N,\beta}(z).
\end{equation}
\end{enumerate}
\end{lemma}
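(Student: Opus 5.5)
I would prove the two parts separately, working throughout in the straightened slab coordinates $(t,y)=\Phi_z^{-1}(x)$ of \Cref{lem:D-in-chart-cocore}, in which $\tau_z=t$.

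\emph{(i) Charging.} Let $\gamma$ be a rectifiable subpath of $\Sigma_z\cap\{H_N<-Nb\}$ running from a transverse face $\{t=\pm r_N\}$ to the sheet $\{t=0\}$. Since $H_N<-Nb$ along $\gamma$, the $y$-components satisfy $h_z(y)\le H_N+\eta_0N/8<-Nb+\eta_0N/8$, using \eqref{eq:D-transverse-change} (moving a transverse distance $\le r_N$ changes $H_N$ by at most $\eta_0N/8$). So $\chi_z=1$ along $\gamma$. Then $\int_\gamma\rho_{N,z}\,\dd s\ge\int_\gamma\varphi_{N,z}(\tau_z)|\grad\tau_z|\,\dd s\ge\bigl|\int\varphi_{N,z}(\tau_z(\gamma))\,\dd\tau_z\bigr|=\int_0^{r_N}\varphi_{N,z}=\tfrac12$, by the chain rule and the evenness and unit mass of $\varphi_{N,z}$. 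A crossing subpath from \Cref{lem:D-in-chart-cocore}(iii) splits at its meeting with $\{t=0\}$ into two such pieces, giving total charge at least $1$.

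\emph{(ii) Energy.} Change variables by $\Phi_z$; the Jacobian is $1$, and by \Cref{lem:D-in-chart-cocore}(ii) $|\grad\tau_z|^2\le 4\,(1+O(\beta^{-1}))$. Normalizing $\nu_N$ by $\vol(\SphereN)$ and using the Jacobian bound \eqref{eq:C-geometry-error}, the integral is at most
\[
 \frac{Ce^{CN/\beta}}{\vol(\SphereN)}\int_{Y_z}\int_{-r_N}^{r_N}\varphi_{N,z}(t)^2e^{-\beta H_N(\Phi_z(t,y))}\,\dd t\,\dd y .
\]
I split $Y_z$ into the Gaussian core $\{\|y\|\le R\sqrt{N/\beta}\}$ and its complement. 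On the core I insert \eqref{eq:D-straightened-Taylor}: the integrand factorizes as $e^{-\beta H_N(z)}e^{C N/\sqrt\beta}$ times $\int\varphi^2e^{\beta\alpha_zt^2/2}\,\dd t$ times $\int e^{-\beta\langle y,B_zy\rangle/2}\,\dd y$. The $y$-Gaussian gives $(2\pi/\beta)^{(N-2)/2}(\det B_z)^{-1/2}$. For the $t$-integral, the definition \eqref{eq:D-Itrunc} gives $\int\varphi^2e^{\beta\alpha t^2/2}=J_{N,z}^{-2}\int\zeta^2e^{-\beta\alpha t^2/2}\le J_{N,z}^{-1}$. Since $\zeta=1$ on $[-\tfrac12,\tfrac12]$,
\[
 J_{N,z}\ge c\min\{(\beta\alpha_z)^{-1/2},\,r_N\}.
\]
Hence $J_{N,z}^{-1}\le C\sqrt{\beta\alpha_z}\,\max\{1,(N\alpha_z)^{-1/2}\}$ once $K=1$ is fixed. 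Multiplying this by the $y$-factor reproduces $(2\pi/\beta)^{(N-3)/2}\sqrt{\alpha_z/\det B_z}$, up to constants and powers of $\beta$ that are absorbed, which is $k_{N,\beta}(z)$ times the claimed max.

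The main obstacle is the complement of the Gaussian core. There, Taylor expansion at scale $\sqrt{N/\beta}$ is unavailable, and the bound must come from the stable-direction convexity of \Cref{lem:D-stable-manifold}. I would combine $h_z(y)\ge H_N(z)+\tfrac\kappa4\|y\|^2$ from \eqref{eq:D-stable-energy} with the transverse change $\beta\,|H_N(\Phi_z(t,y))-h_z(y)|\le\beta r_N\cdot C\sqrt N\|\,\cdot\|$. More precisely, I would use a first-order Taylor bound in $t$ around the graph, with $|\partial_sH|\le C\|y\|$ and curvature $\le 2L$. This gives $\beta H_N\ge\beta H_N(z)+\beta\kappa\|y\|^2/8-C K N$ on the whole slab. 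The resulting Gaussian tail $\int_{\|y\|>R\sqrt{N/\beta}}e^{-\beta\kappa\|y\|^2/8}$ is then compared with $\int e^{-\beta\langle y,B_zy\rangle/2}$, using $B_z\le 2L$. This loses $e^{CN}$ factors independent of $\beta$ but gains $e^{-cR^2N}$, so choosing $R=R(\xi,b)$ large makes the tail at most the core contribution. Getting the $t$-dependent error here to be linear in $\|y\|$ rather than an $O(N)$ constant is what I expect to need care. If it fails, I would shrink the slab domain to $\|y\|\le C'\sqrt{N/\sqrt\beta}$ and absorb the remainder into $e^{C_1N/\sqrt\beta}$.
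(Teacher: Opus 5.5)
Your proposal is correct and follows the paper's proof closely. Part (i) is the same charging argument. Part (ii) has the same structure:
\begin{itemize}
\item the same change of variables through $\Phi_z$;
\item the same split at $\|y\|=R\sqrt{N/\beta}$, with \eqref{eq:D-straightened-Taylor} on the core;
\item the same comparison of the annulus Gaussian tail with $(2\pi/\beta)^{(N-2)/2}(\det B_z)^{-1/2}$, via $\det B_z\le(2L)^{N-2}$;
\item the same one-dimensional bound $J_{N,z}^{-1}\le C\max\{\sqrt{\beta\alpha_z},1/r_N\}$.
\end{itemize}
Your use of $\zeta^2\le\zeta$ to get $\int\varphi_{N,z}^2e^{\beta\alpha_zt^2/2}\le J_{N,z}^{-1}$ is slightly cleaner than the paper's $J_{N,z}\ge\tfrac12I_{N,z}$.

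The only real difference is on the annulus.
\begin{itemize}
\item You expand in $t$ around the stable graph, using $|\partial_sH_N(\psi_z(y),y)|\le C\|y\|$ and $|\partial^2_{ss}H_N|\le 2L$. After AM--GM this gives $\beta H_N\ge\beta H_N(z)+\beta\kappa\|y\|^2/8-C'K^2N$. The resulting $\beta$-independent loss is absorbed by choosing $R=R(\xi,b,K)$ large, and the $t$-integral is then $\int\varphi_{N,z}^2\le J_{N,z}^{-1}$, as on the core.
\item The paper instead expands $\partial_sH_N$ at $z$, using that the chart diagonalizes $Q_z$ together with the third-derivative bound. It therefore keeps the term $-\alpha_zt^2/2$, loses only $e^{4LK^3N/\sqrt\beta}$, and its $R$ depends only on $L$ and $\kappa$.
\end{itemize}
Since $K=1$ is fixed, both routes give \eqref{eq:D-profile-bound}.

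You identified the crux correctly: the first-order transverse term must be linear in $\|y\|$ so that the quadratic gain can absorb it. A bound like $\beta r_N\sup\|\grad H_N\|\sim K\sqrt\beta\,N$ would not be absorbable with $\beta$-independent constants. Your fallback of shrinking $Y_z$ is therefore unnecessary, and it would not be legitimate anyway. $Y_z$ is fixed by \Cref{lem:D-in-chart-cocore} so that the lateral boundary lies in $\{H_N\ge-Nb+\eta_0N/8\}$, and part (i) relies on that.

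A cosmetic point: on the annulus the metric correction to $|\grad\tau_z|^2$ is $O(c_0^2)$, not $O(\beta^{-1})$. Only a constant bound is needed there; the paper uses $8$.
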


\begin{proof}
\emph{(i).}  Let $\gamma$ be such a subpath, parametrized by arclength.  Its points satisfy $H_N<-Nb$ and $|\tau_z|\le r_N$, so by \eqref{eq:D-transverse-change} their $y$-components obey $h_z(y)\le-Nb+\eta_0N/8$; hence $\chi_z=1$ along $\gamma$.  Since $|\frac{\dd}{\dd s}\tau_z(\gamma(s))|\le|\grad\tau_z|$ pointwise,
\[
 \int_\gamma\rho_{N,z}\,\dd s
 \ge\int_\gamma\varphi_{N,z}(\tau_z)\,
 \Bigl|\frac{\dd\tau_z}{\dd s}\Bigr|\dd s
 \ge\int_0^{r_N}\varphi_{N,z}(v)\dd v
 =\frac12,
\]
because $\tau_z$ sweeps at least one of the intervals $[0,r_N]$ or $[-r_N,0]$ and $\varphi_{N,z}$ is an even probability density.  A crossing subpath as in \Cref{lem:D-in-chart-cocore}(iii) concatenates two such subpaths (face to sheet, sheet to face), giving total charge at least $1$.

\emph{(ii).}  In the slab chart, $\dd\vol=J_N(r)\dd t\,\dd y$ with $J_N\le1$ by \eqref{eq:C-Jacobian} and $\det D\Phi_z=1$, while $|\grad\tau_z|^2\le\|D\Phi_z^{-1}\|^2(1+O(c_0^2))\le8$.  Hence
\[
 \int\rho_{N,z}^2e^{-\beta H_N}\dd\nu_N
 \le\frac8{\vol(\SphereN)}
 \int_{Y_z}\int_{-r_N}^{r_N}
 \varphi_{N,z}(t)^2\,e^{-\beta H_N(\Phi_z(t,y))}\dd t\,\dd y .
\]
Fix $R=R(\xi,b)$ by the condition
\begin{equation}\label{eq:D-R-choice}
 \frac{\kappa R^2}{32}\ge1+\frac12\log\frac{8L}\kappa,
\end{equation}
and split the $y$-integration at $\|y\|=R\sqrt{N/\beta}$.

\emph{Core.}  For $\|y\|\le R\sqrt{N/\beta}$, the straightened Taylor estimate \eqref{eq:D-straightened-Taylor} gives
$\beta H_N(\Phi_z(t,y))\ge\beta H_N(z)-\beta\alpha_zt^2/2+\beta\langle y,B_zy\rangle/2-C_{\xi,b,R}N/\sqrt\beta$, so the core contribution is at most
\[
 e^{C_{\xi,b,R}N/\sqrt\beta}\,
 \frac{e^{-\beta H_N(z)}}{\vol(\SphereN)}
 \Bigl[\int_{-r_N}^{r_N}\varphi_{N,z}(t)^2e^{\beta\alpha_zt^2/2}\dd t\Bigr]
 \Bigl[\int_{\R^{N-2}}e^{-\beta\langle y,B_zy\rangle/2}\dd y\Bigr],
\]
where the $y$-integral was extended to $\R^{N-2}$ as an upper bound and equals $(2\pi/\beta)^{(N-2)/2}(\det B_z)^{-1/2}$.  For the $t$-integral, $\varphi_{N,z}(t)^2e^{\beta\alpha_zt^2/2}\le J_{N,z}^{-2}e^{-\beta\alpha_zt^2/2}$, and since the integrand of $J_{N,z}$ is even and decreasing in $|v|$ while $\zeta=1$ on the inner half,
\[
 J_{N,z}\ge\int_{|v|\le r_N/2}e^{-\beta\alpha_zv^2/2}\dd v
 \ge\tfrac12I_{N,z},
 \qquad
 I_{N,z}\defeq\int_{-r_N}^{r_N}e^{-\beta\alpha_zv^2/2}\dd v;
\]
hence the $t$-integral is at most $4I_{N,z}^{-1}$.

\emph{Annulus.}  For $y\in Y_z$ with $\|y\|\ge R\sqrt{N/\beta}$ and $|t|\le r_N$, integrate the $s$-derivative along the transverse segment: by the expansion of $\partial_sH_N$ at $z$ (Step~1 of the proof of \Cref{lem:D-stable-manifold}) and \eqref{eq:D-psi-bounds},
\begin{align*}
 H_N(\Phi_z(t,y))
 &\ge h_z(y)-\frac{\alpha_z}2t^2-\alpha_z|\psi_z(y)|r_N
 -\frac{4Lr_N}{\sqrt N}\bigl(\|y\|^2+r_N^2\bigr)\\
 &\ge H_N(z)+\frac\kappa8\|y\|^2-\frac{\alpha_z}2t^2
 -\frac{4LK^3N}{\beta^{3/2}},
\end{align*}
for $\beta\ge\beta_2$ chosen so that $(LC_0K+4LK)/\sqrt\beta\le\kappa/8$, using \eqref{eq:D-stable-energy}.  Hence the annulus contribution is at most
\[
 e^{4LK^3N/\sqrt\beta}\,
 \frac{e^{-\beta H_N(z)}}{\vol(\SphereN)}
 \cdot4I_{N,z}^{-1}\cdot
 \int_{\|y\|\ge R\sqrt{N/\beta}}e^{-\beta\kappa\|y\|^2/8}\dd y .
\]
The Gaussian tail in dimension $n'=N-2$ satisfies, by the standard chi-square bound and \eqref{eq:D-R-choice},
\begin{align*}
 \int_{\|y\|\ge R\sqrt{N/\beta}}e^{-\beta\kappa\|y\|^2/8}\dd y
 &\le\Bigl(\frac{8\pi}{\beta\kappa}\Bigr)^{n'/2}e^{-n'\kappa R^2/32}
 \le\Bigl(\frac{2\pi}{\beta}\Bigr)^{n'/2}(2L)^{-n'/2}e^{-n'}\\
 &\le\Bigl(\frac{2\pi}\beta\Bigr)^{n'/2}\frac{e^{-n'}}{\sqrt{\det B_z}},
\end{align*}
using $\det B_z\le\|Q_z\|_{\op}^{n'}\le(2L)^{n'}$ on the derivative event.  The annulus is therefore dominated by the core up to the factor $e^{-n'}$.

\emph{Conclusion.}  Adding the two pieces,
\begin{align}
 \int\rho_{N,z}^2e^{-\beta H_N}\dd\nu_N
 \le{}&
 C\exp\left\{\frac{C_1N}{\sqrt\beta}\right\}
 \frac{e^{-\beta H_N(z)}}{\vol(\SphereN)}
 \left(\frac{2\pi}{\beta}\right)^{(N-2)/2}
 \frac{I_{N,z}^{-1}}{\sqrt{\det B_z}}.
 \label{eq:D-profile-Gaussian}
\end{align}
The elementary one-dimensional bound
\begin{equation}\label{eq:D-I-bound}
 I_{N,z}^{-1}
 \le C\max\left\{\sqrt{\beta\alpha_z},\frac1{r_N}\right\}
\end{equation}
follows by considering separately $\beta\alpha_zr_N^2\ge1$ and $\beta\alpha_zr_N^2<1$.  Comparing \eqref{eq:D-profile-Gaussian} with \eqref{eq:D-kz}, and using
$r_N=K\sqrt{N/\beta}$, proves \eqref{eq:D-profile-bound}: the ratio of the two expressions is $(2\pi/\beta)^{1/2}I_{N,z}^{-1}/\sqrt{\alpha_z}$ up to constants, which \eqref{eq:D-I-bound} bounds by $C\max\{1,(N\alpha_z)^{-1/2}\}$.  Every constant above depends only on $\xi$, $b$, $K$, $L$, $\kappa$, and $R(\xi,b)$, which gives the stated uniformity over the cut and removes all $o(N)$ errors.
\end{proof}

Fix $A>0$ and define
\begin{equation}\label{eq:D-alpha-star}
 \alpha_*\defeq
 \exp\left\{-\frac{AN}{\sqrt\beta}\right\}.
\end{equation}
The two regimes $\alpha_z\ge\alpha_*$ (``good'') and
$\alpha_z<\alpha_*$ (``soft'') are an exhaustive dichotomy at the
single threshold $\alpha_*$: no intermediate window is left
uncovered, and the price of extending the good-saddle estimate all
the way down to $\alpha_*$ is exactly the factor
$e^{AN/(2\sqrt\beta)}$ visible in \eqref{eq:D-good-one} through
\eqref{eq:D-I-bound}.  For saddles with $\alpha_z\ge\alpha_*$, \Cref{lem:D-profile-energy} gives
\begin{equation}\label{eq:D-good-one}
 \int\rho_{N,z}^2e^{-\beta H_N}\dd\nu_N
 \le
 \exp\left\{\frac{(C_1+A/2)N}{\sqrt\beta}+o(N)\right\}
 k_{N,\beta}(z).
\end{equation}
Because the cut saddles are a subset of all index-one saddles in $I_{\xi,b,\delta}$, summation yields
\begin{equation}\label{eq:D-good-sum}
 \sum_{\substack{z\in\cC_N\\\alpha_z\ge\alpha_*}}
 \int\rho_{N,z}^2e^{-\beta H_N}\dd\nu_N
 \le
 \exp\left\{\frac{(C_1+A/2)N}{\sqrt\beta}+o(N)\right\}
 \cK_{N,\beta}(I_{\xi,b,\delta}).
\end{equation}
This proves the entire good-saddle part of \Cref{thm:cocore-localization}.

\subsection{Soft saddles and the marked Kac--Rice estimates}

For $\alpha_z<\alpha_*$, the same density \eqref{eq:D-rho-def} applies; in this regime $J_{N,z}\asymp I_{N,z}\asymp r_N$, so the profile is a smoothed flat transition across $[-r_N,r_N]$.  The calculation in \Cref{lem:D-profile-energy} gives
\begin{align}
 \int\rho_{N,z}^2e^{-\beta H_N}\dd\nu_N
 \le{}&
 \exp\left\{\frac{C_1N}{\sqrt\beta}+o(N)\right\}
 \frac1{\vol(\SphereN)}
 \left(\frac{2\pi}{\beta}\right)^{(N-2)/2}
 \frac{e^{-\beta H_N(z)}}{r_N\sqrt{\det B_z}}.
 \label{eq:D-soft-one}
\end{align}
Define the corresponding flat-profile sum on an interval $J$ by
\begin{equation}\label{eq:D-soft-flat-sum}
 \widetilde{\cK}^{\mathrm{soft}}_{N,\beta}(J;A)
 \defeq\frac1{\vol(\SphereN)}
 \left(\frac{2\pi}{\beta}\right)^{(N-2)/2}
 \frac1{r_N}
 \sum_{\substack{z:\,H_N(z)/N\in J,\ \ind Q_z=1\\
                   \alpha_z<\exp\{-AN/\sqrt\beta\}}}
 \frac{e^{-\beta H_N(z)}}{\sqrt{\det B_z}}.
\end{equation}
The Kac--Rice Jacobian for the summand in \eqref{eq:D-soft-flat-sum} satisfies
\begin{equation}\label{eq:D-soft-Jacobian}
 |\det Q_z|\frac1{\sqrt{\det B_z}}
 =\alpha_z\sqrt{\det B_z}.
\end{equation}
Thus the soft event supplies the explicit factor
$\alpha_z\le\exp\{-AN/\sqrt\beta\}$, while the remaining bulk mark is a half determinant.  The fixed-extreme-eigenvalue large-deviation principle of \citet[Theorem~A.1]{auffinger2013random}, the speed-$N^2$ empirical spectral-measure large-deviation principle of \citet{benarous1997large}, and the scalar-tilt Laplace bound \Cref{lem:B-tilt-laplace} for the random Hessian shift are combined in \Cref{lem:B-matrix-rates} to prove the following uniform estimate.

\begin{proposition}[Uniform soft-saddle marked Kac--Rice upper bound]\label{prop:soft-marked-KR}
Let $J\subset(-\infty,0)$ be compact.  For every fixed sufficiently large $\beta$ and every $A>0$,
\begin{align}
 \limsup_{N\to\infty}\frac1N
 \log\E\widetilde{\cK}^{\mathrm{soft}}_{N,\beta}(J;A)
 \le{}&
 -\frac12\log(\beta e)
 +\sup_{u\in J}
 \{-\beta u+\Xi^{\mathrm{EK}}_{1,\xi}(u)\}
 -\frac A{\sqrt\beta}.
 \label{eq:D-soft-KR-obligation}
\end{align}
The bound is uniform under the finite energy binning used in \Cref{thm:weighted-KR}.
\end{proposition}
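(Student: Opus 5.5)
The plan is to run the marked one-point Kac--Rice formula on the soft flat-profile sum \eqref{eq:D-soft-flat-sum}, exactly as in the proof of \Cref{thm:weighted-KR}, and to keep the soft indicator $\{\alpha_z<\alpha_*\}$ inside the matrix expectation. By rotational invariance and \eqref{eq:B-KR-identity}, with the nonnegative mark $(\det B)^{-1/2}\mathbf 1_{\{\ind Q=1,\ \alpha<\alpha_*\}}$, we have
\[
 \E\widetilde{\cK}^{\mathrm{soft}}_{N,\beta}(J;A)
 =\frac{(2\pi/\beta)^{(N-2)/2}}{\vol(\SphereN)\,r_N}
 \int_J e^{N\varrho_N(u)}e^{-\beta Nu}\,
 \E\bigl[\alpha_N(u)\sqrt{\det B_N(u)}\,
 \mathbf 1_{\cI_{N,1}(u)}\mathbf 1_{\{\alpha_N(u)<\alpha_*\}}\bigr]\dd u ,
\]
where \eqref{eq:D-soft-Jacobian} has been used to cancel the inverse determinant. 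The unbounded mark is handled by monotone truncation at level $m$ on both sides. The prefactors are dealt with directly: $r_N^{-1}$ is subexponential, $\frac1N\log\vol(\SphereN)\to\frac12\log(2\pi e)$, and together with $(2\pi/\beta)^{(N-2)/2}$ this produces the $-\frac12\log(\beta e)$ term, as in \Cref{cor:K-pressure}. The factor $e^{N\varrho_N(u)}$ is handled by \Cref{lem:B-matrix-rates}(d).

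For the matrix expectation, bound $\alpha_N\mathbf 1_{\{\alpha_N<\alpha_*\}}\le\alpha_*=e^{-AN/\sqrt\beta}$ pointwise. What remains is $\E[(\det B_N(u))^{1/2}\mathbf 1_{\cI_{N,1}(u)}]$, and \Cref{lem:B-matrix-rates}(b) with $r=1/2$ bounds it by $e^{N(\Lambda^{(1/2)}_{1,\xi}(u)+\eps)}$, uniformly for $u$ in the compact $J$. The Gaussian shift has already been integrated there through \Cref{lem:B-tilt-laplace}, with the growth hypothesis supplied by \eqref{eq:B-det-moment-omega}. This step is the crux, since pulling out $\alpha_*$ gives precisely the $-A/\sqrt\beta$ gain while the half-determinant mark loses nothing. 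The factor $\alpha$ that the mark $\alpha\sqrt{\det B}$ usually carries is simply replaced by the deterministic bound $\alpha_*$.

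To finish, split $J$ into finitely many bins of width $\delta$, bound $e^{-\beta Nu}$ by its maximum on each bin, and sum. Sending $N\to\infty$ first, then $\eps,\delta\downarrow0$, and using the Lipschitz continuity of $\Xi^{\mathrm{EK}}_{1,\xi}=\varrho_\xi+\Lambda^{(1/2)}_{1,\xi}$ from \Cref{lem:B-Lambda-regularity}, gives \eqref{eq:D-soft-KR-obligation}. Uniformity under the binning comes from the uniform-in-$u$ form of \Cref{lem:B-matrix-rates}(b). The main obstacle I anticipate is the removal of the soft indicator itself. One has to check that dropping $\mathbf 1_{\{\alpha<\alpha_*\}}$ after extracting $\alpha_*$ is legitimate for a monotone upper bound. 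It is, because the indicator is at most $1$ and the remaining mark is nonnegative, but the truncated-mark Kac--Rice identity must be applied before that replacement. The other delicate point is that "$\beta$ sufficiently large" is needed only so that $\alpha_*$ and the Gaussian core scale are consistent with the constructions of \Cref{lem:D-profile-energy}; the estimate itself holds for every $\beta$.
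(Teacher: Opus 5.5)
Your proposal is correct and follows the paper's proof essentially step for step. Both arguments justify the marked Kac--Rice identity by monotone truncation, cancel the inverse determinant via \eqref{eq:D-soft-Jacobian}, and use the pointwise bound $\alpha_N\mathbf 1_{\{\alpha_N<\alpha_*\}}\le e^{-AN/\sqrt\beta}$; they then apply \Cref{lem:B-matrix-rates}(b) with $r=1/2$ and (d), and obtain $-\tfrac12\log(\beta e)$ from the Stirling computation of the prefactor. The only cosmetic difference is that the paper bounds the integral over $J$ by $|J|$ times the supremum of the integrand instead of binning, and your remark that largeness of $\beta$ is not actually used in this particular estimate is accurate.
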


\begin{proof}
The mark
$(\det B_N(u))^{-1/2}\mathbf1_{\{\alpha_N(u)<e^{-AN/\sqrt\beta}\}}$ is unbounded where a stable eigenvalue degenerates.  To justify the marked one-point Kac--Rice identity for it, apply the formula (exactly as in \eqref{eq:B-marked-KR}, but with this mark in place of $\alpha_N(u)|\det Q_N(u)|^{-1/2}$) to the bounded marks
$\min\{(\det B_N(u))^{-1/2},M\}\mathbf1_{\{\alpha_N(u)<e^{-AN/\sqrt\beta}\}}$
and let $M\uparrow\infty$ by monotone convergence on both sides.  The limiting right-hand side is finite because the Kac--Rice Jacobian cancels the singularity:
\[
 |\det Q_N(u)|(\det B_N(u))^{-1/2}
 =\alpha_N(u)(\det B_N(u))^{1/2}
 \le\|Q_N(u)\|_{\op}^{n/2+1},
\]
which is integrable by \Cref{lem:B-det-moments}.  Hence, by \eqref{eq:B-prefactor-rate},
\begin{align*}
 \E\widetilde{\cK}^{\mathrm{soft}}_{N,\beta}(J;A)
 ={}&\frac1{\vol(\SphereN)}
 \left(\frac{2\pi}\beta\right)^{(N-2)/2}
 \frac1{r_N}
 \int_Je^{N\varrho_N(u)}e^{-\beta Nu}\\
 &\times
 \E\Bigl[|\det Q_N(u)|(\det B_N(u))^{-1/2}
 \mathbf1_{\cI_{N,1}(u)}
 \mathbf1_{\{\alpha_N(u)<e^{-AN/\sqrt\beta}\}}\Bigr]\dd u.
\end{align*}
The Jacobian cancellation \eqref{eq:D-soft-Jacobian} and the soft-event indicator give the pointwise bound
\begin{align*}
 |\det Q_N(u)|(\det B_N(u))^{-1/2}
 \mathbf1_{\{\alpha_N(u)<e^{-AN/\sqrt\beta}\}}
 &=\alpha_N(u)(\det B_N(u))^{1/2}
 \mathbf1_{\{\alpha_N(u)<e^{-AN/\sqrt\beta}\}}\\
 &\le e^{-AN/\sqrt\beta}(\det B_N(u))^{1/2}.
\end{align*}
Fix $\eps>0$.  \Cref{lem:B-matrix-rates}(b) with $r=1/2$ (which already integrates the random shift) bounds the matrix expectation by
$\exp\{N(\Lambda^{(1/2)}_{1,\xi}(u)+\eps)-AN/\sqrt\beta\}$, uniformly for $u\in J$, and \Cref{lem:B-matrix-rates}(d) then converts the $u$-dependent exponent into
\[
 -\beta u+\varrho_\xi(u)+\Lambda^{(1/2)}_{1,\xi}(u)
 =-\beta u+\Xi^{\mathrm{EK}}_{1,\xi}(u),
\]
by \eqref{eq:Xi-EK}.
The geometric prefactor contributes
\[
 \frac1N\log\left[
 \frac1{\vol(\SphereN)}
 \left(\frac{2\pi}\beta\right)^{(N-2)/2}\frac1{r_N}
 \right]
 \longrightarrow
 \frac12\log\frac{2\pi}\beta-\frac12\log(2\pi e)
 =-\frac12\log(\beta e),
\]
by \eqref{eq:sphere-volume-asymptotic}, since $r_N=K\sqrt{N/\beta}$ is polynomial in $N$ at fixed $\beta$.  Bounding the integral over the compact interval $J$ by $|J|$ times the supremum of the integrand and letting $\eps\downarrow0$ proves \eqref{eq:D-soft-KR-obligation}.  All estimates above are uniform in $u\in J$, so the same bound, with the same $N$-independent constants, holds simultaneously for every compact subinterval of $J$; this is the uniformity required under the finite energy binning of \Cref{thm:weighted-KR}.
\end{proof}

The pointwise cancellation \eqref{eq:D-soft-Jacobian} is thus supplemented by the aggregate control of the exponentially small unstable-eigenvalue window, the half-determinant bulk mark, and the index-one constraint, all supplied by \Cref{lem:B-matrix-rates}.

Put
\begin{equation}\label{eq:D-pressure-P}
 P_{\xi,\beta}(b)
 \defeq-\frac12\log(\beta e)
 +\sup_{u\in[-E_1(\xi),-b]}
 \{-\beta u+\Xi^{\mathrm{EK}}_{1,\xi}(u)\}
 =\Lambda^{\mathrm{EK}}_{\xi,\beta}(b),
\end{equation}
the pressure of \eqref{eq:Lambda-EK}, which is the exponent of the
deterministic additive term in \eqref{eq:cocore-localization}.  The
completion of the proof of \Cref{thm:cocore-localization} below does
\emph{not} require a lower bound on the random conductance sum.

\subsection{Completion of the proof}

With \Cref{prop:soft-marked-KR} established, use the smaller interval
\begin{equation}\label{eq:D-Jbeta}
 J_\beta\defeq[-E_1(\xi)-\delta_\beta,-b]
 \subset I_{\xi,b,\delta}.
\end{equation}
By continuity of $\Xi^{\mathrm{EK}}_{1,\xi}$ and \eqref{eq:D-delta-beta},
\begin{align}
 \sup_{u\in J_\beta}
 \{-\beta u+\Xi^{\mathrm{EK}}_{1,\xi}(u)\}
 \le{}&
 \sup_{u\in[-E_1(\xi),-b]}
 \{-\beta u+\Xi^{\mathrm{EK}}_{1,\xi}(u)\}
 +O(\beta\delta_\beta)+o(\delta_\beta)
 \notag\\
 ={}&
 \sup_{u\in[-E_1(\xi),-b]}
 \{-\beta u+\Xi^{\mathrm{EK}}_{1,\xi}(u)\}
 +o(\beta^{-1/2}).
 \label{eq:D-endpoint-delta}
\end{align}
The probability bookkeeping is as follows.  Fix $\zeta>0$.  Markov's inequality applied to \eqref{eq:D-soft-KR-obligation}, together with \eqref{eq:D-endpoint-delta} and the identity \eqref{eq:D-pressure-P}, gives, for all large $N$,
\[
 \Pp\left(
 \widetilde{\cK}^{\mathrm{soft}}_{N,\beta}(J_\beta;A)
 >\exp\Bigl\{N\Bigl(\Lambda^{\mathrm{EK}}_{\xi,\beta}(b)
 -\tfrac{A-o(1)}{\sqrt\beta}+2\zeta\Bigr)\Bigr\}
 \right)\le e^{-N\zeta}.
\]
This holds for every fixed $\zeta>0$; a subsequence statement would not suffice for the theorem, so we upgrade it to the full sequence by a deterministic diagonal.  Set $\zeta_m=1/m$ and let $N_m$ be such that the displayed bound with $\zeta=\zeta_m$ holds for all $N\ge N_m$; enlarging the $N_m$ if necessary, we may assume $N_{m}>N_{m-1}$ and $N_m\ge m^2$.  Define the deterministic sequence $\zeta(N)=\zeta_m$ for $N_m\le N<N_{m+1}$.  Then $N\zeta(N)\ge N_m/m\ge m\to\infty$ and $N\zeta(N)=o(N)$, so, for all $N\ge N_1$,
\[
 \Pp\left(
 \widetilde{\cK}^{\mathrm{soft}}_{N,\beta}(J_\beta;A)
 >\exp\Bigl\{N\Bigl(\Lambda^{\mathrm{EK}}_{\xi,\beta}(b)
 -\tfrac{A-o(1)}{\sqrt\beta}+2\zeta(N)\Bigr)\Bigr\}
 \right)\le e^{-N\zeta(N)}\longrightarrow0
\]
along the full sequence $N\to\infty$, with a deterministic error $2N\zeta(N)=o(N)$.  Hence, with probability tending to one along the full sequence,
\begin{equation}\label{eq:D-soft-vs-K}
 \widetilde{\cK}^{\mathrm{soft}}_{N,\beta}(J_\beta;A)
 \le
 \exp\left\{N\Lambda^{\mathrm{EK}}_{\xi,\beta}(b)
 -\frac{(A-o(1))N}{\sqrt\beta}
 +o_{\Pp}(N)\right\}.
\end{equation}
No lower bound on the random conductance sum is used.  Combining \eqref{eq:D-soft-one}, whose sum over the soft cut saddles is bounded by $e^{C_1N/\sqrt\beta+o(N)}\,\widetilde{\cK}^{\mathrm{soft}}_{N,\beta}(J_\beta;A)$ because every $z\in\cC_N$ has energy density in $J_\beta$ by \eqref{eq:D-saddle-window}, with \eqref{eq:D-soft-vs-K} yields
\begin{equation}\label{eq:D-soft-sum-final}
 \sum_{\substack{z\in\cC_N\\\alpha_z<\alpha_*}}
 \int\rho_{N,z}^2e^{-\beta H_N}\dd\nu_N
 \le
 \exp\left\{N\Lambda^{\mathrm{EK}}_{\xi,\beta}(b)
 -\frac{(A-C_1-o(1))N}{\sqrt\beta}
 +o_{\Pp}(N)\right\}.
\end{equation}
Choose $A>2C_1+2$.  Then the soft contribution is bounded by
$\exp\{o_{\Pp}(N)\}\,e^{N\Lambda^{\mathrm{EK}}_{\xi,\beta}(b)}$.  Adding
this to the good-saddle estimate \eqref{eq:D-good-sum} gives
\eqref{eq:cocore-localization}, with a constant $C_{\xi,b}=C_1+A/2$
independent of $\beta$.  This completes the proof of
\Cref{thm:cocore-localization}.

\section{Simulation details}\label{app:sim}

\paragraph{$p$-spin dynamics.}  The Hamiltonian is realized with a
symmetrized coupling tensor so that
$\grad H_N(\sigma)=\tfrac{p}{N^{(p-1)/2}}\,J^{\mathrm{sym}}
[\sigma^{\otimes(p-1)}]$ is a single tensor contraction.  The spherical
constraint is enforced by projecting the drift and the noise onto the
tangent space and renormalizing $\|\sigma\|=\sqrt N$ after every Euler
step.  Energy-relaxation and aging runs use $p=4$, $N=60$, $\mathrm
dt=5\times10^{-3}$, $3\times10^4$ steps; the relaxation panel shows
three disorder samples per temperature (smoothed median and min--max
band) and the aging panel three independent thermal histories of one
disorder sample.  Escape-time runs use $p=4$, $N\in\{8,12,16\}$,
$\mathrm dt=10^{-2}$, at most $3\times10^5$ steps, fifteen disorder
samples per $(N,\beta)$; the starting point is the deepest of ten
projected-gradient-descent minima from independent random starts, and
the escape time is the first time the overlap with it drops below
$0.3$ (first passage to overlap $<-0.5$ is recorded separately as an
antipodal-transit proxy).  Runs that do not escape within the horizon
are recorded as censored; medians are reported only where fewer than
$40\%$ of runs are censored, and censored medians are displayed as
lower bounds and excluded from the fits.  Arrhenius slopes are fitted
to $\log$ medians and their confidence intervals are percentile
intervals over $1000$ bootstrap resamples of the disorder samples.
Step-size check: at $N=12$, $\beta=3.0$, repeating the escape
experiment with $\mathrm dt=5\times10^{-3}$ gives a median escape time
of $369$ (IQR $[175,1992]$) against $254$ (IQR $[57,2373]$) at
$\mathrm dt=10^{-2}$; the medians lie well within each other's
interquartile ranges, so discretization bias is small compared with
disorder fluctuations at these sizes.

\paragraph{Saddle enumeration.}  For $p=4$ and $N\in\{8,10,12\}$,
with eight disorder samples per size, critical points solve
$\grad H_N(\sigma)=\mu\sigma$, $\|\sigma\|^2=N$, with
$\mu=pH_N(\sigma)/N$ by Euler's identity.  We run damped Newton
iteration on the extended system in $(\sigma,\mu)$ with backtracking
on the KKT residual (tolerance $10^{-11}\sqrt N$ on the residual
norm, at most $100$ iterations) from $4{,}000$ starts per sample:
one half preconditioned by $15$--$60$ steps of projected gradient
descent, one quarter by $15$ steps of ascent, one quarter raw
uniform starts.  Points are deduplicated up to the antipodal
symmetry at Euclidean tolerance $10^{-5}\sqrt N$ (each stored
configuration represents the pair $\{z,-z\}$; binned counts and
weights carry the factor $2$).  At each critical point the
Riemannian Hessian $V^\top(\grad^2H_N-\mu I)V$ is computed in an
orthonormal tangent basis $V$ and diagonalized exactly; the index is
the number of negative eigenvalues, $\alpha_z$ the magnitude of the
negative eigenvalue for index one, and $\log|\det Q_z|$ the sum of
$\log$-magnitudes of the $N-1$ tangent eigenvalues.  Energy bins have
width $0.115$; binned counts and weighted sums are averaged over the
eight disorder samples.  Completeness diagnostic: the number of new
distinct pairs discovered in the final quarter of the $4{,}000$
starts is recorded per sample.  At
$N=8$ the enumeration is essentially saturated ($2$--$10$ new pairs
per sample in the final quarter, out of $\approx220$ per sample); at
$N=10$ and $N=12$ discovery continues ($\approx60$--$70$ and
$\approx140$--$180$ new pairs per quarter, respectively), dominated by
high-index, high-energy points, so counts at $N\ge10$ are lower
bounds---low-energy, low-index bins, which control the weighted sums
at moderate and large $\beta$, are the best-covered part of the
spectrum because half of the Newton starts are descent-preconditioned.
The dominance panel evaluates, per disorder
sample, the share of the largest term of \eqref{eq:W-main} over all
index-one saddles of that sample at each $\beta$ (computed in log
space), and reports mean $\pm$ s.d.\ over samples.

\paragraph{Phase-transition figures
(\Cref{fig:landscape}, \Cref{fig:arrest}, \Cref{fig:mechanism}).}
The arrest sweep uses $N=64$, $\mathrm dt=5\times10^{-3}$,
$2\times10^4$ steps, three disorder samples per temperature, quenching
from a uniformly random start; the plateau energy averages the last
quarter of each trajectory, and the equilibration time is the first
time the smoothed energy reaches $e_{\mathrm{eq}}(T)+0.05$ with
$e_{\mathrm{eq}}=\max(-1/T,-E_0)$ (the annealed equilibrium energy
$-1/T$ is exact at high temperature).  The arrest sweep uses $p=3$,
where $T_d=1/E_\infty$ exactly; it probes the dynamical transition of
\citet{cugliandolo1993analytical}, not the theorems.  The landscape
rendering evaluates a $p=4$, $N=3$ Hamiltonian on a $500\times500$
spherical grid.  The leading-term comparison evaluates, at $p=4$,
$\Lambda^{\mathrm{EK}}_{\xi,\beta}(b)-\beta b$ on a
$110\times90$ grid in $(\beta,b)$, computing $\Theta_{1,\xi}$, the
determinant rate $D_\xi$, and the GOE outlier rate $I_{1,\xi}$ by
quadrature as in \Cref{app:weighted-KR}; the region
$b\le E_2(4)$, which violates the necessary condition
$b>b_*(\xi)>E_2(\xi)$ of \Cref{thm:ek}, is masked, and $E_2(4)$ is
computed by root finding on $\Theta_{2,p}$.

\bibliographystyle{plainnat}
\bibliography{refs}

\end{document}